\renewcommand{\leq}{\leqslant}
\renewcommand{\geq}{\geqslant}
\newcommand{\RE}{\ensuremath{\operatorname{Re}}}
\newcommand{\IM}{\ensuremath{\operatorname{Im}}}
\newcommand{\Ent}{\operatorname{\mathrm{Ent}}}
\newcommand{\diam}{\operatorname{\mathrm{diam}}}
\newcommand{\sgn}{\operatorname{\mathrm{sgn}}}
\newcommand{\supp}{\operatorname{\mathrm{supp}}}
\newcommand{\entier}[1]{\lfloor#1\rfloor}
\newcommand{\tr}{\operatorname{\mathrm{tr}}}
\newcommand{\bracket}[1]{\langle#1\rangle}
\newcommand{\sbracket}[1]{[#1]}
\newcommand{\kolmo}{\nu_0}
\DeclareFontFamily{U}{mathx}{\hyphenchar\font45}
\DeclareFontShape{U}{mathx}{m}{n}{
      <5> <6> <7> <8> <9> <10>
      <10.95> <12> <14.4> <17.28> <20.74> <24.88>
      mathx10
      }{}
\DeclareSymbolFont{mathx}{U}{mathx}{m}{n}
\DeclareMathAccent{\widecheck}{0}{mathx}{"71}
\DeclareMathAccent{\wideparen}{0}{mathx}{"75}
\newcommand{\ball}[2]{\mathrm{B}_{#1}(#2)}
\newcommand{\cball}[2]{\overline{\mathrm{B}}_{#1}(#2)}
\newcommand{\coball}[2]{\mathrm{B}^c_{#1}(#2)}
\newcommand{\fa}{\quad\text{for all\ \ }}
\newcommand{\et}{\quad\text{and}\quad}
\newcommand{\primo}{1\textsuperscript{o}}
\newcommand{\primolist}{\par\smallskip\noindent\textbf\primo\;\;}
\newcommand{\secundo}{2\textsuperscript{o}}
\newcommand{\secundolist}{\par\smallskip\noindent\textbf\secundo\;\;}
\newcommand{\tertio}{3\textsuperscript{o}}
\newcommand{\tertiolist}{\par\smallskip\noindent\textbf\tertio\;\;}
\newcommand{\quarto}{4\textsuperscript{o}}
\newcommand{\quartolist}{\par\smallskip\noindent\textbf\quarto\;\;}
\newcommand{\quinto}{5\textsuperscript{o}}
\newcommand{\quintolist}{\par\smallskip\noindent\textbf\quinto\;\;}
\newcommand{\sexto}{6\textsuperscript{o}}
\newcommand{\sextolist}{\par\smallskip\noindent\textbf\sexto\;\;}
\newcommand{\septimo}{7\textsuperscript{o}}
\newcommand{\septimolist}{\par\smallskip\noindent\textbf\septimo\;\;}
\newcommand{\Normal}{\mathsf N}
\newcommand{\Uniform}{\mathsf U}
\newcommand{\comp}{\mathds C}
\newcommand{\integer}{\mathds Z}
\newcommand{\nat}{\mathds N}
\newcommand{\rat}{\mathds Q}
\newcommand{\real}{\mathds R}
\newcommand{\rn}{{{\mathds R}^n}}
\newcommand{\rd}{{{\mathds R}^d}}
\newcommand{\Ee}{\mathds{E}}
\newcommand{\Pp}{\mathds{P}}
\newcommand{\Ss}{\mathds{S}}
\newcommand{\Tt}{\mathds{T}}
\newcommand{\Vv}{\mathds{V}}
\newcommand{\I}{\mathds 1}
\newcommand{\Ascr}{{\mathscr A}}
\newcommand{\Bscr}{{\mathscr B}}
\newcommand{\Fscr}{{\mathscr F}}
\newcommand{\Gscr}{{\mathscr G}}
\newcommand{\Nscr}{{\mathscr N}}
\newcommand{\Bcal}{{\mathcal B}}
\newcommand{\Tcal}{{\mathcal T}}
\newtheorem{theorem}{Theorem}[section]
\newtheorem*{theorem*}{Theorem}
\newtheorem{corollary}[theorem]{Corollary}
\newtheorem*{corollary*}{Corollary}
\newtheorem{lemma}[theorem]{Lemma}
\newtheorem*{lemma*}{Lemma}
\newtheorem{proposition}[theorem]{Proposition}
\newtheorem*{proposition*}{Proposition}
\newtheorem{properties}[theorem]{Properties}
\newtheorem*{properties*}{Proposition}
\newtheorem{fact}{Fact}[section]
\theoremstyle{definition}
\newtheorem{definition}[theorem]{Definition}
\newtheorem*{definition*}{Definition}
\newtheorem{remark}[theorem]{Remark}
\newtheorem*{remark*}{Remark}
\newtheorem{example}[theorem]{Example}
\newtheorem*{example*}{Example}
\newtheorem{scholium}[theorem]{Scholium}
\newenvironment{nindex}[1][50pt]%
    {\begin{list}{}%
        {%
            \setlength{\labelwidth}{#1}%
            \setlength{\leftmargin}{\labelwidth+\labelsep}%
            \setlength{\itemsep}{2.5pt}%
            \setlength{\parsep}{0pt}%
            \setlength{\rightmargin}{0pt}%
        }%
    }%
    {\end{list}}
\numberwithin{equation}{section}
\begin{document}
\begin{frontmatter}
\title{Maximal Inequalities and Some Applications}
\runtitle{Maximal Inequalities}

\begin{aug}
\author{\fnms{Franziska} \snm{K\"{u}hn}%\thanksref{t1}
\ead[label=e1]{franziska.fr.kuehn@deutschebahn.com}}
\address{DB Regio AG, Richard-Wagner-Stra{\ss}e 1,\\ 04109 Leipzig, Germany\\
\printead{e1}}

\and
\author{\fnms{Ren\'e L.} \snm{Schilling}%\thanksref{t2}
\ead[label=e2]{rene.schilling@tu-dresden.de}}
\address{Institut f\"{u}r Mathematische Stochastik, Fakult\"{a}t Mathematik\\
Technische Universit\"{a}t Dresden, 01062 Dresden, Germany\\
\printead{e2}}

%\thankstext{t1}{Some comment}

\runauthor{F.\ K\"{u}hn \& R.L.\ Schilling}

\end{aug}

\begin{abstract}
    A maximal inequality is an inequality which involves the (absolute) supremum $\sup_{s\leq t}|X_{s}|$ or the running maximum $\sup_{s\leq t}X_{s}$ of a stochastic process $(X_t)_{t\geq 0}$. We discuss maximal inequalities for several classes of stochastic processes with values in an Euclidean space: Martingales, L\'evy processes, L\'evy-type -- including Feller processes, (compound) pseudo Poisson processes, stable-like processes and solutions to SDEs driven by a L\'evy process --, strong Markov processes and Gaussian processes. Using the Burkholder--Davis--Gundy inequalities we also discuss some relations between maximal estimates in probability and the Hardy--Littlewood maximal functions from analysis.
\end{abstract}

\begin{keyword}[class=MSC]
\kwd[Primary ]{60E15}
%\kwd{XXXXX}
\kwd[; secondary ]{60G15} %Gaussian
\kwd{60G44} %Martingales cts. parameter
\kwd{60G51} %Levy processes
\kwd{60G53} %Feller processes
\kwd{60J25} %Markov processes
\kwd{42B25} %Maximal functions
\kwd{42A61} %Probabilistic methods in one-dim harmonic analysis
\end{keyword}

\begin{keyword}
\kwd{maximal inequality}
\kwd{maximal function}
\kwd{moment estimate}
\kwd{tail estimate}
\kwd{concentration function}
\kwd{Burkholder--Davis--Gundy inequality}
\kwd{Doob's maximal inequality}
\kwd{good-$\lambda$ inequality}
\kwd{Hardy--Littlewood maximal function}
\kwd{martingale inequality}
\kwd{Gaussian process}
\kwd{L\'evy process}
\kwd{Feller process}
\kwd{L\'evy-type process}
\kwd{Markov process}
\kwd{stochastic differential equation}
\end{keyword}

\index{truncated moment|see{censored m.}}
\index{moment|see{censored/generalized m.}}

\tableofcontents

\end{frontmatter}

\section{Introduction}\label{intro}
A \emph{maximal inequality} is, quite generally, an inequality that involves the running maximum $M_t\coloneqq \sup_{s\leq t}X_s$ or the absolute supremum $X^*_t \coloneqq  \sup_{s\leq t}|X_s|$ (and $M\coloneqq M_\infty$, $X^*\coloneqq X^*_\infty$) of a stochastic process $(X_t)_{t\geq 0}$. Frequently, this happens in the form of a \emph{tail estimate}, i.e.\ estimates for $\Pp(X_t^* > \lambda)$, $\Pp(M_t \leq \lambda)$ etc.\ or as a \emph{generalized moment}, \index{generalized moment} i.e.\ bounds of $\Ee\left[g(X^*_t)\right]$ or $\Ee\left[\sup_{s\leq t} g(X_s)\right]$ or in similar expressions.

There are many situations where maximal inequalities appear naturally, for example when studying the growth and asymptotics of $t\mapsto X_t$ as $t\to 0$ or $t\to \infty$, or the Hausdorff dimension of the range. One reason for the usefulness of maximal inequalities are inclusions of the type
\begin{gather*}
     \left\{X_t^* < r\right\}\subseteq
     \left\{\sigma_r > t\right\} \subseteq
     \left\{X_t^* \leq r\right\} \subseteq
     \left\{\sigma_r \geq t\right\},
\end{gather*}
which should be seen as a kind of duality between the maximum process $X_t^*$ and the first exit time $\sigma_r \coloneqq  \inf\left\{s > 0 \mid |X_s|>r\right\}$  from the closed ball $\cball{r}{0}$ with centre $0$ and radius $r>0$; we assume that $t\mapsto X_t$ is c\`adl\`ag, i.e.\ right continuous with finite left hand limits. Each of these inclusions can be strict due to the presence of jumps.

Surprisingly, maximal tail inequalities are often easier to obtain than ordinary tail inequalities, or even heat kernel bounds. In many situations maximal tail inequalities may serve as a good substitute of a heat kernel estimate. In Remark~\ref{fel-25} we give a (by no means complete) list of potential applications and some relevant references. Having said this, we would like to point out that for many processes maximal (tail) estimates are equivalent to ordinary tail estimates: For Brownian motion this is just the reflection principle, for L\'evy processes we can use Etemadi's inequality, see Theorem~\ref{lp-39}.

In this survey paper we discuss various forms of maximal inequalities for several classes of stochastic processes: Martingales, L\'evy processes, L\'evy-type \& Feller processes, Markov processes and Gaussian processes -- and in Chapters~\ref{mar} through \ref{gau} each class is dealt with in a chapter on its own. We try to be as self-contained as sensibly possible, and each chapter starts with a brief introduction to the class of processes under investigation. On the one hand, this makes the chapters independent of each other and allows the novice and non-specialist to follow the presentation easily, on the other hand it allows us to fix the notation and to go in our presentation from the more special to the more general. This may not always be the most efficient approach, but it enables us to exploit the peculiarities of each class and to get clearer proofs.  Chapters~\ref{pre} and~\ref{gen} are some kind of exception, as we collect in Chapter~\ref{pre} the basic tools of the trade, which are frequently used but rarely systematically exposed. Chapter~\ref{gen} explores some basic principles for rather general processes without assuming too much structure. We hope that this is a useful resource for researchers and offers, even for the specialist, some new vistas on known material.

Most results and proofs are (more or less well) known. Wherever possible, we give precise references on the source(s) and point out further reading. Frequently, the arguments are streamlined, sometimes with minor improvements, but it would be utterly presumptive to claim any originality but in the composition.

A few words on \emph{what we did not \textup{(}even try to\textup{)} do} are in order: We do not cover infinite-dimensional state spaces and quite often we restrict ourselves to dimension $d=1$, since the multivariate version is pretty obvious and leads only to notational inconveniences. Naturally, areas where we contributed ourselves to the research, e.g.\ L\'evy and L\'evy-type processes, are covered in greater depth than fields like Gaussian processes, where we give only a glimpse into some of the methods -- this is partly also due to the fact that Gaussian processes require very different methods than the other classes of stochastic processes discussed here. Although it can be seen as a maximal tail estimate, the theory of large deviations is also excluded: this is too wide a field, and it is pretty difficult to surpass an exposition like Dembo \& Zeitouni~\cite{dem-zei92}. Finally, we do not discuss discrete time settings and refer instead to Lin \& Bai~\cite{lin-bai10}, Cs\"{o}rg\"{o} \& R\'ev\'esz~\cite{cso-rev81} or Petrov~\cite{petrov75,petrov95}.

\paragraph{Notation.} Overall, we try to stick to standard notation, and more specialized symbols are introduced locally and when needed. Here we would like to highlight some conventions which are used throughout this paper: If $(X_t)_{t\geq 0}$ is a stochastic process, we write
\begin{gather*}
    X_t^* \coloneqq  \sup_{s\leq t}|X_s|
    \quad\text{and}\quad
    X^* \coloneqq  \sup_{s\geq 0}|X_s|.
\end{gather*}
If $X$ and $Y$ are random variables, we denote by $\Ee X$ and $\Vv X$ the expectation and the variance of $X$; by $X\sim Y$ we indicate that $X$ and $Y$ have the same probability distribution.

Throughout, we assume the \emph{usual conditions} \index{usual conditions} on the filtered probability space $(\Omega,\Ascr,\Pp,\Fscr_t)$: The underlying probability space $(\Omega,\Ascr,\Pp)$ is complete (i.e.\ every subset of a $\Pp$-null set is measurable) and the filtration $(\Fscr_t)_{t\geq 0}$ is right-continuous (i.e.\ $\Fscr_t = \Fscr_{t+} = \bigcap_{u>t}\Fscr_u$) and $\Fscr_0$ contains all $\Pp$-null sets. This convention ensures, in particular, that the notions of optional time and stopping time coincide, and that first entrance times and first hitting times of Borel sets are stopping times.

The Fourier transform \index{Fourier transform} and its inverse are denoted by
\begin{gather*}\label{hidden-ft}
    \widehat f(\xi) \coloneqq (2\pi)^{-d}\int_{\rd} e^{-i\xi\cdot y} f(y)\,dy
    \quad\text{and}\quad
    \widecheck g(y) \coloneqq \int_{\rd} e^{i \eta\cdot y} g(\eta)\,d\eta,
\end{gather*}
respectively; this convention allows us to identify the inverse Fourier transform with the characteristic function of a random variable, and to stay compatible with most formulae used in harmonic analysis and the theory of pseudo differential operators.

We use \enquote{positive}, \enquote{negative}, \enquote{monotone}, \enquote{increasing} etc.\ always in the non-strict sense; the natural numbers $\nat$ are $\{1,2,3,\dots\}$ and $\nat_0 = \nat\cup\{0\}$. As usual, $a\wedge b$ and $a\vee b$ denote the minimum and the maximum of $a,b\in\real$. With $f\asymp g$ or $f(t)\asymp g(t)$ we indicate that there are constants $0<c\leq C<\infty$ such that $c f(t)\leq g(t)\leq C f(t)$ (for a given range of $t$'s). The positive and negative part of a function is $f^+ \coloneqq  f\vee 0$ and $f^- \coloneqq  -(f\wedge 0)$. By $\entier{x}$ we denote the largest integer $n\in\integer$ such that $n\leq x$. We write $\ball{r}{x}$ for the open ball with centre $x$ and radius $r>0$. Finally, we agree that $\inf\emptyset \coloneqq  \infty$.

 An \textbf{index of notation} is included on page \pageref{index-notation} at the end of the paper.

\section{Preliminaries}\label{pre}

\subsection{Censored Moments of Infinitely Divisible Random Variables}\label{pre-mom}

We restrict ourselves to the one-dimensional setting, since the extension to higher dimensions is often straightforward.

\paragraph{Infinite divisibility and characteristic exponents.}
Recall that a random variable $X$ is \emph{infinitely divisible}, \index{infinitely divisible} if $X\sim X_1+\dots+X_n$ for any $n\in\nat$ where $X_1,\dots,X_n$ are iid copies of some random variable $X^{(n)}$. It is easy to see that $X$ is infinitely divisible, if the characteristic function $\phi_X(\xi) = \Ee e^{i\xi X}$ satisfies $\phi_X(\xi) = \phi_{(n)}(\xi)^n$ where $\phi_{(n)}(\xi)$ is itself a characteristic function. Since the characteristic function determines the law of a random variable, we have $\phi_{(n)} = \phi_{X^{(n)}}$ for the random variable $X^{(n)}$ from above. A classical result of L\'evy and Khintchine shows that $X$ is infinitely divisible if, and only if, $\phi_X(\xi) = \exp\left(-\psi_X(\xi)\right)$ with the \emph{characteristic exponent} \index{characteristic exponent} given by the \emph{L\'evy--Khintchine formula} \index{Levy--Khintchine formula@L\'evy--Khintchine formula}
\begin{gather}\label{pre-e71}
        \psi_X(\xi)
        = -ib\xi + \frac 12 \sigma^2\xi^2 + \int_{y\neq 0}\left(1 - e^{iy\xi} + iy\xi\I_{(0,1)}(|y|)\right)\nu(dy).
\end{gather}
The \emph{L\'evy triplet} \index{Levy triplet@L\'evy triplet} $(b,\sigma^2,\nu)$ -- comprising $b\in\real$, $\sigma\geq 0$ and a Borel measure $\nu$ on $\real\setminus\{0\}$ such that $\int_{0<|y|<1} y^2\,\nu(dy) + \int_{|y|\geq 1}\nu(dy) < \infty$ -- determines $\psi_X$, hence $\phi_X$ and $X$, uniquely. Note that $2\RE\psi_X(\xi)$ is the characteristic exponent of the symmetrization of $X$, i.e.\ $X-X'$ where $X'$ is an iid copy of $X$. We refer to Sato~\cite{sato99} as standard reference, also for the connection of infinitely divisible random variables and L\'evy processes.

A further characterization of the exponent $\psi_X$ is to say that it is a \emph{continuous negative definite function} \index{negative definite function} with $\psi_X(0)=0$; negative definite means that the matrix $\left(\psi(\xi_i)+\overline{\psi(\xi_j)}-\psi(\xi_i-\xi_j)\right)_{i,j}$ is for any $n\in\nat$ and $\xi_1,\dots,\xi_n\in\real$ positive hermitian.

We remark that, in particular, $\RE\psi_X(\xi)\geq 0$ and $\xi\mapsto\sqrt{|\psi_X(\xi)|}$ is subadditive, i.e.\ $\sqrt{|\psi_X(\xi+\eta)|} \leq \sqrt{|\psi_X(\xi)|}+\sqrt{|\psi_X(\eta)|}$ for all $\xi,\eta\in\real$. Standard references on (continuous) negative definite functions are Berg \& Forst~\cite{ber-for75} and Jacob~\cite[Vol.~1]{jacob1-3}.

We will also need the following \emph{maximum functions} \index{maximum function $\psi^*, \vert\psi\vert^*$} which describe the maximum of $|\psi_X(\xi)|$ and $\RE\psi_X(\xi)$ on balls with radius $\ell>0$:
\begin{gather}\label{pre-e73}
    \psi_X^*(\ell) \coloneqq  \sup_{|\xi|\leq\ell} \RE\psi_X(\xi)
    \et
    |\psi_X|^*(\ell) \coloneqq  \sup_{|\xi|\leq\ell} \left|\psi_X(\xi)\right|.
\end{gather}
These maximum functions are also important for estimates of the transition densities of L\'evy and L\'evy-type processes, see e.g.\ Knopova \& Kulik~\cite{kno-kul13,kno-kul17}, Knopova~\cite{knopova14}, Grzywny~\cite{grz14} or Grzywny \& Szczypkowski~\cite{grz-scz20}.

\paragraph{Censored second moments.}
Thinking of variances of random variables, it is often useful to require finite second moments -- but this is not always the case. The following construction will serve as a partial substitute when there are no finite second moments.  If $\mu$ is a measure on $\real$, we define its \emph{truncated second moment}, the \emph{tail} and the \emph{censored second moment}, respectively, as \index{censored moment}
\begin{gather}\label{pre-e75}
\begin{gathered}
    K(\mu;\ell) \coloneqq  \frac 1{\ell^2}\int_{0\leq |y|\leq\ell} y^2\,\mu(dy),
    \quad
    G(\mu;\ell) \coloneqq  \int_{|y|>\ell}\mu(dy),\\
    \text{and}\quad
    D(\mu;\ell) \coloneqq  K(\mu;\ell) + G(\mu;\ell),\quad \ell>0.
\end{gathered}
\end{gather}
It is not hard to see that $D(\mu;\ell) \leq D(\mu;\ell_0)$ for all $0<\ell_0 < \ell$.

For an infinitely divisible random variable with L\'evy triplet $(b,\sigma^2,\nu)$ we (formally) insert $\kolmo(dy) = \sigma^2 y^{-2} \delta_0(dy) + \nu(dy)$ instead of $\mu$, which means that
\begin{gather}\label{pre-e76}
\begin{aligned}
    D(\kolmo;\ell)
    &= \frac{\sigma^2}{\ell^2} + \frac 1{\ell^2}\int_{0<|y|\leq\ell} y^2\,\nu(dy) + \int_{|y|>\ell}\nu(dy)\\
    &= \frac{\sigma^2}{\ell^2} + \int_{y\neq 0} \min\left\{1,\;\frac{y^2}{\ell^2}\right\} \nu(dy).
\end{aligned}
\end{gather}

\begin{scholium}[probabilistic meaning of truncated moments]\label{pre-70}
 The interpretation of the truncated and censored moments in terms of moments of the random variable $X$ is obvious if $\mu$ is the corresponding probability distribution. If $X$ is an infinitely divisible random variable with exponent $\psi_X$ and L\'evy triplet $(b,\sigma^2,\nu)$, the truncated moments are moments of the characteristic triplet, and the probabilistic interpretation in terms of moments of $X$ is less straightforward.

    In order to appreciate the construction fully, it is useful to introduce yet another truncated moment that captures the non-symmetry of $X$, as suggested by Pruitt~\cite{pruitt81}:
    \begin{gather}\label{pre-e85}\begin{aligned}
        M(b,\nu;\ell)
        &= \frac 1\ell \left|b + \int_{|y|\leq\ell} y\,\I_{[1,\infty)}(|y|)\,\nu(dy) - \int_{|y|>\ell} y\,\I_{(0,1)}(|y|)\,\nu(dy) \right|\\
        &= \frac 1\ell \left|b + \bigg\{\int_{1}^\ell\!\! +\!\! \int_{-\ell}^{-1}\bigg\}\, y\,\nu(dy)\right|;
    \end{aligned}\end{gather}
    here we use the \enquote{Riemannian} convention that $\int_a^b = - \int_b^a$ if $a>b$.

    For fixed $\ell>0$ we split the random variable $X$ into a sum of two infinitely divisible random variables $Y+J$ where $Y=Y(\ell)$ and $J=J(\ell)$ are determined through their exponents:
    \begin{align*}
        \psi_{J}(\xi) &= \int_{|y|>\ell} \left(1-e^{iy\xi}\right) \nu(dy)\\
        \psi_{Y}(\xi) &= -i\xi\left(b - \smash[b]{\int_{|y|>\ell}} y\I_{(0,1)}(|y|)\,\nu(dy)\right) + \frac 12\sigma^2\xi^2\\
         &\qquad \mbox{} + \int_{0<|y|\leq\ell} \left(1-e^{iy\xi} + iy\xi\I_{(0,1)}(|y|)\right) \nu(dy).
    \end{align*}
    We can work out the moments of $X$, $Y$ and $J$ by differentiating the respective characteristic functions, e.g.
    \begin{gather*}
        \Ee\left[Y\right] = i\psi_Y'(0),\quad
        \Ee\left[Y^2\right] = \psi''_Y(0) - \psi'_Y(0)^2
        \et
        \Vv\left[Y\right] = \psi''_Y(0).
    \end{gather*}
    A short calculation reveals that $M(b,\nu;\ell)$ and $K(\kolmo;\ell)$ are the moments of the \enquote{truncated} random variable $Y=Y(\ell)$:
    \begin{gather*}
        \left|\Ee\left[\ell^{-1}Y(\ell)\right]\right| = M(b,\nu;\ell)
        \et
        \Vv\left[\ell^{-1}Y(\ell)\right] = K(\kolmo;\ell).
    \end{gather*}
\end{scholium}

We will now connect $D(\kolmo;\cdot)$ with the maximum exponent $\psi^*$. To do so, we need some preparations. A direct calculation, see e.g.\ Schilling \& Schnurr~\cite[Lem.~6.1]{sch-schnurr10} yields:
\begin{lemma}\label{pre-72}
    The function $x^2/(1+x^2)$, $x\in\real$, is a characteristic exponent and its L\'evy--Khintchine formula has the form
    \begin{gather}\label{pre-e77}
        \frac{x^2}{1+x^2}
        = \int_{\xi\neq 0} \left(1-\cos(x\xi)\right) g(\xi)\,d\xi
    \end{gather}
    where $g(\xi) = \frac 12\int_0^\infty (2\pi\lambda)^{-1/2} e^{-\xi^2/(2\lambda)} e^{-\lambda/2}\,d\lambda$ is an integrable function which has absolute moments of any order.
\end{lemma}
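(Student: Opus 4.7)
The plan is to recognise $g$ as the probability density of a variance mixture of centered Gaussians with an exponential mixing law, and then to read off both its Fourier transform and its moment properties from this representation.

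First I would verify that $g$ is a probability density. Since $(2\pi\lambda)^{-1/2}e^{-\xi^2/(2\lambda)}$ is the density of a centered Gaussian with variance $\lambda$, Tonelli gives
\begin{gather*}
    \int_\real g(\xi)\,d\xi
    = \frac 12\int_0^\infty e^{-\lambda/2}\int_\real \frac{e^{-\xi^2/(2\lambda)}}{\sqrt{2\pi\lambda}}\,d\xi\,d\lambda
    = \frac 12\int_0^\infty e^{-\lambda/2}\,d\lambda = 1,
\end{gather*}
so $g\geq 0$ is integrable and $g$ is manifestly even.

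Next I would compute $\int_\real e^{ix\xi}g(\xi)\,d\xi$ by swapping the order of integration (which is permitted since everything is positive after the $\xi$-integration is bounded above by $1$):
\begin{gather*}
    \int_\real e^{ix\xi}g(\xi)\,d\xi
    = \frac 12\int_0^\infty e^{-\lambda/2}\Ee\bigl[e^{ixN_\lambda}\bigr]\,d\lambda
    = \frac 12\int_0^\infty e^{-\lambda/2}e^{-x^2\lambda/2}\,d\lambda
    = \frac 1{1+x^2},
\end{gather*}
where $N_\lambda\sim\Normal(0,\lambda)$. Because $g$ is even, this Fourier integral equals $\int_\real \cos(x\xi)g(\xi)\,d\xi$. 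Subtracting from $\int g(\xi)\,d\xi = 1$ yields exactly the claimed identity
\begin{gather*}
    \int_{\xi\neq 0}\bigl(1-\cos(x\xi)\bigr)g(\xi)\,d\xi = 1 - \frac 1{1+x^2} = \frac{x^2}{1+x^2}.
\end{gather*}
The right-hand side is a pure-jump L\'evy--Khintchine representation (with $b=0$, $\sigma=0$, and symmetric L\'evy measure $g(\xi)\,d\xi$), so $x^2/(1+x^2)$ is a characteristic exponent.

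Finally, the existence of absolute moments of any order follows immediately from the same mixture picture: by Tonelli, $\int_\real|\xi|^n g(\xi)\,d\xi = \tfrac 12\int_0^\infty c_n\lambda^{n/2}e^{-\lambda/2}\,d\lambda<\infty$ where $c_n$ is the $n$-th absolute moment of the standard normal. The only step requiring any care is the Fubini exchange, but since the integrand is non-negative (or after multiplying by the bounded factor $e^{ix\xi}$ reduces to the positive case by taking absolute values) this is routine.
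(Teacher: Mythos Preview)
Your argument is correct and is precisely the ``direct calculation'' the paper alludes to but does not spell out (it simply cites \cite[Lem.~6.1]{schnurr10}). The mixture interpretation of $g$ is the natural route, and your justification of the Fubini step and the moment computation are both fine.
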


The next lemma contains a (by now) standard estimate. The short proofs are adapted from Schilling \& Schnurr~\cite[Lem.~6.1]{sch-schnurr10} and Grzywny~\cite[Lem.~4]{grz14}.
\begin{lemma}\label{pre-74} \index{negative definite function!upper bound}
    Let $\psi\colon \real\to\comp$ be a continuous negative definite function. Then
    \begin{gather}\label{pre-e79}
        |\psi(\xi)| \leq 2\sup_{|\eta|\leq 1}|\psi(\eta)| \left(1+|\xi|^2\right),\quad\xi\in\real.
    \end{gather}
    If $\Psi^*$ denotes one of the maximum functions $\psi^*$ or $|\psi|^*$ from \eqref{pre-e73}, then
    \index{maximum function $\psi^*, \vert\psi\vert^*$!estimates}
    \begin{gather}\label{pre-e81}
        \frac 12 \frac{r^2}{1+r^2} \,\Psi^*(\ell) \leq \Psi^*(r\ell) \leq 2(1+r^2)\Psi^*(\ell),\quad r,\ell > 0.
    \end{gather}
\end{lemma}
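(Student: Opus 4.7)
The key ingredient is the subadditivity of $\sqrt{|\psi|}$ already recalled just before the lemma: $\sqrt{|\psi(\xi+\eta)|} \le \sqrt{|\psi(\xi)|} + \sqrt{|\psi(\eta)|}$. Iterating this on a single vector yields the \enquote{quasi-homogeneity} $|\psi(n\eta)| \le n^2 |\psi(\eta)|$ for every $n \in \nat$ and $\eta \in \real$. The remainder of the proof is then a straightforward rescaling argument.

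To establish \eqref{pre-e79}, given $\xi \in \real$ I would take $n := \max\{1,\lceil |\xi| \rceil\}$ and set $\eta := \xi/n$, so that $|\eta| \le 1$ and
\begin{gather*}
    |\psi(\xi)| = |\psi(n\eta)| \le n^2 |\psi(\eta)| \le n^2 \sup_{|\eta'|\le 1} |\psi(\eta')|.
\end{gather*}
The elementary bound $n^2 \le (|\xi|+1)^2 \le 2(1+|\xi|^2)$ (from $2|\xi| \le 1+|\xi|^2$) finishes the job.

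For the upper half of \eqref{pre-e81} in the case $\Psi^* = |\psi|^*$, the same rescaling is run at scale $\ell$ rather than $1$: for $|\xi| \le r\ell$ take $n := \max\{1,\lceil |\xi|/\ell\rceil\}$ and $\eta := \xi/n$, so that $|\eta| \le \ell$ and $n \le r+1$, hence $n^2 \le 2(1+r^2)$. Taking the supremum over $|\xi| \le r\ell$ gives $|\psi|^*(r\ell) \le 2(1+r^2)|\psi|^*(\ell)$. The lower half of \eqref{pre-e81} is not a new computation: apply the upper half with $\ell$ replaced by $r\ell$ and $r$ replaced by $1/r$ to obtain $|\psi|^*(\ell) \le 2(1+1/r^2)|\psi|^*(r\ell) = \tfrac{2(1+r^2)}{r^2}|\psi|^*(r\ell)$, and rearrange.

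The case $\Psi^* = \psi^*$ is what needs care, and is really the only potential obstacle. Here I would note that $\RE\psi$ is itself a continuous negative definite function with $\RE\psi(0) = 0$ -- namely the characteristic exponent of the symmetrisation $X-X'$ -- and is nonnegative. Hence $\sqrt{\RE\psi} = \sqrt{|\RE\psi|}$ is subadditive, so the quasi-homogeneity $\RE\psi(n\eta) \le n^2 \RE\psi(\eta)$ holds, and the identical rescaling argument transfers verbatim, with $\RE\psi$ in place of $|\psi|$.
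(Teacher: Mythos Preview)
Your proof is correct and follows essentially the same approach as the paper: both rely on the subadditivity of $\sqrt{|\psi|}$ to get an at-most-quadratic growth bound, observe that $\RE\psi$ is itself continuous negative definite so that the $\psi^*$ case reduces to the same argument, and derive the lower bound in \eqref{pre-e81} by swapping $r\leftrightarrow 1/r$ and $\ell\leftrightarrow r\ell$ in the upper bound. The only cosmetic difference is that the paper decomposes $\xi = \eta + n\xi_0$ additively (with $|\eta|<1$, $|\xi_0|=1$) and then, for \eqref{pre-e81}, applies \eqref{pre-e79} directly to the rescaled negative definite function $\xi\mapsto\RE\psi(\ell\xi)$ rather than redoing the ceiling argument at scale $\ell$---a slightly slicker packaging, but not a different idea.
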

\begin{proof}
    Fix $\xi$ and write it in the form $\xi = \eta + n\xi_0$ where $\xi, \eta, \xi_0$ have the same sign, $|\eta|<1$, $|\xi_0|=1$ and $n\in\nat$. Using the subadditivity of $\sqrt{|\psi(\xi)|}$ we get
    \begin{gather*}
        \sqrt{|\psi(\xi)|}
        \leq \sqrt{|\psi(\eta)|} + n\sqrt{|\psi(\xi_0)|}
        \leq (1+|\xi|) \sup_{|\eta|\leq 1}\sqrt{|\psi(\eta)|},
    \end{gather*}
    and the first part of the lemma follows.

    We will show the second part only for $\psi^*(\ell)$. Since $\RE\psi(\xi)$ is also continuous and negative definite, the upper estimate follows from the first part applied to $\xi\mapsto\RE\psi(\ell \xi)$:
    \begin{gather*}
        \psi^*(r\ell)
        = \sup_{|\xi|\leq r}\RE\psi(\ell\xi)
        \leq 2\sup_{|\eta|\leq 1}\RE\psi(\ell\eta) \left(1+r^2\right)
        = 2\left(1+r^2\right)\psi^*(\ell),\quad r,\ell>0.
    \end{gather*}
    From this we get at once the lower bound, taking first $1/r$ instead of $r$, and then changing $\ell\rightsquigarrow \ell r$ in the resulting estimate.
\end{proof}

The following estimate from Schilling~\cite[Rem.~4.8]{rs-growth} is an improvement of results by Le Cam~\cite[Prop.~5]{lecam65} and Esseen
~\cite[Lem.~2.1]{esseen68}. We follow again the presentation of Grzywny~\cite[Lem.~4]{grz14}.
\begin{theorem}\label{pre-76}     \index{maximum function $\psi^*, \vert\psi\vert^*$!vs.\ censored moments}\index{censored moment}
    Let $\psi = \psi_X$ be the characteristic exponent of an infinitely divisible real random variable $X$ and denote by $D(\kolmo;\ell)$ the censored second moment of the measure $\kolmo \coloneqq  \sigma^2\delta_0 + \nu$; here $\sigma^2$ and $\nu$ are from the L\'evy triplet of $\psi$. Then the following estimates are satisfied with an absolute constant $c<1$:\footnote{This result remains valid if $X$ takes values in $\real^d$; then we can take $c = 1/(8(1+2d))$, cf.\ Grzywny~\cite[p.~9]{grz14}.}
    \begin{gather}\label{pre-e83}
        c D(\kolmo;\ell) \leq \psi^*(\ell^{-1}) \leq 2 D(\kolmo;\ell).
    \end{gather}
\end{theorem}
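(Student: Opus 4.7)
The plan is to attack the two halves of \eqref{pre-e83} by different mechanisms. The upper bound will be a direct pointwise estimate, obtained by splitting the L\'evy measure at scale $\ell$; the lower bound will come from averaging $\RE\psi$ over the symmetric interval $[-1/\ell,1/\ell]$ and recognising the resulting Fourier average as a quantity comparable to $\min\{1,y^2/\ell^2\}$. Throughout we work with
\[
    \RE\psi(\xi) = \tfrac{1}{2}\sigma^2\xi^2 + \int_{y\neq 0}(1-\cos(y\xi))\,\nu(dy),
\]
so every integrand in the argument is non-negative and Fubini poses no problem.

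For the upper bound, fix $|\xi|\leq 1/\ell$. The trivial inequalities $\tfrac12\sigma^2\xi^2\leq \sigma^2/(2\ell^2)$ and $1-\cos u\leq \min\{2,u^2/2\}$ give, after splitting the L\'evy integral at $|y|=\ell$,
\[
    \RE\psi(\xi) \leq \frac{\sigma^2}{2\ell^2} + \frac{1}{2\ell^2}\int_{0<|y|\leq\ell} y^2\,\nu(dy) + 2\int_{|y|>\ell}\nu(dy) \leq 2D(\kolmo;\ell),
\]
and taking the supremum over $|\xi|\leq 1/\ell$ yields the right-hand inequality of \eqref{pre-e83}.

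For the lower bound I would average over $\xi\in[-1/\ell,1/\ell]$. Since $\RE\psi$ is continuous and the average is bounded above by the supremum, $\psi^*(1/\ell)\geq \frac{\ell}{2}\int_{-1/\ell}^{1/\ell}\RE\psi(\xi)\,d\xi$. The elementary identity $\frac{\ell}{2}\int_{-1/\ell}^{1/\ell}(1-\cos(y\xi))\,d\xi = 1-\sin(y/\ell)/(y/\ell)$ combined with Fubini gives
\[
    \psi^*(1/\ell)\geq \frac{\sigma^2}{6\ell^2} + \int_{y\neq 0} h(y/\ell)\,\nu(dy), \qquad h(u):=1-\frac{\sin u}{u}.
\]
The function $h$ is smooth and non-negative, behaves like $u^2/6$ as $u\to 0$, and satisfies $h(u)\geq 1-1/|u|$ for $|u|\geq 1$. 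From these properties one extracts a universal constant $c_0>0$ (with $c_0 \leq h(1) = 1-\sin 1$) such that $h(u)\geq c_0\min\{1,u^2\}$ for every $u\in\real$. Setting $c:=\min\{1/6,c_0\}<1$ delivers $\psi^*(1/\ell)\geq c\,D(\kolmo;\ell)$.

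The only genuine obstacle is the pointwise inequality $h(u)\geq c_0\min\{1,u^2\}$: it calls for a short case analysis combining a Taylor expansion with sign-controlled remainder on $|u|\leq 1$, the crude bound $|\sin u|\leq 1$ on $|u|\geq 2$, and a compactness/continuity argument on the intermediate window $1\leq |u|\leq 2$ in order to obtain a concrete universal constant. Everything else — the trigonometric identity, the Fubini interchange, and the final comparison with $D(\kolmo;\ell)$ — is entirely routine once that inequality is in hand.
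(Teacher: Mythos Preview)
Your proof is correct. The upper bound is handled exactly as in the paper, via $1-\cos u \leq \min\{2,u^2/2\}$ and splitting at $|y|=\ell$.

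For the lower bound, however, you take a genuinely different and more elementary route. The paper does \emph{not} average $\RE\psi$ over $[-1/\ell,1/\ell]$; instead it combines two prepared lemmas: first the representation $x^2/(1+x^2)=\int(1-\cos(x\xi))g(\xi)\,d\xi$ (Lemma~\ref{pre-72}), which turns $\min\{1,y^2/\ell^2\}\leq 2\,\frac{y^2/\ell^2}{1+y^2/\ell^2}$ into an average of $\RE\psi(\xi/\ell)$ against the density $g$; then the subadditivity-based growth bound $\RE\psi(\xi/\ell)\leq 2(1+\xi^2)\psi^*(1/\ell)$ (Lemma~\ref{pre-74}) to pull the supremum out of the integral. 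Your averaging argument sidesteps both lemmas: the uniform average over $[-1/\ell,1/\ell]$ produces the sinc-type kernel $h(u)=1-\tfrac{\sin u}{u}$ directly, and the remaining work is the purely real-variable inequality $h(u)\geq c_0\min\{1,u^2\}$, which you outline correctly (Taylor with alternating remainder on $|u|\leq 1$, monotonicity/compactness on $[1,\pi]$, and $|\sin u/u|\leq 1/|u|$ beyond). What you gain is a self-contained proof with no auxiliary Fourier identity and no appeal to the subadditivity of $\sqrt{|\psi|}$. What the paper's route buys is that Lemmas~\ref{pre-72} and~\ref{pre-74} are reused repeatedly later (e.g.\ in Theorem~\ref{pre-78}, Theorem~\ref{lp-43}, Example~\ref{lp-44}), so the investment amortises; your sinc kernel would not serve those purposes as directly.
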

\begin{proof}
    Since the supremum is subadditive, we have
    \begin{gather*}
        \psi^*(\ell^{-1})
        \leq \sup_{|\xi|\leq 1/\ell} \frac 12\sigma^2\xi^2 + \sup_{|\xi|\leq 1/\ell} \int_{y\neq 0} \left(1-\cos(y\xi)\right)\nu(dy)
        \leq 2\psi^*(\ell^{-1}).
    \end{gather*}
    Therefore, it is enough to prove that for some constant $c$
    \begin{align*}
        c \int_{y\neq 0} \min\left\{1,\; \frac{y^2}{\ell^2}\right\}\nu(dy)
        &\leq\sup_{|\xi|\leq 1/\ell}\int_{y\neq 0} \left(1-\cos(y\xi)\right)\nu(dy)\\
        &\leq 2 \int_{y\neq 0} \min\left\{1,\; \frac{y^2}{\ell^2}\right\}\nu(dy).
    \end{align*}
    Thus, without loss of generality, we may assume $\sigma=0$.
    Combining the elementary estimate $\min\{1,y^2\} \leq 2y^2/(1+y^2)$ with Lemma~\ref{pre-72} and Tonelli's theorem shows
    \begin{align*}
        \int_{y\neq 0} \min\left\{1,\; \frac{y^2}{\ell^{2}}\right\}\nu(dy)
        &\leq 2\int_{y\neq 0} \frac{{y^2}{\ell^{-2}}}{1+{y^2}{\ell^{-2}}}\,\nu(dy)\\
        &\leq 2\int\int_{y\neq 0} \left(1-\cos\frac{\xi y}{\ell}\right) \nu(dy)\,g(\xi)\,d\xi\\
        &= 2\int \RE\psi(\xi/\ell) \,g(\xi)\,d\xi.
    \end{align*}
    Since $\RE\psi$ is continuous and negative definite, we can use Lemma~\ref{pre-74} to get
    \begin{align*}
        \int_{y\neq 0} \min\left\{1,\; \frac{y^2}{\ell^2}\right\}\nu(dy)
        &\leq 4\sup_{|\eta|\leq 1/\ell} \RE\psi(\eta) \int \left(1+\xi^2\right) g(\xi)\,d\xi
        = c \psi^*(\ell^{-1}).
    \end{align*}
    Using the estimate $1-\cos u = 2\sin^2\left(\frac 12 u\right) \leq 2\min\{1,u^2\}$ we see for all $|\xi|\leq 1/\ell$
    \begin{gather*}
        \RE\psi(\xi)
        \leq \sup_{|\xi|\leq 1/\ell} 2\int_{y\neq 0} \min\left\{1,y^2\xi^2\right\} \nu(dy)
        = 2\int_{y\neq 0} \min\left\{1, \frac{y^2}{\ell^2}\right\} \nu(dy)
    \end{gather*}
    finishing the proof.
\end{proof}

There is a similar estimate for $|\psi|^*(\ell^{-1})$ that takes into account the asymmetry of the random variable $X$.
\begin{theorem}\label{pre-78} \index{maximum function $\psi^*, \vert\psi\vert^*$!vs.\ censored moments}
    Let $\psi = \psi_X$ be the characteristic exponent of an infinitely divisible real random variable $X$ with L\'evy triplet $(b,\sigma^2,\nu)$ and denote by $D(\kolmo;\ell) = K(\kolmo;\ell)+G(\nu;\ell)$ and $M(b,\nu;\ell)$ the censored and truncated moments of the measure $\kolmo \coloneqq  \sigma^2\delta_0 + \nu$. Then the following estimates are satisfied with absolute constants:\footnote{In the $d$-dimensional analogue of this result, the constants depend only on the space dimension $d$.}
    \begin{gather}\label{pre-e87}
        |\psi|^*(\ell^{-1})
        \asymp
        K(\kolmo;\ell)+G(\nu;\ell)+M(b,\nu;\ell).
    \end{gather}
\end{theorem}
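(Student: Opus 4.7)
The plan is to deduce both sides of \eqref{pre-e87} separately, re-using Theorem~\ref{pre-76} for the contribution of $K(\kolmo;\ell)+G(\nu;\ell)$ and treating the drift quantity $M(b,\nu;\ell)$ by looking at $\IM\psi$ at the boundary point $\xi=\pm\ell^{-1}$. The central computation is to re-centre the L\'evy--Khintchine integrand at level $\ell$: setting
\begin{gather*}
    b_\ell := b + \int_{1\leq|y|\leq\ell} y\,\nu(dy) - \int_{\ell<|y|<1} y\,\nu(dy),
\end{gather*}
(only one of the two integrals being non-empty depending on whether $\ell\geq 1$ or $\ell<1$) one has $|b_\ell|/\ell = M(b,\nu;\ell)$. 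A straightforward re-arrangement of \eqref{pre-e71} then gives the identity
\begin{gather*}
    \psi(\xi) = -ib_\ell\xi + \tfrac 12\sigma^2\xi^2 + \int_{0<|y|\leq\ell}\!\left(1-e^{iy\xi}+iy\xi\right)\nu(dy) + \int_{|y|>\ell}\!\left(1-e^{iy\xi}\right)\nu(dy),
\end{gather*}
which corresponds to the split $\psi=\psi_Y+\psi_J$ of the Scholium.

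For the upper bound I would use this identity together with $|1-e^{iu}|\leq 2$ on the ``big jump'' piece and $|1-e^{iu}+iu|\leq u^2/2$ on the ``small jump'' piece to obtain, for $|\xi|\leq 1/\ell$,
\begin{gather*}
    |\psi(\xi)| \leq |b_\ell||\xi| + \tfrac 12\sigma^2\xi^2 + \tfrac{\xi^2}{2}\!\int_{0<|y|\leq\ell}\!y^2\,\nu(dy) + 2G(\nu;\ell) \leq M(b,\nu;\ell) + \tfrac 12 K(\kolmo;\ell) + 2G(\nu;\ell),
\end{gather*}
so $|\psi|^*(\ell^{-1})\leq 2\bigl[M(b,\nu;\ell)+K(\kolmo;\ell)+G(\nu;\ell)\bigr]$.

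For the lower bound, Theorem~\ref{pre-76} yields $K(\kolmo;\ell)+G(\nu;\ell)=D(\kolmo;\ell)\leq c^{-1}\psi^*(\ell^{-1})\leq c^{-1}|\psi|^*(\ell^{-1})$, so it remains to control $M$. Taking imaginary parts in the re-centred identity (the $\sigma^2\xi^2$ term is purely real and drops out) gives
\begin{gather*}
    \IM\psi(\xi) = -\xi b_\ell + \int_{0<|y|\leq\ell}\!\left(y\xi-\sin(y\xi)\right)\nu(dy) - \int_{|y|>\ell}\!\sin(y\xi)\,\nu(dy).
\end{gather*}
On the small-jump part I would use $|u-\sin u|\leq |u|^3/6$ together with $|y|^3\leq\ell\,y^2$ for $|y|\leq\ell$, and on the big-jump part the trivial bound $|\sin|\leq 1$. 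For $\xi=\pm\ell^{-1}$ (chosen so that $\xi b_\ell\geq 0$) this yields
\begin{gather*}
    M(b,\nu;\ell) = |b_\ell|/\ell \leq |\IM\psi(\pm\ell^{-1})| + \tfrac 16 K(\kolmo;\ell) + G(\nu;\ell) \leq |\psi|^*(\ell^{-1}) + \tfrac 16 K(\kolmo;\ell) + G(\nu;\ell).
\end{gather*}
Combining this with the inequality $K(\kolmo;\ell)+G(\nu;\ell)\leq c^{-1}|\psi|^*(\ell^{-1})$ produces an absolute constant $C$ with $M+K+G\leq C\,|\psi|^*(\ell^{-1})$, completing the proof.

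The main obstacle is purely bookkeeping: the definition of $M(b,\nu;\ell)$ behaves differently for $\ell<1$ and $\ell\geq 1$, so the re-centring identity leading to $b_\ell$ must be verified in both regimes before the uniform estimates go through. Once that is done, the estimates for $\IM\psi$ and $|\psi|$ are short applications of elementary trigonometric bounds and the truncated-moment dominations $|y|\leq\ell$ and $|y|>\ell$.
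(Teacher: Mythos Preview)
Your proposal is correct and follows essentially the same route as the paper. Both arguments re-centre the L\'evy--Khintchine representation at level $\ell$, invoke Theorem~\ref{pre-76} for the $K+G$ contribution, and isolate $M(b,\nu;\ell)$ from the imaginary part $\IM\psi$ via an elementary trigonometric bound; the only cosmetic differences are that the paper bounds $|\IM\psi|$ separately and then combines with the real-part estimate (whereas you bound $|\psi|$ directly for the upper estimate), and that the paper uses $|\sin x - x|\leq 1-\cos x$ on the small-jump piece in the lower bound while you use $|u-\sin u|\leq |u|^3/6$.
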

\begin{proof}
    Since we know already the result for the real part from Theorem~\ref{pre-76}, it is enough to estimate the imaginary part. Recall that
    \begin{gather*}
        -\IM\psi(\xi)
        = b\xi + \int_{y\neq 0} \left(\sin(y\xi) - y\xi\I_{(0,1)}(|y|)\right) \nu(dy).
    \end{gather*}
    We show the upper bound for $\ell < 1$, the case $\ell \geq 1$ differs only by a sign within the definition of $M(b,\nu;\ell)$.
    Fix $0<\ell< 1$ and take any $|\xi|< 1/\ell$.

    Using the elementary estimate $|\sin(x)-x|\leq \frac 16 |x|^3$, we get
    \begin{align*}
        &\left|\IM\psi(\xi)\right|\\
        &\quad= \bigg|\int\limits_{0<|y|\leq \ell} \left(\sin(y\xi) - y\xi\right) \nu(dy)
        + b\xi - \!\!\! \int\limits_{\ell < |y| < 1} \!\!\! y\xi\, \nu(dy)
        + \! \int\limits_{|y|>\ell} \! \sin(y\xi)\,\nu(dy)\bigg|\\
        &\quad\leq \frac 16 K(\kolmo;\ell) + M(b,\nu;\ell) + G(\nu;\ell).
    \end{align*}
    Since $|\xi|\leq 1/\ell$ is arbitrary, the upper bound follows if we use the upper bound in \eqref{pre-e83}.

    The lower bound can be shown in a similar way, using the lower triangle inequality and the observation that
    \begin{gather*}
        \left|\sin x - x\right|
        = \left|\int_0^x \left(1-\cos t\right)\,dt\right|
        \leq |x|\left(1-\cos x\right)
        \leq \left(1-\cos x\right),\quad |x|\leq 1.
    \end{gather*}
    Thus,
    \begin{align*}
        &\left|\IM\psi(\xi)\right|\\
        &\quad= \bigg|\int\limits_{0<|y|\leq \ell} \left(\sin(y\xi) - y\xi\right) \nu(dy)
        + b\xi - \!\!\! \int\limits_{\ell < |y| < 1} \!\!\! y\xi\, \nu(dy)
        + \!\int\limits_{|y|>\ell}\! \sin(y\xi)\,\nu(dy)\bigg|\\
        &\quad\geq M(b,\nu;\ell) - G(\nu;\ell) - \int\limits_{0<|y|\leq\ell} \left(1-\cos(y\xi)\right)\,\nu(dy)\\
        &\quad\geq M(b,\nu;\ell) - G(\nu;\ell) - \psi^*(\ell^{-1}).
    \end{align*}
    So we get $M(b,\nu;\ell)\leq 2\left(|\psi|^*(\ell^{-1}) + G(\nu;\ell)\right)$, and the lower estimate follows.
\end{proof}

\paragraph{Comparability of truncated and censored second moments.}\index{censored moment}
We want to compare the truncated moments $K(\mu;\ell)$ and the censored moments $D(\mu;\ell)$. Recall that
\begin{gather*}
    K(\mu;\ell) = \int_{|y|\leq\ell}  \frac{y^2}{\ell^2}\,\mu(dy)
    \et
    D(\mu;\ell) = \int_\real \min\left\{1,\; \frac{y^2}{\ell^2}\right\} \mu(dy).
\end{gather*}
Clearly, $K(\mu;\ell)\leq D(\mu;\ell)$. Let us introduce the condition
\begin{gather}\label{pre-e91}
    D(\mu;\ell) \leq \beta K(\mu;\ell)\quad
    \text{for some $\beta\geq 1$ and all $\ell>0$}.
\end{gather}
We have $\frac d{dx} \min\{1,x\} = \I_{[0,1]}(x)$ Lebesgue a.e.\ on $[0,\infty)$. Therefore,
\begin{gather*}
    \frac d{d\ell} D(\mu;\ell)
    = - \int_\real \I_{[0,1]}\left(\tfrac{y^2}{\ell^2}\right) \frac{2y^2}{\ell^3}\, \mu(dy)
    = - \frac 2{\ell} K(\mu;\ell)\quad\text{Leb.~a.e.\ for $\ell>0$}.
\end{gather*}
If we assume \eqref{pre-e91}, we get for almost all $\ell>0$
\begin{gather*}
    -\frac 2\ell \leq \frac{D'(\mu;\ell)}{D(\mu;\ell)} \leq - \frac 2{\beta\ell},
\end{gather*}
and, by integration,
\begin{gather*}
    \frac{1}{\ell^2} \leq \frac{D(\mu,\ell)}{D(\mu,1)} \leq \frac{1}{\ell^{2/\beta}} \quad\text{if $\ell\geq 1$, and}
    \quad
    \frac{1}{\ell^{2/\beta}} \leq \frac{D(\mu,\ell)}{D(\mu,1)} \leq \frac{1}{\ell^{2}} \quad\text{if $\ell\leq 1$}.
\end{gather*}
If we combine this with Theorem~\ref{pre-76} and use $\mu=\kolmo\coloneqq \sigma^2\delta_0+\nu$,  we finally arrive at the following result which was first observed by Knopova \& Kulik~\cite[Lem.~3.1]{kno-kul13}.
\begin{theorem}\label{pre-90} \index{maximum function $\psi^*, \vert\psi\vert^*$!estimates}
    Let $X$ be an infinitely divisible random variable with characteristic exponent $\psi$, maximum function $\psi^*$ and L\'evy triplet $(b,\sigma^2,\nu)$.

    If $\int_{y\neq 0} \min\left\{\ell^2,\:y^2\right\} \nu(dy)\leq \beta \int_{|y|\leq\ell} y^2\,\nu(dy)$ for some $\beta\geq 1$, then
    \begin{gather}
        \psi^*(r) \geq c r^{2/\beta}\quad\text{if $r\geq 1$, and}\quad
        \psi^*(r) \leq C r^{2/\beta}\quad\text{if $r\leq 1$}.
    \end{gather}
\end{theorem}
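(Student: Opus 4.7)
The plan is to leverage the ODE comparison sketched in the paragraph immediately preceding the theorem and then invoke Theorem~\ref{pre-76} to convert bounds on $D(\nu;\ell)$ into bounds on $\psi^*$. First, I would observe that the hypothesis divided by $\ell^2$ is exactly the doubling-type condition $D(\nu;\ell)\leq \beta K(\nu;\ell)$ for every $\ell>0$, i.e.\ condition \eqref{pre-e91} with $\mu=\nu$. Since $\ell\mapsto\min\{1,y^2/\ell^2\}$ is monotone and locally Lipschitz with a.e.\ derivative $-2y^2\ell^{-3}\I_{\{|y|\leq\ell\}}$, an exchange of differentiation and integration -- justified on compact $\ell$-intervals by the L\'evy integrability $\int(1\wedge y^2)\,\nu(dy)<\infty$ -- yields
\[
    D'(\nu;\ell) = -\frac{2}{\ell}\,K(\nu;\ell)\quad\text{for a.e.\ $\ell>0$}.
\]
Combined with the hypothesis, this produces the two-sided differential inequality $-2/\ell \leq D'(\nu;\ell)/D(\nu;\ell)\leq -2/(\beta\ell)$ a.e.

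Second, because $D(\nu;\cdot)$ is absolutely continuous (as a $\nu$-superposition of Lipschitz functions) and positive wherever $\nu\neq 0$, I can integrate $(\log D(\nu;\cdot))'$ over $[1,\ell]$ or $[\ell,1]$. This yields the two chains
\[
    \ell^{-2}\leq\frac{D(\nu;\ell)}{D(\nu;1)}\leq\ell^{-2/\beta}\quad(\ell\geq 1)
    \et
    \ell^{-2/\beta}\leq\frac{D(\nu;\ell)}{D(\nu;1)}\leq\ell^{-2}\quad(\ell\leq 1).
\]

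Third, I feed these into Theorem~\ref{pre-76}, which gives $\psi^*(r)\asymp D(\kolmo;1/r) = \sigma^2 r^2 + D(\nu;1/r)$. For the lower bound with $r\geq 1$, I drop the non-negative Gaussian term and apply the lower bound on $D(\nu;\ell)$ at $\ell=1/r\leq 1$, giving $\psi^*(r)\geq c\,D(\nu;1)\,r^{2/\beta}$. For the upper bound with $r\leq 1$, I apply the upper bound on $D(\nu;\ell)$ at $\ell=1/r\geq 1$, obtaining $D(\nu;1/r)\leq D(\nu;1)\,r^{2/\beta}$; and since $\beta\geq 1$ and $r\leq 1$, also $\sigma^2 r^2\leq \sigma^2 r^{2/\beta}$. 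Summing the two contributions gives $\psi^*(r)\leq C r^{2/\beta}$. The main bookkeeping obstacle is the differentiation-under-the-integral step, since $D(\nu;\cdot)$ need not be $C^1$ when $\nu$ carries point masses; the remedy is to work with the almost-everywhere derivative together with the absolutely-continuous version of the fundamental theorem of calculus, which is all the subsequent integration needs.
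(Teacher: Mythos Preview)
Your proposal is correct and follows essentially the same route as the paper: the ODE comparison $D'(\mu;\ell)=-\tfrac{2}{\ell}K(\mu;\ell)$ together with the hypothesis yields power-law bounds on $D(\mu;\ell)/D(\mu;1)$, and Theorem~\ref{pre-76} converts these into the stated bounds on $\psi^*$. The only cosmetic difference is that the paper runs the ODE argument directly with $\mu=\kolmo=\sigma^2\delta_0+\nu$ (noting that $D(\nu;\ell)\leq\beta K(\nu;\ell)$ and $\beta\geq 1$ trivially imply $D(\kolmo;\ell)\leq\beta K(\kolmo;\ell)$), whereas you work with $\mu=\nu$ and treat the Gaussian contribution $\sigma^2 r^2$ separately at the end; both reach the same conclusion.
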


\paragraph{The sector condition.} Let us briefly discuss under which circumstances one can use $\psi^*$ and $|\psi|^*$ interchangeably. Clearly,
\begin{gather*}
    \RE\psi(\xi) \leq |\psi(\xi)|,\quad \xi\in\real
    \et
    \psi^*(\ell) \leq |\psi|^*(\ell),\quad \ell>0
\end{gather*}
are always satisfied. For the converse estimates one needs the so-called \emph{sector condition}.
\begin{lemma}\label{pre-92} \index{sector condition}
    Let $\psi\colon \real\to\comp$ be a continuous negative definite function. If the \emph{sector condition} with sector constant $\kappa\in (0,\infty)$ holds, i.e.
    \begin{gather}\label{pre-e99}
        \left|\IM\psi(\xi)\right| \leq \kappa \RE\psi(\xi),\quad\xi\in\real,
    \end{gather}
    then $\RE\psi$ and $|\psi|$ as well as $\psi^*$ and $|\psi|^*$ are comparable. Conversely, the comparability of $\RE\psi$ and $|\psi|$ entails the sector condition for some suitable $\kappa>0$.
\end{lemma}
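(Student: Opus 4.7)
The plan is to prove this directly from the definition of $|\psi|^2 = (\RE\psi)^2 + (\IM\psi)^2$ together with the fact that $\RE\psi \geq 0$ for continuous negative definite functions (as noted in the text following \eqref{pre-e71}). The result is essentially an algebraic fact about complex numbers in a sector of the right half-plane, applied pointwise.

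For the forward implication, I would start from the sector condition \eqref{pre-e99} and square it to get $|\psi(\xi)|^2 = \RE\psi(\xi)^2 + \IM\psi(\xi)^2 \leq (1+\kappa^2)\RE\psi(\xi)^2$. Taking square roots and combining with the always-valid inequality $\RE\psi(\xi)\leq|\psi(\xi)|$ yields
\begin{gather*}
    \RE\psi(\xi) \leq |\psi(\xi)| \leq \sqrt{1+\kappa^2}\,\RE\psi(\xi),\quad \xi\in\real.
\end{gather*}
Taking the supremum over $|\xi|\leq\ell$ on all three members immediately transfers this comparability to the maximum functions, giving $\psi^*(\ell) \leq |\psi|^*(\ell) \leq \sqrt{1+\kappa^2}\,\psi^*(\ell)$.

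For the converse, I assume that $\RE\psi$ and $|\psi|$ are comparable, say $c|\psi(\xi)|\leq\RE\psi(\xi)\leq|\psi(\xi)|$ for some constant $c\in(0,1]$. Then the trivial bound $|\IM\psi(\xi)|\leq|\psi(\xi)|$ combined with the assumed lower bound produces $|\IM\psi(\xi)| \leq c^{-1}\RE\psi(\xi)$, which is exactly the sector condition with $\kappa = c^{-1}$.

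There is no serious obstacle here; the lemma is essentially a rephrasing of elementary plane geometry, and the only fact from the theory actually used is $\RE\psi \geq 0$. I would keep the proof to a few lines, noting explicitly where positivity of $\RE\psi$ enters.
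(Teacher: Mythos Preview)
Your proof is correct and follows essentially the same elementary route as the paper. The only cosmetic difference is that the paper uses the triangle inequality to get $|\psi(\xi)| \leq |\RE\psi(\xi)| + |\IM\psi(\xi)| \leq (1+\kappa)\RE\psi(\xi)$, whereas you square and take roots to obtain the slightly sharper constant $\sqrt{1+\kappa^2}$; the converse direction is identical.
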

\begin{proof}
The assertion follows immediately from the estimates
\begin{gather*}
 \RE\psi(\xi)
    \leq
    |\psi(\xi)|
    \leq
    (1+\kappa)\RE\psi(\xi).
\end{gather*}
which remain true if we take the supremum over all $|\xi|\leq\ell$. The converse follows then from $|\IM\psi(\xi)|\leq |\psi(\xi)|$ and the comparability of $\RE\psi(\xi)$ and $|\psi(\xi)|$.
\end{proof}

The sector condition has a few further interesting consequences. First of all, see Berg \& Forst~\cite{ber-for73}, it is equivalent to saying that the bilinear form $(u,v)\mapsto \int \psi(\xi) \widehat u(\xi) \overline{\widehat v(\xi)}\,d\xi$, $u,v\in L^2(\real;dx)$, is a Dirichlet form -- the Dirichlet form that uniquely characterizes the L\'evy process associated with $\psi$, see Jacob~\cite[Chapter 4.7. 4.8]{jacob1-3}. Stochastically, it is a condition that ensures that the \enquote{true random character} is not dominated by the \enquote{drift} which is essentially governed by the imaginary part $\IM\psi(\xi)$.

\subsection{Concentration Functions}\label{pre-con}

\paragraph{L\'evy's concentration function.}
Concentration functions were introduced by L\'evy~\cite[Chapter III.16]{levy37} in order to study the convergence behaviour of sums and sequences of random variables.
\begin{definition}[L\'evy]\label{pre-40} \index{concentration function!L\'evy}
    Let $X$ be a real-valued random variable. \emph{L\'evy's concentration function} is the function $Q_X \colon [0,\infty) \to [0,1]$ defined by
    \begin{gather}\label{pre-e41}
            Q_X(\ell) \coloneqq  \sup_{x\in\real} \Pp\left(X\in [x,x+\ell] \right),\quad \ell\geq 0.
    \end{gather}
\end{definition}
Sometimes, the generalized inverse  $D_X(p) \coloneqq  \inf\left\{\ell \mid Q_X(\ell) > p\right\}$ of $Q_X$ is used instead of $Q_X$; it is usually called \emph{dispersion} or \emph{scattering function}. The standard reference for L\'evy's concentration functions is Hengartner \& Theodorescu~\cite{hengartner73}, further material can be found in Petrov~\cite[\S\S 1.5, 1.8, 2.5]{petrov95} and LeCam~\cite[Chapter 15.2]{lecam86}. Hengartner \& Theodorescu also discuss multivariate versions of the concentration function. We will summarize the most important facts from these sources.
\begin{properties}[of L\'evy's concentration function]\label{pre-42}
    Let $X$ be a real-valued random variable, $F_X(x) = \Pp\left(X\leq x\right)$ the \textup{(}right continuous\textup{)} distribution function and $Q_X$ the concentration function.
    \begin{enumerate}
    \item\label{pre-42-1}
        $Q_X$ is right continuous, increasing and satisfies $\lim_{\ell\to\infty}Q_X(\ell)=1$. In particular, $Q_X$ is the distribution function of some random variable $Y\geq 0$, i.e.\ $Q_X = F_Y$.
    \item\label{pre-42-2}
        $Q_X(0) = \sup_{x\in\real}\Pp\left(X=x\right)$, and $Q_X$ is continuous if, and only if, the distribution function $F_X$ is continuous.
    \item\label{pre-42-3}
        Replacing in \eqref{pre-e41} the closed interval $[x,x+\ell]$ by $(x,x+\ell)$, $(x,x+\ell]$ or $[x,x+\ell)$ gives $Q^-_X(\ell) \coloneqq  Q_X(\ell-)$ if $\ell>0$ and $Q^-_X(0)=0$.
    \item\label{pre-42-4}
        $Q_X$ is subadditive: $Q_X(\ell+\ell')\leq Q_X(\ell)+Q_{X}(\ell')$ for all $\ell,\ell'\geq 0$.
    \item\label{pre-42-5}
        Every subadditive distribution function of a random variable $Y\geq 0$ is also the concentration function of some random variable $X$.
    \end{enumerate}
\end{properties}

Let us mention some typical examples of concentration functions.
\begin{example}\label{pre-44}
    \begin{enumerate}
    \item
        If $X\sim \Normal(m,\sigma^2)$ is normal with mean $m\in\real$, standard deviation $\sigma\geq 0$, and distribution function $\Phi$, then $Q_X(\ell) = 2\Phi(\ell/2\sigma)-1$, $\ell\geq 0$.
    \item
        If $X\sim \lambda \pi^{-1} (\lambda^2 + (x-\alpha)^2)^{-1}$ is Cauchy distributed ($\lambda>0$, $\alpha\in\real$), then $Q_X(\ell) = 2\pi^{-1}\arctan(\ell/2\lambda)$, $\ell\geq 0$.
    \item
        If $X\sim\Uniform[a,b]$ is a uniform random variable on the interval $[a,b]$, then $Q_X(\ell) =  \min\left\{\ell/(b-a); 1\right\}$, $\ell\geq 0$.
    \item
        If $X \sim Y + c$ or $X\sim -Y$, then $Q_X = Q_Y$, i.e.\ the concentration function is invariant under shifts and reflections.
    \end{enumerate}
\end{example}

The concentration function $Q_X$ is often used for sums of independent random variables. In this connection the following lemma is important. The proof is a rather straightforward calculation based on the definition of the concentration function.
\begin{lemma}\label{pre-46}
    Let $X$ and $Y$ be two independent real-valued random variables. Then
    \begin{gather*}%\label{pre-e45}
        Q_X(t\ell) \cdot Q_Y((1-t)\ell) \leq Q_{X+Y}(\ell) \leq \min\left\{Q_X(\ell); Q_Y(\ell)\right\},\quad \ell\geq 0,\; t\in [0,1].
    \end{gather*}
\end{lemma}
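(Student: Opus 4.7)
The plan is to prove the two inequalities separately using only the definition of $Q_X$ together with the independence of $X$ and $Y$.

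For the upper bound I would condition on $Y$: by independence and Fubini's theorem,
\begin{gather*}
    \Pp(X+Y\in[z,z+\ell]) = \int \Pp\!\left(X\in[z-y,\,z-y+\ell]\right) \Pp_Y(dy) \leq Q_X(\ell),
\end{gather*}
and taking the supremum over $z\in\real$ yields $Q_{X+Y}(\ell)\leq Q_X(\ell)$. Swapping the roles of $X$ and $Y$ gives the same bound with $Q_Y(\ell)$, and hence the minimum.

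For the lower bound I would use the elementary inclusion
\begin{gather*}
    \{X\in[x,x+t\ell]\}\cap\{Y\in[y,y+(1-t)\ell]\}\subseteq\{X+Y\in[x+y,x+y+\ell]\},
\end{gather*}
valid for every $x,y\in\real$ and every $t\in[0,1]$. Independence of $X$ and $Y$ then gives
\begin{gather*}
    Q_{X+Y}(\ell)\geq \Pp(X+Y\in[x+y,x+y+\ell])\geq \Pp(X\in[x,x+t\ell])\cdot\Pp(Y\in[y,y+(1-t)\ell]).
\end{gather*}
Since the right-hand side is a product of a function of $x$ alone and a function of $y$ alone, taking the supremum over $x$ and $y$ separately factorizes, yielding $Q_{X+Y}(\ell)\geq Q_X(t\ell)\cdot Q_Y((1-t)\ell)$.

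There is really no obstacle here; the only minor subtlety is that one should observe that $\sup_{x,y}[f(x)g(y)]=(\sup_x f)(\sup_y g)$ for non-negative $f,g$, which is immediate. Endpoint cases $t\in\{0,1\}$ are absorbed into the convention $Q_X(0)=\sup_x\Pp(X=x)$ from Property~\ref{pre-42}\eqref{pre-42-2}, and the case $\ell=0$ follows from the same definitions without modification.
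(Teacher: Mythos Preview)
Your proof is correct and follows exactly the route the paper has in mind: the paper does not spell out the argument but merely states that ``the proof is a rather straightforward calculation based on the definition of the concentration function,'' and your conditioning/Fubini argument for the upper bound together with the set inclusion and independence for the lower bound is precisely that calculation.
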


If we consider the symmetrization of $X^{\mathrm{sy}} \coloneqq  X- X'$ where $Y= X'$ is an iid copy of $X$, then we get for $t=\frac 12$ that $Q_X(\ell)^2 \leq Q_{X^{\mathrm{sy}}}(2\ell)\leq Q_X(2\ell)$.

We are most interested in the case where $X$ is \emph{infinitely divisible}, see Section~\ref{pre-mom}. We write $\phi_X(\xi) = \exp\left(-\psi(\xi)\right)$ for the characteristic function, with the characteristic exponent given by the L\'evy--Khintchine formula \eqref{pre-e71}. The following lemma is, essentially, a refinement of the classical L\'evy truncation estimate. The key to its proof is the observation that
\begin{gather*}
    u(x) = \max\left\{0, 1-|x|\right\}
    \xleftrightarrow[\text{(inverse) Fourier transform}]{\text{characteristic function}}
    \widehat u(\xi) = \left(\frac{\sin(\xi/2)}{\xi/2}\right)^2
\end{gather*}
is a Fourier transform pair consisting of positive functions such that
\begin{gather*}
    \frac 12\I_{[-1/2,1/2]} \leq u(x) \leq \I_{[-1,1]}(x)
    \text{\ \ and\ \ }
    \left(\frac{95}{96}\right)^2\!\!\I_{[-1/2,1/2]}(\xi) \leq \widehat u(\xi) \leq \I_{[-1,1]}(\xi),
\end{gather*}
for all $x\in\real$ and $\xi\in [-1,1]$, respectively; see Fig.~\ref{fig-fourier}.
\begin{figure}[!ht]\centering
    \includegraphics[width = .9\textwidth]{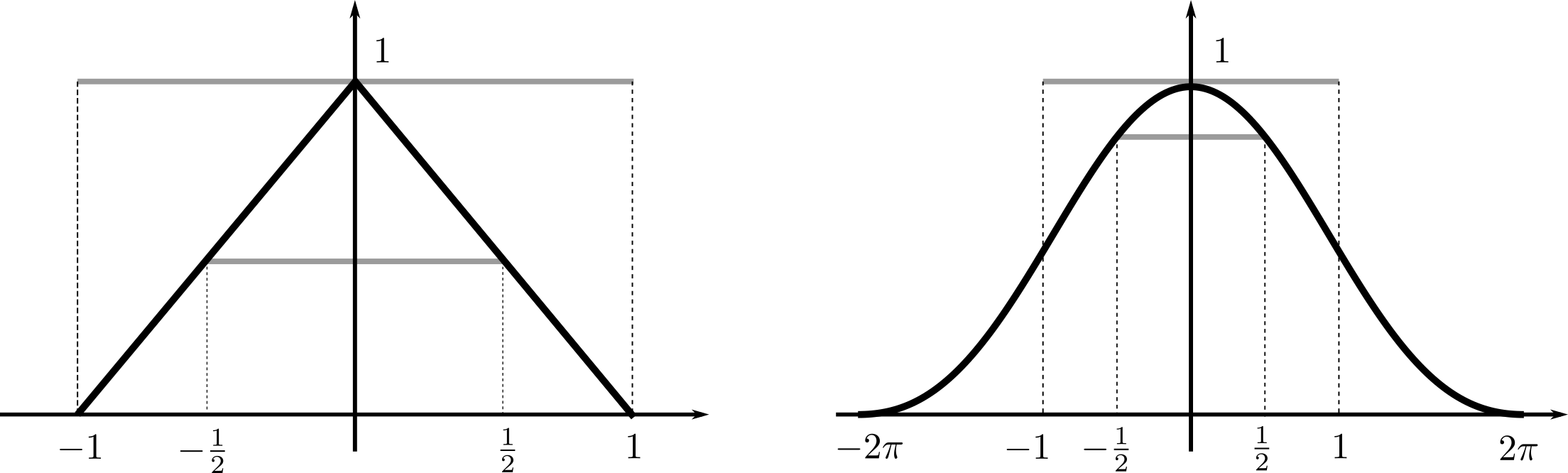}
    \caption{The Fourier pair $u(x) = \max\left\{0, 1-|x|\right\}$ \textup{(}left\textup{)} and $\widehat u(\xi) = \left(\frac{\sin(\xi/2)}{\xi/2}\right)^2$ \textup{(}right\textup{)}.}\label{fig-fourier}
\end{figure}

\begin{lemma}[Esseen~\cite{esseen68}]\label{pre-48} \index{concentration function!vs.\ characteristic function}
    Let $X$ be a real-valued random variable and denote by $\phi_X(\xi) = \Ee e^{i\xi X}$ its characteristic function. Then, for all $\ell,a>0$
    \begin{gather*}%\label{pre-e46}
            \frac{95 \ell}{256 \pi\left( 1+2a\ell\right)} \int_{-a}^a |\phi_X(\xi)|^2\,d\xi
            \leq Q_X(\ell)
            \leq \left(\frac{96}{95}\right)^2 \max\left\{\ell, a^{-1}\right\} \int_{-a}^a |\phi_X(\xi)|\,d\xi.
    \end{gather*}
\end{lemma}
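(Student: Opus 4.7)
The plan is to exploit the Fourier pair $(u,\widehat u)$ just introduced via two different routes, dictated by the asymmetric roles of the hypotheses. For the upper bound I would place $\widehat u$ in physical space (so the frequency side is compactly supported and the truncation to $[-a,a]$ is automatic), whereas for the lower bound I would place $u$ itself in physical space (so the frequency side is non-negative and bounded below on $[-a,a]$).

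\emph{Upper bound.} Set $K(y):=\widehat u(y)$; then Fourier duality gives $\widehat K = 2\pi u$, so the rescaled function $y\mapsto K(y/L)$ has Fourier transform $2\pi L\,u(L\cdot)$, supported in $[-1/L,1/L]$. The pointwise bound $K\geq (95/96)^{2}$ on $[-1/2,1/2]$ yields
\[
\I_{[-\ell/2,\ell/2]}(y)\;\leq\; \left(\tfrac{96}{95}\right)^{\!2} K(y/L)\qquad\text{for every } L\geq\ell,
\]
and I would choose $L:=\max\{\ell,a^{-1}\}$, which additionally forces the frequency support $[-1/L,1/L]$ to sit inside $[-a,a]$. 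Fourier inversion then gives
\[
\Ee\!\left[K\!\left(\tfrac{X-x_0}{L}\right)\right]\;=\;\frac{L}{2\pi}\int_{-1/L}^{1/L}\widehat K(L\xi)\,e^{-i\xi x_0}\,\phi_X(\xi)\,d\xi,
\]
and $\widehat K\leq 2\pi$ together with the triangle inequality produces the modulus estimate $\bigl|\Ee[K((X-x_0)/L)]\bigr|\leq L\int_{-a}^{a}|\phi_X(\xi)|\,d\xi$. Applied with $x_0:=x+\ell/2$ and followed by $\sup_{x\in\real}$, this yields the stated upper bound.

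\emph{Lower bound.} Work with the symmetrization $Y:=X-X'$, whose characteristic function $\phi_Y=|\phi_X|^{2}$ is real and non-negative. Put $f(y):=u(2ay)$, supported in $[-1/(2a),1/(2a)]$; its Fourier transform $\widehat f(\xi)=(2a)^{-1}\widehat u(\xi/(2a))$ is non-negative and satisfies $\widehat f(\xi)\geq (95/96)^{2}/(2a)$ on $[-a,a]$. Since $0\leq f\leq \I_{[-1/(2a),1/(2a)]}$, Fourier inversion produces the two-sided chain
\[
\frac{(95/96)^{2}}{4\pi a}\int_{-a}^{a}|\phi_X(\xi)|^{2}\,d\xi \;\leq\; \frac{1}{2\pi}\int \widehat f(\xi)\,\phi_Y(\xi)\,d\xi \;=\; \Ee[f(Y)] \;\leq\; \Pp\!\left(|Y|\leq\tfrac{1}{2a}\right).
\]
It remains to convert the tail of $Y$ into $Q_X(\ell)$: cover the length-$1/a$ interval by $\lceil 1/(a\ell)\rceil\leq 1+1/(a\ell)$ sub-intervals of length $\ell$ and invoke Lemma~\ref{pre-46} for the pair $X,-X'$ to obtain $Q_Y\leq Q_X$. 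Combining everything gives a lower bound of the shape $Q_X(\ell)\geq c\,\ell/(1+a\ell)\cdot \int_{-a}^{a}|\phi_X|^{2}\,d\xi$.

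The conceptual core is the two choices of Fourier pair — compactly supported on the frequency side for the upper bound, pointwise positive on the frequency side for the lower one — together with the \enquote{$Q_Y\leq Q_X$ plus covering} trick that transports a tail estimate for the symmetrization back to $Q_X(\ell)$. The main obstacle is purely computational: matching the precise constants $(96/95)^{2}$ and $95/(256\pi(1+2a\ell))$ requires careful tracking of the factors of $2$ in the covering step, and possibly splitting the argument into the two cases $a\ell\leq 1$ and $a\ell>1$ to sharpen the crude bound $\lceil 1/(a\ell)\rceil\leq 1+1/(a\ell)$.
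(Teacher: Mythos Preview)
Your approach is correct and follows precisely the hint the paper gives (the paper does not spell out a full proof, only records the Fourier pair $u\leftrightarrow\widehat u$ and its pointwise bounds as ``the key to its proof''). Your upper-bound argument --- placing $\widehat u$ in physical space so that the frequency side is compactly supported, then choosing $L=\max\{\ell,a^{-1}\}$ --- reproduces the stated constant $(96/95)^2$ exactly. Your lower-bound argument via symmetrization, the positivity of $\widehat u$, the inequality $Q_{X-X'}\leq Q_X$ from Lemma~\ref{pre-46}, and the subadditivity covering is also correct and yields a bound of the form $c\,\ell/(1+a\ell)\int_{-a}^{a}|\phi_X|^2\,d\xi$ with $c=(95/96)^2/(4\pi)$. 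This differs numerically from the quoted $95/(256\pi(1+2a\ell))$ --- neither constant dominates the other uniformly in $a\ell$ --- but that discrepancy is exactly the ``purely computational'' issue you flag, and it does not affect the use of the lemma anywhere in the paper (only the order of magnitude matters downstream, e.g.\ in Theorem~\ref{pre-50}).
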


We will now apply Lemma~\ref{pre-48} to infinitely divisible random variables. Recall the definition of the truncated censored moment $D(\kolmo;\ell)$ where $\kolmo = \sigma^2\delta_0+\nu$, see \eqref{pre-e76} and the notation introduced in \eqref{pre-e75}. The following result is due to Le Cam~\cite{lecam65} and Esseen~\cite{esseen68}.

\begin{theorem}[Le Cam; Esseen]\label{pre-50} \index{concentration function!vs.\ censored moments}
    Let $X$ be a \textup{(}non-degenerate\textup{)} infinitely divisible real random variable with characteristic exponent $\psi$ and L\'evy triplet $(b,\sigma^2,\nu)$. Set $\kolmo \coloneqq  \sigma^2\delta_0 + \nu$. There are absolute constants $0<c,C<\infty$ such that for all $\ell>0$
    \begin{gather}\label{pre-e49}
        c\min\left\{1,\; \frac{1}{\sqrt{K(\kolmo;\ell)}}\right\} e^{- 4 G(\nu;\ell)}
        \leq Q_X(\ell)
        \leq \frac{C}{\sqrt{D(\kolmo;\ell)}}.
    \end{gather}
\end{theorem}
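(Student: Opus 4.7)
Both inequalities are derived from Esseen's Fourier bound (Lemma~\ref{pre-48}) combined with the control of the real part of the characteristic exponent in terms of the truncated and censored moments (Theorem~\ref{pre-76}). Writing $|\phi_X(\xi)|=e^{-\RE\psi(\xi)}$ throughout, and splitting the integral in $\RE\psi(\xi)=\sigma^2\xi^2/2+\int(1-\cos y\xi)\nu(dy)$ at $|y|=\ell$, I would repeatedly use the elementary inequalities $1-\cos u\le u^2/2$ (effective when $|y\xi|\le 1$) and $1-\cos u\le 2$ (otherwise).

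\emph{Lower bound.} I apply the left-hand side of Lemma~\ref{pre-48} and choose $a$ by distinguishing two regimes. If $K(\kolmo;\ell)\le 1$, take $a=1/\ell$. For $|\xi|\le 1/\ell$ the split above gives
\begin{gather*}
    2\RE\psi(\xi)
    \le K(\kolmo;\ell)+4G(\nu;\ell)
    \le 1+4G(\nu;\ell),
\end{gather*}
so Lemma~\ref{pre-48} yields $Q_X(\ell)\ge c_1 e^{-4G(\nu;\ell)}$. If $K(\kolmo;\ell)>1$, take instead $a=1/(\ell\sqrt{K(\kolmo;\ell)})$. Now $|y\xi|\le|y|/(\ell\sqrt K)\le 1$ on $|y|\le\ell$, and the same split produces $2\RE\psi(\xi)\le 2+4G(\nu;\ell)$ on $[-a,a]$. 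The prefactor $\ell/(1+2a\ell)\ge\ell/3$ (as $a\ell\le 1$) combined with the integration length $2a=2/(\ell\sqrt K)$ produces $Q_X(\ell)\ge c_2\, e^{-4G(\nu;\ell)}/\sqrt{K(\kolmo;\ell)}$. Merging both cases gives the claimed lower bound; the constant $4$ in the exponent is sharp in this argument, arising as $2\cdot 2G$ (factor $2$ from $|\phi|^2=e^{-2\RE\psi}$ in Lemma~\ref{pre-48}, factor $2$ from $1-\cos u\le 2$).

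\emph{Upper bound.} Apply the right-hand side of Lemma~\ref{pre-48} with $a=1/\ell$ so that $\max\{\ell,a^{-1}\}=\ell$, giving $Q_X(\ell)\le C_0\ell\int_{-1/\ell}^{1/\ell}e^{-\RE\psi(\xi)}\,d\xi$. The pointwise bound $1-\cos u\ge c_0 u^2$ for $|u|\le 1$ yields, on $|\xi|\le 1/\ell$,
\begin{gather*}
    \RE\psi(\xi)
    \ge \tfrac{1}{2}\sigma^2\xi^2+c_0\xi^2\!\!\int_{|y|\le\ell}\!\!y^2\,\nu(dy)
    \ge c_1\xi^2\ell^2\,K(\kolmo;\ell),
\end{gather*}
and a Gaussian integration (extending the range to all of $\real$) gives $Q_X(\ell)\le C_1/\sqrt{K(\kolmo;\ell)}$. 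Because $D(\kolmo;\ell)=K(\kolmo;\ell)+G(\nu;\ell)\le 2\max\{K,G\}$, this already settles the case $K\gtrsim G$. In the complementary regime $G\gg K$ the big-jumps component controls the concentration: using the decomposition $X=Y+J$ of Scholium~\ref{pre-70} and Lemma~\ref{pre-46}, $Q_X(\ell)\le Q_J(\ell)$, where $J=J(\ell)$ is compound Poisson with intensity $G(\nu;\ell)$ and jumps in $\{|y|>\ell\}$. A Kolmogorov--Rogozin-type concentration estimate (or a Fourier-analytic bound on $|\phi_J|$ exploiting the oscillations of $\cos(y\xi)$ for $|y|>\ell$) delivers $Q_J(\ell)\le C_2/\sqrt{G(\nu;\ell)}$, and using $\sqrt D\le\sqrt{2}\max\{\sqrt K,\sqrt G\}$ finishes the proof.

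\emph{Main obstacle.} Once the two-regime choice of $a$ is in place, the lower bound is essentially bookkeeping. The delicate point of the upper bound is extracting the factor $\sqrt{G}$ in the denominator: the analytic estimate via $\int e^{-\RE\psi}$ on $[-1/\ell,1/\ell]$ only captures the diffusive contribution $K$, because the cosines $\cos(y\xi)$ with $|y|>\ell$ oscillate over this range and do not produce a useful pointwise lower bound on $\RE\psi$. One therefore has to peel off the big-jumps compound Poisson component and invoke a Poisson-type concentration estimate yielding an absolute constant in $Q_J(\ell)\le C/\sqrt{G(\nu;\ell)}$.
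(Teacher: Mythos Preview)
Your lower bound is correct and essentially matches the paper, though the paper does it more economically: it takes the single choice $a=1/\ell$ in Lemma~\ref{pre-48} and estimates the resulting integral $\int_{-1}^{1}e^{-\xi^2 K(\kolmo;\ell)}\,d\xi$ directly as $\gtrsim\min\{1,K^{-1/2}\}$, which automatically covers both of your regimes. Your two-case choice of $a$ is a valid alternative.

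For the upper bound you have correctly isolated the difficulty but not resolved it. Your step (1), the bound $Q_X(\ell)\le C_1/\sqrt{K}$ via a Gaussian integral, is fine. The gap is step (2): the claimed bound $Q_J(\ell)\le C_2/\sqrt{G(\nu;\ell)}$ for the compound Poisson piece is precisely the hard content of the theorem, and the ``Kolmogorov--Rogozin-type estimate'' you invoke does not deliver it as a black box. The classical Kolmogorov--Rogozin inequality gives $Q_{S_n}(\ell)\le C/\sqrt{\sum_i(1-Q_{X_i}(\ell))}$, but the jumps $H_i$ of $J(\ell)$ may well satisfy $Q_{H_i}(\ell)=1$ (take $\nu|_{\{|y|>\ell\}}$ a point mass), so the denominator vanishes. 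The bound $Q_J(\ell)\le C/\sqrt{G}$ is true, but proving it with an absolute constant requires exactly the Fourier/periodicity argument you allude to and do not carry out.

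The paper avoids the regime split altogether. It takes $a=1/\ell$ in Lemma~\ref{pre-48} and applies H\"older's inequality to the $d\xi$-integral with exponents $1/p=K/D$, $1/q=G/D$, splitting $e^{-\RE\psi}$ into a ``small-jumps'' factor and a ``big-jumps'' factor. The first factor is handled by the Gaussian bound you describe. For the second, the paper uses Jensen's inequality (with respect to the normalized measure $\nu|_{\{|y|>\ell\}}/G$) to pull the $\nu$-integral outside the exponential, then Fubini, and finally the periodicity and evenness of $1-\cos t$ on $[0,\pi]$ together with $1-\cos t\ge t^2/4$ on $[0,\pi/2]$ to reduce the inner $\xi$-integral to a Gaussian one. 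Both factors end up $\le C/\sqrt{D}$, and since $1/p+1/q=1$ the product is again $\le C/\sqrt{D}$. This H\"older--Jensen--periodicity device is the missing ingredient in your proposal; it is exactly what makes the oscillations of $\cos(y\xi)$ for $|y|>\ell$ productive after averaging in $\xi$.
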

\begin{proof}
    We begin with an elementary estimate for the cosine. Since $\sin(u)$ is concave on $\left[0,\frac \pi2\right]$, we have $\sin(u)\geq \frac{\sin\left(\frac\pi 2\right)}{\frac \pi 2}\cdot u \geq \frac 12u$, and so
    \begin{gather*}
        1-\cos t
        = \int_0^t \sin(u)\,du
        \geq \frac 12\int_0^t u\,du
        = \frac{1}{4}\,t^2,
        \quad t\in \left[0,\tfrac \pi2\right].
    \end{gather*}
    Since we want to use Lemma~\ref{pre-48}, we need to bound the characteristic function $|\phi_X(\xi)|$. Take $a=\ell^{-1}$ in the upper bound of Lemma~\ref{pre-48} and use the L\'evy--Khintchine formula \eqref{pre-e71} to get
    \begin{align*}
        &\ell\int_{-1/\ell}^{1/\ell} \left|e^{-\psi_X(\xi)}\right| d\xi
        = \int_{-1}^1 \exp\left[-\RE\psi_X(\xi/\ell)\right] d\xi\\
        &= \int_{-1}^1 \exp\left[
            -\tfrac{\sigma^2\xi^2}{2\ell^2}
            -\int_{ |y| \leq \ell} \left(1-\cos\tfrac{y\xi}{\ell}\right)\nu(dy)
            -\int_{ |y| > \ell} \left(1-\cos\tfrac{y\xi}{\ell}\right)\nu(dy)
            \right] d\xi\\
        &\leq \int_{-1}^1
            \exp\left[- \frac{1}{4}K(\kolmo;\ell)\xi^2\right]
            \exp\left[-\int_{ |y| > \ell} \left(1-\cos\frac{y\xi}{\ell}\right)\nu(dy)\right]
            d\xi.
    \end{align*}
    Now we use H\"{o}lder's inequality with
    \begin{gather*}
        \frac 1p = \frac{K(\kolmo;\ell)}{D(\kolmo;\ell)}
        \et
        \frac 1q = \frac{G(\nu;\ell)}{D(\kolmo;\ell)},
    \end{gather*}
    see \eqref{pre-e75} for the definition of $K$ and $G$, and we obtain
    \begin{align*}
        \ell\int_{-1/\ell}^{1/\ell} \left|e^{-\psi_X(\xi)}\right| d\xi
        &\leq (\mathrm{I})^{1/p}\cdot (\mathrm{II})^{1/q}.
    \end{align*}
    The two factors can be estimated in the following way:
    \begin{gather*}
        \mathrm{I}
        = \int_{-1}^1 \exp\left[- \frac{1}{4}D(\kolmo;\ell)\xi^2\right] d\xi
        \leq \frac{C_1}{\sqrt{D(\kolmo;\ell)}}.
    \end{gather*}
 If $\nu(|y|>\ell)=0$, then $\mathrm{II}=1$, and we are done. Otherwise, we use the normalization $\mathrlap{\rule[3pt]{2.5pt}{0pt}\rule[2.5pt]{5pt}{1pt}}\int_A \cdots d\nu \coloneqq  \int_A \cdots \frac{d\nu}{\nu(A)}$, and Jensen's inequality:
    \begin{align*}
    \mathrm{II}
    &= \int_{-1}^1 \exp\left[- D(\kolmo;\ell) \mathrlap{\rule[3pt]{4.25pt}{0pt}\rule[2.5pt]{5pt}{1pt}}\int_{ |y| > \ell} \left(1-\cos\frac{y\xi}{\ell}\right)\nu(dy) \right] d\xi\\
    &\leq \int_{-1}^1 \mathrlap{\rule[3pt]{4.25pt}{0pt}\rule[2.5pt]{5pt}{1pt}}\int_{ |y| > \ell} \exp\left[- D(\kolmo;\ell)\left(1-\cos\frac{y\xi}{\ell}\right) \right] \nu(dy)\, d\xi\\
    &= 2\mathrlap{\rule[3pt]{4.25pt}{0pt}\rule[2.5pt]{5pt}{1pt}}\int_{ |y| > \ell} \int_{0}^{|y|/\ell} \exp\left[- D(\kolmo;\ell)\left(1-\cos t\right) \right] \frac{dt}{|y|/\ell}\,\nu(dy).
    \end{align*}
    Since $1-\cos t$ is periodic, symmetric (even) around $\frac\pi 2$ in the interval $[0,\pi]$, and $|y|/\ell > 1$, we get
    \begin{gather*}
    \mathrm{II}
    \leq  4\mathrlap{\rule[3pt]{4.25pt}{0pt}\rule[2.5pt]{5pt}{1pt}}\int_{|y| > \ell} \frac{\left\lfloor\frac{|y|/\ell}{\pi}\right\rfloor+1}{|y|/\ell} \int_{0}^{\pi/2} \exp\left[- D(\kolmo;\ell)\frac{1}{2}\,t^2 \right] dt\,\nu(dy)
    \leq \frac{C_2}{\sqrt{D(\kolmo;\ell)}}.
    \end{gather*}
    Since $C_1,C_2$ are absolute constants and $p^{-1}+q^{-1}=1$, the upper bound in \eqref{pre-e49} follows.

    For the lower bound in \eqref{pre-e49} we use the lower estimate in Lemma~\ref{pre-48} with $a=\ell^{-1}$ and the elementary inequality $1-\cos u \leq \frac 12 u^2$.
    \begin{equation*}
    \mathclap{\begin{aligned}[b]
        &Q_X(\ell)
        \geq c\ell \int_{-1/\ell}^{1/\ell} \left|e^{-\psi_X(\xi)}\right|^2\,d\xi\\
        &= c\int_{-1}^{1} \!\!\exp\left[-\tfrac{\sigma^2\xi^2}{\ell^2}
            - 2\int_{|y|\leq\ell}\!\left[1-\cos\tfrac{y\xi}{\ell}\right]\nu(dy)
            - 2\int_{|y|>\ell}\!\left[1-\cos\tfrac{y\xi}{\ell}\right]\nu(dy)
        \right] d\xi\\
        &\geq c\int_{-1}^{1} \exp\left[-\xi^2\left(\frac{\sigma^2}{\ell^2}
            + \frac 1{\ell^2}\int_{|y|\leq\ell}y^2\,\nu(dy)\right)
            - 4\int_{|y|>\ell}\nu(dy)
        \right] d\xi\\
        &\geq c\int_{-1}^{1} e^{- \xi^2K(\kolmo;\ell)}\,d\xi \, e^{-4G(\nu;\ell)}\\
        &\geq c'\min\left\{1;\;\frac{1}{\sqrt{K(\kolmo;\ell)}}\right\} e^{-4G(\nu;\ell)}.
    \end{aligned}}\qedhere
    \end{equation*}
\end{proof}

\begin{remark}\label{pre-52}
{a)}
With a more careful argument based on Lemma~\ref{pre-46} one can improve the lower bound in \eqref{pre-e49} and get $G(\nu,\ell)$ instead of $4G(\nu;\ell)$ in the exponent, see Hengartner \& Theodorescu~\cite[pp.~53--54]{hengartner73}.

\smallskip{b)}
There is an interesting extension of the upper bound in \eqref{pre-e49} to arbitrary, not necessarily infinitely divisible, random variables $X$. The starting point is the elementary convexity estimate $(z+1)\leq e^z$, $z\in\real$, which yields
\begin{gather*}
    e^z \geq 1+z
    \iff -2x \leq -\left(1-e^{-2x}\right)
    \iff \sqrt{e^{-2x}} \leq \sqrt{e^{-\left(1-e^{-2x}\right)}},
\end{gather*}
and this implies
\begin{gather*}
    \left|\phi_X(\xi)\right| \leq \exp\left[-\tfrac 12\left(1-\left|\phi_X(\xi)\right|^2\right)\right].
\end{gather*}
The right hand side is the characteristic function of a compound-Poisson type random variable, since the exponent is given by a L\'evy--Khintchine formula:
\begin{gather*}
    \frac 12\left(1-\left|\phi_X(\xi)\right|^2\right)
    = \frac 12\int \left(1-\cos(y\xi)\right) \rho*\widetilde\rho(dy)
\end{gather*}
where $X\sim\rho$ and $-X\sim\widetilde\rho$. The proof of Theorem~\ref{pre-50} immediately gives
\begin{gather*}
    Q_X(\ell) \leq \frac{C}{\sqrt{D\bigl(\frac 12\rho*\widetilde\rho;\ell\bigr)}}.
\end{gather*}
\end{remark}

\begin{remark}\label{pre-54}\index{concentration function!vs.\ censored moments}
    Combining the estimates from Theorem~\ref{pre-50} with Theorem~\ref{pre-76} gives at once the following estimates for an infinitely divisible random variable $X$ with characteristic function $\psi$ in terms of the censored second moment $D(\kolmo;\cdot)$, $\kolmo = \sigma^2\delta_0 + \nu$,
    \begin{align}
        c\min\left\{1,\;\frac{1}{\sqrt{D(\kolmo;\ell)}}\right\} e^{-4 D(\kolmo;\ell)}
        \leq Q_X(\ell)
        \leq \frac{C}{\sqrt{D(\kolmo;\ell)}},
    \intertext{($0<c<C<\infty$ are absolute constants) and the maximum function $\psi^*$: }
        c'\min\left\{1,\;\frac{1}{\sqrt{\psi^*(\ell^{-1})}}\right\} e^{- c'' \psi^*(\ell^{-1})}
        \leq Q_X(\ell)
        \leq \frac{C'}{\sqrt{\psi^*(\ell^{-1})}}
    \end{align}
 ($0<c'<C''<\infty$ and $c''>0$ are absolute constants).
\end{remark}

\paragraph{Milman's concentration function.}

A different type of concentration function was introduced by Amir \& Milman~\cite{ami-mil80} to study isoperimetric inequalities using the \emph{concentration of measure phenomenon}. This approach goes back to Milman~\cite{milman71} who proved Dvoretzky's theorem on spherical sections of convex sets using an estimate for the volume of the $n$-dimensional spherical cap due to L\'evy~\cite[pp.~260--285]{levy22}, see also L\'evy~\cite[209--234]{levy51}: if $S$ is a cap of the sphere $\Ss^{n-1}$ in $\real^n$ and $A\subseteq\Ss^{n-1}$ another set having the same Haar measure $\sigma(\cdot)$ as $S$, then the $\epsilon$-enlargements $A_\epsilon \coloneqq  \{x\in\Ss^{n-1} \mid \mathrm{dist}(x,A) \leq \epsilon\}$ satisfy $\sigma(A_\epsilon)\geq \sigma(S_\epsilon)$. Starting in 1988, in a series of papers Talagrand turned the concentration of measures approach into a universal tool in probability, analysis and the geometry of Banach spaces. Arguably, the contribution Talagrand~\cite{talagrand95} on the measure concentration on product spaces marks the broad breakthrough of the theory. Very good presentations are due to McDiarmid~\cite{mcdiarmid98}, Ledoux~\cite{ledoux01} and Boucheron \emph{et al.}~\cite{boucheron-et-al13}. Below we restrict ourselves to the concentration of the law of a random variable.
\begin{definition}[Milman]\label{pre-60} \index{concentration function!Milman}
    Let $X$ be a random variable taking values in a metric space $(M,d)$. Denote by $\mathrm{K}_\ell(0)$ the closed metric ball of radius $\ell$ and write $A_\ell = A+\mathrm{K}_\ell(0)$ for the $\ell$-enlargement of a Borel set $A\subseteq M$. \emph{Milman's concentration function} is the function $\alpha_X\colon  (0,\infty)\to [0,1/2]$ defined by
    \begin{gather}\label{pre-e61}
        \alpha_X(\ell)
        = \sup\left\{1 - \Pp\left(X\in A_\ell\right) \;\big|\;  \Pp\left(X\in A\right)\geq \frac 12\right\},
        \quad \ell>0.
    \end{gather}
\end{definition}

\begin{properties}[Milman's concentration function]\label{pre-62}
    Milman's con\-cen\-tra\-tion \linebreak function $\alpha_X$ of an $M$-valued random variable  $X$ is decreasing, right continuous and satisfies $\alpha_X(0+)\leq\frac 12$ and $\lim_{\ell\to\infty}\alpha_X(\ell) = 0$.
\end{properties}
\begin{proof}
    Let $\ell < \ell+\epsilon$. Since $A_\ell\subseteq A_{\ell+\epsilon}$ we have $\Pp(X \in A_{\ell+\epsilon}) \geq \Pp(X \in A_{\ell})$, and so $\alpha_X(\ell+\epsilon)\leq\alpha_X(\ell)$.
    The properties $\alpha_X(0)\leq\frac 12$ and $\lim_{\ell\to\infty}\alpha_X(\ell)=0$ are trivial.

    Continuity from the right follows from the right continuity of measures and the fact that we can always interchange suprema:
    \begin{equation*}
    \begin{aligned}[b]
        \sup_{\epsilon>0} \alpha_X(\ell+\epsilon)
        &= \sup \left\{1-\inf_{\epsilon>0}\Pp\left(X\in A_{\ell+\epsilon}\right) \;\big|\; \Pp\left(X\in A\right)\geq\frac 12\right\}\\
        &= \sup \left\{1-\Pp\left(X\in A_{\ell}\right) \;\big|\; \Pp\left(X\in A\right)\geq\frac 12\right\}
        = \alpha_X(\ell).
    \end{aligned}\qedhere
    \end{equation*}
\end{proof}

If $M=\real$ with the usual Euclidean distance, then we have the following relation between L\'evy's and Milman's concentration functions.
\begin{lemma}\label{pre-64}
    Let $X$ be a real random variable and denote by $Q_X$ and $\alpha_X$ the concentration functions of L\'evy and Milman, respectively. Then
    \begin{gather}\label{pre-e65}
        1-2\alpha_X(\ell) \leq Q_X(2\ell) \leq 2 Q_X(\ell).
    \end{gather}
\end{lemma}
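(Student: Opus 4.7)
The plan is to handle the two inequalities separately, with the right-hand inequality following from subadditivity and the left-hand one from a median argument applied to the two complementary half-lines.

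For the upper bound $Q_X(2\ell) \leq 2Q_X(\ell)$, I would simply invoke the subadditivity property of L\'evy's concentration function stated in Properties~\ref{pre-42}\ref{pre-42-4}, namely $Q_X(\ell+\ell') \leq Q_X(\ell)+Q_X(\ell')$. Taking $\ell'=\ell$ immediately gives $Q_X(2\ell)\leq 2Q_X(\ell)$. No further work is needed.

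For the lower bound $1-2\alpha_X(\ell)\leq Q_X(2\ell)$, the idea is to produce an interval of length $2\ell$ carrying mass at least $1-2\alpha_X(\ell)$. Let $m\in\real$ be a median of $X$, so that both $\Pp(X\leq m)\geq\tfrac12$ and $\Pp(X\geq m)\geq\tfrac12$. Apply Definition~\ref{pre-60} with $A=(-\infty,m]$ (whose $\ell$-enlargement is $A_\ell=(-\infty,m+\ell]$) to obtain
\begin{gather*}
    \Pp(X>m+\ell) = 1-\Pp(X\in A_\ell) \leq \alpha_X(\ell),
\end{gather*}
and symmetrically, with $B=[m,\infty)$ and $B_\ell=[m-\ell,\infty)$,
\begin{gather*}
    \Pp(X<m-\ell) \leq \alpha_X(\ell).
\end{gather*}
Adding these two estimates and passing to the complement yields
\begin{gather*}
    \Pp\bigl(X\in[m-\ell,m+\ell]\bigr) \geq 1-2\alpha_X(\ell).
\end{gather*}
Taking $x=m-\ell$ in the supremum defining $Q_X(2\ell)$ completes the proof.

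No serious obstacle is anticipated; the only small subtlety is making sure the closed half-lines $(-\infty,m]$ and $[m,\infty)$ both have probability at least $\tfrac12$, which is exactly the defining property of a median and is automatic by right-continuity of the distribution function $F_X$ together with $\Pp(X\geq m)=1-F_X(m-)$.
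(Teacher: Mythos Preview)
Your proposal is correct and follows essentially the same route as the paper: the upper bound comes from subadditivity (Property~\ref{pre-42}~no.~\ref{pre-42-4}), and the lower bound is obtained by applying Milman's definition to the two half-lines $(-\infty,m]$ and $[m,\infty)$ at a median $m$, then passing to the complement and recognizing the resulting interval as a candidate in the supremum defining $Q_X(2\ell)$.
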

\begin{proof}
    Let $m\in\real$ be a median of $X$, i.e.\ a value such that both $\Pp\left(X\geq m\right)\geq \frac 12$ and $\Pp\left(X > m\right)\leq \frac 12$ (or, equivalently, $\Pp\left(X\leq m\right)\geq\frac 12$). Taking $A=(-\infty,m]$ in the definition of $\alpha_X$ shows
    \begin{gather*}
        1-\Pp\left(X\in (-\infty, m+\ell]\right) \leq \alpha_X(\ell)
        \quad\text{and so}\quad
        \Pp\left(X-m>\ell\right)\leq \alpha_X(\ell).
    \end{gather*}
    In the same way we conclude with $A=[m,\infty)$ that $\Pp\left(X-m<-\ell\right)\leq\alpha_X(\ell)$, i.e.
    \begin{gather*}
        \Pp\left(|X-m|>\ell\right)
        \leq \Pp\left(X-m > \ell\right) + \Pp\left(X-m < -\ell\right)
        \leq 2\alpha_X(\ell).
    \end{gather*}
    From this we get
    \begin{equation*}
        1-2\alpha_X(\ell)
        \leq \Pp\left(|X-m|\leq\ell\right)
        \leq \sup_{x\in\real} \Pp\left(|X-x|\leq\ell\right)
        = Q_X(2\ell).
    \end{equation*}
    The estimate $Q_X(2\ell)\leq 2Q_X(\ell)$ follows from the subadditivity of $Q_X$, see Property~\ref{pre-42} no.~\ref{pre-42-4}.
\end{proof}
The following example shows that the estimate in Lemma~\ref{pre-64} is not sharp and that one cannot expect estimates like $Q_X(2\ell)\leq C(1-2\alpha_X(\ell))$ or $Q_X(\ell)\leq C(1-2\alpha_X(\ell))$.
\begin{example}\label{pre-66}
    Consider a Bernoulli random variable $X$ with $\Pp\left(X=-1\right)=1-p$ and $\Pp\left(X=1\right)=p$. Without loss of generality we assume that $p\geq \frac 12$. Then we have
    \begin{gather*}
        Q_X(\ell) = p\I_{[0,2)}(\ell) + \I_{[2,\infty)}(\ell),\quad
        \alpha_X(\ell) = (1-p)\I_{[0,2)}(\ell),\\
        \text{and}\quad
        1-2\alpha_X(\ell) = (2p-1)\I_{[0,2)}(\ell) + \I_{[2,\infty)}(\ell).
    \end{gather*}
    Thus, $1-2\alpha_X(\ell)\leq Q_X(2\ell)$ always holds, while $Q_X(2\ell)\leq c(1-2\alpha_X(2\ell))$ is only possible for $p> \frac 12$ and with a constant depending on $p$.
\end{example}

\subsection{Good-$\lambda$ Inequalities}\label{pre-lambda}

Good $\lambda$-inequalities were introduced by Burkholder \& Gundy in order to study both (maximal) inequalities for martingales and maximal functions in analysis. Rough\-ly speaking, good-$\lambda$ inequalities provide a means to get from estimates for distribution functions to estimates of $L^p$-norms $\|X\|_{L^p}^p = \Ee\left[|X|^p\right]$, or more general moments, of the random variable $X$. The key tool is the following \emph{layer cake formula}, which expresses (generalized) moments of a random variable via the distribution function. As usual, we allow integrals of positive functions to attain values in $[0,\infty]$.
\begin{lemma}\label{pre-10} \index{layer cake formula}
    Let $X\geq 0$ be a positive random variable and $\phi\colon [0,\infty)\to [0,\infty]$ a positive, right continuous increasing function with $\phi(0)=0$. Then
    \begin{gather}\label{pre-e11}
        \Ee \left[\phi(X)\right]
        = \int_0^\infty \Pp(X\geq\lambda)\,d\phi(\lambda).
    \intertext{If, in addition, $\phi$ is absolutely continuous, then}
    \label{pre-e13}
        \Ee \left[\phi(X)\right]
        = \int_0^\infty \phi'(\lambda) \,\Pp(X \geq \lambda)\,d\lambda
        = \int_0^\infty \phi'(\lambda) \,\Pp(X > \lambda)\,d\lambda.
    \end{gather}
\end{lemma}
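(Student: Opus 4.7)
The plan is to prove both identities by a standard Tonelli-type argument, representing $\phi(X)$ as an integral of indicator functions and then swapping the order of integration. Everything stays non-negative, so no integrability hypotheses are needed and infinite values are allowed on both sides.

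The key observation is that, because $\phi$ is right continuous and increasing with $\phi(0)=0$, the associated Lebesgue--Stieltjes measure $d\phi$ satisfies $\phi(x) = \int_{(0,x]} d\phi(\lambda)$ for every $x\geq 0$. Applying this pointwise in $\omega$ to $X(\omega)\geq 0$ gives
\begin{gather*}
    \phi(X(\omega))
    = \int_{(0,\infty)} \I_{\{\lambda\leq X(\omega)\}}\,d\phi(\lambda)
    = \int_{(0,\infty)} \I_{\{X(\omega)\geq\lambda\}}\,d\phi(\lambda).
\end{gather*}
I would then take expectations and invoke Tonelli's theorem (legitimate since the integrand is non-negative) to obtain
\begin{gather*}
    \Ee[\phi(X)]
    = \int_{(0,\infty)} \Ee\left[\I_{\{X\geq\lambda\}}\right] d\phi(\lambda)
    = \int_0^\infty \Pp(X\geq\lambda)\,d\phi(\lambda),
\end{gather*}
which is \eqref{pre-e11}. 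The potential endpoint issue at $\lambda=0$ is harmless: the atom of $d\phi$ at $0$ is zero because $\phi(0)=0$ and $\phi$ is right continuous with $\phi(0-)$ interpreted as $0$, so integration over $(0,\infty)$ and over $[0,\infty)$ give the same value.

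For the second part, if $\phi$ is absolutely continuous, then $d\phi(\lambda) = \phi'(\lambda)\,d\lambda$ in the Lebesgue--Stieltjes sense, and substitution in \eqref{pre-e11} yields the first equality in \eqref{pre-e13}. For the final equality in \eqref{pre-e13}, I would note that $\{X\geq\lambda\}\setminus\{X>\lambda\} = \{X=\lambda\}$, and the set of atoms of the law of $X$ is at most countable, hence of Lebesgue measure zero; consequently $\Pp(X\geq\lambda)$ and $\Pp(X>\lambda)$ agree for almost every $\lambda$, and integration against $\phi'(\lambda)\,d\lambda$ produces the same value.

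I do not expect any serious obstacles; the only mildly delicate points are bookkeeping around the endpoint $\lambda=0$ and the interchange of the two versions of the tail probability, both of which are handled by the remarks above. An alternative route, in case one prefers to avoid Stieltjes integration, would be to establish \eqref{pre-e11} first for $\phi(x)=x^p$ ($p>0$) by the usual Fubini trick $x^p = \int_0^\infty p\lambda^{p-1}\I_{\{\lambda\leq x\}}\,d\lambda$, then extend by linearity to simple increasing right continuous $\phi$ (finite non-negative combinations of indicators $\I_{[\lambda_0,\infty)}$), and finally pass to the general case via monotone convergence; but the direct Tonelli argument above is cleaner and gives both formulations at once.
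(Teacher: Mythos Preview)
Your proposal is correct and follows essentially the same approach as the paper: represent $\phi(X)$ via the Lebesgue--Stieltjes measure $d\phi$, apply Tonelli, and for the absolutely continuous case invoke the countability of the atoms of the law of $X$. The paper's proof is slightly more terse but contains exactly these ingredients.
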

\begin{proof}
    Since $\phi$ is right continuous, $\nu_\phi((a,b]) = d\phi((a,b]) \coloneqq  \phi(b)-\phi(a)$ defines a Borel measure. An application of Tonelli's theorem gives
    \begin{align*}
        \Ee \left[\phi(X)\right]
        = \int_\Omega \int_0^\infty \I_{(0,X]}(\lambda)\, d\phi(\lambda)\,d\Pp
        &= \int_0^\infty \int_\Omega \I_{[\lambda,\infty)}(X) \,d\Pp\,d\phi(\lambda)\\
        &= \int_0^\infty \Pp(X\geq \lambda)\,d\phi(\lambda).
    \end{align*}
    If $\phi$ is absolutely continuous, the same argument with $\int_0^\infty \I_{(0,X]}(\lambda) \,d\phi(\lambda) = \phi(X) = \int_0^X \phi'(\lambda)\,d\lambda$ yields the second formula. Here we use that the set of $\lambda\geq 0$ for which $\Pp(X\geq \lambda) \neq \Pp(X > \lambda)$ is countable, hence a Lebesgue null set.
\end{proof}

Let us begin with Doob's~\cite[Chapter VII.\S3]{doob53} classical argument, which illustrates the use of the layer cake formula.
\begin{lemma}[Doob]\label{pre-12}
    Let $X,Y\geq 0$ be positive random variables which satisfy
    \begin{gather}\label{pre-e15}
        \Pp(X> \lambda) \leq \frac 1\lambda \int_{\{X\geq\lambda\}} Y\,d\Pp  \fa  \lambda>0.
    \end{gather}
    Then one has
    \begin{gather}\label{pre-e17}
        \Ee\left[X^p\right]
        \leq
        \begin{dcases}
            \left(\frac p{p-1}\right)^p \Ee\left[Y^p\right], &\text{if\ \ }p\in (1,\infty),\\
            \frac{e}{e-1}\left(1+\Ee\left[Y\log^+Y\right]\right), &\text{if\ \ }p=1.
        \end{dcases}
    \end{gather}
\end{lemma}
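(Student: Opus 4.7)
My plan is to use the layer cake formula (Lemma~\ref{pre-10}) as the bridge between the distributional hypothesis \eqref{pre-e15} and the integral $\Ee[X^p]$, and then combine this with Fubini--Tonelli to turn $\int_{\{X\geq\lambda\}}Y\,d\Pp$ into a moment of $YX^{p-1}$. To avoid circular reasoning when $\Ee[X^p]=\infty$, I would first argue for the truncation $X_N := X\wedge N$; it inherits \eqref{pre-e15} since $\{X_N>\lambda\}=\{X>\lambda\}$ and $\{X_N\geq\lambda\}=\{X\geq\lambda\}$ for $\lambda<N$, and the inequality is trivial for $\lambda\geq N$. At the end I would let $N\to\infty$ and apply monotone convergence.

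For $p\in(1,\infty)$, the main chain of estimates is
\begin{align*}
    \Ee[X_N^p]
    &= \int_0^\infty p\lambda^{p-1}\,\Pp(X_N>\lambda)\,d\lambda
    \leq \int_0^\infty p\lambda^{p-2}\int_{\{X_N\geq\lambda\}} Y\,d\Pp\,d\lambda\\
    &= \Ee\!\left[Y\int_0^{X_N} p\lambda^{p-2}\,d\lambda\right]
    = \frac{p}{p-1}\,\Ee[YX_N^{p-1}]
    \leq \frac{p}{p-1}\,\Ee[Y^p]^{1/p}\,\Ee[X_N^p]^{(p-1)/p},
\end{align*}
where the last step is H\"older's inequality. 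Since $\Ee[X_N^p]\leq N^p<\infty$, we may divide by $\Ee[X_N^p]^{(p-1)/p}$, raise to the $p$th power, and pass to the limit $N\to\infty$ by monotone convergence to obtain the claimed bound with constant $(p/(p-1))^p$.

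For $p=1$ the layer cake formula only yields $\Ee[X_N]=\int_0^N\Pp(X>\lambda)\,d\lambda$, and the integrand $\lambda^{-1}\!\int_{\{X\geq\lambda\}}Y\,d\Pp$ is no longer integrable near $0$. I would therefore bound $\Pp(X>\lambda)\leq 1$ on $[0,1]$ and use \eqref{pre-e15} on $[1,N]$, which after Tonelli gives
\begin{gather*}
    \Ee[X_N]\leq 1 + \Ee\!\left[Y\int_1^{X_N\vee 1}\!\frac{d\lambda}{\lambda}\right]
    = 1 + \Ee[Y\log^+ X_N].
\end{gather*}
The decisive step is then Young's inequality in the form $y\log^+ x\leq y\log^+ y + x/e$ for $x,y\geq 0$, which follows from $\log u\leq u/e$ on $(0,\infty)$ applied to $u=x/y$ (after splitting into the cases $x\leq 1$, $y\leq 1$, and $x,y>1$). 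Inserting it yields $\Ee[X_N]\leq 1+\Ee[Y\log^+ Y]+\Ee[X_N]/e$; since $\Ee[X_N]\leq N$, we may rearrange to obtain $\Ee[X_N]\leq \tfrac{e}{e-1}(1+\Ee[Y\log^+ Y])$, and monotone convergence as $N\to\infty$ finishes the proof.

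The only genuinely delicate point is the sharp Young-type inequality $y\log^+ x\leq y\log^+ y + x/e$ which produces the constant $e/(e-1)$; everything else is a careful application of Lemma~\ref{pre-10}, Tonelli, H\"older and a monotone truncation.
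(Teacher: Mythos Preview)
Your argument is correct and follows essentially the same route as the paper: truncate $X$ to make it bounded, apply the layer cake formula together with \eqref{pre-e15} and Tonelli, then finish with H\"older for $p>1$ and the Young-type estimate $y\log x\leq y\log^+y+x/e$ for $p=1$. The only cosmetic difference is that the paper encodes the split at $\lambda=1$ by taking $\phi(x)=(x-1)^+$ in the layer cake formula, whereas you perform the same split by hand.
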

\begin{proof}
    Without loss of generality we may assume that $X$ is bounded. To see this, we note that \eqref{pre-e15} remains valid if we replace $X \rightsquigarrow X\wedge n$ and observe that $\Ee\left[X^p\right] = \sup_{n\geq 1} \Ee\left[(X\wedge n)^p\right]$ by monotone convergence.

    Let $\phi(x) = x^p$ or $\phi(x) = (x-1)^+$. Combining the layer cake formula with the estimate \eqref{pre-e15} yields
    \begin{align*}
        \Ee\left[\phi(X)\right]
        &= \int_0^\infty \phi'(\lambda)\,\Pp(X>\lambda)\,d\lambda\\
        &\leq \int_0^\infty \phi'(\lambda)\left(\frac 1\lambda\int_\Omega \I_{\{X>\lambda\}} Y\,d\Pp\right)d\lambda\\
        &= \int_\Omega \int_0^X \frac{\phi'(\lambda)}{\lambda}\,d\lambda \: Y\,d\Pp.
    \end{align*}
    If $p\in (1,\infty)$ and $\phi(x)=x^p$, we see that
    \begin{gather*}
        \int_0^X \frac{\phi'(\lambda)}{\lambda}\,d\lambda
        = \int_0^X p\lambda^{p-2}\,d\lambda
        = \frac 1{p-1} X^{p-1}.
    \end{gather*}
    Therefore, we can use H\"{o}lder's inequality with $p^{-1}+q^{-1}=1$,
    \begin{gather*}
        \Ee\left[X^p\right]
        \leq \frac 1{p-1} \Ee\left[Y X^{p-1}\right]
        \leq \frac 1{p-1} \left(\Ee\left[Y^p\right]\right)^{\frac 1p} \left(\Ee\left[X^p\right]\right)^{1- \frac 1p}
    \end{gather*}
    which yields \eqref{pre-e17}.

    If $p=1$  and $\phi(x)=(x-1)^+$, we have
    \begin{gather*}
        \int_0^X \frac{\phi'(\lambda)}{\lambda}\,d\lambda
        = \I_{\{X\geq 1\}}\int_1^X \frac 1\lambda\,d\lambda
        = \I_{\{X\geq 1\}} \log(X).
    \end{gather*}
    From the elementary inequality $x^{-1}\log(x) \leq e^{-1}$, $x>0$,---just find the maximum of $x^{-1}\log(x)$---we see that
    \begin{gather*}
        \log(x) = \log \left(y\frac xy\right) = \log(y) + \log\left(\frac xy\right) \leq \log^+(y) + \frac{x}{ye},
    \end{gather*}
    hence $y\log(x) \leq y\log^+(y) + x/e$.
    Therefore,
    \begin{gather*}
        \Ee\left[X\right]
        \leq 1 + \Ee\left[Y \I_{\{X\geq 1\}} \log(X)\right]
        \leq  1 + \Ee\left[Y\log^+(Y)\right] + \frac 1e\Ee\left[X\right],
    \end{gather*}
    from which it is easy to get \eqref{pre-e17}.
\end{proof}

The following lemma will be a substitute for Lemma~\ref{pre-12} if $0<p<1$.
\begin{lemma}\label{pre-13}
    Let $X,Y\geq 0$ be positive random variables which satisfy
    \begin{gather}\label{pre-e18}
        \Pp(X > \lambda) \leq \frac 1\lambda \Ee Y  \fa  \lambda>0.
    \end{gather}
    Then one has
    \begin{gather}\label{pre-e19}
        \Ee\left[X^p\right]
        \leq
        \frac 1{1-p} \left(\Ee Y\right)^p
        \fa p\in (0,1).
    \end{gather}
\end{lemma}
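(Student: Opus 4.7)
The plan is to proceed exactly as in the proof of Lemma~\ref{pre-12}, using the layer cake formula from Lemma~\ref{pre-10} to convert $\Ee[X^p]$ into an integral of the tail $\Pp(X>\lambda)$, and then to bound that tail by combining the hypothesis \eqref{pre-e18} with the trivial estimate $\Pp(X>\lambda)\leq 1$. The key insight is that, for $0<p<1$, the integrand $p\lambda^{p-1}\Pp(X>\lambda)$ is only singular at $\lambda=0$ (this is where $\lambda^{p-1}$ blows up), so we have to use the trivial bound near $0$ and the hypothesis far out; for $p\geq 1$ there is no hope of doing this, which is consistent with the statement requiring $p\in(0,1)$.

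Concretely, I would first assume without loss of generality that $\Ee Y<\infty$ (otherwise the right-hand side of \eqref{pre-e19} is infinite and nothing needs to be proved) and apply \eqref{pre-e13} with $\phi(\lambda)=\lambda^p$ to obtain
\begin{gather*}
    \Ee[X^p] = \int_0^\infty p\lambda^{p-1}\,\Pp(X>\lambda)\,d\lambda.
\end{gather*}
Then I would split this integral at the level $\lambda_0:=\Ee Y$, where the two available tail bounds $\Pp(X>\lambda)\leq 1$ and $\Pp(X>\lambda)\leq \Ee Y/\lambda$ cross. Using the trivial bound on $(0,\lambda_0]$ and \eqref{pre-e18} on $(\lambda_0,\infty)$ gives
\begin{gather*}
    \Ee[X^p]
    \leq \int_0^{\Ee Y} p\lambda^{p-1}\,d\lambda
    + \int_{\Ee Y}^\infty p\lambda^{p-1}\,\frac{\Ee Y}{\lambda}\,d\lambda
    = (\Ee Y)^p + \frac{p}{1-p}(\Ee Y)^p,
\end{gather*}
which collects to the claimed bound $\frac{1}{1-p}(\Ee Y)^p$; here the integral $\int_{\Ee Y}^\infty \lambda^{p-2}d\lambda$ converges precisely because $p<1$.

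There is no real obstacle: the only minor point is to verify that the split at $\lambda_0=\Ee Y$ yields exactly the constant $1/(1-p)$ stated in the lemma rather than something larger. A short calculus check (or differentiating in the split point) shows that $\lambda_0=\Ee Y$ is indeed optimal, so the constant in \eqref{pre-e19} is sharp for this method. If $\Ee Y=0$ one concludes $X=0$ almost surely from \eqref{pre-e18}, so the inequality is trivial in that degenerate case.
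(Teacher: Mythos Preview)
Your proof is correct and essentially the same as the paper's: both use the layer-cake formula, split the resulting integral at a threshold, and apply the trivial bound $\Pp(X>\lambda)\leq 1$ below and the hypothesis \eqref{pre-e18} above. The only cosmetic differences are that the paper writes the layer-cake formula as $\Ee[X^p]=\int_0^\infty \Pp(X>\lambda^{1/p})\,d\lambda$ (i.e.\ $\phi(\lambda)=\lambda$ applied to $X^p$, which is your formula after the substitution $\lambda\mapsto\lambda^p$) and leaves the split point as a free parameter $x$ to be optimized afterwards, arriving at $x=(\Ee Y)^p$---exactly your $\lambda_0=\Ee Y$ after the change of variable.
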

\begin{proof}
    Using the layer-cake formula for $\phi(\lambda)=\lambda$ we get
    \begin{align*}
        \Ee \left[X^p\right]
        = \int_0^\infty \Pp\left(X > \lambda^{1/p}\right)  d\lambda
        &\leq \int_0^x d\lambda + \Ee Y \int_x^\infty \lambda^{-1/p}\,d\lambda\\
        &= x + \frac p{1-p} x^{1-1/p}\Ee Y.
    \end{align*}
    The right hand side becomes minimal if $x=\left(\Ee Y \right)^p$, and the claim follows.
\end{proof}

The next lemma can be seen as a partial converse of \eqref{pre-e17} for $p=1$.
\begin{lemma}\label{pre-13-2}
    Let $0\leq X \leq Y$ be positive random variables such that for some constants $a,b>1$
    \begin{gather}\label{pre-e20}
        \frac 1\lambda \int_{\{Y>\lambda\}} X\,d\Pp \leq b\Pp(Y \geq \lambda) \fa  \lambda>a.
    \end{gather}
    Then $\Ee Y < \infty$ implies that $\Ee\left[X\log^+ X\right]<\infty$.
\end{lemma}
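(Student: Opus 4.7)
The plan is to use Tonelli's theorem and the identity $\log^+ x=\int_1^\infty t^{-1}\I_{\{x>t\}}\,dt$ to represent
\[
    \Ee\bigl[X\log^+ X\bigr] = \int_1^\infty \frac{1}{t}\int_{\{X>t\}} X\,d\Pp\,dt,
\]
and then to split the outer $t$-integral at $t=a$. For $1\leq t\leq a$, I would simply use $X\leq Y$ to bound the inner integral by $\Ee X\leq\Ee Y$, so this piece contributes at most $(\log a)\,\Ee Y$. For $t>a$, the inclusion $\{X>t\}\subseteq\{Y>t\}$, which follows from $X\leq Y$, together with the hypothesis~\eqref{pre-e20} gives
\[
    \int_{\{X>t\}} X\,d\Pp \leq \int_{\{Y>t\}} X\,d\Pp \leq b t\,\Pp(Y\geq t),
\]
so by the layer-cake formula \eqref{pre-e13} with $\phi(\lambda)=\lambda$,
\[
    \int_a^\infty \frac{1}{t}\int_{\{X>t\}} X\,d\Pp\,dt \leq b\int_a^\infty \Pp(Y\geq t)\,dt \leq b\,\Ee Y.
\]
Adding the two pieces yields the quantitative bound $\Ee[X\log^+ X]\leq (\log a+b)\,\Ee Y<\infty$.

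There is no genuine obstacle here; the argument is essentially a one-sided variant of the layer-cake computation in Lemma~\ref{pre-12}. The only point worth a moment of care is to write the representation of $\log^+$ using the open sublevel sets $\{X>t\}$ rather than $\{X\geq t\}$, so that the inclusion induced by $X\leq Y$ lines up directly with the open set $\{Y>t\}$ on which~\eqref{pre-e20} is phrased; otherwise one would first have to invoke $\Pp(Y=t)=0$ for all but countably many $t$ to pass between the two sublevel sets. The assumption $a,b>1$ plays only a cosmetic role, ensuring that $\log a>0$ and that the final constant is a clean multiple of $\Ee Y$.
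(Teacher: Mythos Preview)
Your proof is correct and follows essentially the same approach as the paper: represent $\Ee[X\log^+ X]$ via the layer-cake identity $\log^+ x=\int_1^\infty t^{-1}\I_{\{x>t\}}\,dt$, split the outer integral at $t=a$, bound the short piece trivially by $(\log a)\,\Ee X$, and use the inclusion $\{X>t\}\subseteq\{Y>t\}$ together with hypothesis~\eqref{pre-e20} on the tail piece. Your version is in fact slightly cleaner, since you keep track of the constant $(\log a+b)\,\Ee Y$ explicitly and note the open-versus-closed sublevel set issue.
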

\begin{proof}
    Assume that $\Ee Y<\infty$. From $X\leq Y$ we see that $\Ee X<\infty$.
    Let $a,b>0$ be as in the statement of the lemma. The layer-cake formula shows
    \begin{align*}
        \Ee\left[X\log^+X\right]
        &= \int_1^a \int_{\{X>\lambda\}} X\,d\Pp\,\frac{d\lambda}{\lambda}
        + \int_a^\infty\int_{\{X>\lambda\}} X\,d\Pp\,\frac{d\lambda}{\lambda}\\
        &\leq \int_1^a \frac{d\lambda}{\lambda}\:\Ee X + \int_a^\infty\int_{\{Y>\lambda\}} X\,d\Pp\,\frac{d\lambda}{\lambda}\\
        &\leq \log(a) \, \Ee X + \int_0^\infty b\,\Pp(X \geq \lambda)\,d\lambda\\
        &= \log(a)\, \Ee X + b \, \Ee Y.
    \end{align*}
    This proves that $\Ee\left[X\log^+X\right]<\infty$ if $\Ee\left[Y\right]<\infty$.
\end{proof}

The following \emph{good-$\lambda$-inequality} is due to Burkholder~\cite[Lem.~7.1]{burkholder73}, earlier versions can be found in Burkholder \& Gundy~\cite[Proof Thm.~4.1]{bur-gun70}, Burkholder, Davies \& Gundy~\cite[Proof Lem.~3.1]{bur-dav-gun72}, and Burkholder \& Gundy~\cite[Proof Thm.~1]{bur-gun72}.
\begin{lemma}\label{pre-14} \index{inequality!good-$\lambda$}
    Let $0<p<\infty$ and assume that $X,Y\geq 0$ are positive random variables. If there exist $\beta>1$, $\delta\in (0,1)$ and $0 < \epsilon < \frac 12\beta^{-p}$ such that
    \begin{gather}\label{pre-e21}
        \Pp(X > \beta\lambda,\: Y < \delta\lambda)
        \leq \epsilon\Pp(X\geq \lambda)
         \fa \lambda>0
    \end{gather}
    holds, then there exists a constant $c=c(\beta,\delta,\epsilon,p)$ such that
    \begin{gather}\label{pre-e22}
        \Ee\left[X^p\right] \leq c \Ee\left[Y^p\right].
    \end{gather}
\end{lemma}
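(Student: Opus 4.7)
The plan is to use the layer cake formula (Lemma~\ref{pre-10}) to express $\Ee[X^p]$ as $\int_0^\infty p\lambda^{p-1}\Pp(X>\lambda)\,d\lambda$ and then route everything through the distribution function $\Pp(X>\beta\lambda)$. The decisive observation is the pointwise splitting
\begin{gather*}
    \Pp(X>\beta\lambda) \leq \Pp(X>\beta\lambda,\,Y<\delta\lambda) + \Pp(Y\geq \delta\lambda) \leq \epsilon\Pp(X\geq\lambda) + \Pp(Y\geq\delta\lambda),
\end{gather*}
where the hypothesis \eqref{pre-e21} is applied to the first summand. First I would integrate this estimate against $p\lambda^{p-1}\,d\lambda$ over $(0,\infty)$ and then use the substitutions $\mu=\beta\lambda$ and $\mu=\delta\lambda$, together with Lemma~\ref{pre-10}, to rewrite the three integrals as $\beta^{-p}\Ee[X^p]$, $\epsilon\,\Ee[X^p]$, and $\delta^{-p}\Ee[Y^p]$, respectively.

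After this rearrangement one arrives at
\begin{gather*}
    (\beta^{-p}-\epsilon)\Ee[X^p] \leq \delta^{-p}\Ee[Y^p],
\end{gather*}
and the hypothesis $\epsilon<\tfrac 12\beta^{-p}$ guarantees $\beta^{-p}-\epsilon>\tfrac 12\beta^{-p}>0$, giving the constant $c=c(\beta,\delta,\epsilon,p)\leq 2\beta^p\delta^{-p}$.

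The main obstacle is purely technical: the absorption step above is meaningless unless $\Ee[X^p]<\infty$, which is precisely what one is trying to prove. To sidestep this I would first run the argument with $X$ replaced by $X_n := X\wedge n$, which is bounded, so that $\Ee[X_n^p]<\infty$ is automatic. One needs to check that \eqref{pre-e21} is preserved under this truncation: for $\lambda<n/\beta$ one has $\{X_n>\beta\lambda\}=\{X>\beta\lambda\}$ and $\{X_n\geq\lambda\}=\{X\geq\lambda\}$ (using $\beta>1$), while for $\lambda\geq n/\beta$ the left-hand side is empty; hence the hypothesis transfers verbatim. Applying the main argument to $X_n$ yields $\Ee[X_n^p]\leq 2\beta^p\delta^{-p}\Ee[Y^p]$ uniformly in $n$, and monotone convergence then delivers the desired bound \eqref{pre-e22}. (One may of course assume $\Ee[Y^p]<\infty$ throughout, as the conclusion is vacuous otherwise.)
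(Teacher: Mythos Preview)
Your argument is correct and follows essentially the same route as the paper: the same splitting $\Pp(X>\beta\lambda)\leq \epsilon\,\Pp(X\geq\lambda)+\Pp(Y\geq\delta\lambda)$, the layer-cake integration, the absorption using $\epsilon<\tfrac12\beta^{-p}$, and the preliminary truncation $X\rightsquigarrow X\wedge n$ to guarantee finiteness, yielding the identical constant $c=2\beta^p/\delta^p$. Your explicit verification that \eqref{pre-e21} survives the truncation is more detailed than the paper's one-line remark, but the strategy is the same.
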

\begin{proof}
    As in the proof of Lemma~\ref{pre-12} we can assume that the random variable $X$ is bounded---note that \eqref{pre-e21} remains valid, if we truncate $X$. We have
    \begin{align*}
        \Pp(X > \beta\lambda)
        &\leq \Pp(X > \beta\lambda,\; Y < \delta\lambda) + \Pp(Y \geq \delta\lambda)\\
        &\leq \epsilon\Pp(X\geq\lambda) + \Pp(Y\geq \delta\lambda).
    \end{align*}
    An application of the layer cake formula for $\phi(x)=x^p$ shows that
    \begin{align*}
        \Ee\left[\left(\frac 1\beta X\right)^p\right]
        &\leq \epsilon\Ee\left[X^p\right] + \Ee\left[\left(\frac 1\delta Y\right)^p\right]\\
        &\leq \frac 1{2\beta^p} \Ee\left[X^p\right] + \Ee\left[\left(\frac 1\delta Y\right)^p\right].
    \end{align*}
    This yields \eqref{pre-e22} with $c={2\beta^p}/{\delta^p}$.
\end{proof}

\begin{remark}\label{pre-16}
It is possible to replace $x\mapsto x^p$ in \eqref{pre-e22} by a positive, increasing function $\phi\colon [0,\infty)\to [0,\infty)$ with $\phi(0)=0$ which satisfies a \enquote{doubling condition}, i.e.\
\begin{gather*} \index{doubling condition}
    \phi(2x)\leq c\phi(x)\quad\text{for some $c\geq 1$ and all $x\geq 0$}.
\end{gather*}
The doubling condition implies that $\phi(x)$ has at most polynomial growth. In fact, if $\beta \geq 0$ and $\kappa = \log(\beta)/\log(2)$, then
\begin{gather*}
    \phi(\beta x)
    = \phi(2^\kappa x)
    \leq \phi(2^{\entier\kappa + 1}x)
    \leq c c^\kappa\phi(x)
    = c \beta^{\log(c)/\log(2)}\phi(x).
\end{gather*}
As a consequence, we see for $\beta>1$, $\delta\in (0,1)$, and suitable constants $c_\beta$ and $c_\delta$
\begin{gather*}
    \Ee\left[\phi(X/\beta)\right] \geq c_\beta \Ee\left[\phi(X)\right]
    \quad\text{and}\quad
    \Ee\left[\phi(Y/\delta)\right] \leq c_\delta \Ee\left[\phi(Y)\right],
\end{gather*}
which means that Lemma~\ref{pre-14} remains valid for this class of functions. A thorough discussion for this and other classes can be found in Lenglart \emph{et al.}~\cite{len-lep-pra80}.
\end{remark}

The following variant of the good-$\lambda$-inequality reveals its (formal) connection with interpolation theorems. The proof (for $0<p<1$) is taken from Revuz \& Yor~\cite[Lem.~IV(4.6)]{rev-yor99}, see also Schilling~\cite[Lem.~19.19]{schilling-bm}, while the general case $0<p<r$ follows the lines of Stein~\cite[Chapter I.\S4]{stein70}.
\begin{lemma}\label{pre-18}\index{inequality!good-$\lambda$}
    Let $X,Y\geq 0$ be positive random variables such that for some $r>0$
    \begin{align}\label{pre-e23}
        \Pp(X> \lambda, Y < \lambda)
        &\leq \frac 1{\lambda^r}\int_{\{Y<\lambda\}} Y^r\,d\Pp \fa  \lambda>0,
    \intertext{or}\label{pre-e25}
        \Pp(X> \lambda, Y < \lambda)
        &\leq \frac 1{\lambda^r}\int (Y\wedge\lambda)^r\,d\Pp \fa  \lambda>0,
    \end{align}
    holds. Then one has for all $0<p<r$
    \begin{gather}\label{pre-e27}
        \Ee\left[X^p\right]
        \leq \left(\frac r{r-p} + 1\right) \Ee\left[Y^p\right].
    \end{gather}
\end{lemma}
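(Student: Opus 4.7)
The plan is to combine the layer cake formula (Lemma~\ref{pre-10}) with the elementary split
\begin{gather*}
    \Pp(X > \lambda) \leq \Pp(X > \lambda,\, Y < \lambda) + \Pp(Y \geq \lambda).
\end{gather*}
Applying \eqref{pre-e13} with $\phi(\lambda)=\lambda^p$, I would write
\begin{gather*}
    \Ee\left[X^p\right]
    = \int_0^\infty p\lambda^{p-1}\Pp(X>\lambda)\,d\lambda
    \leq \mathrm{I} + \mathrm{II},
\end{gather*}
where $\mathrm{I}:=\int_0^\infty p\lambda^{p-1}\Pp(X>\lambda,\,Y<\lambda)\,d\lambda$ and $\mathrm{II}:=\int_0^\infty p\lambda^{p-1}\Pp(Y\geq\lambda)\,d\lambda=\Ee\left[Y^p\right]$ by another application of the layer cake formula.

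The remaining task is to show that $\mathrm{I}\leq \frac{p}{r-p}\Ee\left[Y^p\right]$ under either hypothesis \eqref{pre-e23} or \eqref{pre-e25}; then the total bound is $\left(\frac{p}{r-p}+1\right)\Ee\left[Y^p\right]=\frac{r}{r-p}\Ee\left[Y^p\right]$, which is even slightly better than the stated constant. Under \eqref{pre-e23}, Tonelli's theorem gives
\begin{align*}
    \mathrm{I}
    &\leq \int_0^\infty p\lambda^{p-1-r}\int_{\{Y<\lambda\}} Y^r\,d\Pp\,d\lambda\\
    &= \int_\Omega Y^r\int_Y^\infty p\lambda^{p-1-r}\,d\lambda\,d\Pp
    = \frac{p}{r-p}\int_\Omega Y^p\,d\Pp,
\end{align*}
where the inner integral converges precisely because $p<r$. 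Under \eqref{pre-e25}, Tonelli gives
\begin{gather*}
    \mathrm{I}
    \leq \int_\Omega\int_0^\infty p\lambda^{p-1-r}(Y\wedge\lambda)^r\,d\lambda\,d\Pp,
\end{gather*}
and splitting the inner integral at $\lambda=Y$ one obtains
\begin{gather*}
    \int_0^Y p\lambda^{p-1}\,d\lambda + Y^r\int_Y^\infty p\lambda^{p-1-r}\,d\lambda
    = Y^p + \frac{p}{r-p}Y^p
    = \frac{r}{r-p}Y^p.
\end{gather*}
Thus in case \eqref{pre-e25} we actually get $\mathrm{I}\leq \frac{r}{r-p}\Ee\left[Y^p\right]$, and together with $\mathrm{II}=\Ee\left[Y^p\right]$ this yields exactly the bound $\left(\frac{r}{r-p}+1\right)\Ee\left[Y^p\right]$ stated in \eqref{pre-e27}.

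There is no genuine obstacle here; the only subtlety is that one must justify Tonelli and ensure all integrals are finite. To avoid the latter issue I would first reduce to the case $X\leq n$ by a truncation argument as in the proof of Lemma~\ref{pre-12} (the hypotheses \eqref{pre-e23} and \eqref{pre-e25} are clearly preserved by replacing $X$ with $X\wedge n$), carry out the estimate for the bounded $X$, and then let $n\to\infty$ using monotone convergence. If $\Ee\left[Y^p\right]=\infty$ the assertion is trivial, so one may also assume $Y^p$ is integrable from the outset, which makes all iterated integrals finite and Tonelli unproblematic.
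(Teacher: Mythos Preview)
Your proof is correct and follows essentially the same route as the paper: the split $\Pp(X>\lambda)\leq \Pp(X>\lambda,\,Y<\lambda)+\Pp(Y\geq\lambda)$, the layer cake formula, and Tonelli to compute $\int_0^\infty p\lambda^{p-1-r}\int_{\{Y<\lambda\}}Y^r\,d\Pp\,d\lambda=\tfrac{p}{r-p}\Ee[Y^p]$. The paper likewise notes that \eqref{pre-e23} gives the slightly better constant $r/(r-p)$ while \eqref{pre-e25} adds an extra $\Pp(Y\geq\lambda)$ and yields exactly $\tfrac{r}{r-p}+1$; your truncation remark is a welcome addition that the paper leaves implicit.
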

\begin{proof}
    Since
    \begin{gather*}
        \frac 1{\lambda^r} \int (Y\wedge\lambda)^r\,d\Pp
        = \frac 1{\lambda^r}\int_{\{Y<\lambda\}} Y^r\,d\Pp + \Pp(Y\geq\lambda),
    \end{gather*}
    the argument below shows that either \eqref{pre-e23} or \eqref{pre-e25} yield \eqref{pre-e27}, and that the respective constants in \eqref{pre-e27} differ by $1$.

    From \eqref{pre-e23} we get
    \begin{align*}
        \Pp(X>\lambda)
        &= \Pp(X> \lambda, Y\geq \lambda) + \Pp(X> \lambda, Y < \lambda)\\
        &\leq \Pp(Y\geq \lambda) + \frac 1{\lambda^r}\int_{\{Y<\lambda\}} Y^r\,d\Pp.
    \end{align*}
    At this point we see that \eqref{pre-e25} just adds one more term $\Pp(Y\geq \lambda)$ on the right hand side. The layer cake formula with $\phi(x) = x^p$ yields $\Ee\left[X^p\right]$ on the left, and $\Ee\left[Y^p\right]$ from the first summand on the right side. The second term contributes
    \begin{align*}
        \int_0^\infty p\lambda^{p-1} \lambda^{-r} \int_{\{Y < \lambda\}}  Y^r  \,d\Pp\,d\lambda
        &= \int_\Omega\int_Y^\infty p\lambda^{-1 - (r-p)}\,d\lambda \, Y^r\,d\Pp\\
        &= \frac{p}{r-p}\int_\Omega  Y^{p}\,d\Pp;
    \end{align*}
    here we use that $p<r$. Together we arrive at \eqref{pre-e27} with the (slightly better) constant $r/(r-p)$.
\end{proof}

\begin{remark}\label{pre-20}
    Let us briefly make the connection to interpolation of operators in the scale $L^p(\mu)$, $1\leq p<\infty$, where $\mu$ is a $\sigma$-finite Borel measure on $\real^d$. An operator $T$ defined on $L^p(\mu)$ and taking values in the Borel-measurable functions $\Bcal(\real^d)$ is said to be \emph{of weak type $(p,q)$}, $1\leq p,q < \infty$, if there is a constant $c=c_{p,q}$ such that
    \begin{gather}\label{pre-e31}
        \mu\left(\left\{ |Tf| > \lambda\right\}\right)
        \leq \left(\frac{c}{\lambda} \|f\|_{L^p}\right)^{q} \fa \lambda > 0,\; f\in L^p(\mu).
    \end{gather}
    The operator is called of (\emph{strong}) \emph{type $(p,q)$}, if for some constant $C_{p,q}$
    \begin{gather}\label{pre-e33}
        \|Tf\|_{L^q}
        \leq C_{p,q}\|f\|_{L^p}
         \fa  f\in L^p(\mu).
    \end{gather}
 The techniques developed in this section allow us to prove a simple version of the \emph{Marcinkiewicz interpolation theorem}, see e.g.\ Stein~\cite[Chapter I.\S4.2]{stein70}:\footnote{There are also stronger versions of the Marcinkiewicz interpolation theorem for general weak-type $(p_0,q_0)$ and weak-type $(p_1,q_1)$ operators, cf.\ Bennett \& Sharpley~\cite[Chapter 4.\S4]{ben-sha88}.}

    \begin{theorem*}[Marcinkiewicz] \index{interpolation}
 Let $T$ be an operator, which is defined on the space $L^{1}(\mu)+L^{r}(\mu) \coloneqq \left\{f+g \mid f\in L^1(\mu),\; g\in L^r(\mu)\right\}$ for some $r\in (1,\infty)$. If $T$ is sublinear, i.e.\ $|T(f+g)(x)|\leq |Tf(x)|+|Tg(x)|$, and both of weak-type $(1,1)$ and of weak-type $(r,r)$, then it is of \textup{(}strong\textup{)} type $(p,p)$ for any $1<p<r$, i.e.\ $\|Tf\|_{L^p}\leq c_{r,p}\|f\|_{L^p}$.
    \end{theorem*}
    \begin{proof}
    Because of the subadditivity of $T$ it is enough to consider positive $f\geq 0$, since we have
    \begin{align*}
        \left\{|Tf|> 4\lambda\right\}
        \subseteq
        \left\{|T(f^+)|> 2\lambda\right\}
        \cup \left\{|T(f^-)|> 2\lambda\right\}.
    \end{align*}
    Let $f\geq 0$ and use again subadditivity to see
    \begin{align*}
        \left\{|Tf|> 2\lambda\right\}
        \subseteq
        \left\{|T(f\wedge \lambda)|>\lambda\right\}
        \cup \left\{|T(f-f\wedge \lambda)|> \lambda\right\},
    \end{align*}
    hence
    \begin{align}\label{pre-e35}
        \mu\left(\left\{|Tf|> 2\lambda\right\}\right)
        \leq
        \mu\left(\left\{|T(f\wedge \lambda)|> \lambda\right\}\right)
        + \mu\left(\left\{|T(f-f\wedge \lambda)|> \lambda\right\}\right).
    \end{align}
    For the measures appearing on the right hand side we use the weak-$(r,r)$ resp.\ weak-$(1,1)$ bounds to get
    \begin{gather}\label{pre-e37}
        \mu\left(\left\{|T(f\wedge \lambda)|> \lambda\right\}\right)
        \leq \frac {c_{r,r}}{\lambda^r} \int (f\wedge \lambda)^r\,d\mu
    \intertext{as well as}\label{pre-e39}
        \mu\left(\left\{|T(f - f\wedge \lambda)|> \lambda\right\}\right)
        \leq \frac {c_{1,1}}{\lambda} \int (f-f\wedge \lambda)\,d\mu
        \leq \frac {c_{1,1}}{\lambda} \int_{\{f\geq\lambda\}} f\,d\mu.
    \end{gather}
    We can now apply the layer cake formula with $\phi(x)=x^p$ to \eqref{pre-e35} and use the estimates \eqref{pre-e35}, \eqref{pre-e37}. On the left-hand side of \eqref{pre-e35} we get $\frac 12 \|Tf\|_{L^p}^{{p}}$.  The strategy used in the proof of Lemma~\ref{pre-12} works for $p>1$, and yields the bound $K_{1,p} \|f\|_{L^p}^{{p}}$ for the term in \eqref{pre-e39}. Indeed, we have
    \begin{equation*}\begin{aligned}
        \int_0^\infty p\lambda^{p-1} \mu\left(\left\{|T(f-f\wedge \lambda)|> \lambda\right\}\right) d\lambda
        &\leq c_{1,1} \int_0^\infty p\lambda^{p-1} \left(\frac 1\lambda \int \I_{\{f\geq\lambda\}} f\,d\mu\right) d\lambda\\
        &=    c_{1,1} \int f(x) \int_0^{f(x)} p\lambda^{p-2} d\lambda\,\mu(dx)\\
        &=    \frac{c_{1,1}p}{p-1}\|f\|_{L^p}^p.
    \end{aligned}\end{equation*}
    For the term \eqref{pre-e37} appearing on the right-hand side, we can literally use the calculation from the proof of Lemma~\ref{pre-18} with $p<r$ to get the bound $\kappa_{r,p} \|f\|_{L^p}^{{p}}$ with $\kappa_{r,p}= r/(r-p)$.

    As one would expect, the constants $\kappa_{r,p}$ and $K_{1,p}$ become infinite as $p\uparrow r$ and $p\downarrow 1$, respectively.
    \end{proof}

    Further applications of the good-$\lambda$ inequality in harmonic analysis are discussed in Torchinsky~\cite[Chapter~XIII]{torchinsky86}.
\end{remark}

 Good-$\lambda$ inequalities play also an important role in the extrapolation problem for function spaces, see Geiss~\cite{geiss97} and Hyt\"{o}nen \emph{et al.}~\cite[Ch.~3]{hyt-et-al16}. That paper also contains a discussion how one can formulate certain weak-type $L^1$-estimates in a $\mathrm{BMO}$ (functions of bounded mean oscillation, see Remark~\ref{gen-37}) setting in order to obtain strong $L^p$-estimates.

\section{Some General Principles}\label{gen}

Let $(X_t)_{t\geq 0}$ be a stochastic process with values in $\rd$ defined on a filtered probability space $(\Omega,\Ascr,\Pp,\Fscr_t)$; we assume that the process is \emph{adapted} to the filtration, i.e.\ each random variable $X_t$ is $\Fscr_t$-measurable. Without further assumptions, the supremum processes $X_t^* \coloneqq  \sup_{s\leq t}|X_{s}|$ or $M_t \coloneqq  \sup_{s\leq t} X_s$ may not be measurable. Typically, if $t\mapsto X_t$ is left- or right continuous, this is not an issue. For a long time the notion of \emph{separability} was used.

\paragraph{Separability.}
A process is called \emph{separable} with \emph{separating set} $D\subseteq [0,\infty)$, if for some countable set $D$ the set $\{(t,X_t(\omega)) \mid t\in D\}$ is for almost all $\omega\in\Omega$ dense (w.r.t.\ the Euclidean topology in $[0,\infty)\times\rd$) in the graph $\{(t,X_t(\omega))\mid t\geq 0\}$. In particular, this allows us to calculate suprema etc.\ along the countable set $D$, rendering everything measurable.
A full discussion on separability can be found in Dellacherie \& Meyer~\cite[pp.~153--161]{dm-1} and Lo\`eve~\cite[Vol.~2, pp.~170--178]{loeve77}.
\index{separable process}

The notion of \emph{separability} is hardly seen in modern texts. This is mainly due to the following two results:
\begin{fact}[Doob]\label{gen-01}\index{separable process!sufficient condition}
    Every stochastic process $(X_t)_{t\geq 0}$ with values in $\rd$ has a separable modification taking values in $[-\infty,+\infty]^d$.
\end{fact}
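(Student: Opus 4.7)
The plan is to implement Doob's classical separability construction in the compact space $[-\infty,+\infty]^d$, where the modification is permitted to take values; compactness is used exactly once, to produce cluster points of arbitrary sequences. The main task is to build a countable set $D\subseteq [0,\infty)$ that records all topologically relevant information about the paths, and then to define $\tilde X$ so that it coincides with $X$ almost surely for every $t$ while satisfying the density condition of separability on every single sample path.

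Fix a countable family $\mathcal I$ of open intervals in $[0,\infty)$ with rational endpoints, and a countable basis $\mathcal U$ of open sets for $[-\infty,+\infty]^d$. For each pair $(I,U)\in\mathcal I\times\mathcal U$ and each countable $E\subseteq I$ put
$$B_{I,U}(E):=\bigl\{\omega:\exists\, t\in I,\ X_t(\omega)\in U,\text{ but }X_s(\omega)\notin U\text{ for all }s\in E\bigr\}.$$
Since $E\mapsto B_{I,U}(E)$ is decreasing under inclusion, a countable union of a minimising sequence of countable $E_n$ yields a countable set $D_{I,U}$ realising $\Pp(B_{I,U}(D_{I,U}))=\inf_E\Pp(B_{I,U}(E))=:\beta_{I,U}$. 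Set $D:=\bigcup_{(I,U)}D_{I,U}$, still countable.

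The decisive observation is that for every $t_0\in I$ the set $D_{I,U}\cup\{t_0\}$ is also countable, hence $\Pp(B_{I,U}(D_{I,U}\cup\{t_0\}))\geq\beta_{I,U}$; combined with the reverse inequality from set-inclusion, the symmetric difference is null. This says exactly that
$$\Pp\bigl(X_{t_0}\in U,\ X_s\notin U\text{ for all }s\in I\cap D\bigr)=0$$
for every $U\in\mathcal U$ and every $I\in\mathcal I$ containing $t_0$, so after taking the (countable) union of these null sets there is $N_{t_0}$ with $\Pp(N_{t_0})=0$ such that for $\omega\notin N_{t_0}$ the value $X_{t_0}(\omega)$ is a cluster point of $s\mapsto X_s(\omega)$ along $D$ as $s\to t_0$. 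Now define the modification by $\tilde X_t:=X_t$ if $t\in D$; for $t\notin D$ set $\tilde X_t(\omega):=X_t(\omega)$ whenever $\omega\notin N_t$, and otherwise let $\tilde X_t(\omega)$ be any cluster point in $[-\infty,+\infty]^d$ of $(X_{s_n}(\omega))_n$ along a fixed sequence $D\ni s_n\to t$; such a cluster point exists by compactness of $[-\infty,+\infty]^d$. Then $\tilde X_t=X_t$ almost surely for every $t$, so $\tilde X$ is a modification, and pathwise separability with separating set $D$ follows automatically because $\tilde X_t(\omega)$ is by construction either an element of $\{\tilde X_s(\omega):s\in D\}$ (when $t\in D$) or a cluster point of $\{\tilde X_s(\omega):s\in D\}=\{X_s(\omega):s\in D\}$ along $D$ near $t$.

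The single substantive obstacle is the existence and minimality of $D_{I,U}$; this is precisely why one restricts to countable families on both sides and exploits the fact that the countable union of a minimising sequence of countable sets is itself minimising. Once the \enquote{adding a single point to $D$ is free} principle is established, all remaining steps are book-keeping, and the role of the compactification $[-\infty,+\infty]^d$ is exclusively to guarantee the existence of the cluster points needed to define $\tilde X$ on the null event $N_t$ without leaving the state space.
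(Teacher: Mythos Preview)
The paper does not prove this fact; it only cites Dellacherie--Meyer and Doob's monograph. Your argument is precisely the classical Doob construction that those references contain, so there is nothing to compare in terms of strategy.

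There is, however, a measurability slip. As written, $B_{I,U}(E)$ contains the clause ``$\exists\,t\in I,\ X_t(\omega)\in U$'', which is the uncountable union $\bigcup_{t\in I}\{X_t\in U\}$ and need not lie in the $\sigma$-algebra; hence $\Pp(B_{I,U}(E))$ is undefined and you cannot minimise it. The fix is painless, because that clause is entirely superfluous: drop it and work with the measurable set $\widetilde B_{I,U}(E):=\bigcap_{s\in E}\{X_s\notin U\}$. The map $E\mapsto \Pp(\widetilde B_{I,U}(E))$ is still decreasing, the minimiser $D_{I,U}$ is obtained in the same way, and the crucial set difference
\[
\widetilde B_{I,U}(D_{I,U})\setminus \widetilde B_{I,U}(D_{I,U}\cup\{t_0\})
=\{X_{t_0}\in U\}\cap\bigcap_{s\in D_{I,U}}\{X_s\notin U\}
\]
is exactly the set you need to be null. (Equivalently, with closed $F=U^c$, minimise $\Pp\bigl(\bigcap_{s\in E}\{X_s\in F\}\bigr)$; this is how Doob phrases it.) With this correction your argument is complete.

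A minor second point: for $\tilde X_t$ to be a random variable you need measurability on the null set $N_t$, where you pick ``any cluster point''. Either assume the probability space is complete, or make a concrete measurable choice such as an iterated coordinatewise $\limsup$ along your fixed sequence $D\ni s_n\to t$.
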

A proof of this can be found in Dellacherie \& Meyer~\cite[Thm.~29, p.~157]{dm-1} or in Doob~\cite[Thm.~II.2.4]{doob53}.
Note that the modification is, in general, not $\rd$-valued; moreover, we have no control on what the separating set $D$ will be.
\begin{fact}\label{gen-03}\index{separable process!sufficient condition}
    Let $(X_t)_{t\geq 0}$ be a stochastic process with values in $\rd$ which is defined on a complete probability space.
    \begin{enumerate}\itemsep5pt
    \item[\upshape (a)]\label{gen-03-a}
        If $(X_t)_{t\geq 0}$ is continuous in probability, i.e.\ $\lim_{s\to t}\Pp\left(|X_s - X_t| > \epsilon\right)=0$ for all $\epsilon>0$ and $t\geq 0$, then there is a separable modification taking values in $[-\infty,\infty]^d$.
    \item[\upshape (b)]\label{gen-03-b}
        If $\rho(s,t)\coloneqq  \sqrt{\Ee\left[ |X_t-X_s|^2\right]}$ is finite for all $s,t\geq 0$ and the pseudo metric space $([0,\infty),\rho)$ is separable, then there is a separable modification taking values in $\rd$.
    \end{enumerate}
    Moreover, if \textup{(a)} or \textup{(b)} holds, every countable dense set $D\subseteq [0,\infty)$ can be used as a separating set.
\end{fact}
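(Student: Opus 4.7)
Both parts refine Fact~\ref{gen-01}: Doob's theorem produces a separable modification $\tilde X$ of $X$ with values in $[-\infty,\infty]^d$ and \emph{some} separating set $D_0\subseteq[0,\infty)$ that we have no control over. Our task is (i) to show that any prescribed countable dense $D$ can replace $D_0$, and (ii) in case (b), to promote the range of the modification to $\rd$.

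\textbf{Replacing $D_0$ by $D$ \textup{(}common to (a) and (b)\textup{)}.}
The hypothesis of (a) is stochastic continuity, while the hypothesis of (b) gives, by $L^2$-approximability of each $t$ by a sequence in $D$, convergence in probability. In either case, for every $t_0\in D_0$ I choose a sequence $s^{(t_0)}_n\in D$ with $s^{(t_0)}_n\to t_0$ in the appropriate topology and, passing to a subsequence via Borel--Cantelli, arrange that $\tilde X_{s^{(t_0)}_n}\to\tilde X_{t_0}$ $\Pp$-almost surely. Because $D_0$ is \emph{countable}, the union of the corresponding exceptional events is again a single $\Pp$-null set $N$. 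For every $\omega\notin N$, the graph of $\tilde X(\omega)$ restricted to $D$ has every point $(t_0,\tilde X_{t_0}(\omega))$ with $t_0\in D_0$ in its closure in $[0,\infty)\times[-\infty,\infty]^d$; combined with the separating property of $D_0$ and transitivity of closure, this makes $D$ a separating set for $\tilde X$.

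\textbf{Refinement to $\rd$-valued paths in (b).}
Under the hypothesis of (b), for each fixed $t$ I pick $(s_n)\subseteq D$ with $\rho(s_n,t)\to 0$; the sequence $(X_{s_n})$ is Cauchy in $L^2(\Omega;\rd)$ and thus converges to some $Y_t\in L^2(\Omega;\rd)$. Since $\rho(s_n,t)\to 0$ also forces $X_{s_n}\to X_t$ in $L^2$, one has $Y_t=X_t$ almost surely. Replacing the value of the Doob modification at $t$ by $Y_t$ \textup{(}a modification of a modification is still a modification\textup{)} produces genuinely $\rd$-valued random variables, and since this swap happens on a $\Pp$-null event the previous step still delivers $D$ as a separating set.

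\textbf{Main obstacle.}
The only delicate point is the uniformity in $t_0$ in the first step: each $t_0\in D_0$ produces its own $\Pp$-null set on which $\tilde X_{s^{(t_0)}_n}$ fails to converge, and it is precisely the countability of Doob's (uncontrolled) separating set $D_0$ that is needed to merge these into a \emph{single} null event outside of which the graph-density statement holds simultaneously for all $t_0\in D_0$. After that, the remainder is routine: $L^2$-completeness for the upgrade in (b), and transitivity of topological closure for the density-transfer from $D_0$ to $D$.
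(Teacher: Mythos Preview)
The paper does not give its own proof of this Fact; it simply refers to Wentzell and Lifshits. So there is no ``paper proof'' to compare against, and your write-up has to stand on its own.

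Your transfer step from Doob's uncontrolled separating set $D_0$ to the prescribed dense set $D$ is the standard argument and is correct: the crucial observation is exactly the one you flag, namely that $D_0$ is \emph{countable}, so the per-$t_0$ null sets coming from the a.s.\ subsequential limits can be merged into a single null set, after which transitivity of closure finishes the job. This handles~(a) completely.

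Your refinement step in~(b), however, has a genuine gap. You replace $\tilde X_t$ by an $\rd$-valued representative $Y_t$ \emph{for every} $t\in[0,\infty)$ and then assert that ``since this swap happens on a $\Pp$-null event the previous step still delivers $D$ as a separating set.'' But the swap happens on a $t$-dependent null set, and there are uncountably many $t$; the union of these null sets need not be null, so the separability of $\tilde X$ with $D$ does not transfer to the swapped process. In effect, after the swap you are just back at some modification of the original $X$, and there is no reason this particular modification should be separable. The fix is to bypass Doob's theorem in~(b) and build the $\rd$-valued modification directly: for each $t$ fix once and for all a sequence $s_n(t)\in D$ with $s_n(t)\to t$ and $\rho(s_n(t),t)\to 0$, and set $\hat X_t(\omega):=\lim_n X_{s_n(t)}(\omega)$ whenever this limit exists in $\rd$. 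This definition simultaneously makes $\hat X_t$ genuinely $\rd$-valued and forces $(t,\hat X_t(\omega))$ to lie in the closure of the graph over~$D$ \emph{for every} $\omega$ at which the limit exists --- the two requirements are wired in together rather than patched in sequentially.

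A side remark: your argument in~(b) tacitly assumes that the Euclidean-dense set $D$ is also $\rho$-dense. Without that, neither your approach nor any other can work in general (take $X_t=G$ for $t\in\mathds Q$ and $X_t=G'$ for $t\notin\mathds Q$ with $G,G'$ independent Gaussians; then $([0,\infty),\rho)$ is separable, yet $D=\mathds Q_{\ge 0}$ cannot serve as a separating set for any modification). So in~(b) one should read ``dense'' as $\rho$-dense, which is indeed how the cited references formulate it.
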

We refer to Wentzell~\cite[Satz 3, p.~79]{wentzell79} and Lifshits~\cite[Prop.~1, p.~26]{lifshits95} for the simple proof.

\paragraph{Comparison of \boldmath $|X_t|, X_t^*$ and $X_t, M_t\coloneqq  \sup_{s\leq t} X_s$.\unboldmath}
Frequently we state a theorem only for $|X_t|$ and $X_t^*$ or for $X_t$ and $M_t$. The following argument gives a useful criterion that compares $\Ee\left[X_t^*\right]$ and $\Ee\left[M_t\right]$.
\begin{lemma}\label{gen-05}
    Let $(X_t)_{t\geq 0}$ be a separable real-valued process such that $\Ee |X_t|<\infty$ for all $t \geq 0$. Then
    \begin{gather}\label{gen-e04}
        \Ee \left[X^*\right] < \infty
        \iff
        -\infty < \Ee\left[\inf_{t\geq 0}X_t\right] \leq \Ee\left[\sup_{t\geq 0}X_t\right] < \infty.
    \end{gather}
    If the process is symmetric, i.e.\ $X_t\sim -X_t$, then
    \begin{gather}\label{gen-e06}\begin{aligned}
        \Ee\left[\sup_{t\geq 0} |X_t|\right]
        &\leq 3 \Ee\left[\sup_{t\geq 0} X_t\right] + 2\inf_{t\geq 0} \Ee\bigl[|X_t|\bigr]\\
        &\leq 3 \Ee\left[\sup_{t\geq 0} X_t\right] + 2\inf_{t\geq 0} \sqrt{\Vv\left[X_t\right]}.
    \end{aligned}\end{gather}
\end{lemma}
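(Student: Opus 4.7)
The strategy is to rewrite $X^* = \sup_{t\geq 0}|X_t|$ in terms of the one-sided extrema
\[
    M := \sup_{t\geq 0} X_t \et M' := \sup_{t\geq 0}(-X_t) = -\inf_{t\geq 0} X_t,
\]
using $|x| = x\vee(-x)$, which gives pathwise $X^* = M\vee M'$. Both assertions then reduce to controlling the positive and negative parts of $M$ and $M'$ via the assumption $\Ee|X_s|<\infty$.

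For \eqref{gen-e04} I would rely on the two-sided sandwich
\[
    M\vee M' \leq X^* \leq M^+ + (M')^+,
\]
combined with the observation that $M\geq X_s$ and $M'\geq -X_s$ for every $s$ (taking $t=s$ in the suprema). These inequalities imply $M^- \leq X_s^- \leq |X_s|$ and $(M')^- \leq X_s^+ \leq |X_s|$, so the integrability of $X_s$ makes $\Ee M$ and $\Ee M'$ well-defined in $(-\infty,+\infty]$. Therefore $\Ee X^* < \infty$ is equivalent to $\Ee M^+ + \Ee (M')^+ < \infty$, which in turn is equivalent to $\Ee M < \infty$ and $\Ee M' < \infty$, i.e.\ $\Ee \inf_{t\geq 0} X_t > -\infty$. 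The middle comparison $\Ee \inf_{t\geq 0} X_t \leq \Ee \sup_{t\geq 0} X_t$ is automatic from the pathwise inequality $\inf\leq \sup$.

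For \eqref{gen-e06} the key tool is the identity $|x| = x + 2x^-$, which yields the pathwise estimate
\[
    X^* \leq \sup_{t} X_t + 2\sup_{t} X_t^-  = M + 2(M')^+,
\]
using $\sup_{t} X_t^- = \max\bigl(\sup_t(-X_t),0\bigr) = (M')^+$. The symmetry of the process gives $M' \stackrel{d}{=} M$, and hence $\Ee M' = \Ee M$; from $(M')^- \leq X_s^+ \leq |X_s|$ one obtains $\Ee (M')^+ \leq \Ee M' + \Ee|X_s| = \Ee M + \Ee|X_s|$. Substituting,
\[
    \Ee X^* \leq \Ee M + 2\bigl(\Ee M + \Ee|X_s|\bigr) = 3\Ee M + 2\Ee|X_s|,
\]
and taking the infimum over $s\geq 0$ produces the first inequality in \eqref{gen-e06}. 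The second inequality follows from symmetry $\Ee X_s = 0$ (so that $\Vv[X_s] = \Ee X_s^2$) and the Cauchy--Schwarz/Jensen bound $\Ee|X_s| \leq \sqrt{\Ee X_s^2} = \sqrt{\Vv[X_s]}$.

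The one conceptual subtlety is the interpretation of \enquote{$X_t \sim -X_t$}: read marginally (for each fixed $t$) the statement controls only $\Ee X_s$, not the joint law of the trajectory, and one cannot conclude $\Ee M' = \Ee M$; the argument requires the pathwise equality in law $(X_t)_{t\geq 0} \stackrel{d}{=} (-X_t)_{t\geq 0}$. Once this reading is adopted, everything else is an elementary assembly of the pointwise inequality $|x| = x+2x^-$ with the integrability reductions used in \eqref{gen-e04}.
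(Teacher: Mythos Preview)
Your proof is correct and reaches the same constants as the paper, but the route differs in one technical step. Both you and the paper start from the identity $|x|=x^++x^-$ (you write it as $|x|=x+2x^-$, the paper as $|x|=2x^+-x$), leading to a pathwise bound of $X^*$ by a combination of $M=\sup_t X_t$ and $M'=\sup_t(-X_t)$. The divergence is in how the awkward term is handled: the paper invokes a pathwise trick from Lifshits, $\sup_t X_t^+ \leq \sup_t X_t + \inf_t |X_t|$ (proved by cases on the sign of $\sup_t X_t$), and only then takes expectations. You instead take expectations first and use the integrability hypothesis directly, splitting $\Ee(M')^+=\Ee M'+\Ee(M')^-$ and bounding $\Ee(M')^-\leq \Ee|X_s|$ via $M'\geq -X_s$. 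Your argument is slightly more economical in that it sidesteps the Lifshits estimate entirely; the paper's argument has the minor advantage that its key inequality $\sup_t|X_t|\leq 2\sup_t X_t + 2\inf_t|X_t| - \inf_t X_t$ is pathwise, which also furnishes \eqref{gen-e04} in one stroke. Your separate treatment of \eqref{gen-e04} via $X^*=M\vee M'$ and $M^-\leq |X_s|$ is clean and arguably more transparent than what the paper does there. Your remark that the symmetry must be read at the process level (so that $\Ee M'=\Ee M$) is exactly right; the paper uses the same fact in the form $-\Ee[\inf_t X_t]=\Ee[\sup_t X_t]$ without flagging it.
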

\begin{proof}
Note that $|X_t| = 2X^+_t - X_t$ and
\begin{gather*}
    \sup_{t\geq 0} |X_t|
    \leq 2\sup_{t\geq 0} X_t^+ + \sup_{t\geq 0} (-X_t)
    = 2\sup_{t\geq 0} X_t^+ - \inf_{t\geq 0} X_t.
\end{gather*}
Moreover, $\sup_{t\geq 0} X_t^+ \leq \sup_{t\geq 0} X_t + \inf_{t\geq 0}|X_t|$. This clever estimate is taken from Lifshits~\cite[Prop.~10.2]{lifshits12}, and it follows from considering two cases: If $\sup_{t\geq 0} X_t \geq 0$, then $\sup_{t\geq 0} X_t = \sup_{t\geq 0} X_t^+$, while for $\sup_{t\geq 0} X_t <0$ the left hand side is zero. Together we have
\begin{gather*}
    \sup_{t\geq 0} |X_t|
    \leq 2\sup_{t\geq 0} X_t + 2\inf_{t\geq 0}|X_t| - \inf_{t\geq 0} X_t
    \leq 5\sup_{t\geq 0}|X_t|
\end{gather*}
from which the claim follows at once. Taking expectations, we get
\begin{gather*}
    \Ee\left[\sup_{t\geq 0} |X_t|\right]
    \leq 2 \Ee\left[\sup_{t\geq 0} X_t\right] + 2\inf_{t\geq 0} \Ee\bigl[|X_t|\bigr] - \Ee\left[\inf_{t\geq 0} X_t\right].
\end{gather*}
If the process is symmetric, then $-\Ee\left[\inf\limits_{t\geq 0} X_t\right] = \Ee\left[\sup\limits_{t\geq 0}(- X_t)\right] = \Ee\left[\sup\limits_{t\geq 0}X_t\right]$, and \eqref{gen-e06} follows.
\end{proof}

\paragraph{The Kolmogorov--Chentsov theorem.}
When talking about maximal inequalities, the Kolmogorov--Chentsov continuity theorem sounds like an unlikely candidate. However, the formulation pioneered by Totoki~\cite{totoki61} and widely used in the theory of stochastic flows, see e.g.\ Meyer~\cite[pp.~105--106]{meyer-flot} and Kunita~\cite[Appendix]{kunita04}, clearly shows the \enquote{maximal estimate} character of this result. Our statement follows the exposition in Revuz \& Yor~\cite[Thm.~I.(2.1)]{rev-yor99} and Schilling~\cite[Thm.~10.1]{schilling-bm}, and we refer to these monographs for the (by now standard) proof.

\begin{theorem}[Kolmogorov--Chentsov--Totoki]\label{gen-12} \index{theorem!Kolmogorov--Chentsov}
    Let $(\xi(x))_{x\in\rn}$ be a stochastic process taking values in $\rd$ and indexed by $x\in\rn$. If
    \begin{equation}\label{gen-e11}
        \forall x,y\in\rn \::\: \Ee\left[ |\xi(x)-\xi(y)|^\alpha\right] \leq c |x-y|^{n+\beta}
    \end{equation}
    holds for some constants $c>0$ and $\alpha, \beta>0$, then $(\xi(x))_{x\in\rn}$ has a modification with continuous sample paths. Moreover
    \begin{equation}\label{gen-e13}
        \Ee\left[\sup_{\substack{0<|x-y|<1\\ x,y\in [-T,T)^n}} \frac{|\xi(x)-\xi(y)|^\alpha}{|x-y|^\gamma}\right] < \infty
    \end{equation}
    for all $T>0$ and $0\leq \gamma < \beta$. In particular, $x\mapsto\xi(x)$ is a.s.\ locally H\"{o}lder continuous of order $\gamma/\alpha$.
\end{theorem}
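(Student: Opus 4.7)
The plan is the classical dyadic chaining argument, but we follow the bookkeeping carefully so that the inequality \eqref{gen-e13} drops out as a byproduct rather than as an afterthought. By the obvious translation invariance, it suffices to work on the cube $Q_T := [-T,T)^n$ for any fixed $T>0$. For $k\in\nat_0$ let $D_k = 2^{-k}\integer^n \cap Q_T$ and $D = \bigcup_{k\geq 0} D_k$. Call two points $x,y\in D_k$ \emph{neighbours} if $\|x-y\|_\infty \leq 2^{-k}$; the number $N_k$ of neighbour pairs in $D_k$ is bounded by $C_n T^n 2^{kn}$.

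First I would bound the dyadic oscillations. Set
\begin{gather*}
    S_k^\alpha := \sum_{\substack{x,y\in D_k\\ \text{neighbours}}} |\xi(x)-\xi(y)|^\alpha
    \et
    \Delta_k := \max_{\substack{x,y\in D_k\\ \text{neighbours}}} |\xi(x)-\xi(y)|.
\end{gather*}
Using $|x-y|\leq \sqrt n\,2^{-k}$ for neighbours and hypothesis \eqref{gen-e11},
\begin{gather*}
    \Ee\bigl[\Delta_k^\alpha\bigr]
    \leq \Ee\bigl[S_k^\alpha\bigr]
    \leq N_k \cdot c(\sqrt n\,2^{-k})^{n+\beta}
    \leq C_{n,T}\,2^{-k\beta}.
\end{gather*}
Choose any $\gamma$ with $0\leq\gamma<\beta$; then $\sum_{k\geq 0} 2^{k\gamma}\,\Ee[\Delta_k^\alpha] \leq C_{n,T}\sum_{k\geq 0} 2^{-k(\beta-\gamma)} < \infty$, so in particular $\sum_{k\geq 0} 2^{k\gamma}\Delta_k^\alpha < \infty$ almost surely.

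The second step is the chaining. For distinct $x,y\in D$ with $|x-y|<1$, choose $k_0=k_0(x,y)\in\nat_0$ so that $2^{-(k_0+1)}\leq |x-y|< 2^{-k_0}$. A standard dyadic chain (e.g.\ Revuz--Yor \cite[Lem.~I(2.3)]{rev-yor99} in the one-dimensional case, which extends coordinatewise) connects $x$ and $y$ through at most $C_n$ neighbour edges at level $k_0$ and at most $C_n$ neighbour edges at each level $k>k_0$ needed to refine $x$ and $y$ to successively finer dyadic approximations. By the triangle inequality and $|a+b|^\alpha \leq 2^\alpha(|a|^\alpha+|b|^\alpha)$,
\begin{gather*}
    |\xi(x)-\xi(y)|^\alpha
    \leq C_{n,\alpha} \sum_{k\geq k_0} \Delta_k^\alpha.
\end{gather*}
Since $|x-y|^{-\gamma}\leq 2^{(k_0+1)\gamma}$, multiplying through and summing a geometric factor yields
\begin{gather*}
    \frac{|\xi(x)-\xi(y)|^\alpha}{|x-y|^\gamma}
    \leq C'_{n,\alpha,\gamma} \sum_{k\geq k_0} 2^{k\gamma}\Delta_k^\alpha
    \leq C'_{n,\alpha,\gamma} \sum_{k\geq 0} 2^{k\gamma}\Delta_k^\alpha.
\end{gather*}
Taking the supremum over $x,y\in D$ and then expectations gives \eqref{gen-e13} restricted to the countable set $D$.

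Next I would construct the modification. The estimate above shows that almost surely $\xi|_D$ is H\"older-$\gamma/\alpha$ continuous on $D\cap Q_T$, hence uniformly continuous on bounded subsets and admits a unique continuous extension $\widetilde\xi:Q_T\to\rd$. Exhausting $\rn$ by cubes $Q_T$ with $T\in\nat$ yields a globally continuous process $\widetilde\xi$ on a set of full measure (set $\widetilde\xi\equiv 0$ on the exceptional null set). The moment bound on $D$ automatically passes to the closure $Q_T$ by continuity, giving \eqref{gen-e13}.

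Finally I need to check that $\widetilde\xi$ is a modification. The hypothesis \eqref{gen-e11} together with Markov's inequality gives continuity in probability of $\xi$: for every $\epsilon>0$, $\Pp(|\xi(x)-\xi(y)|>\epsilon)\leq c\epsilon^{-\alpha}|x-y|^{n+\beta}\to 0$ as $y\to x$. Pick any sequence $y_j\in D$ with $y_j\to x$; then $\xi(y_j)\to \xi(x)$ in probability, while $\xi(y_j)=\widetilde\xi(y_j)\to\widetilde\xi(x)$ a.s.\ by continuity of $\widetilde\xi$. Hence $\widetilde\xi(x)=\xi(x)$ a.s.

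The only delicate point is the chaining step: one must be careful to produce dyadic chains in $\rn$ (not just in $\real$) whose lengths at each level are bounded independently of $k$, and to keep the geometric constant in front of $\sum 2^{k\gamma}\Delta_k^\alpha$ uniform. Once this is set up on $\ell^\infty$-neighbours, passing to Euclidean distance costs only a factor of $\sqrt n$ which is harmless in the final bound.
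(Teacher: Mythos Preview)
The paper does not give its own proof of this theorem; it explicitly refers the reader to Revuz \& Yor \cite[Thm.~I.(2.1)]{rev-yor99} and \cite[Thm.~10.1]{schilling-bm} for the ``(by now standard) proof''. Your dyadic chaining is precisely that standard argument, so in spirit your proposal coincides with what the paper invokes.

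There is, however, one genuine gap in your chaining step. From the chain you correctly obtain $|\xi(x)-\xi(y)| \leq C_n \sum_{k \geq k_0} \Delta_k$, and you then assert
\[
    |\xi(x)-\xi(y)|^\alpha \leq C_{n,\alpha} \sum_{k\geq k_0} \Delta_k^\alpha,
\]
citing the two-term bound $|a+b|^\alpha \leq 2^\alpha(|a|^\alpha+|b|^\alpha)$. That inequality does not iterate to an unbounded number of summands with a uniform constant: for $\alpha>1$ the function $t\mapsto t^\alpha$ is convex and $(\sum_k a_k)^\alpha$ can be arbitrarily larger than $\sum_k a_k^\alpha$. (For $\alpha \leq 1$ your step is fine, since $t\mapsto t^\alpha$ is then subadditive.) The standard repair is a weighted H\"older inequality: pick $\epsilon>0$ with $\gamma + \epsilon\alpha < \beta$ and apply H\"older with exponents $\alpha$ and $\alpha/(\alpha-1)$ to $\sum_{k\geq k_0}\Delta_k = \sum_{k\geq k_0} 2^{-k\epsilon}\cdot 2^{k\epsilon}\Delta_k$, which gives
\[
    \Bigl(\sum_{k\geq k_0}\Delta_k\Bigr)^\alpha
    \leq C_{\epsilon,\alpha}\sum_{k\geq k_0} 2^{k\epsilon\alpha}\Delta_k^\alpha.
\]
Your bookkeeping then goes through with $\gamma$ replaced by $\gamma+\epsilon\alpha<\beta$ in the summability check $\sum_k 2^{k(\gamma+\epsilon\alpha)}\Ee[\Delta_k^\alpha]<\infty$. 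With this fix, the remainder of your outline --- construction of the continuous modification by uniform continuity on $D$ and identification via continuity in probability --- is correct.
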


Without real changes in the argument, we may in \eqref{gen-e13} replace $0<|x-y|<1$ by $0<|x-y|<\kappa$ for some $\kappa<T$. Therefore, we may read \eqref{gen-e13} as
\begin{gather*}
        \Ee\left[\sup_{\substack{0<|x-y|<\kappa\\ x,y\in [-T,T)^n}} |\xi(x)-\xi(y)|^\alpha\right] \leq C_T \kappa^\gamma,\quad 0<\gamma<\beta.
\end{gather*}
Sometimes, a further variant is useful. Recall that $\ball{\kappa}{z}$ is an open ball with centre $z$ and radius $\kappa$. Since $\{x,y\in [-T,T)^n \mid x,y\in\ball{\kappa}{z}, z\in [-T,T)^n\}\subseteq \{x,y\in [-T,T)^n \mid |x-y| < 2\kappa\}$, we also have
\begin{gather*}
        \Ee\left[\sup_{z\in [-T,T)^n} \sup_{\substack{x,y\in\ball{\kappa}{z}\cap [-T,T)^n}} |\xi(x)-\xi(y)|^\alpha\right] \leq C_T \kappa^\gamma,\quad 0<\gamma<\beta.
\end{gather*}

If we replace $n \rightsquigarrow 1+n$ and $x\rightsquigarrow (i,x)\in \real\times\rn$, the above theorem also shows that the estimate $\Ee\left[ |\xi(i,x)-\xi(i,y)|^\alpha\right] \leq c |x-y|^{1+n+\beta}$ for all $(i,x),(i,y)\in\real\times\rn$ implies
\begin{gather*}
    \Ee\left[\sup_{\substack{i\in [-T,T),\:0<|x-y|<\kappa\\ x,y\in [-T,T)^n}} |\xi(i,x)-\xi(i,y)|^\alpha\right] \leq C_T \kappa^\gamma,\quad \gamma<\beta.
\end{gather*}
The supremum over the extra parameter $i$ \enquote{costs us} one extra power in the estimate \eqref{gen-e11}.

There are several refinements of Theorem~\ref{gen-12}, which use methods that were discovered when studying Gaussian processes. A good survey is Pisier~\cite{pisier80} and the paper Schilling~\cite{schilling00} explains the connection to the Sobolev embedding theorem. The following result is due to Pisier~\cite[Thm.~2.1]{pisier80}, we follow the strategy worked out in Lifshits~\cite[Thm.~10.1, Exercise~10.1]{lifshits12}. For the proof we need the notion of a covering number \index{covering number} (see also Definition~\ref{gau-31} further down): Let $(\Tt,d)$ be a compact (pseudo-)metric space. The covering number is defined as
\begin{gather}\label{gen-e15-a}
    N(\epsilon) \coloneqq  \min\left\{n \mid \Tt \subseteq {\textstyle \bigcup_{k=1}^n} A_{k},\; \diam(A_i)\leq\epsilon \right\},
    \quad \epsilon > 0.
\end{gather}

\begin{lemma}[Pisier]\label{gen-15} \index{theorem!Pisier}
    Let $(X_t)_{t\geq 0}$ be a separable stochastic process taking values in $\rd$ and such that $m_t^p \coloneqq  \sup_{s\leq t} \Ee\left[|X_s|^p\right] < \infty$ for some $p\in (1,\infty)$ and all $t>0$. Let $\rho(r,s) \coloneqq  \|X_r - X_s\|_{L^p}$ and denote by $N(\epsilon)$ the covering number induced by $\rho$ on every compact set $[0,t]$. Then
    \begin{gather}\label{gen-e16}
        \Ee\left[X_t^*\right] \leq 4 \int_0^{m_t/2} N(\epsilon)^{1/p}\,d\epsilon.
    \end{gather}
\end{lemma}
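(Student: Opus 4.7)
The strategy is a chaining argument tailored to the $L^p$-geometry induced by $\rho$. Using separability, $X_t^* = \sup_{s\in D\cap[0,t]}|X_s|$ for some countable dense $D$, so by monotone convergence it suffices to estimate $\Ee[\sup_{s\in F}|X_s|]$ for arbitrary finite $F\subseteq[0,t]$, with a constant independent of $F$. For geometric scales $\epsilon_k:=2^{-k-1}m_t$, $k\in\integer$, I would choose $T_k\subseteq[0,t]$ consisting of representatives of a covering of $[0,t]$ by sets of $\rho$-diameter at most $\epsilon_k$, so that $|T_k|\leq N(\epsilon_k)$. Since $\rho(r,s)\leq\|X_r\|_{L^p}+\|X_s\|_{L^p}\leq 2m_t$, for sufficiently negative $k$ the set $T_k$ reduces to a single point $y_0$. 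The key device is an admissible chain: pick a map $\sigma_k:T_{k+1}\to T_k$ with $\rho(y,\sigma_k(y))\leq\epsilon_k$ for every $y\in T_{k+1}$, fix a large index $K$, and set $\tilde{\pi}_K(s):=\pi_K(s)$ (a nearest representative at level $K$) and iteratively $\tilde{\pi}_k(s):=\sigma_k(\tilde{\pi}_{k+1}(s))$.

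The telescoping decomposition
\begin{gather*}
    X_s = X_{y_0} + \sum_{k}\bigl(X_{\tilde{\pi}_{k+1}(s)}-X_{\tilde{\pi}_k(s)}\bigr) + \bigl(X_s-X_{\pi_K(s)}\bigr)
\end{gather*}
is the heart of the argument. The chain is constructed precisely so that, as $s$ varies over $F$, the increment $X_{\tilde{\pi}_{k+1}(s)}-X_{\tilde{\pi}_k(s)}$ is a function of $\tilde{\pi}_{k+1}(s)\in T_{k+1}$ alone, hence it assumes at most $N(\epsilon_{k+1})$ distinct values, and each value has $L^p$-norm at most $\rho(\tilde{\pi}_{k+1}(s),\tilde{\pi}_k(s))\leq\epsilon_k$. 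Combining the elementary bound $\|\max_{1\leq i\leq n}|Y_i|\|_{L^p}\leq n^{1/p}\max_i\|Y_i\|_{L^p}$ (trivially true, as $\max_i|Y_i|^p\leq\sum_i|Y_i|^p$) with Jensen's inequality then yields
\begin{gather*}
    \Ee\Bigl[\sup_{s\in F}\bigl|X_{\tilde{\pi}_{k+1}(s)}-X_{\tilde{\pi}_k(s)}\bigr|\Bigr] \leq N(\epsilon_{k+1})^{1/p}\,\epsilon_k.
\end{gather*}
The residual $\sup_{s\in F}|X_s-X_{\pi_K(s)}|$ is bounded in $L^p$ by $|F|^{1/p}\epsilon_K$, which vanishes as $K\to\infty$ (here is where finiteness of $F$ is used).

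It remains to convert the dyadic sum into an integral. Since $N$ is non-increasing and $\epsilon_{k+1}=\epsilon_k/2$, a Riemann-type comparison such as $\epsilon_k N(\epsilon_{k+1})^{1/p}\leq 4\int_{\epsilon_{k+2}}^{\epsilon_{k+1}}N(\epsilon)^{1/p}\,d\epsilon$, summed telescopically over $k$, gives a bound of the form $c\int_0^{m_t/2}N(\epsilon)^{1/p}\,d\epsilon$. The base-point contribution $\Ee|X_{y_0}|\leq m_t$ is absorbed into the same integral through the trivial estimate $N\geq 1$, which forces $m_t\leq 2\int_0^{m_t/2}N(\epsilon)^{1/p}\,d\epsilon$. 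The main obstacle is the arithmetic bookkeeping required to hit the sharp constant $4$ with integration limit $m_t/2$: this forces a careful choice of the starting scale $\epsilon_0=m_t/2$ (so that the dyadic block straddling $m_t/2$ is not counted twice) and a refined allocation between the base-point term and the geometric sum, rather than the crude estimates sketched above.
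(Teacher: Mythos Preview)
Your proposal is correct and follows essentially the same dyadic chaining argument as the paper: geometric scales $\epsilon_k\asymp 2^{-k}m_t$, nets $T_k$ of cardinality $N(\epsilon_k)$, and the crude union bound $\|\max_{i\leq n}|Y_i|\|_{L^p}\leq n^{1/p}\max_i\|Y_i\|_{L^p}$ at each level. The only tactical difference is that the paper iterates the inequality $\sup_{s\in T_i}|X_s|\leq \sup_{s\in T_{i-1}}|X_s|+\sup_{s\in T_i}|X_s-X_{\pi_{i-1}(s)}|$ directly, which avoids the admissible-chain maps $\sigma_k$ and the telescoping to a base point; this delivers the same per-level bound $N(\epsilon_i)^{1/p}\epsilon_{i-1}$ with less machinery and makes the constant~$4$ and the upper limit $m_t/2$ fall out without the extra bookkeeping you flag at the end.
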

\begin{proof}
    We set up a chaining \index{chaining} scheme.\footnote{An abstract discussion of \enquote{generic} chaining schemes is in Talagrand~\cite{talagrand14}.} Fix $t>0$, set $\epsilon_i \coloneqq  m_t 2^{-i}$, $i\in\nat_0$, and cover $[0,t]$ by a minimal covering with sets of $\rho$-diameter less than $\epsilon_i$. By the definition of the covering number, the covering contains $N(\epsilon_i)$ sets. Take in each covering set a point $t_k = t_k(i)$ and set $T_i = \{t_1,\dots,t_{N(\epsilon_i)}\}$. Now we construct a map
    \begin{gather*}
        \pi_i\colon [0,t] \to T_i
        \quad\text{satisfying}\quad
        \forall s\in [0,t] \::\: \rho(s,\pi_i(s)) = \|X_s-X_{\pi_i(s)}\|_{L^p} \leq \epsilon_i .
    \end{gather*}
    Clearly,
    \begin{gather*}
        \sup_{s\in T_i}|X_s|
        \leq \sup_{s\in T_{i-1}}|X_s| + \sup_{s\in T_i}|X_s - X_{\pi_{i-1}(s)}|.
    \end{gather*}
    Taking expectations, we see that
    \begin{align*}
        \Ee\left[ \sup_{s\in T_i}|X_s| \right]
        &\leq \Ee\biggl[ \sup_{s\in T_{i-1}}|X_s| \biggr] +  \left(\Ee\left[ \sup_{s\in T_i}\left|X_s - X_{\pi_{i-1}(s)}\right|^p \right]\right)^{1/p}\\
        &\leq \Ee\biggl[ \sup_{s\in T_{i-1}}|X_s| \biggr] +  \biggl({\textstyle\sum\limits_{s\in T_i}} \Ee\left[\left|X_s - X_{\pi_{i-1}(s)}\right|^p \right]\biggr)^{1/p}\\
        &\leq \Ee\biggl[ \sup_{s\in T_{i-1}}|X_s| \biggr] +  N(\epsilon_i)^{1/p} \epsilon_{i-1}.
    \end{align*}
    If we iterate this argument, observe that $N(\epsilon)$ is decreasing and that $\epsilon_{i}-\epsilon_{i+1} = \epsilon_{i+1} = \frac 12\epsilon_{i}$, then we get
    \begin{align*}
        \Ee\left[ \sup_{s\in T_n}|X_s| \right]
        &\leq \sum_{i=1}^n N(\epsilon_i)^{1/p} \epsilon_{i-1}
        = 4\sum_{i=1}^n \int_{\epsilon_{i+1}}^{\epsilon_{i}} N(\epsilon_i)^{1/p}\,d\epsilon\\
        &\leq 4\sum_{i=1}^n \int_{\epsilon_{i+1}}^{\epsilon_{i}} N(\epsilon)^{1/p}\,d\epsilon
        \leq 4\int_0^{m_t/2} N(\epsilon)^{1/p}\,d\epsilon.
    \end{align*}
    If we apply the above argument to the process $(X_s)_{s\in D_n\cap[0,t]}$ defined on the finite set $D_n = \{k 2^{-n} \mid k\in\nat_0\}\cup\{t\}$, we get the estimate \eqref{gen-e16} first for $\Ee \left[\max_{s\in D_n\cap[0,t]}|X_s|\right]$; note that $D_n$ is reached by some $T_i$ after finitely many iterations. Since the right-hand side of \eqref{gen-e16} does not depend on $D_n$, we can use monotone convergence to bound $\Ee \left[\sup_{s\in D\cap[0,t]}|X_s|\right]$ for $D = \bigcup_n D_n$. Finally, by separability, $X^*_t  = \sup_{s\in D\cap[0,t]}|X_s|$, finishing the proof.
\end{proof}

The proof also works if we use a further metric $d(s,t)$ on $[0,\infty)$ such that $\rho(s,t)\leq c d(s,t)$. If $d(s,t)=d(0,t-s)$ is invariant under translations, the $d$-covering number is $N(\epsilon)\approx d(0,\epsilon)^{-1}$. If, in particular, $\|X_s-X_t\|_{L^p}\leq c|t-s|^\alpha$ for some $\alpha\leq 1$, we get
\begin{gather*}
    \Ee\left[X_t^*\right] \leq c\int_0^{m_t} \epsilon^{-\alpha/p}\,d\epsilon
    = c_{\alpha,p} m_t^{1-\alpha/p}
    = c_{\alpha,p} \left(\sup_{s\leq t}\left(\Ee|X_s|^p\right)^{1/p}\right)^{1-\alpha/p}.
\end{gather*}
This is essentially a refinement of the Kolmogorov--Chentsov theorem.

A systematic exposition of processes $(X_t)_{t\geq 0}$ satisfying an \enquote{increment condition}, i.e.\ an inequality of the form $\Pp\left(|X_t-X_s|>\lambda\right)\leq\exp\left[-\lambda^2/2d^2(s,t)\right]$ where $d(s,t)$ is some metric on the parameter space of the process $(X_t)_{t\geq 0}$, can be found in Talagrand~\cite[Ch.~2]{talagrand14}. For example, for a Gaussian process we have $d(s,t)=\|X_s-X_t\|_{L^2}$, see Chapter~\ref{gau}.

\paragraph{A maximal inequality for processes with controlled increments.}
The following theorem was found by Liu \& Xi~\cite{liu-xi20}, without making the connection to the Kolmogorov--Chentsov--Totoki theorem. Let $(X_t,\Fscr_t)_{t\in [0,T]}$ be an adapted $\rd$-valued stochastic process satisfying a \emph{conditional increment control} \index{conditionally controlled increments} with parameters $(p,h)$, $p>1$, $h\in (0,1)$ and $ph > 1$, if $\Ee\left[|X_t|^p\right] < \infty$ and if there is a constant $A_{p,h}$ such that
\begin{gather}\label{gen-e15}
    \forall 0\leq s < t\leq T \::\quad
    \Ee\left[ \left|\Ee(X_t\mid\Fscr_s) - X_s\right|^p\right] \leq A_{p,h}|t-s|^{ph}.
\end{gather}

\begin{theorem}\label{gen-17}\index{inequality!controlled jumps}
    Let $(X_t,\Fscr_t)_{t\in [0,T]}$ be an $\rd$-valued c\`adl\`ag process with conditionally controlled increments with parameters $(p,h)$ such that $p>1$, $h\in (0,1)$ and $ph> 1$. Define for $[u,v]\subseteq [0,T]$ the maximal process $X^*_{u,v}\coloneqq \sup_{u\leq s\leq v}|X_s|$. There is a constant $C_{p,h,\epsilon}$ such that for any $0<\epsilon < ph-1$
    \begin{gather*}
        \Ee\left[\left(X_{u,v}^*\right)^p\right]
        \leq \frac{c_p p^p}{(p-1)^p}\left(C_{p,h,\epsilon} A_{p,h}|u-v|^{ph- 1-\epsilon} + \Ee\left[|X_v|^p\right]\right).
    \end{gather*}
\end{theorem}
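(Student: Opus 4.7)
The plan is to split $X$ by Doob-closure into a martingale plus a small drift, handle the martingale part by Doob's $L^p$-inequality, and control the drift by dyadic chaining that exploits the hypothesis $ph>1$. Put $M_t:=\Ee(X_v\mid\Fscr_t)$ for $t\in[u,v]$; since $X_v\in L^p$, $(M_t)_{t\in[u,v]}$ is a c\`adl\`ag uniformly integrable martingale with $M_v=X_v$. Setting $Z_t:=X_t-M_t$, the assumption \eqref{gen-e15} specialised to $(s,t)=(t,v)$ gives $\|Z_t\|_p\leq A_{p,h}^{1/p}(v-t)^h$ and $Z_v=0$. Doob's $L^p$-inequality for $M$ then immediately yields
\begin{gather*}
\Ee\left[\sup_{u\leq t\leq v}|M_t|^p\right] \leq \left(\frac{p}{p-1}\right)^p \Ee\bigl[|X_v|^p\bigr],
\end{gather*}
which is precisely the $\Ee[|X_v|^p]$-contribution on the right-hand side of the theorem.

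It remains to bound $\Ee\bigl[\sup_{u\leq t\leq v}|Z_t|^p\bigr]$. With $\delta:=v-u$ and the dyadic grid $t^n_k:=u+k\delta 2^{-n}$, the c\`adl\`ag property of $Z$ gives $\sup_{u\leq t\leq v}|Z_t|=\sup_n\max_{0\leq k\leq 2^n}|Z_{t^n_k}|$. On the level-$n$ grid I decompose each one-step increment as $X_{t^n_j}-X_{t^n_{j-1}}=D^n_j+R^n_j$, where the martingale difference $D^n_j:=X_{t^n_j}-\Ee(X_{t^n_j}\mid\Fscr_{t^n_{j-1}})$ generates a discrete martingale $\tilde M^n_k:=\sum_{j\leq k}D^n_j$, and the $\Fscr_{t^n_{j-1}}$-measurable residual $R^n_j:=\Ee(X_{t^n_j}\mid\Fscr_{t^n_{j-1}})-X_{t^n_{j-1}}$ satisfies $\|R^n_j\|_p\leq A_{p,h}^{1/p}(2^{-n}\delta)^h$ by~\eqref{gen-e15}. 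Telescoping then gives $X_{t^n_k}=X_u+\tilde M^n_k+S^n_k$ with $S^n_k:=\sum_{j\leq k}R^n_j$, and discrete Doob reduces the task to controlling $\Ee[\max_k|S^n_k|^p]$ uniformly in $n$ by $C_{p,h,\epsilon}A_{p,h}\delta^{ph-1-\epsilon}$ for every $0<\epsilon<ph-1$. I would achieve this by iterating the MDS/predictable splitting (writing $R^n_j=(R^n_j-\Ee(R^n_j\mid\Fscr_{t^n_{j-2}}))+\Ee(R^n_j\mid\Fscr_{t^n_{j-2}})$ separates a coarser martingale difference, to which Doob applies, from a strictly smaller residual on which the scheme recurses), or---equivalently---by applying the Pisier-type chaining of Lemma~\ref{gen-15} to the predictable process $(S^n_k)$. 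Combining with $|X_t|\leq|M_t|+|Z_t|$, $(a+b)^p\leq 2^{p-1}(a^p+b^p)$ (producing the factor $c_p=2^{p-1}$) and passing $n\to\infty$ by monotone convergence delivers the claim.

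The main obstacle is precisely the uniform-in-$n$ maximal estimate on $S^n_k$. The naive bound $\|S^n_k\|_p\leq\sum_j\|R^n_j\|_p\leq A_{p,h}^{1/p}\delta^h\,2^{n(1-h)}$ diverges as $n\to\infty$, so one has to extract genuine cancellation or orthogonality across the dyadic grid. The hypothesis $ph>1$ is exactly what rescues matters, since the $\ell^p$-sum $\bigl(\sum_j\|R^n_j\|_p^p\bigr)^{1/p}\asymp A_{p,h}^{1/p}\delta^h\,2^{n(1/p-h)}$ is then convergent, and the arbitrarily small $\epsilon>0$ in the conclusion absorbs the logarithmic losses incurred when reconciling $\ell^p$-summation on single-scale increments with $\ell^\infty$-control across the scales.
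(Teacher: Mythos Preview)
Your opening decomposition $X_t=M_t+Z_t$ with $M_t=\Ee(X_v\mid\Fscr_t)$ is exactly the paper's approach (it is the content of Lemma~\ref{gen-21}, combined with Lemma~\ref{pre-12}), and Doob's inequality on $M$ correctly produces the $\Ee[|X_v|^p]$ contribution with the right constant. The divergence from the paper, and the gap in your argument, lies entirely in bounding $\Ee[\sup_{s}|Z_s|^p]$.

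Your route through the level-$n$ decomposition $X_{t^n_k}=X_u+\tilde M^n_k+S^n_k$ does not close. The iterated predictable/MDS splitting you sketch gains nothing per iteration: the new residual $\Ee(R^n_j\mid\Fscr_{t^n_{j-2}})=\Ee(X_{t^n_j}\mid\Fscr_{t^n_{j-2}})-\Ee(X_{t^n_{j-1}}\mid\Fscr_{t^n_{j-2}})$ has the same $L^p$-size $\asymp(2^{-n}\delta)^h$ and the same count $2^n$ as before, so the recursion does not contract. Lemma~\ref{gen-15} would require increment bounds $\|S^n_k-S^n_{k'}\|_p$ rather than the pointwise bounds $\|R^n_j\|_p$ that you have. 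And even the discrete Doob step on $\tilde M^n$ is not uniform in $n$, since its terminal value $\tilde M^n_{2^n}=X_v-X_u-S^n_{2^n}$ inherits the uncontrolled $S^n_{2^n}$.

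The paper bypasses this entirely with one observation (Lemma~\ref{gen-19}): for each fixed conditioning time $s$, the process $t\mapsto Y_t:=\Ee(X_t\mid\Fscr_s)$ satisfies
\[
\Ee\bigl[|Y_t-Y_r|^p\bigr]=\Ee\bigl[|\Ee(\Ee(X_t\mid\Fscr_r)-X_r\mid\Fscr_s)|^p\bigr]\leq A_{p,h}|t-r|^{ph}
\]
directly from the tower property, conditional Jensen, and \eqref{gen-e15}, \emph{uniformly in $s$}. Since $ph>1$, the Kolmogorov--Chentsov--Totoki theorem (Theorem~\ref{gen-12}, in its two-parameter form with $s$ as the extra index) immediately yields $\Ee[\sup_{s\in[u,v]}|Z_s|^p]\leq C_{p,h,\epsilon}A_{p,h}|u-v|^{ph-1-\epsilon}$. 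The chaining you are trying to build by hand is already packaged inside Kolmogorov--Chentsov; the missing idea is to apply it not to $Z$ itself but to the two-parameter family $\Ee(X_t\mid\Fscr_s)$, whose $t$-increments are controlled by hypothesis.
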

The paper Liu \& Xi~\cite{liu-xi20} shows the result of Theorem~\ref{gen-17} with $|u-v|^{ph}$ instead of $|u-v|^{ph-1-\epsilon}$, but the proof is much more complicated. The key is an improved version of Lemma~\ref{gen-19}, which is not directly deduced from Theorem~\ref{gen-12}, but the authors give a more refined version of the argument used in the proof of Theorem~\ref{gen-12}. If one is interested in large values of $p$, our shortcut is not too much of a loss. The following two auxiliary results are simplified versions of results in Liu \& Xi~\cite{liu-xi20}.

\begin{lemma}\label{gen-19}
    Let $(X_t)_{t\in [0,T]}$ be a process as in Theorem~\ref{gen-17}. There is a constant $C_{p,h}$ such that for all $[u,v]\subseteq [0,T]$ and $t\in [u,v]$ and any $0<\epsilon < ph-1$
    \begin{gather*}
        \Ee\left[\sup_{u\leq s\leq v} \left|\Ee\left(X_t\mid\Fscr_s\right) - X_s\right|^p\right]
        \leq C_{p,h,\epsilon} A_{p,h}|u-v|^{ph- 1-\epsilon}.
    \end{gather*}
\end{lemma}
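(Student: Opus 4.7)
The plan is to apply the Kolmogorov--Chentsov--Totoki theorem (Theorem~\ref{gen-12}) to the process $(Y_s)_{s\in[u,t]}$ with $Y_s := \Ee(X_t\mid\Fscr_s) - X_s$. The key observation that makes the whole argument work is the trivialisation $Y_t = \Ee(X_t\mid\Fscr_t)-X_t = 0$, so that
\begin{equation*}
    \sup_{u\leq s\leq t}|Y_s| = \sup_{u\leq s\leq t}|Y_s - Y_t| \leq \sup_{\substack{s_1,s_2\in[u,t]\\ |s_1-s_2|\leq t-u}}|Y_{s_1}-Y_{s_2}|,
\end{equation*}
which reduces the proof to a Kolmogorov-type modulus estimate for $Y$ anchored at $t$. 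The remaining range $s\in(t,v]$, where $Y_s = X_t - X_s$, is handled by the symmetric argument with time reversed.

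The technical heart is an increment estimate of the form $\Ee|Y_{s_2}-Y_{s_1}|^p \leq c_{p,h}\,A_{p,h}\,|s_2-s_1|^{ph}$ for $s_1<s_2\leq t$. Starting from the decomposition
\begin{equation*}
    Y_{s_2}-Y_{s_1} = -\bigl(\Ee(X_{s_2}\mid\Fscr_{s_1})-X_{s_1}\bigr) + \bigl(Y_{s_2}-\Ee(Y_{s_2}\mid\Fscr_{s_1})\bigr),
\end{equation*}
the conditional-drift term has $L^p$-norm at most $A_{p,h}^{1/p}|s_2-s_1|^h$ by the hypothesis~\eqref{gen-e15} applied with endpoint $s_2$. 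The second term is a martingale increment of the closed martingale $M_s:=\Ee(X_t\mid\Fscr_s)$; its $L^p$-norm has to be controlled by exploiting the martingale structure (a Burkholder--Davis--Gundy-type inequality combined with the conditional increment control applied to intermediate times) so as to exhibit the correct $|s_2-s_1|^h$ dependence rather than the trivial bound one reads off from the triangle inequality.

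With the increment estimate secured, I apply Theorem~\ref{gen-12} with $n=1$, $\alpha=p$ and $\beta=ph-1>0$. For any $0<\epsilon<ph-1$ and $\gamma:=ph-1-\epsilon<\beta$, the quantitative refinement of that theorem gives
\begin{equation*}
    \Ee\left[\sup_{\substack{s_1,s_2\in[u,t]\\ |s_1-s_2|\leq t-u}}|Y_{s_1}-Y_{s_2}|^p\right] \leq C_{p,h,\epsilon}\,A_{p,h}\,(t-u)^{ph-1-\epsilon},
\end{equation*}
which together with the anchoring inequality above and $t-u\leq v-u$ closes the proof.

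The main obstacle is exactly the martingale-piece estimate in the second paragraph. The naive bound $\|Y_{s_2}-\Ee(Y_{s_2}\mid\Fscr_{s_1})\|_p\leq 2\|Y_{s_2}\|_p\leq 2A_{p,h}^{1/p}|t-s_2|^h$ is uniform in $|s_2-s_1|$ and so is unusable for Kolmogorov--Chentsov; extracting the genuine $|s_2-s_1|^h$-decay from the martingale structure of $M$ is the delicate point. It is precisely here that the direct K--C--T shortcut loses a factor compared with the refined argument of Liu--Xi, and this loss accounts for the exponent $ph-1-\epsilon$ in place of the sharper $ph$.
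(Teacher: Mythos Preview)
Your proposal has a genuine gap at precisely the point you flag as ``the main obstacle'': the martingale piece $Y_{s_2}-\Ee(Y_{s_2}\mid\Fscr_{s_1})$. You never actually establish the needed $|s_2-s_1|^{h}$ decay for it; the appeal to ``a Burkholder--Davis--Gundy-type inequality combined with the conditional increment control'' is hand-waving. BDG would require control of a quadratic variation, and nothing in the hypothesis \eqref{gen-e15} supplies that --- the condition bounds only the conditional drift $\Ee(X_t\mid\Fscr_s)-X_s$, not the martingale part $X_t-\Ee(X_t\mid\Fscr_s)$, so there is no a priori reason the increments of the closed martingale $M_s=\Ee(X_t\mid\Fscr_s)$ should be of order $|s_2-s_1|^h$. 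Without that increment bound, Kolmogorov--Chentsov--Totoki cannot be applied and the argument collapses. (The remark that the range $s\in(t,v]$ is handled ``by the symmetric argument with time reversed'' is also unjustified: time reversal of an adapted process does not preserve the conditional increment control \eqref{gen-e15}.)

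The paper avoids this difficulty by \emph{swapping the roles of the two time variables}. Rather than fixing the endpoint $t$ and varying the conditioning time $s$ (your choice of process), it fixes the conditioning $\sigma$-algebra $\Fscr_u$ and considers $\tau\mapsto Y_\tau:=\Ee(X_\tau\mid\Fscr_u)$. For $u\le\sigma<\tau$ the tower property and conditional Jensen give the increment estimate \emph{for free}:
\[
\Ee\bigl|Y_\tau-Y_\sigma\bigr|^p
=\Ee\Bigl|\Ee\bigl(\Ee(X_\tau\mid\Fscr_\sigma)-X_\sigma\,\bigm|\,\Fscr_u\bigr)\Bigr|^p
\le \Ee\bigl|\Ee(X_\tau\mid\Fscr_\sigma)-X_\sigma\bigr|^p
\le A_{p,h}\,|\tau-\sigma|^{ph},
\]
directly from \eqref{gen-e15}. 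This bound is uniform in $u$, and the anchoring $Y_u=X_u$ is built in. One then invokes the two-parameter variant of Theorem~\ref{gen-12} (the discussion immediately following that theorem) to put the supremum over the conditioning time inside the expectation. The moral: vary the endpoint, not the conditioning --- then the troublesome martingale increment never appears.
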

\begin{proof}
    Pick $0\leq u<v\leq T$ and consider the process $Y_t \coloneqq  \Ee\left(X_t\mid\Fscr_u\right)$ for $u\leq s<t\leq v$. By the tower property and the conditional Jensen inequality we have
    \begin{align*}
        \Ee\left[\left|Y_t-Y_s\right|^p\right]
        &= \Ee\left[\left|\Ee\left(X_t-X_s \mid \Fscr_u\right)\right|^p\right]\\
        &= \Ee\left[\left|\Ee\left( \Ee(X_t\mid\Fscr_s)-X_s \mid \Fscr_u\right)\right|^p\right]\\
        &\leq \Ee\left[\left|\Ee\left(X_t\mid\Fscr_s\right) - X_s\right|^p\right]\\
        &\leq A_{p,h}|t-s|^{ph}.
    \end{align*}
    We can now apply Theorem~\ref{gen-12}, cf.\ see the discussion following the statement of that theorem, to conclude
    \begin{align*}
        \Ee\left[\sup_{u\in [0,T]}\sup_{t\in [u, v]} \left|\Ee(X_t\mid\Fscr_u) - X_u\right|^p\right]
        &= \Ee\left[\sup_{u\in [0,T]}\sup_{t\in [u, v]} \left|Y_t-Y_u\right|^p\right]\\
        &\leq \Ee\left[\sup_{u\in [0,T]}\sup_{u\leq s < t\leq v} \left|Y_t-Y_s\right|^p\right]\\
        &\leq C_{p,h} |u-v|^{ph- 1-\epsilon}
    \end{align*}
    from which the claim follows.
\end{proof}

\begin{lemma}\label{gen-21}
    Let $(X_t,\Fscr_t)_{t\in [0,T]}$ be an adapted c\`adl\`ag process such that $\Ee|X_t|<\infty$ for all $t\in [0,T]$. For all $[u,v]\subseteq [0,T]$, $X_{u,v}^* \coloneqq  \sup_{u\leq s\leq v}|X_s|$ and $\lambda > 0$
    \begin{gather}\label{gen-e22}
        \Pp\left(X_{u,v}^* > \lambda\right)
        \leq \frac 1\lambda \int_{X_{u,v}^* > \lambda} \left( \sup_{s\in [u,v]} \left|\Ee\left(X_v\mid\Fscr_s\right) - X_s\right| + |X_v|\right)\,d\Pp.
    \end{gather}
\end{lemma}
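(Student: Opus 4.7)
The strategy is to reduce the estimate to Doob's maximal inequality for the martingale $M_s := \Ee(X_v\mid\Fscr_s)$, $s\in[u,v]$, and to control the discrepancy between $X_s$ and $M_s$ by its supremum. The key decomposition is
\begin{gather*}
    |X_s| \le |M_s| + |M_s - X_s| \le |M_s| + \sup_{r\in[u,v]} \left|\Ee(X_v\mid\Fscr_r) - X_r\right|,
\end{gather*}
which holds for every $s\in[u,v]$. Denote the last supremum by $D$. On the set $\{X_{u,v}^* > \lambda\}$ there is some $s\in[u,v]$ with $|X_s| > \lambda$, so one at least exceeds $\lambda - D$, and this is where the stopping argument will be deployed.

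Concretely, I would pick a countable dense set $\{s_1,s_2,\dots\}\subseteq [u,v]$ with $v$ included. Since $X$ is c\`adl\`ag, $X_{u,v}^* = \sup_{n}|X_{s_n}|$ almost surely, which renders all suprema measurable and allows us to discretise. Define
\begin{gather*}
    \tau_n := \min\left\{s_i \::\: 1\le i\le n,\; |X_{s_i}| > \lambda\right\}
\end{gather*}
(with $\tau_n := \infty$ on the empty set). Then $\tau_n$ is a stopping time for the filtration restricted to $\{s_1,\dots,s_n\}$, and the events $\{\tau_n = s_i\}$ partition $\{\tau_n \le v\}$, with $\{\tau_n \le v\}\uparrow \{X^*_{u,v} > \lambda\}$ as $n\to\infty$.

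On each atom $\{\tau_n = s_i\}\in\Fscr_{s_i}$ we have $|X_{s_i}|>\lambda$, so
\begin{gather*}
    \lambda\,\Pp\left(\tau_n = s_i\right)
    \le \int_{\{\tau_n = s_i\}} |X_{s_i}|\,d\Pp
    \le \int_{\{\tau_n = s_i\}} |M_{s_i}|\,d\Pp + \int_{\{\tau_n = s_i\}} D\,d\Pp.
\end{gather*}
For the first integral on the right, the conditional Jensen inequality gives $|M_{s_i}| = |\Ee(X_v\mid\Fscr_{s_i})|\le \Ee(|X_v|\mid\Fscr_{s_i})$; since $\{\tau_n = s_i\}\in\Fscr_{s_i}$, this integral is bounded by $\int_{\{\tau_n=s_i\}} |X_v|\,d\Pp$. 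Summing over $i=1,\dots,n$ and letting $n\to\infty$ via monotone convergence produces
\begin{gather*}
    \lambda\,\Pp\left(X^*_{u,v} > \lambda\right)
    \le \int_{\{X^*_{u,v}>\lambda\}} |X_v|\,d\Pp + \int_{\{X^*_{u,v}>\lambda\}} D\,d\Pp,
\end{gather*}
which is exactly \eqref{gen-e22}. The only delicate point is that we must know $s\mapsto M_s$ admits a version for which $\sup_{s\in[u,v]}|M_s - X_s|$ is measurable; working along the countable dense set $\{s_n\}$ sidesteps this issue entirely, since the supremum on the right-hand side is then $\sup_{n}|M_{s_n}-X_{s_n}|$, and by right-continuity of $X$ the corresponding quantity appearing on the left can be replaced by a sup over the same dense set. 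This is the small bookkeeping point I expect to spend most care on; the martingale/stopping mechanics are otherwise a direct transcription of Doob's classical argument.
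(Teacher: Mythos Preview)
Your proof is correct and follows essentially the same Doob--Kolmogorov stopping idea as the paper: stop at the first time $|X_s|>\lambda$, bound $|X_\tau|$ by $|\Ee(X_v\mid\Fscr_\tau)-X_\tau|+|\Ee(X_v\mid\Fscr_\tau)|$, apply conditional Jensen to the second term, and use $\{\tau\le v\}\in\Fscr_\tau$ to turn $\Ee(|X_v|\mid\Fscr_\tau)$ into $|X_v|$ under the integral. The only difference is cosmetic: the paper works directly with the continuous-time first-passage stopping time $\tau=\inf\{s\in[u,v]:|X_s|>\lambda\}$, whereas you discretise along a countable dense set and pass to the limit, which sidesteps any need to justify that $\tau$ is a genuine stopping time or that $\Ee(X_v\mid\Fscr_\tau)$ is well defined.
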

\begin{proof}
    Fix $\lambda>0$ and define a stopping time $\tau = \inf\left\{s\in [u,v] \mid |X_s|>\lambda\right\}\wedge T$ with $\inf\emptyset \coloneqq  \infty$. Since the process has c\`adl\`ag paths,
    \begin{gather*}
        \left\{\tau < T\right\} = \left\{u\leq\tau\leq v\right\} \supseteq \left\{X_{u,v}^*>\lambda\right\},
    \end{gather*}
    and $|X_\tau| > \lambda$ on the set $\{u\leq\tau\leq v\}$. Thus,
    \begin{equation*}\begin{aligned}[b]
        \Pp\left(X_{u,v}^* > \lambda\right)
        &\leq \frac 1\lambda \int_{u\leq\tau\leq v} |X_\tau|\,d\Pp\\
        &\leq \frac 1\lambda \int_{u\leq\tau\leq v} \Big[\left| \Ee(X_v\mid\Fscr_\tau) - X_\tau\right| + \left| \Ee(X_v\mid\Fscr_\tau) \right| \Big]\, d\Pp\\
        &\leq \frac 1\lambda \int_{u\leq\tau\leq v} \Big[ \sup_{s\in [u,v]} \left|\Ee(X_v\mid\Fscr_s) - X_s\right| + \Ee(|X_v|\mid\Fscr_\tau) \Big]\, d\Pp\\
        &\leq \frac 1\lambda \int_{u\leq\tau\leq v} \Big[ \sup_{s\in [u,v]} \left|\Ee(X_v\mid\Fscr_s) - X_s\right| + \left|X_v\right|\Big]\, d\Pp.
    \end{aligned}\qedhere\end{equation*}
\end{proof}

\begin{proof}[Proof of Theorem~\ref{gen-17}]
    Lemma~\ref{gen-21} and Lemma~\ref{pre-12} show for $p>1$ with $ph> 1$
    \begin{align*}
        \Ee\left[ \left(X_{u,v}^*\right)^p\right]
        &\leq c_p\left(\frac{p}{p-1}\right)^p \Ee\left[ \sup_{s\in [u,v]} \left|\Ee(X_v\mid\Fscr_s) - X_s\right|^p + \left|X_v\right|^p\right].
    \end{align*}
    From this the claim follows with the estimate in Lemma~\ref{gen-19}.
\end{proof}

\begin{remark}\label{gen-23}
    The appearance of the conditional increment control \eqref{gen-e15} in connection with the Kolmogorov-Chentsov-Totoki theorem is not too surprising. A similar condition appears in tightness and compactness theorems for the weak convergence of stochastic processes, see
    Ethier \& Kurtz~\cite[p.~138, Rem.~3.8.7]{ethier-kurtz}.
\end{remark}

Another type  of conditional domination can be used to get exponential moments for c\`adl\`ag processes with bounded jumps $\Delta X_t \coloneqq  X_t - X_{t-}$. The clever proof of the following lemma is due to Bass~\cite[Thm.~I.(6.11)]{bass88}.
\begin{lemma}\label{gen-31} \index{inequality!controlled jumps}
    Let $(X_t,\Fscr_t)_{t\in [0,\infty]}$ be an adapted c\`adl\`ag process with bounded jumps, i.e.\ $\sup_{t>0}|\Delta X_t| \leq c$ for some constant $c>0$. If there is some constant $C>0$ such that
    \begin{gather}\label{gen-e32}
        \Ee\left( |X_\infty - X_{\tau-}| \mid \Fscr_{\tau}\right) \leq C
        \quad\text{for all stopping times $\tau$,}
    \end{gather}
    then there exists an exponent $p>0$ such that $\Ee\left[\exp(p X^*)\right]<\infty$.
\end{lemma}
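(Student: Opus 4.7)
The plan is to show that $\Pp(X^{*}>n\lambda)$ decays geometrically in $n$ for a suitably large $\lambda$, which by the layer cake formula immediately produces a finite exponential moment $\Ee[\exp(pX^{*})]$ for small $p>0$. To get the geometric decay I would set up successive level-crossing stopping times and iterate the hypothesis \eqref{gen-e32}.

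Fix $\lambda$ (to be chosen at the end) and define $\sigma_{0}:=0$ and $\sigma_{n}:=\inf\{t\geq 0\mid|X_{t}|>n\lambda\}$ for $n\geq 1$, together with $A_{n}:=\{\sigma_{n}<\infty\}$. The $A_{n}$ are nested, $\{X^{*}>n\lambda\}\subseteq A_{n}$, and the bounded-jump hypothesis $|\Delta X_{t}|\leq c$ combined with the c\`adl\`ag property gives on $A_{n}$ the sandwich
\begin{gather*}
    n\lambda-c \;\leq\; |X_{\sigma_{n}-}| \;\leq\; n\lambda \;\leq\; |X_{\sigma_{n}}| \;\leq\; n\lambda+c.
\end{gather*}
Applied at $n+1$ this yields on $A_{n+1}$ the key lower bound $|X_{\sigma_{n+1}-}-X_{\sigma_{n}-}|\geq|X_{\sigma_{n+1}-}|-|X_{\sigma_{n}-}|\geq\lambda-c$. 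Combined with the plain triangle inequality through $X_{\infty}$ one obtains
\begin{gather*}
    (\lambda-c)\,\I_{A_{n+1}} \;\leq\; \I_{A_{n+1}}\bigl(|X_{\infty}-X_{\sigma_{n}-}|+|X_{\infty}-X_{\sigma_{n+1}-}|\bigr).
\end{gather*}

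Now I take $\Ee[\,\cdot\mid\Fscr_{\sigma_{n}}]$ on both sides. The hypothesis \eqref{gen-e32} at $\tau=\sigma_{n}$ bounds the first summand by $C\I_{A_{n}}$. For the second summand I use that $\I_{A_{n+1}}=\I_{\{\sigma_{n+1}<\infty\}}$ is $\Fscr_{\sigma_{n+1}}$-measurable, so the tower property together with \eqref{gen-e32} at $\tau=\sigma_{n+1}$ gives $\Ee[\I_{A_{n+1}}|X_{\infty}-X_{\sigma_{n+1}-}|\mid\Fscr_{\sigma_{n}}]\leq C\,\Pp(A_{n+1}\mid\Fscr_{\sigma_{n}})$. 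Rearranging the resulting self-referential inequality produces
\begin{gather*}
    (\lambda-c-C)\,\Pp(A_{n+1}\mid\Fscr_{\sigma_{n}}) \;\leq\; C\,\I_{A_{n}}.
\end{gather*}
Choosing $\lambda>c+2C$ and setting $q:=C/(\lambda-c-C)\in(0,1)$ gives $\Pp(A_{n+1}\mid\Fscr_{\sigma_{n}})\leq q\I_{A_{n}}$, and integrating over $\Omega$ yields $\Pp(A_{n+1})\leq q\,\Pp(A_{n})$ and hence $\Pp(X^{*}>n\lambda)\leq q^{n}$ by induction.

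From this geometric decay one obtains $\Pp(X^{*}>t)\leq q^{-1}e^{-\alpha t}$ with $\alpha:=-\lambda^{-1}\log q>0$, so by Lemma~\ref{pre-10} the moment $\Ee[\exp(pX^{*})]$ is finite for every $0<p<\alpha$. The key (and somewhat delicate) point is the two-step triangle inequality in the middle: one must pass through $X_{\sigma_{n+1}-}$ rather than $X_{\sigma_{n+1}}$ so that hypothesis \eqref{gen-e32} is applicable at $\tau=\sigma_{n+1}$, while still using the jump bound to guarantee $|X_{\sigma_{n+1}-}|\geq(n+1)\lambda-c$. Note also that on $\{\sigma_{n+1}=\infty\}$ the indicator $\I_{A_{n+1}}$ vanishes, so the argument is unaffected by any ambiguity in the definition of $X_{\sigma_{n+1}-}$ there.
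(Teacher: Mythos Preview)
Your proof is correct and shares the paper's high-level strategy—establish geometric decay of $\Pp(X^*>n\lambda)$ via level-crossing stopping times, then invoke the layer cake formula—but the mechanics are genuinely different. The paper (following Bass) normalises to $c=C=1$, introduces \emph{two} auxiliary stopping times $\sigma=\inf\{t:|X_t|\geq\lambda+5\}$ and $\tau=\inf\{t:|X_t|\geq\lambda+15\}$, splits the event $\{X^*>\lambda+20\}$ according to whether $|X_\infty|\leq\lambda+10$, and uses the hypothesis only through Markov's inequality in the form $\Pp(\{|X_\infty-X_{\tau-}|>r\}\cap F)\leq r^{-1}\Pp(F)$ for $F\in\Fscr_\tau$; this yields the additive recursion $\Pp(X^*>\lambda+20)\leq\tfrac12\Pp(X^*>\lambda)$. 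You instead run a single sequence $(\sigma_n)_n$, lower-bound $|X_{\sigma_{n+1}-}-X_{\sigma_n-}|\geq\lambda-c$ from the jump control, route through $X_\infty$ by the triangle inequality, and apply \eqref{gen-e32} directly as a conditional expectation bound together with the tower property; this gives the multiplicative recursion $\Pp(A_{n+1})\leq q\,\Pp(A_n)$ with $q=C/(\lambda-c-C)$. Your route avoids the somewhat ad-hoc constants $5,10,15,20$ and makes the dependence on $c$ and $C$ explicit in the rate; the paper's Markov-inequality step, on the other hand, is a reusable trick that recurs in its treatment of BMO martingales.
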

\begin{proof}
    There is no loss of generality in assuming $c=C=1$, as we can always consider the scaled process $(c+C)^{-1}X_t$. Let $\tau$ be any stopping time, $\lambda>0$ and $F\in\Fscr_\tau$. By the Markov inequality we get
    \begin{align*}
        \Pp\left(\left\{|X_\infty - X_{\tau-}| > \lambda\right\}\cap F\right)
        &\leq \frac 1\lambda \int_F |X_\infty - X_{\tau-}|\,d\Pp\\
        &= \frac 1\lambda \int_F \underbracket[.6pt]{\Ee\left(|X_\infty - X_{\tau-}|\mid\Fscr_\tau\right)}_{\leq 1} d\Pp
        \leq \frac{1}{\lambda}\Pp(F).
    \end{align*}
    We will now bound the tail of $X^*$. For this, we introduce stopping times
    \begin{gather*}
        \sigma \coloneqq  \inf\left\{t>0 \mid |X_t|\geq \lambda + 5\right\}
        \et
        \tau \coloneqq  \inf\left\{t>0 \mid |X_t|\geq \lambda + 15\right\},
    \end{gather*}
    and we use in the following calculation that $|X_{\tau-}|\geq \lambda + 14$ and $|X_{\sigma-}|\leq \lambda + 5$, since $t\mapsto X_t$ is c\`adl\`ag with jumps of size less than one. For any $\lambda>0$
    \begin{align*}
        \Pp&\left(X^* > \lambda + 20\right)\\
        &= \Pp\left(X^* > \lambda + 20,\: |X_\infty|\leq \lambda + 10 \right) + \Pp\left(|X_\infty|> \lambda + 10 \right)\\
        &\leq \Pp\left(\tau<\infty,\: |X_\infty|\leq \lambda + 10 \right) + \Pp\left(\sigma < \infty,\:|X_\infty|> \lambda + 10 \right)\\
        &\leq \Pp\left(\tau<\infty,\: |X_\infty - X_{\tau-}| > 4\right) + \Pp\left(\sigma < \infty,\:|X_\infty-X_{\sigma-}|> 5 \right)\\
        &\leq \frac 14 \Pp(\tau<\infty) + \frac 15\Pp(\sigma<\infty).
    \end{align*}
    In the last inequality we use the estimate from the beginning of the proof. Since $\{\sigma<\infty\}\cup\{\tau<\infty\}\subseteq\{X^*>\lambda\}$, we get
    \begin{gather*}
        \Pp\left(X^* > \lambda + 20\right) \leq \frac 12 \Pp\left(X^* > \lambda\right),
    \end{gather*}
    and if we iterate this inequality, taking $\lambda = 20 n$, $n\in\nat_0$, we end up with $\Pp\left(X^* > 20 n\right)\leq 2^{-n}$. Now we can apply the layer cake formula and conclude
    \begin{equation*}
        \Ee\left[ e^{pX^*}\right]
        = \sum_{n=0}^\infty \int_{20 n}^{20(n+1)} p e^{p\lambda}\Pp\left(X^* > \lambda\right)\,d\lambda
        \leq p e^{20 p}\sum_{n=0}^\infty e^{20 pn}2^{-n},
    \end{equation*}
    which is finite if $p$ is small enough.
\end{proof}

Before we give an application of Lemma~\ref{gen-31} to $\mathrm{BMO}$ (bounded mean oscillation) martingales, we want to record the analogue of Lemma~\ref{gen-31} for increasing processes. This result is due to Dellacherie~\cite{dellacherie79} and Lenglart \emph{et al.}~\cite{len-lep-pra80} who, in turn, credit Garsia and Neveu for the discrete-time case.
\begin{lemma}\label{gen-33}
    Let $(A_t,\Fscr_t)_{t\geq 0}$ be an adapted, increasing c\`adl\`ag process such that $A_0=0$, $A_\infty \coloneqq  \lim_{t\to\infty} A_t$, $\Ee A_\infty < \infty$ and
    \begin{gather}\label{gen-e34}
        \Ee\left[A_\infty - A_{\tau-} \mid \Fscr_\tau \right] \leq X\I_{\{\tau<\infty\}}
    \end{gather}
    for some integrable random variable $X\geq 0$ and all stopping times $\tau$. Then, for every positive convex function $F$, whose right derivative is denoted by $f = F'_+$, one has\footnote{There is a misprint in Lenglart \emph{et al.}~\cite[Lem.~1.2.a)]{len-lep-pra80}: $\Ee[A_\infty f(X)]$ should read $\Ee[f(A_\infty) X]$}
    \begin{gather*}
        \Ee\left[F(A_\infty)\right] \leq \Ee\left[X f(A_\infty)\right] + F(0).
    \end{gather*}
\end{lemma}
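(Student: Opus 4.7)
The plan is to reduce the assertion to two elementary estimates via a Fubini-type decomposition of $F$, in the spirit of the layer-cake formula (Lemma~\ref{pre-10}). Since $f = F'_+$ is increasing and right-continuous on $[0,\infty)$, it induces a non-negative Radon measure $df$ on $(0,\infty)$, and swapping the order of integration in $F(x)-F(0)=\int_0^x f(s)\,ds$ gives the representation
\begin{gather*}
    F(x) - F(0) = f(0)\,x + \int_{(0,\infty)} (x-a)^+\,df(a),\quad x\geq 0.
\end{gather*}
Evaluating at $x=A_\infty$, taking expectations and applying Tonelli (every ingredient being positive) reduces the lemma to the two estimates
\begin{gather*}
    \Ee\left[A_\infty\right] \leq \Ee[X]
    \et
    \Ee\left[(A_\infty-a)^+\right] \leq \Ee\left[X\,\I_{\{A_\infty>a\}}\right]\text{ for each }a>0.
\end{gather*}

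The first is nothing but \eqref{gen-e34} at the stopping time $\tau\equiv 0$, using the natural convention $A_{0-}=A_0=0$. For the second, I would introduce the first-passage time
\begin{gather*}
    \tau_a := \inf\left\{t\geq 0 \mid A_t > a\right\},
\end{gather*}
which is a stopping time by c\`adl\`agness. Since $A$ is increasing, $\{\tau_a<\infty\}=\{A_\infty>a\}$; moreover, the definition of $\tau_a$ combined with the existence of left limits forces $A_{\tau_a-}\leq a$ on $\{\tau_a<\infty\}$, since otherwise some $s<\tau_a$ with $A_s>a$ would contradict the infimum. Hence $(A_\infty-a)^+ \leq A_\infty - A_{\tau_a-}$ on $\{\tau_a<\infty\}$, and \eqref{gen-e34} applied at $\tau_a$ (followed by taking unconditional expectations) delivers the desired bound.

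Reassembling by Tonelli,
\begin{gather*}
    \Ee[F(A_\infty)]-F(0)
    \leq f(0)\Ee[X] + \int_{(0,\infty)}\Ee\left[X\,\I_{\{A_\infty>a\}}\right]df(a)
    = \Ee\left[X f(A_\infty-)\right]
    \leq \Ee\left[X f(A_\infty)\right],
\end{gather*}
the last step using $X\geq 0$ and the monotonicity of $f$. The only real obstacle is the sign of $f(0)$: if $F$ is allowed to be decreasing near $0$, then the step $f(0)\Ee[A_\infty]\leq f(0)\Ee[X]$ reverses and the decomposition no longer propagates the inequality cleanly. I would therefore silently restrict to increasing convex $F$ (so $f\geq 0$), which is the standard Garsia--Neveu setting suggested by the statement and its attributions.
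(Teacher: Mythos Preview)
Your proof is correct and follows essentially the same route as the paper: both use the first-passage time at level $\lambda$ to obtain $\Ee[(A_\infty-\lambda)^+]\leq\Ee[X\I_{\{A_\infty\geq\lambda\}}]$, then integrate against $df(\lambda)$ via Fubini/Tonelli to reconstruct $F$. The caveat you raise about needing $f(0)\geq 0$ is equally present (but not flagged) in the paper's argument, whose final step $\Ee[(A_\infty-X)f(0)]\leq 0$ relies on it in the same way.
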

If we use the lemma for $X\equiv c$, $\kappa \wedge A_t$ (note that $\kappa\wedge A_\infty - \kappa \wedge A_t \leq A_\infty-A_t$ since $A_t\leq A_\infty$), $F(x) = e^{px}$ and $f(x) = p e^{px}$ for some $p<1/c$, we get
\begin{gather*}
    \Ee\left[\exp(p \kappa\wedge A_\infty)\right]
    \leq pc \Ee\left[\exp(p \kappa\wedge A_\infty)\right] + 1,
\intertext{hence}
    \Ee\left[\exp(p \kappa\wedge A_\infty)\right]
    \leq (1-pc)^{-1};
\end{gather*}
now we use Fatou's lemma to let $\kappa\to\infty$. This gives $\Ee\left[\exp(p A_\infty)\right]\leq (1-pc)^{-1}$ making the connection to Lemma~\ref{gen-31}.

\begin{proof}
    We multiply both sides of \eqref{gen-e34} with $\I_{\{\tau<\infty\}}$ and take expectations to get $\Ee\left[(A_\infty- A_{\tau-})\right] \leq \Ee\left[X\I_{\tau<\infty}\right]$ -- here we use that $A_\infty = A_{\infty-}$. Fix $\lambda>0$ and use the stopping time $\tau = \inf\{t>0\mid A_t \geq \lambda\}$. Since $A_{\tau-}\leq\lambda$ and $\{A_\infty\geq\lambda\}\supseteq\{\tau<\infty\}$
    \begin{gather*}
        \Ee\left[(A_\infty - \lambda)\I_{\{A_\infty\geq\lambda\}}\right] \leq \Ee\left[A_\infty - A_{\tau-}\right] \leq \Ee\left[X\I_{\{\tau<\infty\}}\right]
        \leq\Ee\left[X\I_{\{A_\infty\geq\lambda\}}\right].
    \end{gather*}
    Now we can integrate both sides with respect to $df(\lambda)$ and get
    \begin{gather*}
        \int_0^{A_\infty}(A_\infty-\lambda)\,df(\lambda)
        = \int_0^{A_\infty}\int_{\lambda}^{A_\infty}\,dx\,df(\lambda)
        = \int_0^{A_\infty}\int_{0}^{x}df(\lambda)\,dx\\
        = \int_0^{A_\infty} (f(x)-f(0))\,dx
        = F(A_\infty) - F(0) - f(0)A_\infty
    \end{gather*}
    on the left, and $X(f(A_\infty)-f(0))$ on the right. Thus,
    \begin{equation*}
        \Ee\left[F(A_\infty)\right] - F(0)
        \leq \Ee\left[(A_\infty-X)f(0) + Xf(A_\infty)\right]
        \leq \Ee\left[Xf(A_\infty)\right].
    \qedhere
    \end{equation*}
\end{proof}

We will finally apply Lemma~\ref{gen-31} to derive the John--Nirenberg inequality for $\mathrm{BMO}$ martingales. For details on martingales we refer to the following Chapter~\ref{mar}. A martingale $(X_t,\Fscr_t)_{t\geq 0}$, which is $L^2$-bounded, i.e.\ $\sup_{t>0} \Ee[X_t^2] < \infty$, and has continuous paths, is of \emph{class $\mathrm{BMO}$} (\emph{bounded mean oscillation}), if there exists a constant $c>0$ such that
\begin{gather}\label{gen-e36} \index{theorem!John--Nirenberg} \index{inequality!John--Nirenberg} \index{BMO}
    \Ee\left[ (X_\infty - X_\tau)^2\mid \Fscr_\tau\right]\leq c^2\quad\text{for all stopping times $\tau$}.
\end{gather}
We define the norm $\|X\|_{\mathrm{BMO}}$ as the infimum of all constants $c$ in \eqref{gen-e36}.

\begin{theorem}[John, Nirenberg]\label{gen-35}
    Let $(X_t)_{t\geq 0}$  be a \textup{(}continuous\textup{)} martingale of class $\mathrm{BMO}$ such that $0<\|X\|_{\mathrm{BMO}}<\infty$. There exists a constant $p>0$ such that
    \begin{gather*}
        \Ee\left[\exp\left(p X^*/\|X\|_{\mathrm{BMO}}\right)\right] < \infty.
    \end{gather*}
\end{theorem}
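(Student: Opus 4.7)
The plan is to deduce Theorem~\ref{gen-35} directly from Lemma~\ref{gen-31} after a simple normalization. The BMO condition~\eqref{gen-e36} is stated as an $L^2$-bound of conditional increments, so the $L^1$-bound required by Lemma~\ref{gen-31} should follow by conditional Cauchy--Schwarz.

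First I would rescale. Set $c := \|X\|_{\mathrm{BMO}}$ and consider $Y_t := X_t/c$, which is again a continuous $L^2$-bounded martingale, $L^2$-bounded giving existence of $Y_\infty$ in $L^2$ by the martingale convergence theorem. By the definition of the BMO norm we have
\begin{gather*}
    \Ee\left[(Y_\infty - Y_\tau)^2 \mid \Fscr_\tau\right] \leq 1 \quad\text{for every stopping time $\tau$.}
\end{gather*}
Since $(Y_t)_{t\geq 0}$ has continuous paths, it has zero jumps ($\Delta Y_t\equiv 0$), so the bounded-jump hypothesis of Lemma~\ref{gen-31} is trivially satisfied with constant $c=0$ (one may equally well take any positive constant; the proof of Lemma~\ref{gen-31} works verbatim with smaller buffers when the jump size is zero). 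Moreover, $Y_{\tau-}=Y_\tau$ by continuity, so the conditional Jensen inequality yields
\begin{gather*}
    \Ee\left[|Y_\infty - Y_{\tau-}| \mid \Fscr_\tau\right]
    = \Ee\left[|Y_\infty - Y_{\tau}| \mid \Fscr_\tau\right]
    \leq \sqrt{\Ee\left[(Y_\infty - Y_\tau)^2 \mid \Fscr_\tau\right]}
    \leq 1,
\end{gather*}
for every stopping time $\tau$. This is precisely condition~\eqref{gen-e32} with $C=1$.

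Therefore Lemma~\ref{gen-31} applies to $(Y_t)_{t\geq 0}$ and produces an exponent $p>0$ with $\Ee[\exp(p Y^*)]<\infty$. Reinserting $Y^* = X^*/\|X\|_{\mathrm{BMO}}$ gives the conclusion. There is essentially no obstacle: once one notices that BMO gives an $L^2$ bound and Lemma~\ref{gen-31} only needs an $L^1$ bound, Cauchy--Schwarz closes the gap, and the continuity of the paths eliminates the jump issue. The only subtlety worth flagging is that the exponent $p$ produced by Lemma~\ref{gen-31} is an absolute constant independent of the particular martingale, so after unscaling the resulting estimate holds with $p/\|X\|_{\mathrm{BMO}}$, which is exactly the form claimed in Theorem~\ref{gen-35}.
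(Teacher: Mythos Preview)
Your proposal is correct and follows essentially the same route as the paper: note that continuity removes the jump hypothesis, use the conditional Cauchy--Schwarz (Jensen) inequality to convert the $L^2$ BMO bound into the $L^1$ bound \eqref{gen-e32}, and invoke Lemma~\ref{gen-31}. The only cosmetic difference is that you normalize by $\|X\|_{\mathrm{BMO}}$ before applying the lemma, whereas the paper applies the lemma to $X$ directly with $C=\|X\|_{\mathrm{BMO}}$ and lets the rescaling happen inside the proof of Lemma~\ref{gen-31}.
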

\begin{proof}
    Since $(X_t)_{t\geq 0}$ is $L^2$-bounded, it is uniformly integrable and so $X_\infty=\lim_{t\to\infty} X_t$ and $X^*$ exist in $L^2(\Pp)$.
    Since $t\mapsto X_t$ is continuous, the martingale has no jumps. Moreover, by the conditional Jensen inequality, we see for all stopping times
    \begin{gather*}
        \Ee\left[\left|X_\infty-X_\tau\right| \mid \Fscr_\tau\right]
        \leq \sqrt{\Ee\left[\left|X_\infty-X_\tau\right|^2 \mid \Fscr_\tau\right]}
        \leq \|X\|_{\mathrm{BMO}},
    \end{gather*}
    and the claim follows from Lemma~\ref{gen-31}.
\end{proof}

    A systematic study of (weighted) $\mathrm{BMO}$-spaces for stochastic processes is in Geiss~\cite[Sec.~2]{geiss05}, see also Geiss~\cite{geiss97} for martingale difference $\mathrm{BMO}$-spaces.

\begin{remark}\label{gen-37}\index{BMO}
    In (harmonic) analysis a measurable function $f\colon \rd\to\real$ is said to be in $\mathrm{BMO}$ if
    \begin{gather}\label{gen-e38}
        \sup_{Q} \frac{1}{|Q|} \int_Q |f(x)-f_Q|\,dx \leq c < \infty
    \end{gather}
    where the supremum ranges over all axis-parallel cubes $Q\subseteq\rd$, $|Q|$ is the Lebesgue measure of $Q$ and $f_Q = |Q|^{-1} \int_Q f(y)\,dy$ is the mean over $Q$. The $\mathrm{BMO}$-norm $\|f\|_{\mathrm{BMO}}$ is the infimum of all constants $c$ appearing in the inequality \eqref{gen-e38}. We will again encounter a similar kind of \enquote{dyadic maximal function} in Example~\ref{mar-15}.

    The original analyst's version of the John--Nirenberg inequality reads as follows: \textit{There is some constant $\alpha >0$ such that for each axis-parallel cube $Q\subseteq\rd$ the following inequality holds:
    \begin{gather*}
        \left|\left\{ x\in Q \mid {|f(x)-f_Q|} > \lambda \right\}\right|
        \leq |Q| e^{-\alpha\lambda/\|f\|_{\mathrm{BMO}}},\quad \lambda>0.
    \end{gather*}}
    A martingale proof can be found in Bass~\cite[Prop.~IV.(7.6)]{bass95}, the standard proof is in Torchinsky~\cite[Ch.~VIII, Thm.~1.3]{torchinsky86}.

    Let us briefly mention the origins of this class. In analysis, the $L^p$ scale is often used but the end points of this scale, the spaces $L^1$ and $L^\infty$, are special since they are not reflexive. In many problems in analysis, therefore, the space $L^1$ is replaced by a slightly smaller space such as the $L\log L$ class in $\rd$ (see also \S~\ref{mar-doo}) or the real Hardy space $H^1(\rd)$. For definitions and details we refer to the monograph by Torchinsky~\cite[Ch.~XIV, XV]{torchinsky86}. It turned out -- and even led to a Fields medal for Charles Fefferman -- that the space $\mathrm{BMO}(\rd)$ is the topological dual of $H^1(\rd)$. Of course, there are many more useful applications of $\mathrm{BMO}$ in harmonic analysis and the study of partial differential equations.
\end{remark}

\section{Martingales}\label{mar}

For most mathematicians, the first encounter with maximal inequalities in probability theory happens in connection with martingales. Here we will consider martingales indexed with $I = \nat_0$ or $I = [0,\infty)$. Recall that the underlying filtered probability space $(\Omega,\Ascr,\Pp,\Fscr_t)$ satisfies the usual conditions, i.e.\ $(\Omega,\Ascr,\Pp)$ is complete, $\Fscr_0$ contains all $\Pp$-null sets and $(\Fscr_t)_{t\geq 0}$ is right-continuous (if $I=[0,\infty)$).
\begin{definition}\label{mar-03} \index{martingale}
    Let $X=(X_t,\Fscr_t)_{t\in I}$ be a real-valued stochastic process; if $I=[0,\infty)$, we assume, in addition, that $t\mapsto X_t(\omega)$ is c\`adl\`ag.
    The process $X$ is said to be a \emph{martingale}, if
    \begin{enumerate}
    \item\label{mar-03-a} for every $t\in I$, the random variable $X_t$ is $\Fscr_t$ measurable;
    \item\label{mar-03-b} $\Ee|X_t|<\infty$ for every $t\in I$;
    \item\label{mar-03-c} $\int_F X_s\,d\Pp = \int_F X_t\,d\Pp$ for all $s,t\in I$  with $s<t$ and all $F\in\Fscr_s$;
    \end{enumerate}
    If we have \enquote{$\leq$} or \enquote{$\geq$} in Property \ref{mar-03-c}, then we speak of a sub- or super-martingale, respectively.
\end{definition}

\begin{remark}\label{mar-05}
a)
    Property~\ref{mar-03-c} in Definition~\ref{mar-03} can be equivalently expressed as $\Ee\left(X_t\mid\Fscr_s\right) = X_s$ using the conditional expectation $\Ee(\cdots \mid \Fscr_s)$. The present formulation has the advantage, that it naturally extends the notion of martingales to general $\sigma$-additive measure spaces -- which is useful for many applications of martingales in real analysis, see e.g.\ Stein~\cite[Ch.~IV]{stein70b}. Note that most martingale results, in particular convergence theorems and inequalities, remain valid in the $\sigma$-additive setting, see Schilling~\cite[Ch.~22--25, 27]{schilling-mims} for a systematic presentation. An interesting side effect is that this also allows one to treat martingales without formally introducing conditional expectations right from the beginning.

\smallskip b)
    If $t\mapsto X_t$ is not c\`adl\`ag but satisfies the Properties~\ref{mar-03-a}--\ref{mar-03-c} from Definition~\ref{mar-03}, it is always possible to find a c\`adl\`ag modification; this is due to the \enquote{usual conditions} for the filtered probability space $(\Omega,\Ascr,\Pp,\Fscr_t)$. For sub-martingales one needs, in addition, that $t\mapsto \Ee X_t$ is right continuous. For a proof see Revuz \& Yor~\cite[Ch.~II.\S2]{rev-yor99} or Schilling~\cite[Thm.~46.11]{schilling-dc}.
\end{remark}

Since we focus on maximal inequalities, we assume the reader to be familiar with the standard martingale convergence results, see e.g.\ Revuz \& Yor~\cite{rev-yor99} or Schilling~\cite{schilling-dc}. Throughout this chapter, we use the following standard notation for maximal functions:
\begin{gather}\label{mar-e02}
    X_t^* \coloneqq  \sup_{s\leq t}|X_s|
    \et
    X^* \coloneqq  \sup_{t\in I} X_t^* = \sup_{t\in I}|X_t|.
\end{gather}

\subsection{Doob's Maximal Inequalities}\label{mar-doo}
The basis of most martingale maximal inequalities is the following lemma.
\begin{lemma}[Doob--Kolmogorov maximal estimate]\label{mar-07} \index{inequality!Doob's maximal}
    Let $(X_t,\Fscr_t)_{t\in I}$ be a submartingale. For every $t \in I$ and $\lambda> 0$
    \begin{gather}\label{mar-e04}
        \Pp\left(X_t^* > \lambda\right) \leq \frac 1\lambda \int_{\{X_t^* \geq \lambda\}} X_t^+\,d\Pp.
    \end{gather}
    If $(X_t,\Fscr_t)_{t\in I}$ is a martingale or a positive submartingale such that $X_t\in L^p(\Pp)$ for some $p\geq 1$, then
    \begin{gather}\label{mar-e06}
        \Pp\left(X_t^* > \lambda\right) \leq \frac 1{\lambda^p} \int_{\{X_t^* \geq \lambda\}} |X_t|^p\,d\Pp.
    \end{gather}
\end{lemma}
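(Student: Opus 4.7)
My plan is to prove \eqref{mar-e04} via a stopping-time / optional-sampling argument, and then deduce \eqref{mar-e06} by composing with the convex function $x\mapsto |x|^p$.

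First I would tackle the discrete index case $I=\nat_0$. Fix $t\in\nat_0$, $\lambda>0$, set $A:=\{X_t^*>\lambda\}$, and define the stopping time
\begin{gather*}
    \tau := \inf\{k\in\nat_0 : k\le t,\: X_k>\lambda\},
    \qquad \inf\emptyset := +\infty.
\end{gather*}
Then $A=\{\tau\le t\}$, and $X_\tau>\lambda$ on $A$. Since $\tau\wedge t$ is a bounded stopping time, the submartingale property (or the optional sampling theorem applied to the bounded stopping times $\tau\wedge t\le t$) gives $\Ee[X_{\tau\wedge t}]\le \Ee[X_t]$. Splitting on $A$ and $A^c$ (noting $X_{\tau\wedge t}=X_t$ on $A^c$) yields
\begin{gather*}
    \int_A X_\tau\,d\Pp \le \int_A X_t\,d\Pp \le \int_A X_t^+\,d\Pp,
\end{gather*}
and since $X_\tau>\lambda$ on $A$,
\begin{gather*}
    \lambda\Pp(A) \le \int_A X_\tau\,d\Pp \le \int_A X_t^+\,d\Pp \le \int_{\{X_t^*\ge\lambda\}} X_t^+\,d\Pp,
\end{gather*}
which is \eqref{mar-e04}. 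An equivalent, possibly cleaner, route is to partition $A=\bigsqcup_{k\le t} A_k$ with $A_k=\{X_0\le\lambda,\dots,X_{k-1}\le\lambda,X_k>\lambda\}\in\Fscr_k$ and apply the submartingale inequality $\int_{A_k}X_k\,d\Pp\le\int_{A_k}X_t\,d\Pp$ on each piece before summing.

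For the continuous-time case $I=[0,\infty)$ I would use the c\`adl\`ag hypothesis from Definition~\ref{mar-03}. Pick a countable dense set $D\subseteq[0,t]$ containing $t$. Right-continuity of paths gives $X_t^*=\sup_{s\in D}|X_s|$ a.s., and applied to the submartingale $(X_s)_{s\in D}$ (enumerated in increasing order), the discrete-time argument above shows, for every finite $D_n\uparrow D$,
\begin{gather*}
    \lambda \Pp\Bigl(\max_{s\in D_n}X_s>\lambda\Bigr)\le \int_{\{\max_{s\in D_n}X_s\ge\lambda\}}X_t^+\,d\Pp.
\end{gather*}
Monotone convergence on both sides (and right continuity of measures) upgrades this to the analogous inequality with $\sup_{s\in D}X_s$, hence $X_t^*$, proving \eqref{mar-e04}. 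Alternatively one can work directly with the hitting time $\tau=\inf\{s\le t:X_s>\lambda\}$, which is a stopping time for the (right-continuous completion of the) filtration, apply optional sampling to $\tau\wedge t\le t$, and use right continuity to ensure $X_\tau\ge\lambda$ on $A=\{\tau\le t\}=\{X_t^*>\lambda\}$; this is the route already used repeatedly in the paper (compare the proof of Lemma~\ref{gen-21}).

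For \eqref{mar-e06}, note that $x\mapsto|x|^p$ is convex for $p\ge 1$, so by Jensen's inequality $(|X_t|^p,\Fscr_t)_{t\in I}$ is a submartingale whenever $X$ is a martingale with $X_t\in L^p$; the same holds by the composition rule when $X\ge 0$ is a positive submartingale, since $x\mapsto x^p$ is additionally increasing on $[0,\infty)$. Applying \eqref{mar-e04} to $(|X_s|^p)_{s\in I}$ at level $\lambda^p$, and observing $\{\sup_{s\le t}|X_s|^p>\lambda^p\}=\{X_t^*>\lambda\}$ and similarly with $\ge$, gives
\begin{gather*}
    \Pp(X_t^*>\lambda)=\Pp\bigl(\sup_{s\le t}|X_s|^p>\lambda^p\bigr)\le\frac{1}{\lambda^p}\int_{\{X_t^*\ge\lambda\}}|X_t|^p\,d\Pp,
\end{gather*}
which is \eqref{mar-e06}.

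The main subtlety -- not so much an obstacle as a point requiring care -- is the transition from discrete to continuous time: verifying that the first passage time $\tau$ is indeed a stopping time and that $X_\tau\ge\lambda$ on $A$ relies on c\`adl\`ag regularity of paths (and, in the stopping-time route, on right-continuity of the filtration, already invoked in Remark~\ref{mar-05}\,b)). Everything else is a direct consequence of the submartingale property combined with optional sampling.
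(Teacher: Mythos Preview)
Your proposal is correct and follows essentially the same approach as the paper: a first-passage stopping time combined with optional sampling for \eqref{mar-e04}, then composition with the convex function $x\mapsto|x|^p$ to get \eqref{mar-e06}. The paper's proof is organized slightly differently---it treats discrete and continuous $I$ in one stroke by working directly with $\tau=\inf\{s\le t\mid X_s\ge\lambda\}$ and the stopped positive submartingale $(X^+_{s\wedge\tau})_{s\in I}$, rather than first doing the discrete case and then approximating---but your alternative route via the hitting time in continuous time is exactly what the paper does, and the underlying idea is identical.
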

\begin{proof}
    Fix $t\in I$, $\lambda > 0$ and define the stopping time $\tau \coloneqq  \inf\{s\leq t \mid X_s\geq\lambda\}$; as usual, $\inf\emptyset \coloneqq  \infty$. Note that
    \begin{gather*}
        \left\{\tau < t\right\}
        \subseteq
        \left\{X_t^* > \lambda\right\}
        \subseteq
        \left\{\tau\leq t\right\}
        \subseteq
        \left\{X_t^* \geq \lambda\right\}.
    \end{gather*}
    These inclusions, along with the Markov inequality and the submartingale property for the stopped submartingale $(X_{t\wedge\tau}^{+})_{t\in I}$ yield
    \begin{align*}
        \Pp\left(X_t^* > \lambda\right)
        &\leq \Pp\left(\tau\leq t,\: X_{\tau\wedge t}^+ \geq\lambda\right)\\
        &\leq \frac 1\lambda \int_{\{\tau\leq t\}} X_{\tau\wedge t}^+\,d\Pp\\
        &\leq \frac 1\lambda \int_{\{\tau\leq t\}} X_{t}^+\,d\Pp\\
        &\leq \frac 1\lambda \int_{\{X^*\geq\lambda\}} X_{t}^+\,d\Pp.
    \end{align*}
    The estimate \eqref{mar-e06} follows from \eqref{mar-e04} since for a (positive sub-)martingale satisfying $\Ee\left[|X_t|^p\right]<\infty$ the process $(|X_t|^p,\Fscr_t)$ is a submartingale.
\end{proof}

Combining the estimates from Lemma~\ref{mar-07} with the good-$\lambda$ techniques from Section~\ref{pre-lambda} leads to Doob's maximal $L^p$ inequalities.
\begin{theorem}[Doob]\label{mar-09}\index{inequality!Doob's $L^p$}
    Let $(X_t,\Fscr_t)_{t\in I}$ be a martingale or a positive submartingale. Then
    \begin{gather}\label{mar-e08}
        \Ee\left[\left(X^*\right)^p\right]
        \leq
        \begin{dcases}
            \left(\frac p{p-1}\right)^p \sup_{t\in I} \Ee\left[|X_t|^p\right], &\text{if\ \ } p > 1,\\
            \frac{e}{e-1} \sup_{t\in I} \left(1+\Ee\left[|X_t| \log^+|X_t|\right]\right), &\text{if\ \ } p = 1,\\
            \frac 1{1-p} \sup_{t\in I} \left(\Ee\left|X_t\right|\right)^p, &\text{if\ \ } 0< p < 1.
        \end{dcases}
    \end{gather}
    If the terms on the right hand side of \eqref{mar-e08} are finite, $X^*\in L^p(\Pp)$ and the limit $X_\infty = \lim_{t\to\infty} X_t$ exists a.s.; if $p\geq 1$ the limit exists also in $L^p(\Pp)$ and $\sup_{t\in I} \Ee\left[|X_t|^p\right] = \Ee\left[|X_\infty|^p\right]$.
\end{theorem}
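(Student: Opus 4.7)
The plan is to combine the Doob--Kolmogorov maximal estimate (Lemma~\ref{mar-07}) with the good-$\lambda$ lemmas from Section~\ref{pre-lambda}, applied first at a fixed $t\in I$, and then to pass to the limit $t\to\infty$ by monotone convergence. For a positive submartingale we work directly with $X_t\geq 0$, while for a martingale we use that $|X_t|$ is a positive submartingale; in either case, Lemma~\ref{mar-07} yields
\begin{gather*}
    \Pp(X_t^* > \lambda) \leq \frac 1\lambda \int_{\{X_t^*\geq\lambda\}} |X_t|\,d\Pp, \quad \lambda>0,
\end{gather*}
which is exactly the hypothesis \eqref{pre-e15} of Lemma~\ref{pre-12} with the pair $(X_t^*, |X_t|)$.

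For $p>1$, Lemma~\ref{pre-12} gives $\Ee[(X_t^*)^p] \leq (p/(p-1))^p\,\Ee[|X_t|^p]$; for $p=1$ it gives the $L\log L$ bound $\Ee[X_t^*]\leq (e/(e-1))\,(1+\Ee[|X_t|\log^+|X_t|])$. For $0<p<1$ the first estimate above trivially implies $\Pp(X_t^*>\lambda)\leq \lambda^{-1}\Ee|X_t|$, so that Lemma~\ref{pre-13} yields $\Ee[(X_t^*)^p] \leq (1/(1-p))(\Ee|X_t|)^p$. In each case, because $|X_t|^p$ (for $p\geq 1$) and $x\mapsto x\log^+ x$ applied to $|X_t|$ are convex functions of a martingale (resp.\ increasing convex functions of a positive submartingale), the conditional Jensen inequality ensures that the right-hand sides are increasing in $t$, so the suprema are either attained in the limit or are the natural upper bound. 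Since $t\mapsto X_t^*$ is itself increasing, monotone convergence gives $\Ee[(X_t^*)^p]\uparrow \Ee[(X^*)^p]$, and the inequality \eqref{mar-e08} follows.

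It remains to establish the a.s.\ and $L^p$ convergence of $X_t$ under the assumption that the right-hand side of \eqref{mar-e08} is finite. In each of the three cases, finiteness of the right-hand side forces the (sub-)martingale to be $L^1$-bounded: for $p>1$ by $L^p$-boundedness, for $p=1$ via the $L\log L$ hypothesis, and for $0<p<1$ directly. Hence the standard (sub-)martingale convergence theorem yields an a.s.\ limit $X_\infty$, and the bound $\Ee[(X^*)^p]<\infty$ just established gives $X^*\in L^p(\Pp)$. For $p\geq 1$ the estimate $|X_t-X_\infty|^p\leq (2 X^*)^p\in L^1(\Pp)$ allows dominated convergence, so $X_t\to X_\infty$ in $L^p$; the identity $\sup_t\Ee[|X_t|^p]=\Ee[|X_\infty|^p]$ then follows because $(|X_t|^p)_{t\in I}$ is a submartingale, so $t\mapsto\Ee[|X_t|^p]$ is increasing with limit $\Ee[|X_\infty|^p]$.

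The main obstacle is essentially bookkeeping: one must verify that in both the martingale and the positive submartingale setting the triple $(X_t^*,|X_t|,t)$ plugs correctly into the good-$\lambda$ lemmas, and that the relevant bounding sequence $\Ee[|X_t|^p]$, $\Ee[|X_t|\log^+|X_t|]$, or $\Ee|X_t|$ is monotone in $t$, so that $\sup_{t\in I}$ coincides with the monotone limit used after applying Fatou or monotone convergence. The genuine work has already been done in Lemma~\ref{mar-07}, Lemma~\ref{pre-12}, and Lemma~\ref{pre-13}; the novelty here is only in choosing the correct submartingale to which these are applied.
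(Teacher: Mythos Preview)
Your proposal is correct and follows essentially the same approach as the paper: combine Lemma~\ref{mar-07} with Lemma~\ref{pre-12} (for $p\geq 1$) or Lemma~\ref{pre-13} (for $0<p<1$) at each fixed $t$, then take the supremum over $t$; for the convergence statements, invoke $L^1$-boundedness and the martingale convergence theorem. The only cosmetic difference is that the paper phrases the $L^p$-convergence via uniform integrability and Vitali's theorem (using $|X_t|^p\leq (X^*)^p\in L^1$), whereas you use dominated convergence with the same dominating function---these are equivalent here.
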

\begin{proof}
    The estimates in \eqref{mar-e08} follow if we combine Lemma~\ref{mar-07} and Lemma~\ref{pre-12} or Lemma~\ref{pre-13}, respectively, using $X=X_t^*$, $Y=X_t$, and taking the supremum over all $t\in I$.

    If the right hand side of \eqref{mar-e08} is finite, the family $(X_t)_{t\in I}$ is $L^1$-bounded, and the a.s.\ convergence $X_t\to X_\infty$ follows from the standard martingale convergence theorem. If $p\geq 1$, we also have uniform integrability, since $|X_t|^p\leq \left(X^*\right)^p\in L^1(\Pp)$, and we get $L^p$-convergence from Vitali's convergence theorem, see e.g.\
    Schilling~\cite[Thms.~22.9, 22.7]{schilling-mims}; in particular, $\sup_{t\in I}\Ee\left[|X_t|^p\right] = \Ee\left[|X_\infty|^p\right]$.
\end{proof}

Using Lemma~\ref{pre-13-2} we can obtain the following partial converse of the $L\log L$-case (i.e.\ for $p=1$) of Theorem~\ref{mar-09}. Originally, this assertion is due to Stein~\cite{stein69} in connection with the Hardy--Littlewood maximal function, see also Torchinsky
~\cite[Ch.~IV.5]{torchinsky86}; the probabilistic point of view is Gundy's~\cite{gundy69}, and our presentation follows Chersi~\cite{chersi70}.
\begin{theorem}[Stein; Gundy]\label{mar-11}
    Let $(X_t,\Fscr_t)_{t\in I}$ be a uniformly integrable martingale such that $X_t\geq 0$. Assume that there exist constants $a,b>0$ and a sequence $t_0<t_1<\dots < t_n <\dots < t_\infty = \infty$ such that
    \begin{gather*}
        X_0 \leq a
        \et
        X_{t_n} \leq b X_{t_{n-1}},\quad\text{a.s.\ and for all $n\in\nat$}.
    \end{gather*}
    If $\Ee\left[X^*\right] < \infty$, then $\Ee\left[X_\infty \log^+ X_\infty\right]<\infty$ for $X_\infty \coloneqq  \lim_{t\to\infty}X_t$.
\end{theorem}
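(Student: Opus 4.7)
The plan is to apply Lemma~\ref{pre-13-2} with $X := X_\infty$ and $Y := \widetilde X^* := \sup_{n\in\nat_0} X_{t_n}$. Because $(X_t)$ is a non-negative, uniformly integrable martingale, $X_\infty := \lim_{t\to\infty}X_t$ exists $\Pp$-a.s.\ and in $L^1$; since $t_n\to\infty$, this gives $X_{t_n}\to X_\infty$ $\Pp$-a.s., and therefore $0\leq X_\infty\leq\widetilde X^*\leq X^*$. In particular $\Ee\widetilde X^*\leq\Ee X^*<\infty$, so the domination $X\leq Y$ and the integrability $\Ee Y<\infty$ required by Lemma~\ref{pre-13-2} are in place. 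All that remains is to verify the tail-integral condition \eqref{pre-e20} with $Y=\widetilde X^*$.

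The doubling assumption is tailor-made for a one-step stopping argument along the sampled sequence. For $\lambda>a$ introduce the $(\Fscr_{t_n})_{n\geq 0}$-stopping time $\tau := \inf\{n\in\nat_0 \mid X_{t_n}>\lambda\}$ with $\inf\emptyset = \infty$. Since $X_{t_0}=X_0\leq a<\lambda$, one has $\tau\geq 1$ on $\{\tau<\infty\}$, and by construction $X_{t_{\tau-1}}\leq\lambda$ on this set. The hypothesis $X_{t_n}\leq bX_{t_{n-1}}$ then gives
\begin{gather*}
    X_{t_\tau}\leq bX_{t_{\tau-1}}\leq b\lambda\quad\text{on }\{\tau<\infty\}.
\end{gather*}
Applying the optional stopping theorem to the uniformly integrable martingale $(X_t)$ at the stopping time $t_\tau$, we get $\Ee(X_\infty\mid\Fscr_{t_\tau})=X_{t_\tau}$ on $\{\tau<\infty\}$, and therefore
\begin{gather*}
    \int_{\{\widetilde X^*>\lambda\}} X_\infty\,d\Pp
    = \int_{\{\tau<\infty\}} X_{t_\tau}\,d\Pp
    \leq b\lambda\,\Pp(\tau<\infty)
    = b\lambda\,\Pp(\widetilde X^*>\lambda)
    \leq b\lambda\,\Pp(\widetilde X^*\geq\lambda).
\end{gather*}
After enlarging $a$ and $b$ to exceed $1$ if necessary (both inequalities only become easier), this is precisely the hypothesis of Lemma~\ref{pre-13-2}, and that lemma yields $\Ee[X_\infty\log^+ X_\infty]<\infty$.

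The only real subtlety I anticipate is a conceptual one: the doubling control bounds $X$ exclusively at the sampling points $t_n$, so what the proof genuinely controls is the discrete maximum $\widetilde X^*$ rather than the pathwise maximum $X^*$. Using $Y=\widetilde X^*$ in Lemma~\ref{pre-13-2} neatly sidesteps this, because $\Ee X^*<\infty$ automatically gives $\Ee\widetilde X^*<\infty$, and the domination $X_\infty\leq\widetilde X^*$ demands nothing beyond the a.s.\ convergence $X_{t_n}\to X_\infty$ along the sequence. Beyond this book-keeping, the proof is a direct application of optional stopping together with the doubling bound at the stopped index, composed with the good-$\lambda$ style conclusion of Lemma~\ref{pre-13-2}.
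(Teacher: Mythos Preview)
Your proof is correct and follows essentially the same route as the paper: define the discrete maximum $Y=\sup_n X_{t_n}$, use the first-entrance index together with the doubling bound to get $X_{t_\tau}\le b\lambda$, apply optional stopping to rewrite $\int_{\{Y>\lambda\}}X_\infty\,d\Pp$, and conclude via Lemma~\ref{pre-13-2}. Your version is in fact slightly more careful than the paper's in spelling out why $\tau\ge 1$ (from $X_0\le a<\lambda$) and in noting that $a,b$ may be enlarged past $1$ to match the hypotheses of Lemma~\ref{pre-13-2}.
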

\begin{proof}
    Since $(X_t)_{t\in I}$ is uniformly integrable, the limit $X_\infty = \lim_{t\to\infty}X_t$ exists a.s.\ and in $L^1(\Pp)$. Set $Y\coloneqq \sup_{n\in\nat} X_{t_n}\leq X^*$ and define stopping times
    \begin{gather*}
        \tau \coloneqq  t_{\nu},\quad
        \nu \coloneqq  \inf\left\{n \mid X_{t_n}>\lambda\right\},\quad \inf\emptyset = \infty.
    \end{gather*}
    By assumption,
    \begin{gather*}
        X_{t_\nu} \leq b X_{t_{\nu-1}} \leq b\lambda
    \end{gather*}
    for all $\lambda>a$, and this estimate still holds if $\nu=\infty$. Since $\left\{Y>\lambda\right\}=\left\{t_\nu<\infty\right\}$, we can use optional stopping to see for all $\lambda > a$
    \begin{align*}
        \int_{\{Y>\lambda\}} X_\infty\,d\Pp
        = \int_{\{t_\nu<\infty\}} X_{t_\nu}\,d\Pp
        \leq b\lambda\Pp\left(t_\nu<\infty\right)
        = b\lambda\Pp\left(Y>\lambda\right).
    \end{align*}
    From this the assertion follows with Lemma~\ref{pre-13-2}.
\end{proof}

There is an interesting variant of the Doob--Kolmogorov maximal estimate in Lemma~\ref{mar-07}. It is a continuous-time version of the following inequality due to Chow~\cite{chow60} and Birnbaum \& Marshall~\cite[Thm.~2.1]{bir-mar61}: Let  $(X_n)_{n\in\nat}$ be a positive submartingale and $(c_n)_{n\in\nat}$ a decreasing sequence. For every $\lambda\geq 0$ \index{inequality!Birnbaum--Marshall}
\begin{gather}\label{mar-e10}\begin{aligned}
    \lambda \Pp\left(\max_{1\leq k\leq n} c_k X_k > \lambda\right)
    &\leq \sum_{k=1}^{n-1} (c_k-c_{k+1}) \Ee \left[X_k\right] + c_n \Ee \left[X_n\right]\\
    &= c_1\Ee\left[X_1\right] + \sum_{k=2}^{n} c_k \left(\Ee \left[X_k\right] - \Ee \left[X_{k-1}\right]\right).
\end{aligned}\end{gather}
\begin{lemma}\label{mar-12}
    Let $(X_t,\Fscr_t)_{t\geq 0}$, $X_0=0$, be a positive submartingale with finite second moments $m(t)\coloneqq  \Ee\left[X_t^2\right]$, and $c\colon [0,\infty)\to[0,\infty)$ a right continuous, decreasing function. Then
    \begin{gather}\label{mar-e12}
        \Pp\left(\sup_{0\leq s\leq t} c(s)X_s > 1\right)
        \leq \int_0^t c^2(s)\,dm(s),\quad t > 0.
    \end{gather}
\end{lemma}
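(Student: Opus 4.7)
The plan is to reduce the continuous-time inequality to the discrete Birnbaum–Marshall estimate \eqref{mar-e10} by approximation along a dense sequence of partitions of $[0,t]$.

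First, I would fix $t>0$ and a partition $0 = s_0 < s_1 < \dots < s_n = t$ and apply \eqref{mar-e10} to the positive submartingale $Y_k := X_{s_k}^2$ (which is a submartingale because $\phi(x)=x^2$ is convex and increasing on $[0,\infty)$ and $X_t\ge 0$) with the decreasing weights $d_k := c(s_k)^2$, evaluated at level $\lambda = 1$. Since $d_k Y_k > 1 \iff c(s_k) X_{s_k} > 1$ by positivity, and since $m(0)=0$ (so the boundary term $d_1 \Ee[Y_1]$ merges with the telescoping sum), this yields
\begin{gather*}
    \Pp\left(\max_{1\le k\le n} c(s_k) X_{s_k} > 1\right)
    \leq \sum_{k=1}^n c(s_k)^2 \bigl(m(s_k)-m(s_{k-1})\bigr).
\end{gather*}
Since $c^2$ is decreasing, $c(s_k)^2 \leq c(s)^2$ for every $s\in(s_{k-1},s_k]$, so the right-hand side is bounded by the Lebesgue--Stieltjes integral $\int_0^t c^2(s)\,dm(s)$ (which is well-defined because $m$ is increasing and right continuous).

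Next, I would take a nested sequence of partitions $\pi_N\subseteq \pi_{N+1}$ of $[0,t]$ whose mesh tends to $0$ and whose union $D$ is dense in $[0,t]$. The events $\{\max_{s\in\pi_N} c(s) X_s > 1\}$ are increasing in $N$, so continuity from below of $\Pp$ gives
\begin{gather*}
    \Pp\left(\sup_{s\in D} c(s) X_s > 1\right) \leq \int_0^t c^2(s)\,dm(s).
\end{gather*}
To replace $D$ by $[0,t]$, for any $s\in [0,t]\setminus D$ pick $s_N\in D$ with $s_N\downarrow s$; the right continuity of $c$ together with the c\`adl\`ag property of $X$ gives $c(s_N)X_{s_N}\to c(s)X_s$, so $c(s)X_s\leq \sup_{s'\in D} c(s')X_{s'}$. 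Hence $\sup_{s\in D} c(s)X_s = \sup_{s\in [0,t]} c(s)X_s$, and the inequality \eqref{mar-e12} follows.

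The main technical obstacle I anticipate is the passage from the partition supremum to the supremum over $[0,t]$: it relies in an essential way on the one-sided regularity of both the weight function and the path (decreasing \& right continuous $c$, c\`adl\`ag $X$), which together ensure the right-continuity of $s\mapsto c(s)X_s$ at every point and hence the density argument above. If one only had measurability, the partition sup would bound just an essential sup along a countable set and could miss jumps that arise as left-limits; this is exactly the kind of pitfall that the right continuity of $c$ in the statement is designed to avoid. The rest of the argument is routine: the discrete inequality \eqref{mar-e10} is applied in a standard way, and the Stieltjes estimate using the monotonicity of $c^2$ is elementary.
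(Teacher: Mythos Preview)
Your proof is correct and follows essentially the same route as the paper's: apply the discrete Birnbaum--Marshall inequality \eqref{mar-e10} to the squared process $X_{s_k}^2$ with weights $c(s_k)^2$, then pass to the limit along partitions using right continuity. The only cosmetic difference is that the paper uses the equidistant partitions $t_i = it/n$ and identifies the limit of the Riemann--Stieltjes sums with $\int_0^t c^2(s)\,dm(s)$, whereas you work with nested partitions and bound each finite sum by the integral via the monotonicity of $c^2$; your version is slightly more explicit about the two limiting steps but otherwise identical in spirit.
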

\begin{proof}
    Fix $t>0$ and define $t_i \coloneqq  t_{i,n} \coloneqq  (it)/n$ for $i=0,1,\dots,n$ and $n\in\nat$. By right continuity and with the inequality \eqref{mar-e10} applied to the submartingale $(X_{t_i}^2, \Fscr_{t_i})_{i=0,\dots, n}$ we get
    \begin{equation*}
    \mathclap{\begin{aligned}[b]
        \Pp\left(\sup_{0\leq s\leq t} c(s)X_s > 1\right)
        &= \lim_{n\to\infty} \Pp\left(\max_{i=1,\dots,n} c(t_i) X_{t_i} > 1\right)\\
        &\leq \lim_{n\to\infty} \sum_{i=1}^n c^2(t_i)\left(\Ee\left[X_{t_i}^2\right] - \Ee\left[\smash[b]{X_{t_{i-1}}^2}\right]\right)\\
        &= \int_0^t c^2(s)\,dm(s).
    \end{aligned}}\qedhere
    \end{equation*}
\end{proof}

We will close this first part with two applications of Doob's maximal inequalities.
\begin{example}[Kolmogorov's SLLN]\label{mar-13} \index{strong law of large numbers}
    Let us briefly sum up the standard proof of Kolmogorov's strong law of large numbers using martingales with an index set running to the left (\enquote{backwards or reverse martingale}), see e.g.\ Schilling~\cite[Ex.~24.8]{schilling-mims}
    or Schilling~\cite[Thms.~45.5, 45.6]{schilling-dc}:

    \medskip\noindent
    \textit{Let $(X_n)_{n\in\nat}$ be a sequence of identically distributed, integrable and independent \textup{(}or exchangeable\textup{)} random variables, then $S_n \coloneqq  (X_1+\dots+X_n)/n$ tends to $\Ee X_1$ a.s.\ and in $L^1$-sense.}

    \medskip\noindent
    The key observation in the argument is that
    \begin{gather*}
        M_{-n} \coloneqq  \frac{X_1+\dots+X_n}{n},\quad
        \Fscr_{-n} \coloneqq  \sigma\left(S_n, X_{n+1}, X_{n+2}, \dots\right)
    \end{gather*}
    is a  martingale with backwards directed index set. Such martingales are automatically uniformly integrable, and so the limit $M_{-n}\to M_{-\infty}$ exists a.s.\ and in $L^1$. In order to identify the limit as $M_{-\infty} = \Ee X_1$, one uses either Kolmogorov's zero-one law (in the iid case) or the Hewitt--Savage zero-one law (in the exchangeable case).

    \medskip\noindent
    There is also the usual converse: \textit{If a sequence of iid random variables is such that $\lim_{n\to\infty}\frac 1n(X_1+\dots+X_n)$ exists a.s., then $\Ee\left[|X_1|\right]<\infty$.}

    \medskip\noindent
    The standard proof -- along with a short argument for positive random variables -- can be found in
    Schilling~\cite[continuation of Ex.~24.8, pp.~296--7]{schilling-mims}. We are now interested in the following converse:

    \medskip\noindent
    \textit{If, in the above setting, $X_1\geq 0$ and $\Ee \left[M^*\right] = \Ee \left[\sup\limits_{n\in\nat} \frac 1n(X_1+\dots+X_n)\right] < \infty$, then $\Ee\left[X_1\log^+ X_1\right]< \infty$.}

    \medskip\noindent
    This follows at once from Theorem~\ref{mar-11} -- suitably modified for martingales with backwards directed index set: The roles of $X_0$ and $X_\infty$ in this theorem are now played by $M_{-\infty}=\Ee X_1$ and $M_{1}$, respectively. Moreover,
    \begin{gather*}
        M_{-n}
        = \frac{X_1+\dots+X_n}{n}
        \leq \frac{n+1}{n} \frac{X_1+\dots+X_n+X_{n+1}}{n+1}
        \leq 2 M_{-n-1}
    \end{gather*}
    shows that the condition of Theorem~\ref{mar-11} holds with $a = \Ee X_1>0$ and $b=2$. Thus, $\Ee\left[X_1\log^+ X_1\right]<\infty$.
\end{example}

\begin{example}[Hardy--Littlewood maximal functions]\label{mar-15} \index{maximal function!Hardy--Littlewood}
Write $C = [0,1)^d$ for the half-open unit cube in $\rd$, $\lambda$ for Lebesgue measure on $(C,\Bscr(C))$, and define the dyadic $\sigma$-algebras
\begin{gather*}
    \Fscr_k \coloneqq  \sigma\left(\Gscr_k\right),\quad
    \Gscr_k \coloneqq  \left\{(j_1,\dots,j_d) 2^{-k} + \left[0,2^{-k}\right)^d \mid \begin{aligned} &j_i = 0,\dots,2^k-1,\\ &i=1,\dots d\end{aligned}\right\}.
\end{gather*}
Clearly, $\Gscr_\infty \coloneqq  \bigcup_{k\in\nat} \Gscr_k$ generates the Borel sets $\Bscr(C)$. Each $f\in L^1(\lambda) = L^1(C,\lambda)$ gives rise to a uniformly integrable martingale by
\begin{gather*}
    f_k(x) \coloneqq  \Ee\left[ f\mid\Fscr_k\right](x) = \sum_{Q\in\Gscr_k} \frac{1}{\lambda(Q)}\int_Q f\,d\lambda \, \I_Q(x),
\end{gather*}
and we can define the \emph{dyadic maximal function} $f^{\diamond}$ \index{maximal function!dyadic} as
\begin{gather*}\label{square-max}%hidden
    f^\diamond(x)
    \coloneqq  \sup_{k\in\nat} \Ee\left[|f| \mid \Fscr_k\right](x)
    = \sup_{Q\in\Gscr_\infty,\: Q\ni x} \frac 1{\lambda(Q)} \int_{Q} |f|\,d\lambda.
\end{gather*}
The usual \emph{Hardy--Littlewood maximal function} is given by
\begin{gather*}\label{H-L-max}%hidden
    f^*(x) = \sup_{\ball{r}{x}, r\leq 1} \frac 1{\lambda(B)} \int_{B} |f|\,d\lambda,\quad x\in C;
\end{gather*}
if necessary, we extend $f$ periodically to $\rd$ and use Lebesgue measure $\lambda$ on $\rd$. In the definition of $f^*$ it does not matter whether the balls are centred around $x$ or just contain them, and we also may only consider balls with rational centres and rational radii containing $x$, see Schilling~\cite[Lem.~25.16]{schilling-mims}.

\begin{theorem*}\index{LlogL@$L\log L$ class}
\textit{The following estimates for the Hardy-Littlewood maximal function hold true: For any $f\in L^1(C,\lambda)$
\begin{gather}\label{mar-e20}
    \|f^*\|_{L^p}
    \leq
    \begin{dcases}
        c_d\frac{p}{p-1}\|f\|_{L^p}, &\text{if\ \ } p\in (1,\infty);\\
        c_d(1+\|f\log^+|f|\|_{L^1}), &\text{if\ \ } p = 1.
    \end{dcases}
\end{gather}
Moreover, $\|f^*\|_{L^p} < \infty$ if, and only if, $\|f\log^+|f|\|_{L^1}<\infty$ \textup{(}$p=1$\textup{)} or $\|f\|_{L^p}<\infty$ \textup{(}$1<p<\infty$\textup{)}, respectively.}
\end{theorem*}

If we can show that the dyadic maximal function $f^\diamond$ satisfies $f^\diamond \geq c f^*$, then the estimates \eqref{mar-e20} follow from Doob's inequalities \eqref{mar-e08}. The trouble is, that a ball $\ball{r}{x}$ of radius $r < \frac 12 2^{-k}$, $k=-1,0,1,2,3,\dots$, need not entirely fall into any single square of our lattice $\Gscr_k$, and it even need not be entirely contained in $C$.
\begin{figure}[ht]
    \includegraphics[width = 0.4\textwidth]{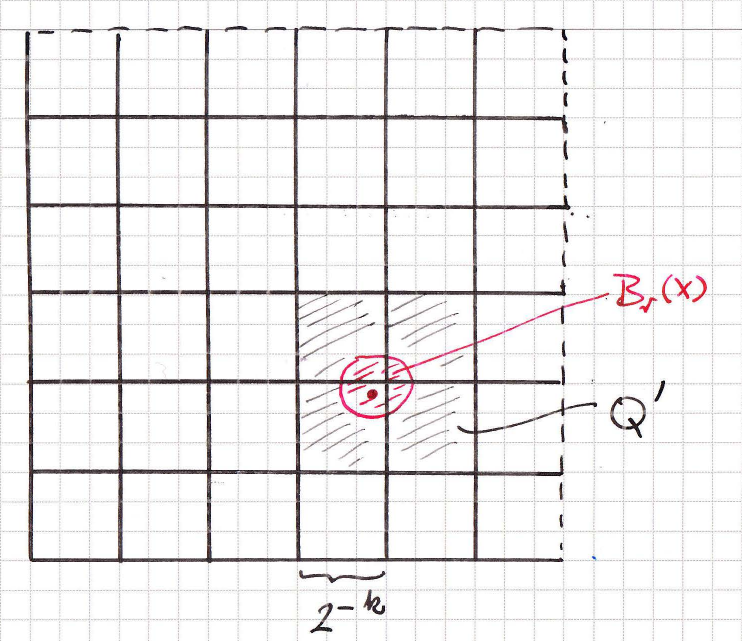}
    \caption{The picture shows a square from $\Gscr_{k-2}$ which is subdivided into four resp.\ sixteen squares from $\Gscr_{k-1}$ and $\Gscr_{k}$, respectively. A ball $\ball{r}{x}$ of radius $\frac 12 2^{-k-1} \leq r < \frac 12 2^{-k}$, $k=-1,0,1,2,\dots$, need not entirely fall into any single square of side-length $2^{-k}$, but it hits at most $2^d$ such squares \textup{(}shaded\textup{)}, each of them is a corner of a square from $\Gscr_{k-1}$ with side-length $2^{1-k}$, and all of them are in the square from $\Gscr_{k-2}$ with side-length $2^{2-k}$. Note that the pictures shows the worst possible case as $\ball{r}{x}$ may already fall into a square from $\Gscr_{k-1}$ \textup{(}or even $\Gscr_{k}$\textup{)}.}
\label{mar-grid}
\end{figure}

The latter problem can be fixed by extending $f$ and $\Gscr_k$, hence $f_k$ and $\Fscr_k$, periodically. This allows us to reduce everything to the situation where $\ball{r}{x}$, $\frac 12 2^{-k-1} \leq r <\frac 12 2^{-k}$, is within $C$, but fails to be inside a single $Q\in\Gscr_k$. In this case, $\ball{r}{x}$ intersects at most $2^d$ of the sets in $\Gscr_k$ (see Fig.~\ref{mar-grid}). Denote by $Q'\in\Gscr_{k-2}$ a cube of side length $4\cdot 2^{-k}$ such that $\ball{r}{x}\subseteq Q'$. Then,
\begin{align*}
    \frac 1{\lambda(\ball{r}{x})} \int_{\ball{r}{x}} |f|\,d\lambda
    \leq \frac{\lambda(Q')}{\lambda(\ball{r}{x})} \frac 1{\lambda(Q')} \int_{Q'}|f|\,d\lambda
    \leq c_d f^{\diamond}(x).
\end{align*}
The constant $c_d$ comes from the comparison of $\lambda(Q')$ and $\lambda(\ball{r}{x})$, and depends only on the space dimension as $\frac 12 2^{-k-1} \leq r <\frac 12 2^{-k}$; this constant is of the order $4^d$.

This proves $f^*(x)\leq c_d f^\diamond(x)$. A similar reasoning also gives the converse inequality $f^\diamond(x)\leq c_d' f^*(x)$.

In order to see the second part of the assertion, we note first that the martingale convergence theorem gives $\lim_{k\to\infty} f_k(x) = f(x)$ for almost all $x$. Note that this essentially proves Lebesgue's differentiation theorem. Therefore, $\|f\|_{L^p}\leq \|f^\diamond\|_{L^p}$ follows easily for all $p\in [1,\infty)$. The assertion for $p=1$ follows with Theorem~\ref{mar-11}, if we use $|f|$ instead of $f$.
\end{example}

\medskip
    A further Doob-type maximal result on martingales is discussed on connection with $\mathrm{BMO}$ spaces in Theorem~\ref{gen-35}.

\begin{example}[The \enquote{sharp} maximal function]\label{mar-17}
We continue Example~\ref{mar-15}. Recall that $C = [0,1)^d$ is the half-open unit cube in $\rd$, $\lambda$ is Lebesgue measure on $(C,\Bscr(C))$, and $\Fscr_k=\sigma(\Gscr_k)$ is the $\sigma$-algebra generated by the dyadic cubes $\Gscr_k$ in $C$ with side-length $2^{-k}$. For $f\in L^1(C,\lambda)$ we define the \emph{sharp maximal function} as \index{maximal function!sharp $(\#)$}
\begin{equation*}
    f^{\#}(x) \coloneqq \sup_{Q\ni x} \frac 1{\lambda(Q)} \int_Q |f - f_Q|\,d\lambda
    \quad\text{where}\quad
    f_Q \coloneqq \frac 1{\lambda(Q)} \int_Q |f|\,d\lambda;
\end{equation*}
the supremum ranges over all axis-parallel cubes $Q\subset C$ such that $x\in Q$. Compared with the Hardy--Littlewood maximal function $f^*$, we have now an additional centering with $f_Q$. This is useful if we want to deal with $\mathrm{BMO}$-spaces, see Remark~\ref{gen-37}; trivially $f\in\mathrm{BMO}$ if, and only if, $f^{\#}\in L^\infty$. It is straightforward to see that
\begin{gather*}
    f^{\#}(x) \leq 2 f^*(x)
    \quad\text{and, therefore,}\quad
    \|f^{\#}\|_{L^p} \leq \frac{2c_d p}{p-1}\, \|f\|_{L^p} \quad (1<p<\infty)
\end{gather*}
using the results from Example~\ref{mar-15}. Again, $p=1$ and $f^*, f^{\#}\in L^1$, requires $f\log^+|f| \in L^1$. But now we also have the converse estimate.

\begin{theorem*}[Fefferman--Stein \cite{fef-ste72}]
    Let $f\in L^r(C,\lambda)$ for some $1<r<\infty$. For every $r<p<\infty$ there is a constant $\kappa_{d,p}$ such that the sharp maximal function satisfies
    \begin{gather*}
        \|f\|_{L^p}\leq \|f^{\diamond}\|_{L^p} \leq \kappa_{d,p}\|f^{\#}\|_{L^p}.
    \end{gather*}
\end{theorem*}

The proof of this theorem relies on the good-$\lambda$-inequality \eqref{pre-e21}, which can be established in the following form
\begin{gather*}
    \lambda\left(\left\{ x\in C \mid f^{\diamond}(x) > 2\lambda, \: f^{\#}(x) < \gamma\lambda\right\}\right)
    \leq \gamma 2^d \lambda\left(\left\{ x\in C \mid f^{\diamond}(x) \geq \lambda\right\}\right),
\end{gather*}
for $\gamma\in (0,1)$, see e.g.\ Grafakos~\cite[Thm.~7.4.4]{grafakos09}. Take $\gamma < 2^{-p-d-1}$ and use Lemma~\ref{pre-14} to conclude that
\begin{gather*}
    \|f^{\diamond}\|_{L^p} \leq \frac {2\cdot 2^{1/p}}\gamma \|f^*(x)\|_{L^p}, \quad \gamma < 2^{-p-d-1},
\end{gather*}
and since $\|f\|_{L^p}\leq \|f^{\diamond}\|_{L^p}$ -- cf.\ the last paragraph in the previous Example~\ref{mar-15} -- the claim follows.

The sharp maximal function was introduced by Fefferman \& Stein~\cite{fef-ste72} in order to study interpolation \index{BMO}\index{interpolation} pairs when one end-point is the space $\mathrm{BMO}$. A detailed modern presentation is given by Grafakos~\cite[Ch.~7.4]{grafakos09}, see also Torchinsky \cite[Ch.~ VIII.1,2]{torchinsky86}. Recently, Kami\'nski \& Os\c{e}kowski~\cite{kam-ose23} have found an optimal bound for the constant $\kappa_{1,p}$ in dimension $d=1$.
\end{example}

\subsection{Khintchine and Burkholder--Davis--Gundy Inequalities}\label{mar-bdg}
Before we tend to the general case of the Burkholder--Davis--Gundy inequalities, we want to discuss an inequality which is frequently used in analysis. As it turns out it is related to the concentration of measure method, see Section~\ref{pre-con},  as well as to Doob's maximal inequality, cf.\ Theorem~\ref{mar-09}. The following inequality is Azuma's~\cite{azuma67} generalization of Hoeffding's inequality~\cite{hoeffding63} for sums of bounded, independent random variables.
\begin{lemma}[Azuma]\label{mar-31}\index{inequality!Azuma}
    Let $(X_n,\Fscr_n)_{n\in\nat_0}$ be a martingale such that the increments are bounded: $|X_n-X_{n-1}|\leq c_n$ for a sequence  $(c_n)_{n\in\nat}\subseteq[0,\infty)$. Set $d_n^2\coloneqq  c_1^2+\dots+c_n^2$. Then
    \begin{align}\label{mar-e30}
        \Pp(X_n-X_0 \geq \lambda)
        &\leq \exp\left[-\frac{\lambda^2}{2d_n^2}\right], \quad \lambda\geq 0,\; n\in\nat,\\
    \label{mar-e32}
        \Pp(|X_n-X_0| \geq \lambda)
        &\leq 2\exp\left[-\frac{\lambda^2}{2d_n^2}\right],\quad \lambda\geq 0,\; n\in\nat.
    \end{align}
\end{lemma}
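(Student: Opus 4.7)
The plan is to use the exponential Chernoff--Markov method, combined with the standard trick of controlling the conditional moment generating function of a bounded, centred random variable. Write the telescoping sum $X_n-X_0=\sum_{k=1}^n D_k$ with $D_k:=X_k-X_{k-1}$, so that $|D_k|\leq c_k$ and $\Ee[D_k\mid\Fscr_{k-1}]=0$. For any $t>0$, Markov's inequality gives
\begin{gather*}
    \Pp(X_n-X_0\geq\lambda)
    \leq e^{-t\lambda}\,\Ee\left[e^{t(X_n-X_0)}\right],
\end{gather*}
so the task reduces to bounding the exponential moment and then optimising in $t$.

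The key auxiliary estimate is the following conditional Hoeffding lemma: if $Y$ is a random variable with $\Ee[Y\mid\Gscr]=0$ and $|Y|\leq c$, then
\begin{gather*}
    \Ee\left[e^{tY}\mid\Gscr\right]\leq e^{t^2c^2/2},\quad t\in\real.
\end{gather*}
I would prove this by writing $Y=\tfrac{c-Y}{2c}(-c)+\tfrac{c+Y}{2c}c$ and using the convexity of $y\mapsto e^{ty}$ on $[-c,c]$ to get $e^{tY}\leq\tfrac{c-Y}{2c}e^{-tc}+\tfrac{c+Y}{2c}e^{tc}$ pointwise; taking $\Ee(\cdot\mid\Gscr)$ kills the $Y$ term and leaves $\cosh(tc)$, which is dominated by $e^{t^2c^2/2}$ via the elementary power-series comparison $\cosh(x)=\sum_{k\geq 0}x^{2k}/(2k)!\leq\sum_{k\geq 0}(x^2/2)^k/k!=e^{x^2/2}$.

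Applied conditionally with $Y=D_k$, $\Gscr=\Fscr_{k-1}$, $c=c_k$, this gives $\Ee[e^{tD_k}\mid\Fscr_{k-1}]\leq e^{t^2 c_k^2/2}$. Using the tower property iteratively,
\begin{align*}
    \Ee\left[e^{t(X_n-X_0)}\right]
    &=\Ee\left[e^{t(X_{n-1}-X_0)}\,\Ee\left[e^{tD_n}\mid\Fscr_{n-1}\right]\right]\\
    &\leq e^{t^2 c_n^2/2}\,\Ee\left[e^{t(X_{n-1}-X_0)}\right]
    \leq\dots\leq\exp\left[\tfrac{1}{2}t^2 d_n^2\right].
\end{align*}

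Inserting this in the Chernoff bound gives $\Pp(X_n-X_0\geq\lambda)\leq\exp[-t\lambda+\tfrac{1}{2}t^2d_n^2]$. Minimising the exponent in $t>0$ (the optimum is $t=\lambda/d_n^2$) yields \eqref{mar-e30}. Finally, the two-sided bound \eqref{mar-e32} follows by applying \eqref{mar-e30} to the martingale $(-X_n,\Fscr_n)_{n\in\nat_0}$, whose increments are bounded by the same $c_n$, and using the union bound $\Pp(|X_n-X_0|\geq\lambda)\leq\Pp(X_n-X_0\geq\lambda)+\Pp(-(X_n-X_0)\geq\lambda)$. The only real work is the Hoeffding lemma; the rest is the standard exponential-Markov/tower-property chain, and there is no serious obstacle.
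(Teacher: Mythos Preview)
Your proof is correct and essentially identical to the paper's: both use the convexity/secant-line bound on $[-c,c]$ to obtain $\Ee[e^{tD_k}\mid\Fscr_{k-1}]\leq\cosh(tc_k)\leq e^{t^2c_k^2/2}$, iterate via the tower property, and finish with the Chernoff--Markov optimisation $t=\lambda/d_n^2$; the two-sided bound is likewise obtained by applying the one-sided result to $-X_n$.
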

\noindent
Further generalizations, e.g.\ if the $c_n$ are random variables, are due to Hitczenko \cite[Thm.~3.1, Lem.~4.3]{hitczenko90}.
\begin{proof}
    Since $(-X_n)_{n\in\nat_0}$ is also a martingale with bounded increments, it is enough to prove \eqref{mar-e30} and add the resulting inequalities for $(\pm X_n)_{n\in\nat_0}$ to get \eqref{mar-e32}.

    The function $e_\lambda(x)\coloneqq  e^{\lambda x}$, $x\in\real$, $\lambda\geq 0$, is convex. Thus, the graph of $e_\lambda$ is on the segment $[-c,c]$ below the line connecting the points $(-c; e_\lambda(-c))$ and $(c; e_\lambda(c))$, i.e.
    \begin{gather*}
        e^{\lambda x}
        \leq \frac{e^{\lambda c}-e^{-\lambda c}}{2c}\,x + \frac 12\left(e^{\lambda c}+e^{-\lambda c}\right),\quad
        x\in [-c,c].
    \end{gather*}
    Set $x=(X_{n}-X_{n-1})$ and $c=c_n$, and take on both sides conditional expectation $\Ee(\dots\mid\Fscr_{n-1})$. This gives
    \begin{align*}
        &\Ee\left(e^{\lambda(X_n-X_{n-1})}\mid\Fscr_{n-1}\right)\\
        &\quad\leq \frac{e^{\lambda c_n}-e^{- \lambda c_n}}{2c_n} \, \Ee\left( X_n-X_{n-1}\mid\Fscr_{n-1}\right) + \frac{e^{\lambda c_n}+e^{-\lambda c_n}}2\\
        &\quad= \frac{e^{\lambda c_n}+e^{-\lambda c_n}}2 \leq e^{\lambda^2c_n^2/2}.
    \end{align*}
    The tower property of conditional expectation and a pull out argument yield
    \begin{align*}
        \Ee \left(e^{\lambda(X_n-X_0)}\right)
        &= \Ee\left( \prod_{i=1}^n e^{\lambda(X_i-X_{i-1})}\right)\\
        &= \Ee\left( \prod_{i=1}^{n-1} e^{\lambda(X_i-X_{i-1})} \Ee\left[ e^{\lambda(X_n-X_{n-1})}\mid\Fscr_{n-1}\right]\right)\\
        &\leq \Ee\left( \prod_{i=1}^{n-1} e^{t(X_i-X_{i-1})} \right) e^{\lambda^2c_n^2/2}\\
        &\leq \cdots \leq e^{\lambda^2d_n^2/2}.
    \end{align*}
    Using the Markov inequality for $t,\xi\geq 0$ gives
    \begin{align*}
        \Pp\left(X_n-X_0 \geq \lambda\right)
        = \Pp\left(e^{\xi(X_n-X_0)} \geq e^{\lambda\xi}\right)
        \leq e^{-\lambda\xi}\Ee e^{\xi(X_n-X_0)}
        \leq e^{-\lambda\xi + \xi^2d_n^2/2}.
    \end{align*}
    Since the right hand side becomes minimal at $\xi = \lambda/d_n^2$, we get \eqref{mar-e30}.
\end{proof}

With the help of Azuma's inequality we can get \emph{Khintchine's inequalities}. The upper bound in Theorem~\ref{mar-33} was first proved in Khintchine~\cite{khintchine23}, the beautiful \enquote{duality argument} for the lower bound is due to Littlewood~\cite{littlewood30} and Paley \& Zygmund~\cite{pal-zyg30}. In analysis, the iid random variables appearing in Theorem~\ref{mar-33} are often represented by the Rademacher functions on the probability space $([0,1), \Bscr[0,1],\lambda)$, hence the alternative name \emph{Rademacher inequalities}.
\begin{theorem}[Khintchine]\label{mar-33} \index{inequality!Khintchine}
    Let $(c_n)_{n\in\nat}\subseteq\real$ be a sequence of real numbers and $(\xi_n)_{n\in\nat}$ iid Bernoulli random variables such that $\Pp(\xi_1=\pm 1)=\frac 12$. For all $0<p<\infty$
    \begin{gather}\label{mar-e34}
        \gamma_p \left(\sum_{i=1}^n c_i^2\right)^{p/2}
        \leq
        \Ee\left(\left|\sum_{i=1}^n {c_i\xi_i}\right|^p\right)
        \leq
        \Gamma_p \left(\sum_{i=1}^n c_i^2\right)^{p/2},\quad n\in\nat.
    \end{gather}
    The constants $0<\gamma_p \leq \Gamma_p<\infty$ depend only on $p$.
\end{theorem}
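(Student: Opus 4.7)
The plan is to treat the upper and lower bounds separately, with the sub\-Gaussian tail coming from Azuma's inequality (Lemma~\ref{mar-31}) and the lower bound handled by a Hölder/duality trick that bootstraps on the upper bound at $p=4$.

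For the upper bound, set $S_n := \sum_{i=1}^n c_i \xi_i$ and $D_n^2 := \sum_{i=1}^n c_i^2$. Since $(\xi_i)_{i\in\nat}$ are iid Rademacher random variables, the sums $(S_n,\Fscr_n)_{n\in\nat_0}$ with $\Fscr_n = \sigma(\xi_1,\dots,\xi_n)$ form a martingale with bounded increments $|S_n - S_{n-1}| = |c_n|$. Azuma's inequality, applied with $c_n \rightsquigarrow |c_n|$, gives
\begin{gather*}
    \Pp\left(|S_n|\geq \lambda\right) \leq 2\exp\left[-\lambda^2/(2 D_n^2)\right], \quad \lambda \geq 0.
\end{gather*}
Integrating via the layer cake formula (Lemma~\ref{pre-10}) with $\phi(\lambda)=\lambda^p$ and substituting $\lambda = D_n\sqrt{2u}$ yields
\begin{gather*}
    \Ee\left|S_n\right|^p
    = \int_0^\infty p\lambda^{p-1}\Pp(|S_n|>\lambda)\,d\lambda
    \leq 2p\,2^{p/2} D_n^p\int_0^\infty u^{p/2-1} e^{-u}\,du
    = \Gamma_p D_n^p,
\end{gather*}
with $\Gamma_p := p\, 2^{p/2+1}\,\Gamma(p/2)$.

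For the lower bound, I would first dispose of the easy range $p\geq 2$ by Jensen's inequality: since $\Ee S_n^2 = D_n^2$ (independence and $\Ee\xi_i = 0$, $\Ee\xi_i^2 = 1$), one has $(\Ee|S_n|^p)^{1/p} \geq (\Ee |S_n|^2)^{1/2} = D_n$, so $\gamma_p = 1$ works. The genuine content is the range $0<p<2$, and the plan is to interpolate $L^2$ between $L^p$ and $L^4$. Pick $\theta := 2/(4-p)\in (0,1)$ and apply Hölder with conjugate exponents $1/\theta$ and $1/(1-\theta)$ to the factorisation $|S_n|^2 = |S_n|^{p\theta}\cdot |S_n|^{2-p\theta}$, choosing the second exponent so that $(2-p\theta)/(1-\theta) = 4$; a direct check shows this is exactly what $\theta = 2/(4-p)$ achieves. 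The result is
\begin{gather*}
    D_n^2 = \Ee\left|S_n\right|^2 \leq \left(\Ee\left|S_n\right|^p\right)^{\theta}\left(\Ee\left|S_n\right|^4\right)^{1-\theta}
    \leq \left(\Ee\left|S_n\right|^p\right)^{\theta}\Gamma_4^{1-\theta} D_n^{4(1-\theta)},
\end{gather*}
where the final step uses the upper bound already proved at $p=4$. Rearranging and using $2-4(1-\theta) = p\theta$ gives $D_n^{p}\leq \Gamma_4^{(1-\theta)/\theta}\,\Ee|S_n|^p$, i.e.\ the claim with $\gamma_p := \Gamma_4^{-(1-\theta)/\theta}$.

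The only real obstacle is the lower bound for small $p$: Azuma yields sub\-Gauss\-ian upper estimates almost for free, but no lower tail bound follows from it. The duality trick above circumvents this by exploiting the identity $\Ee S_n^2 = D_n^2$ (which is genuinely sharp) and trading integrability against a higher moment controlled by the upper bound. An alternative worth flagging would be a direct computation with the moment generating function $\Ee\cosh(\lambda S_n) = \prod_i\cosh(\lambda c_i)$, which also gives the upper bound with an explicit optimised constant; but for stating the two-sided inequality with matching structure, the Hölder route is the cleanest.
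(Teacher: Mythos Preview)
Your proof is correct and follows the paper's approach exactly: Azuma's inequality plus the layer-cake formula for the upper bound, and Littlewood's H\"older interpolation between $L^p$ and $L^4$ (pivoting on the exact identity $\Ee S_n^2 = D_n^2$) for the lower bound when $0<p<2$, with the case $p\geq 2$ handled by monotonicity of $L^p$ norms. The only discrepancy is a harmless factor of $2$ in your constant---the substitution actually gives $\Gamma_p = p\,2^{p/2}\Gamma(p/2)$ rather than $p\,2^{p/2+1}\Gamma(p/2)$.
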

\begin{proof}
\emph{Upper estimate}.
    $S_0\coloneqq  0$, $S_n \coloneqq  \xi_1+\dots+\xi_n$ is a martingale, and so is the \emph{martingale transform}
    \begin{gather*}\label{mg-trafo}%hidden
        X_n \coloneqq  c\bullet S_n \coloneqq  \sum_{i=1}^n c_i (S_i-S_{i-1}) = \sum_{i=1}^n c_i \xi_i.
    \end{gather*}
    Azuma's inequality (Lemma~\ref{mar-31}) yields
    \begin{gather*}
        \Pp\left(|X_n| \geq \lambda\right)
        \leq 2 e^{-\lambda^2/2d_n^2},\quad d_n^2 = c_1^2+\dots+c_n^2,
    \end{gather*}
    and a direct calculation along the lines of Lemma~\ref{pre-12} shows for all $0<p<\infty$
    \begin{align*}
        \Ee\left(|X_n|^p\right)
        \leq 2p\int_0^{\;\:\infty} t^{p-1} e^{-t^2/2d_n^2}\,dt
        =  p 2^{p/2} \Gamma(p/2)\, \left(d_n^2\right)^{p/2},
    \end{align*}
    i.e.\ $\Gamma_p \coloneqq  p 2^{p/2} \Gamma(p/2)$.

\medskip\noindent\emph{Lower estimate}.
    The following \enquote{duality} trick is due to Littlewood~\cite{littlewood30}: Using independence, we see
    \begin{gather*}
        \Ee \left(X_n^2\right) = \Vv X_n = \sum_{i=1}^n \Vv(c_i\xi_i) = \sum_{i=1}^n c_i^2,
    \end{gather*}
    and the lower estimate can be rewritten as
    \begin{gather*}
        \gamma_p\|X_n\|_{L^2}^p\leq \|X_n\|_{L^p}^p.
    \end{gather*}
    This inequality is obvious if $p\geq 2$, since the $L^p(\Pp)$ norm is increasing in $p$. If $p\in (0,2)$, we write
    \begin{gather*}
        2= p\alpha + 4\beta
        \quad\text{with suitable}\quad
        \alpha+\beta=1,\; \alpha, \beta\in (0,1),
    \end{gather*}
    and use H\"{o}lder's inequality in conjunction with the upper estimate:
    \begin{align*}
        \Ee\left(|X_n|^2\right)
        = \Ee\left(|X_n|^{p\alpha + 4\beta}\right)
        &\leq \left\{\Ee\left(|X_n|^p\right)\vphantom{\Ee\left(|X_n|^4\right)}\right\}^\alpha  \left\{\Ee\left(|X_n|^4\right)\right\}^\beta\\
        &\leq \left\{\Ee\left(|X_n|^p\right)\vphantom{\Ee\left(|X_n|^4\right)}\right\}^\alpha  \Gamma_4^{\beta}\left\{\Ee\left(|X_n|^2\right)\right\}^{2\beta}.
    \end{align*}
    From this we get the lower estimate for $0<p<2$ with $\gamma_p = \Gamma_4^{1 - 2/p}$.
\end{proof}

\begin{remark}\label{mar-42}
The proof of the upper bound in \eqref{mar-e34} uses Azuma's inequality \eqref{mar-e32}. The following argument shows that both inequalities are equivalent -- up to some constant in the exponent.

The proof of Theorem~\ref{mar-33} yields the constant $\Gamma_p \coloneqq  p 2^{p/2} \Gamma(p/2)$ in the upper bound of the Khintchine inequality, which is not far off the known best constant $2^{p/2}\Gamma((p+1)/2)/\sqrt \pi$ (for $p>2$), see Haagerup~\cite{haagerup82}.

In order to simplify our calculations we use the following slightly rough estimate, which will, however, be good enough for the point we want to make. Since $y e^{-y} \leq e^{-1}$, we see for all $\lambda>0$ and $\alpha,\beta\in (0,1)$ with $\alpha+\beta=1$
\begin{align*}
    p\Gamma\left(\tfrac p2\right)
    = 2\Gamma\left(\tfrac p2+1\right)
    &= 2\int_0^\infty x^{p/2} e^{-x}\,dx\\
    &= 2 p \cdot p^{p/2} \int_0^\infty \left(ye^{-2\alpha y}\right)^{p/2} e^{-p\beta y}\,dy\\
    &\leq \frac 2\beta p^{p/2} (2\alpha e)^{-p/2}.
\end{align*}
Thus, assuming the upper bound in \eqref{mar-e34}, we see with Markov's inequality
\begin{align*}
    \Pp\left(\left|c\bullet S_n\right| > \lambda\right)
    \leq \frac 1{\lambda^{p}} \Ee\left[\left|(c\bullet S)_n\right|^p\right]
    \leq \frac 2\beta \left(\frac{d_n^2}{2\alpha e\lambda^2}\,p\right)^{p/2},
\end{align*}
where $d_n^2 = c_1^2 + \dots + c_n^2$. As a function of $p$, the right hand side becomes minimal if $\left({d_n^2}/{\alpha e\lambda^2}\right) p = 1/e$, i.e.\ $p = \alpha\lambda^2/d_n^2$. Thus,
\begin{align*}
    \Pp\left(\left|c\bullet S_n\right| > \lambda\right)
    \leq \frac 2\beta e^{-\alpha\lambda^2/(2d_n^2)},
    \quad \lambda >0,\: \alpha+\beta = 1,\: \alpha,\beta\in (0,1),
\end{align*}
which is essentially Azuma's inequality.
\end{remark}

\begin{remark}\label{mar-41}
    The following argument explains, why \eqref{mar-e34} can be seen as a maximal inequality. If $\sum_{i=1}^\infty c_i^2 < \infty$, we infer from \eqref{mar-e34} that the martingale $\sum_{i=1}^n c_i\xi_i$ is uniformly integrable, hence we get\footnote{We use the symbol $f\asymp g$ to indicate two-sided estimates $c f \leq g \leq C f$, with comparison constants $0<c<C<\infty$ depending only on $p$.}
    \begin{gather*}
        \left(\Ee\left[\left|\sum_{i=1}^\infty c_i\xi_i\right|^p\right]\right)^{1/p}
        \asymp \left(\sum_{i=1}^\infty c_i^2\right)^{1/2},\quad
        0<p<\infty.
    \end{gather*}
    On the other hand, by Doob's maximal inequality,
    \begin{gather*}
        \left(\Ee\left[\left|\sum_{i=1}^\infty c_i\xi_i\right|^p\right]\right)^{1/p}
        \asymp \left(\Ee\left[\sup_{n\in\nat}\left|\sum_{i=1}^n c_i\xi_i\right|^p\right]\right)^{1/p},\quad p>1.
    \end{gather*}
    Finally, since the $L^p$-norm of a probability measure is monotone in $p\in (0,\infty)$, we see that \eqref{mar-e34} implies
    \begin{gather*}\label{compen}%hidden
        \left(\sum_{i=1}^\infty c_i^2\right)^{1/2}
        \asymp \left(\Ee\left[\left|\sum_{i=1}^\infty c_i\xi_i\right|^p\right]\right)^{1/p}
        \asymp \left(\Ee\left[\sup_{n\in\nat} \left|\sum_{i=1}^n c_i\xi_i\right|^p\right]\right)^{1/p}.
    \end{gather*}
    Using the sequence $(c_1,c_2,\dots,c_n,0,0,\dots)$ brings us back to \eqref{mar-e34}.
\end{remark}

The Burkholder--Davis--Gundy (BDG) inequalities can be seen as a generalization of the Khintchine inequalities to martingales, which are more general than the random walk martingale $S_n = \xi_1+\dots+\xi_n$, and to martingale transforms $X_n \coloneqq  c\bullet S_n \coloneqq  \sum_{i=0}^n c_i(S_i-S_{i-1})$ with a previsible -- and not just deterministic -- sequence $(c_n)_{n\in\nat}$, $c_n = c_n(\omega)$. In order to appreciate this extension, it is helpful to interpret the term $\sum_{i=1}^n c_i^2$ appearing in Khintchine's inequalities as the \emph{quadratic variation} (or \emph{compensator} or \emph{square function}) of the martingale $X = c\bullet S$. Indeed, the compensator $[S]_n$ of the simple random walk martingale $S_n = \xi_1+\dots+\xi_n$ appearing in Theorem~\ref{mar-33} is  $[S]_n = n$, and so \index{square bracket}
\begin{gather*}
    [c\bullet S]_n = c^2\bullet [S]_n = \sum_{i=1}^n c_i^2([S]_i-[S]_{i-1}) = \sum_{i=1}^n c_i^2.
\end{gather*}
This point of view allows us to find a continuous-time extension of Khintchine's inequalities. We will distinguish between two cases: (i) $(X_t)_{t\in I}$ is a discrete martingale or it is a continuous martingale whose paths are c\`adl\`ag (right continuous \& finite left limits) with jumps, and (ii) $(X_t)_{t\geq 0}$ is a martingale in continuous time and with continuous paths. The latter situation is much easier to handle, since the previsible quadratic variation $\bracket{X} = (\bracket{X}_t)_{t\geq 0}$ and the adapted quadratic variation $\sbracket{X} = (\sbracket{X}_t)_{t\geq 0}$ coincide.

We will give two proofs of the BDG inequalities, first the standard proof based on It\^o's formula and then an alternative approach using the fact that continuous martingales are time-changed Brownian motions -- a further explanation as to why the continuous case is easier than the jump case or the discrete case.

\begin{theorem}[Burkholder--Davis--Gundy]\label{mar-43}\index{inequality!BDG (continuous case)}
    Let $(X_t,\Fscr_t)_{t\geq 0}$, $X_0=0$, be a martingale with continuous paths and \textup{(}predictable\textup{)} quadratic variation process $\left(\bracket{X}_t\right)_{t\geq 0}$. For any $p > 0$ we have
    \begin{gather}\label{mar-e40}
        c_p\Ee\left(\bracket{X}_\infty^{p/2}\right)
        \leq
        \Ee\left(\left(X^*\right)^p\vphantom{\bracket{X}_\infty^{p/2}}\right)
        \leq
        C_p\Ee\left(\bracket{X}_\infty^{p/2}\right).
    \end{gather}
    The constants $0<c_p\leq C_p<\infty$ depend only on $p\in (0,\infty)$.
\end{theorem}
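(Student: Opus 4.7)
My plan is to reduce both halves of \eqref{mar-e40} to good-$\lambda$ inequalities in the sense of Lemma~\ref{pre-14}, exploiting two structural facts peculiar to continuous martingales: path continuity, and the semimartingale identity $X_t^2-\bracket{X}_t=2\int_0^t X_s\,dX_s$, which makes $X^2-\bracket{X}$ itself a continuous local martingale starting at $0$. The identity, combined with Doob's $L^2$-inequality (Theorem~\ref{mar-09}), already handles $p=2$: after localization we have $\Ee[X_t^2]=\Ee[\bracket{X}_t]$, hence
\begin{gather*}
\tfrac{1}{4}\Ee[(X_t^*)^2]\le \Ee[X_t^2]=\Ee[\bracket{X}_t]\le \Ee[(X_t^*)^2].
\end{gather*}
All remaining values of $p$ will follow by the good-$\lambda$ method applied to suitable stopped auxiliary martingales.

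\textbf{Good-$\lambda$ for the upper bound.} I would show that for every $\beta>1$, every $\delta>0$ and every $\lambda>0$,
\begin{gather*}
\Pp\bigl(X^*>\beta\lambda,\ \sqrt{\bracket{X}_\infty}\le\delta\lambda\bigr)\le \frac{4\delta^2}{(\beta-1)^2}\,\Pp\bigl(X^*\ge\lambda\bigr).
\end{gather*}
Introduce the stopping times $\sigma:=\inf\{t>0\mid|X_t|>\lambda\}$ and $\tau:=\inf\{t>0\mid\bracket{X}_t>\delta^2\lambda^2\}$, and define the continuous martingale $N_t:=X_{t\wedge\tau}-X_{\sigma\wedge t\wedge\tau}$, which vanishes before $\sigma$ and satisfies $\bracket{N}_\infty\le\delta^2\lambda^2$ pathwise. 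By continuity of paths, $|X_\sigma|=\lambda$ on $\{\sigma<\infty\}$; on the event inside the probability on the left-hand side a short case analysis (comparing $\sigma$ with $\tau$, using that after $\tau$ the quadratic variation cannot grow, so the path is constant on that event) forces $N^*_\infty\ge(\beta-1)\lambda$. Applying Doob's $L^2$-inequality to $N$ conditionally on $\Fscr_\sigma$ and then Chebyshev delivers the estimate, since $\{\sigma<\infty\}=\{X^*\ge\lambda\}$. With $\delta$ chosen small enough so that $4\delta^2/(\beta-1)^2<\tfrac12\beta^{-p}$, Lemma~\ref{pre-14} applied to the pair $(X^*,\sqrt{\bracket{X}_\infty})$ yields $\Ee[(X^*)^p]\le C_p\Ee[\bracket{X}_\infty^{p/2}]$ for every $p>0$.

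\textbf{Reverse direction and main obstacle.} The lower bound is handled by a symmetric good-$\lambda$ inequality with $\sqrt{\bracket{X}_\infty}$ and $X^*$ playing swapped roles. Set $\sigma':=\inf\{t\mid|X_t|>\delta\lambda\}$ and $\rho:=\inf\{t\mid\bracket{X}_t>\beta^2\lambda^2\}$; continuity gives $\bracket{X}_\rho=\beta^2\lambda^2$ on $\{\rho<\infty\}$. Applying the $L^2$-identity $\Ee[X_\rho^2\wedge {}_{\sigma'}]=\Ee[\bracket{X}_{\rho\wedge\sigma'}]$ for the stopped martingale $X^{\rho\wedge\sigma'}$ and splitting the expectation according to whether $\sigma'=\infty$ or not, one obtains an inequality of the form
\begin{gather*}
\Pp\bigl(\sqrt{\bracket{X}_\infty}>\beta\lambda,\ X^*\le\delta\lambda\bigr)\le \frac{C\delta^2}{\beta^2}\,\Pp\bigl(\sqrt{\bracket{X}_\infty}\ge\lambda\bigr),
\end{gather*}
and a second application of Lemma~\ref{pre-14} yields $\Ee[\bracket{X}_\infty^{p/2}]\le c_p^{-1}\Ee[(X^*)^p]$. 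The main technical obstacle throughout is \emph{integrability}: the arguments silently assume enough moments for Doob's $L^2$-inequality and for taking expectations in the semimartingale identity. This is resolved by the standard localization procedure, replacing $X$ by the stopped martingale $X^{T_n}$ with $T_n:=\inf\{t\mid|X_t|+\bracket{X}_t>n\}$, proving the inequalities with universal constants independent of $n$, and passing to the limit by monotone convergence. Since the constants $c_p,C_p$ depend only on $p$, this extension is painless, even in the delicate range $0<p<1$.
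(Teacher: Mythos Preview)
Your overall strategy is sound and constitutes a third route, distinct from the two proofs the paper gives. The paper first handles $p\geq 2$ via It\^o's formula for $|x|^p$ combined with H\"older (upper bound) and an auxiliary martingale $Y_t=\int_0^t\bracket{X}_s^{(p-2)/4}\,dX_s$ (lower bound), and then treats $0<p<2$ separately using Lemma~\ref{pre-18}. Its second proof time-changes $X$ to a Brownian motion via Dambis--Dubins--Schwarz and applies Lemma~\ref{pre-14} there, with Brownian scaling making $\epsilon$ small. You instead apply Lemma~\ref{pre-14} directly to $X$, uniformly in $p>0$, bypassing both It\^o's formula and the time change: the role of scaling is taken over by the $L^2$-identity $\Ee[N_\infty^2]=\Ee[\bracket{N}_\infty]$ together with the pathwise control on $\bracket{N}_\infty$. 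Your upper-bound argument is correct; the crucial point (which your phrase ``conditionally on $\Fscr_\sigma$'' gestures at) is that $N\equiv 0$ on $\{\sigma=\infty\}$, so $\bracket{N}_\infty\leq\delta^2\lambda^2\I_{\{\sigma<\infty\}}$, and this is precisely what produces the factor $\Pp(X^*\geq\lambda)$ on the right.

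There is, however, a real gap in your reverse direction. With only the two stopping times $\sigma'=\inf\{t\mid |X_t|>\delta\lambda\}$ and $\rho=\inf\{t\mid \bracket{X}_t>\beta^2\lambda^2\}$, the identity $\Ee\bigl[X_{\rho\wedge\sigma'}^2\bigr]=\Ee\bigl[\bracket{X}_{\rho\wedge\sigma'}\bigr]$ together with the pathwise bound $|X_{\rho\wedge\sigma'}|\leq\delta\lambda$ yields only
\begin{gather*}
\Pp\bigl(\sqrt{\bracket{X}_\infty}>\beta\lambda,\; X^*\leq\delta\lambda\bigr)\leq\frac{\delta^2}{\beta^2},
\end{gather*}
\emph{without} the factor $\Pp\bigl(\sqrt{\bracket{X}_\infty}\geq\lambda\bigr)$ that Lemma~\ref{pre-14} requires; splitting according to $\{\sigma'<\infty\}$ does not recover it, and integrating the constant $\delta^2/\beta^2$ against $p\lambda^{p-1}\,d\lambda$ diverges. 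The fix is to mirror your upper bound: introduce a third stopping time $\rho_1:=\inf\{t\mid\bracket{X}_t>\lambda^2\}$ and work with the increment martingale $M_t:=X^{\sigma'}_t-X^{\rho_1\wedge\sigma'}_t$. Since $M\equiv 0$ on $\{\rho_1=\infty\}$ while $|M_t|\leq 2\delta\lambda$ always, one obtains $\Ee[\bracket{M}_\infty]=\Ee[M_\infty^2]\leq 4\delta^2\lambda^2\,\Pp(\rho_1<\infty)$; on the event in question $\bracket{M}_\infty\geq(\beta^2-1)\lambda^2$, and Chebyshev now gives the good-$\lambda$ inequality in the form Lemma~\ref{pre-14} needs.
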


\begin{remark}\label{mar-45}
a)
    Using a stopping argument $X_t \rightsquigarrow X^\tau_t \coloneqq  X_{t\wedge\tau}$ we can change \eqref{mar-e40} to a finite (deterministic or random) time horizon $[0,\tau]$. Moreover, \eqref{mar-e40} is always understood as taking values in $[0,\infty]$, i.e.\ if the right hand side is finite, then $X^*\in L^p$, hence $(X_t)_{t\geq 0}\subseteq L^p$. If $p\geq 1$ and $X^*\in L^p$, then $(X_t)_{t\geq 0}$ is uniformly integrable, and we get, as in Doob's inequality (Theorem~\ref{mar-09}), that $\Ee\left[|X_\infty|^p\right]$ and $\Ee\left[\left(X^*\right)^p\right]$ are comparable.

\medskip
b)
    There is a huge literature on the optimality of the constants in the BDG and related inequalities. For this we refer, in particular, to the monograph by Os\c{e}kowski \cite{osekowski12}. Recent developments can be found in  Kami\'nski \& Os\c{e}kowski \cite{kam-ose23}. An approach to BDG-inequalities via the Garsia--Rodemich--Rumsey lemma is due to Barlow \& Yor~\cite{bar-yor82} yielding more general BDG-type inequalities for continuous (semi-)martingales.
\end{remark}

\begin{proof}[Standard proof of Theorem~\ref{mar-43}: Getoor \& Sharpe~\cite{get-sha72}]\label{hidden-BDG-1}
    By stopping with the stopping time
    \begin{gather*}
        \tau_n \coloneqq  \inf\left\{s\geq 0 \mid |X_s|^2 + \bracket{X}_s \geq n\right\},
    \end{gather*}
    we can, without loss of generality, assume that both $X$ and $\bracket{X}$ are bounded;\footnote{As usual, the problem is the overshoot when crossing the level $n$. Since $t\mapsto X_t$ and $t\mapsto \bracket{X}_t$  are continuous, we trivially have $\left|X_{\tau_n} - X_{\tau_n-}\right|=0$ and $\bracket{X}_{\tau_n}-\bracket{X}_{\tau_n-}=0$, but this will change dramatically for a jump process.} in particular $X$ is uniformly integrable, and $X_\infty$ is well-defined.

\medskip
    Assume first that $p\geq 2$ and denote by $q = p/(p-1)$ the conjugate exponent. Combining the fact that $X^2-\bracket{X}$ is a martingale with Doob's maximal inequality (Theorem~\ref{mar-09}) proves \eqref{mar-e40} for $p=2$:
    \begin{gather*}
            \Ee\left[\bracket{X}_\infty\right]
            = \Ee\left[\left|X_\infty\right|^2\right]
            \leq \Ee\left[\left(X^*\right)^2\right]
            \leq 4 \sup_{s\geq 0} \Ee\left[\left|X_s\right|^2\right]
            = 4 \Ee\left[\bracket{X}_\infty\right].
    \end{gather*}
    Since $x\mapsto |x|^p$, $p>2$, is a $C^2$-function, It\^o's formula, yields
    \begin{gather*}
        |X_t|^p = p\int_0^t |X_s|^{p-1} \sgn(X_s)\,dX_s + \frac 12\,p(p-1)\int_0^t |X_s|^{p-2}\,d\bracket{X}_s.
    \end{gather*}
    Because of the boundedness of $|X_s|^{p-1}$, the first integral on the right hand side is a martingale, and we see with Doob's maximal inequality and the H\"{o}lder inequality for $p/(p-2)$ and $p/2$
    \begin{align*}
        \Ee\left[\sup_{s\geq 0}|X_s|^p\right]
        &\leq q^p\,\frac{p(p-1)}2\, \Ee\left[\int_0^\infty |X_s|^{p-2}\,d\bracket{X}_s\right]\\
        &\leq q^p\,\frac{p(p-1)}2 \Ee\left[\sup_{s\geq 0}|X_s|^{p-2} \, \bracket{X}_\infty\right]\\
        &\leq q^p\,\frac{p(p-1)}2 \left(\Ee\left[\sup_{s\geq 0}|X_s|^{p}\right]\right)^{1-2/p}
          \left(\Ee\left[\bracket{X}_\infty^{p/2}\right]\vphantom{\sup_{s\geq 0}}\right)^{2/p}.
    \end{align*}
    Dividing by $\left(\Ee\left[\sup_{s\geq 0}|X_s|^{p}\right]\right)^{1-2/p}$ gives the upper estimate in \eqref{mar-e40} with $C_p = \left[q^p\,p(p-1)/2\right]^{p/2}$.

\medskip
    In order to prove the lower estimate in \eqref{mar-e40} we define an auxiliary martingale
    $
        Y_t
        \coloneqq  \int_0^t \bracket{X}_s^{(p-2)/4}\,dX_s.
    $
    Observe that
    \begin{gather*}
        \smash[b]{\bracket{Y}_t
        = \int_0^t \bracket{X}_s^{(p-2)/2}\,d\bracket{X}_s
        = \frac 2p\,\bracket{X}_t^{p/2}}
    \intertext{and}
        \smash[t]{\Ee\left[\bracket{X}_t^{p/2}\right]
        = \frac p2\Ee\left[\bracket{Y}_t\vphantom{|Y_t|^2}\right]
        = \frac p2\Ee\left[|Y_t|^2\right].}
    \end{gather*}
    The It\^o formula for $f(x,y)=xy$ and the bivariate process $(X_t,\bracket{X}_t^{(p-2)/4})$ gives
    \begin{align*}
        X_t \bracket{X}_t^{(p-2)/4}
        &= \int_0^t \bracket{X}_s^{(p-2)/4}\,dX_s + \int_0^t X_s\,d\bracket{X}_s^{(p-2)/4}\\
        &= Y_t + \int_0^t X_s\,d\bracket{X}_s^{(p-2)/4}.
    \end{align*}
    If we rearrange this equality, we get
    $
        |Y_t| \leq 2\sup_{s\leq t} |X_s|\,\bracket{X}_t^{(p-2)/4}.
    $
    Finally, using H\"{o}lder's inequality with $p/2$ and $p/(p-2)$, shows
    \begin{align*}
        \frac 2p\,\Ee\left[\bracket{X}_t^{p/2}\right]
        = \Ee\left[|Y_t|^2\right]
        &\leq 4\Ee\left[\sup_{s\leq t}|X_s|^2\,\bracket{X}_t^{(p-2)/2}\right]\\
        &\leq 4 \left(\Ee\left[\sup_{s\leq t}|X_s|^p\right]\right)^{2/p}
           \left(\Ee\left[\bracket{X}_t^{p/2}\right]\vphantom{\sup_{s\leq t}}\right)^{1-2/p}.
    \end{align*}
    Divide both sides by $\left(\Ee\left[\bracket{X}_t^{p/2}\right]\right)^{1-2/p}$ and let $t\to\infty$ to get the lower bound in \eqref{mar-e40} with $c_p = (2p)^{-p/2}$.

\medskip
    Now we assume that $0<p<2$. In this case, we use the good-$\lambda$ argument from Lemma~\ref{pre-18}, setting $X = \left(X^*\right)^2$ and $Y=\bracket{X}_\infty$. Let $\tau \coloneqq  \inf\left\{s\geq 0 \mid \bracket{X}\geq\lambda\right\}$. We have
    \begin{align*}
        P \coloneqq  \Pp\left(\left(X^*\right)^2 \geq \lambda,\; \bracket{X}_\infty < \lambda\right)
        = \Pp\left(\left(X^*\right)^2 \geq \lambda,\; \tau = \infty\right)
        \leq \Pp\left(\left(X_\tau^*\right)^2 \geq \lambda\right),
    \end{align*}
    and with optional stopping, Doob's maximal inequality \eqref{mar-e08} and the definition of $\tau$, we find
    \begin{gather*}
        P \leq \frac 1\lambda \Ee \left(\left|X_{\tau}\right|^2\right)
        = \frac 1\lambda \Ee \left(\bracket{X}_{\tau}\right)
        = \frac 1\lambda \Ee \left(\bracket{X}_{\infty}\wedge \lambda\right).
    \end{gather*}
    Now we can apply Lemma~\ref{pre-18} and we get
    \begin{gather*}
        \Ee\left[\left(X^*\right)^{p}\right]
        \leq \frac{4-p}{2-p}\,\Ee\left[\bracket{X}^{p/2}_\infty\right].
    \end{gather*}
    This proves the upper bound in \eqref{mar-e40} for $0<p<2$.

    For the lower bound we using again Lemma~\ref{pre-18}, this time with $X=\bracket{X}_\infty$ and $Y = \left(X^*\right)^2$. Define $\sigma \coloneqq  \inf\{s> 0 \mid |X_s|^2 \geq \lambda\}$. Then
    \begin{gather*}
        P' \coloneqq  \Pp\left(\bracket{X}_\infty \geq \lambda,\; \left(X^*\right)^2 < \lambda\right)
        \leq \Pp\left(\bracket{X}_\infty \geq \lambda,\; \sigma=\infty\right)
        \leq \Pp\left(\bracket{X}_{\sigma} > \lambda\right).
    \end{gather*}
    Using the Markov inequality and the fact that $X^2-\bracket{X}$ is a martingale, we get
    \begin{gather*}
        P' \leq \frac 1\lambda \Ee\left[\bracket{X}_{\sigma}\right]
        \leq \frac 1\lambda \Ee\left[\left(X_\sigma^*\right)^2\right]
        \leq \frac 1\lambda \Ee\left[\left(X_\sigma^*\right)^2\wedge \lambda\right],
    \end{gather*}
    and Lemma~\ref{pre-18} gives
    \begin{gather*}
        \Ee\left[ \left(X^*\right)^{p}\right]
        \geq \frac{2- p}{4-p}\,\Ee\left[\bracket{X}^{p/2}_\infty\right],
    \end{gather*}
    which is the lower bound in \eqref{mar-e40} for $0<p<2$.
\end{proof}

\begin{proof}[Time change proof of Theorem~\ref{mar-43}]\label{hidden-BDG-2}\footnote{It seems that Burkholder \& Gundy~\cite[pp.~109--110 \& \S 7]{bur-gun70} were the first to point out this strategy.}\index{Brownian motion!time-change}%
    By the Dambis \& Dubins--Schwarz theorem, we can represent any continuous martingale $(X_t)_{t\in I}$ with $X_0=0$ as a time-changed Brownian motion, see e.g.\ Schilling~\cite[Thm.~19.17]{schilling-bm} and Revuz \& Yor~\cite[Thm.~V.1.6]{rev-yor99}; in fact, we have $X_t = B_{\bracket X_t}$, for some Brownian motion $(B_t)_{t\in I}$ which is, in general, not independent of $(X_t)_{t\in I}$. Therefore, it is enough to show \eqref{mar-e40} for Brownian motion $(B_t)_{t\geq 0}$ evaluated at some stopping time $\tau$. That is, we have to show that
    \begin{gather}\tag{$\ref{mar-e40}^\prime$}\label{mar-e40-bis}
        c_p\Ee\left[\tau^{p/2}\right] \leq \Ee\left[\left(B_\tau^*\right)^p\vphantom{\tau^{p/2}}\right] \leq C_p\Ee\left[\tau^{p/2}\right].
    \end{gather}

    Let $\sigma \coloneqq  \inf\left\{t\geq 0 \mid B_t^* > \lambda\right\}$. We have for all $\lambda>0$ and $\delta\in (0,1)$
    \begin{align*}
        \Pp\left( B_\tau^* > 2\lambda,\; \sqrt\tau < \delta\lambda\right)
        &= \Pp\left( B_\tau^* > 2\lambda,\; \sqrt\tau < \delta\lambda,\;\sigma<\tau\right)\\
        &\leq \Pp\left( \sup_{\sigma\leq t\leq \tau} |B_t-B_{\sigma}| > \lambda,\; \sqrt\tau < \delta\lambda,\;\sigma<\tau\right)\\
        &\leq \Pp\left( \sup_{\sigma\leq t\leq\smash{\delta^2\lambda^2}} |B_t-B_{\sigma}| > \lambda,\;\sigma<\tau\right)\\
        &= \Ee\left[\Pp^{B_\sigma}\left(\sup_{t\leq \smash{\delta^2\lambda^2}}|B_t-B_0| > \lambda\right) \I_{\{\sigma<\tau\}}\right]\\
        &\leq \sup_{x\in\real} \Pp^x\left(\sup_{t\leq \smash{\delta^2\lambda^2}}|B_t-x| > \lambda\right) \Pp(\sigma < \tau).
    \end{align*}
    In the penultimate step we use the strong Markov property of Brownian motion. Since Brownian motion is translation invariant and scales like $B_{\lambda^2 t}\sim \lambda B_{t}$, we get
    \begin{gather*}
        \Pp\left( B_\tau^* > 2\lambda,\; \sqrt\tau < \delta\lambda\right)
        \leq \Pp\left(B^*_{\delta^2} > 1\right)\Pp(\sigma < \tau)
        \leq \epsilon \Pp(B_\tau^* \geq \lambda).
    \end{gather*}
    Here we use that for a Brownian motion $\Pp\left(B_{\delta^2}^* > 1\right)\to 0$ as $\delta\to 0$. The upper estimate in \eqref{mar-e40-bis} now follows from Lemma~\ref{pre-14}.

    The lower estimate can be shown in a similar fashion, switching the roles of $\sqrt\tau$ and $B_\tau^*$. We have
    \begin{align*}
        \Pp\left(\sqrt\tau > 2\lambda,\; B^*_\tau < \delta\lambda\right)
        &\leq \Pp\left(\sqrt\tau > 2\lambda,\; \sup_{\smash{\lambda^2}\leq t\leq \smash{2\lambda^2}} |B_t - B_{\lambda^2}| < 2\delta\lambda\right)\\
        &\leq \Ee\left[\I_{\{\sqrt\tau > \lambda\}}  \Pp^{B_{\lambda^2}} \left(\sup_{t\leq \smash{\lambda^2}} |B_t - B_{0}| < 2\delta\lambda\right)\right]\\
        &\leq \Pp\left(\tau > \lambda^2\right) \sup_{x\in\real} \Pp^{x} \left(\sup_{t\leq \smash{\lambda^2}} |B_t - x| < 2\delta\lambda\right)\\
        &\leq \Pp\left(B_1^* < 2\delta\right) \Pp\left(\tau > \lambda^2\right) \\
        &\leq \epsilon \Pp\left(\tau > \lambda^2\right).
    \end{align*}
    using again the Markov property, translation invariance and Brownian scaling. A further application of Lemma~\ref{pre-14} gives the lower bound in \eqref{mar-e40}.
\end{proof}

\begin{remark}\label{mar-47}
    The time-change technique can also be used to get sharp \enquote{deviation} estimates for continuous martingales. This complements our earlier Remark~\ref{mar-42} on the role of Azuma's inequalities in relation to Khintchine's inequality.

    Let $(X_t)_{t\geq 0}$ be a continuous martingale and write it as $X_t-X_0 = B_{\bracket X_t}$. Assume that $\bracket X_T \leq d_T^2$ a.s.\ for some $T>0$ and a constant $d_T^2$. Using the scaling property of Brownian motion and the reflection principle we get
    \begin{align*}
        \Pp \left( (X- X_0)^*_T > \lambda\right)
        = \Pp\left(B^*_{\bracket X_T} > \lambda\right)
        &\leq \Pp\left(B^*_{d_T^2} > \lambda\right)\\
        &\leq \Pp\left(B^*_1 > \lambda/ d_T \right) \\
        &= 2\Pp\left(B_1 > \lambda/d_T\right).
    \end{align*}
    With the usual Gaussian tail estimate, we finally get
    \begin{gather*}
        \Pp \left( (X- X_0)^*_T > \lambda\right) \leq 2(2\pi)^{-1/2} \frac{d_T}{\lambda} e^{-\lambda^2/(2d_T^2)}.
    \end{gather*}
    This is sometimes called McKean's inequality, \index{inequality!McKean} see Krylov~\cite[Ch.~IV.\S2]{krylov95}.
\end{remark}

\begin{remark}\label{mar-48}\index{Brownian motion}%
    If $X_t = \int_0^t f(s) \, dB_s$ is a stochastic integral with respect to Brownian motion satisfying $\Ee\left[\bracket{X}_t^{p/2}\right] = \Ee\Big[\left( \int_0^t |f(s)|^2 \, ds\right)^{p/2}\Big] <\infty$ for some $p>1$, then Novikov~\cite{novikov71} showed that
	\begin{gather}\label{mar-e41}
		c_p  \Ee\left[\bracket{X}_t^{p/2}\right]
            \leq \Ee\left[|X_t|^p\right]
            \leq C_p \Ee\left[\bracket{X}_t^{p/2}\right]
	\end{gather}
	for some constants $c_p,C_p>0$. Hence, by the BDG inequalities,
	\begin{equation*}
		\Ee\left[\bracket{X}_t^{p/2}\right]
        \asymp \sup_{s \leq t} \Ee\left[|X_s|^p\right]
        \asymp \Ee \Bigl[ \sup_{s \leq t} |X_s|^p\Bigr].
	\end{equation*}
    While the upper bound in \eqref{mar-e41} holds for any $p>0$, cf.\ Novikov~\cite{novikov71}, the lower bound breaks down for $p=1$. This can be seen by considering the martingale $X_t \coloneqq  B_{t \wedge \tau}$ with $\tau \coloneqq  \inf\{t \geq 0\mid B_t=1\}$. On the one hand, we have
    \begin{gather*}
        \Ee\left[|X_t|\right]
        = \Ee\left[2B_{t\wedge\tau}^+ - B_{t\wedge\tau}\right]
        = 2\Ee\left[B_{t\wedge\tau}^+\right]
        \leq 2.
    \end{gather*}
    On the other hand, $\Ee\left[\bracket{X}_t^{1/2}\right] = \Ee\left[ (t \wedge \tau)^{1/2}\right] \to \Ee\left[\tau^{1/2}\right] = \infty$ as $t \to \infty$,\footnote{A simple proof that $\Ee\left[\tau^a\right]=\infty$ for all $a\geq \frac 12$ (for random walks) can be found in Schilling~\cite[Bsp.~11.10, p.~115--116]{schilling-maps}. Alternatively, one can use a direct calculation and the fact that $\tau \sim (2\pi s^3)^{-1/2} e^{-1/2s}\,ds$, see e.g.\ Schilling~\cite[Thm.~6.10]{schilling-bm}}
    cf.\ Novikov~\cite{novikov13}, so that the lower bound cannot hold for $p=1$.
\end{remark}

\paragraph{The BDG inequalities for jump processes.}
In the final part of this section, we will discuss the BDG inequalities for processes with jumps. Let us first explain why the presence of jumps creates difficulties.

Our first proof of the BDG inequalities for processes (see p.~\pageref{hidden-BDG-1} \emph{et seq.})  with continuous sample paths heavily relied on a stopping argument: we could assume without loss of generality that the process $X$ and its quadratic variation $\bracket X$ are bounded by considering the stopped process $X^{\tau}$ for the stopping time $\tau = \{s \geq 0 \mid |X_s|^2 + \bracket{X}_s \geq n\}$. This reasoning is no longer applicable if $t \mapsto X_t$ has jumps. While $|X_s|^2+\langle X \rangle_s<n$ is still true for $s<\tau$, we do not have any bound on $|X_{\tau}|^2$ because the jumps create an overshoot which cannot be controlled.\footnote{We note, in passing, that also the time-change proof of the BDG inequalities, see \pageref{hidden-BDG-2} \emph{et.\ seq.}, breaks down. Although there are quite general embedding results for right continuous (semi-)martingales into Brownian motion, see Monroe~\cite{monroe72,monroe78} and Dubins~\cite{dubins68} or Schilling~\cite[Thm.~14.7]{schilling-bm} for the discrete-time case, the stopping times appearing in the embedding frequently have unbounded moments of order $p/2$.} The same kind of problem arises for discrete-time processes $(X_n)_{n \in \nat}$, and, in fact, the BDG inequalities for discrete-time processes are essentially as hard to derive as the BDG inequalities for jump processes in continuous time.

From now on we will focus on continuous-time processes with jumps; an exposition of the discrete-time setting can be found in Schilling~\cite{schilling-maps}.

\begin{example}\label{mar-49}
    Among the simplest examples of martingales with jumps are the compound Poisson processes. For a probability distribution $\mu$ on $\real \setminus \{0\}$, let $H_1,H_2,\ldots$ be iid random variables with $H_i \sim \mu$, and let $(N_t)_{t \geq 0}$ be an independent Poisson process with intensity $\Ee N_1 = \Vv N_1 = 1$. Then the sample paths of the \emph{compound Poisson process}
    \begin{equation*}
    	J_t\coloneqq  \sum_{i=1}^{N_t} H_i,\quad t \geq 0,
    \end{equation*}
    are piecewise constant with jumps occurring whenever the value of $N_t$ increases by $1$. The compound Poisson process $(J_t)_{t \geq 0}$ has stationary and independent increments, i.e.\ it is a L\'evy process, see Section~\ref{lp-back}, Scholium~\ref{pre-70}, and also Schilling~\cite[Chapter 3]{crm}. Using the independence and stationarity of the increments, it is not difficult to show that $X_t \coloneqq  J_t- b t$ with $b=\Ee(J_1) = \int y \, \mu(dy)$ is a martingale with quadratic variation $\langle X \rangle_t = t \int y^2 \, \mu(dy)$; of course, we assume that the first and second moments exist. A naive application of Theorem~\ref{mar-43} would give
    \begin{gather}\label{mar-e42}
    	\Ee\left( \sup_{s \leq t} |X_s|^p \right) \asymp t^{p/2} \left( \int y^2 \, \mu(dy) \right)^{p/2},
        \quad t \geq 0,\;p>0,
    \end{gather}
    but this estimate fails to hold. In fact, it can  happen that the left hand side is infinite while the right hand side is finite. To see this, let us consider $p=4$. The moment $\Ee(|X_t|^4)$ exists if, and only if, the characteristic function $\phi(\xi)=\Ee e^{i \xi X_t}$ is four times differentiable at $\xi=0$. Since $\phi(\xi) = \exp(-t \psi(\xi))$ with $\psi(\xi) = \int_{y \neq 0} (1-e^{iy \xi}-iy \xi) \, \mu(dy)$, we find that
    \begin{align} \begin{aligned}
	\Ee(|X_t|^4)<\infty
	&\iff \text{$\psi$ is four times differentiable at $\xi=0$} \\
	&\iff \int_{|y| \geq 1} |y|^4 \, \mu(dy)<\infty,
	\end{aligned} \label{mar-e44}
    \end{align}
    see e.g.\ K\"{u}hn~\cite[Lem.~4.3]{ltp-moments} for a detailed proof or, indeed Theorem~\ref{lp-53} further down. This already indicates that \eqref{mar-e42} cannot hold for $p=4$. In fact, a somewhat lengthy, but easy, calculation yields
    \begin{equation*}
    	\Ee(X_t^4)
    	= \frac{d^4}{d\xi^4} \phi(\xi) \bigg|_{\xi=0}
    	= 3t^2 \left( \int y^2 \, \mu(dy) \right)^2 + t \int y^4 \, \mu(dy).
    \end{equation*}
    While the first term on the right hand side looks familiar from \eqref{mar-e42}, the second one is new. As \eqref{mar-e44} shows, this second term is actually needed to ensure that $\Ee(X_t^4)$ is finite.
\end{example}

The following version of the BDG inequalities for (possibly discontinuous) martingales is due to Dzhaparidze \& Valkeila~\cite[Lem.~2.1]{dzh90}.

\begin{theorem}\label{mar-50}\index{inequality!BDG (jump case)}
    Let $(X_t)_{t \geq 0}$ be a real-valued martingale with c\`adl\`ag sample paths and $X_0=0$. For every $p \geq 2$, there are constants $0<c_p \leq C_p<\infty$ such that
	\begin{gather}\label{mar-e48}
		c_p \Ee\left[\bracket{X}^{p/2}_{t} + |(\Delta X)^*_{t}|^p\right]
		\leq \Ee\left[(X_{t}^*)^p\right]
		\leq C_p \Ee\left[\langle X \rangle_t^{p/2} + |(\Delta X)_t^*|^p\right];
	\end{gather}
	here $(\Delta X)_t = X_t-X_{t-}$ is the jump height at time $t$.
\end{theorem}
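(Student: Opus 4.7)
My plan is to follow the structure of the continuous-path BDG proof carried out earlier in this section, replacing the classical It\^o formula by its c\`adl\`ag analogue and then absorbing the extra jump contributions using H\"older inequalities together with the trivial bound $|\Delta X_s|\leq 2 X^*_t$. Throughout I would restrict to $p\geq 2$ so that $f(x)=|x|^p$ is $C^2$, and write $[X]_t := \bracket{X^c}_t + \sum_{s\leq t}(\Delta X_s)^2$ for the optional quadratic variation. The stopping reduction would use $\tau_n := \inf\{t:\,|X_t|\vee \bracket{X}_t\geq n\}$; the possible overshoot $|X_{\tau_n}|>n$ is bounded by $|\Delta X_{\tau_n}|\leq |(\Delta X)^*_t|$, which is exactly the new term appearing on the right of \eqref{mar-e48}.

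For the upper bound I would apply the It\^o--Meyer formula to $|X_t|^p$:
\begin{gather*}
  |X_t|^p = N_t + \tfrac{p(p-1)}{2}\int_0^t |X_{s-}|^{p-2}\,d\bracket{X^c}_s + \sum_{0<s\leq t}\Phi_p(X_{s-},\Delta X_s),
\end{gather*}
with $N$ a local martingale and $\Phi_p(x,h):=|x+h|^p-|x|^p-p|x|^{p-1}\sgn(x)h$ satisfying the Taylor-type estimate $|\Phi_p(x,h)|\leq c_p(|x|^{p-2}h^2+|h|^p)$ for $p\geq 2$. Combining this pointwise bound with $\int|X_{s-}|^{p-2}\,d\bracket{X^c}_s\leq (X^*_t)^{p-2}\bracket{X^c}_t$ and taking expectations yields
\begin{gather*}
  \Ee|X_t|^p \leq c_p\,\Ee\Bigl[(X^*_t)^{p-2}[X]_t + \sum_{s\leq t}|\Delta X_s|^p\Bigr].
\end{gather*}
Doob's inequality $\Ee(X^*_t)^p\leq (p/(p-1))^p \Ee|X_t|^p$ together with H\"older applied with exponents $(p/(p-2),p/2)$ absorbs a fractional power of $\Ee(X^*_t)^p$ on the left and gives $\Ee(X^*_t)^p\leq C_p\Ee\bigl([X]_t^{p/2}+\sum_s|\Delta X_s|^p\bigr)$. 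The pure-jump sum satisfies $\sum_s|\Delta X_s|^p\leq |(\Delta X)^*_t|^{p-2}[X]^d_t$, and a further H\"older step expresses the second term in the variables $[X]_t^{p/2}$ and $|(\Delta X)^*_t|^p$. Finally I would pass from $[X]$ to $\bracket{X}$ via the purely discontinuous martingale $M:=[X]-\bracket{X}$, whose jumps are bounded by $((\Delta X)^*)^2$: applying the inequality just established to $M$ at exponent $p/2$ delivers $\Ee[X]_t^{p/2}\leq C_p\Ee\bigl(\bracket{X}_t^{p/2}+|(\Delta X)^*_t|^p\bigr)$.

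For the lower bound, the jump contribution is immediate: $|\Delta X_s|=|X_s-X_{s-}|\leq 2X^*_t$, so $\Ee|(\Delta X)^*_t|^p\leq 2^p\Ee(X^*_t)^p$. For the $\bracket{X}$-part I would replicate the duality argument from the continuous proof using the auxiliary martingale $Y_t:=\int_0^t \bracket{X}_{s-}^{(p-2)/4}\,dX_s$, which has $\bracket{Y}_t = \tfrac{2}{p}\bracket{X}_t^{p/2}$. Integration by parts gives $Y_t = X_t\bracket{X}_t^{(p-2)/4}-\int_0^t X_{s-}\,d\bracket{X}_s^{(p-2)/4}$, and since $s\mapsto \bracket{X}_s^{(p-2)/4}$ is increasing and vanishes at $0$, the right-hand side is controlled pointwise by $2 X^*_t\bracket{X}_t^{(p-2)/4}$. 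A single H\"older application with exponents $(p/2,p/(p-2))$ then yields $\Ee\bracket{X}_t^{p/2}\leq c_p\Ee(X^*_t)^p$.

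The main obstacle is channelling the three inherently different quantities produced by the It\^o--Meyer decomposition---the continuous quadratic variation $\bracket{X^c}$, the quadratic jump sum $\sum(\Delta X_s)^2$, and the $p$-th-power jump sum $\sum|\Delta X_s|^p$---into only the two quantities $\bracket{X}_t^{p/2}$ and $|(\Delta X)^*_t|^p$ appearing on the right of \eqref{mar-e48}. The most delicate piece is the self-applied BDG-type bound for the compensator martingale $[X]-\bracket{X}$ that mediates the passage from the optional quadratic variation to the predictable one; it succeeds precisely because the jumps of that compensator martingale are controlled by $((\Delta X)^*)^2$, which regenerates exactly the jump term $|(\Delta X)^*_t|^p$ needed on the right-hand side of \eqref{mar-e48}.
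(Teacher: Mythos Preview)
The paper does not actually prove Theorem~\ref{mar-50}; it only states the result and attributes it to Dzhaparidze--Valkeila \cite[Lem.~2.1]{dzh90}. So there is no ``paper's proof'' to compare against, and your outline has to stand on its own. The overall architecture---It\^o--Meyer formula plus Taylor estimate for the upper bound, the trivial $|\Delta X_s|\leq 2X^*_t$ plus the auxiliary-martingale trick for the lower bound---is the standard route and is broadly correct. Two steps, however, are not yet under control.

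\textbf{Upper bound, passage from $[X]$ to $\bracket X$.} You propose to apply the inequality you just proved to the martingale $M=[X]-\bracket X$ ``at exponent $p/2$''. But your upper bound was derived only for exponents $\geq 2$; for $2\leq p<4$ you have $p/2<2$ and nothing to invoke. Moreover, the jumps of $M$ are $\Delta M_s=(\Delta X_s)^2-\Delta\bracket X_s$, and $\Delta\bracket X_s$ is a \emph{predictable} quantity not pointwise dominated by $((\Delta X)^*)^2$; your claim that the jumps of $M$ are ``bounded by $((\Delta X)^*)^2$'' is therefore not justified as written. The usual way to handle this step is either Lenglart's domination inequality (comparing an adapted increasing process with its compensator) or a direct estimate exploiting that the dual predictable projection of a process with jumps bounded by a predictable $K$ again has jumps $\leq K$; either route needs an explicit argument, and the self-referential BDG idea as stated has a genuine range/circularity gap.

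\textbf{Lower bound, the $\bracket X$ part.} For a c\`adl\`ag martingale the predictable bracket $\bracket X$ can jump (at predictable times), so the identity $\bracket Y_t=\tfrac{2}{p}\bracket X_t^{p/2}$ fails; convexity gives only $\int_0^t\bracket X_{s-}^{(p-2)/2}\,d\bracket X_s\leq \tfrac{2}{p}\bracket X_t^{p/2}$, which is the wrong direction for your purposes. One fix is to work with $\int_0^t\bracket X_{s}^{(p-2)/2}\,d\bracket X_s\geq \tfrac{2}{p}\bracket X_t^{p/2}$ and then control the discrepancy $\int(\bracket X_s^{(p-2)/2}-\bracket X_{s-}^{(p-2)/2})\,d\bracket X_s$ by a term of the form $(\Delta\bracket X)^*\cdot\bracket X_t^{(p-2)/2}$, which can in turn be bounded using that $\Delta\bracket X_s$ is the predictable projection of $(\Delta X_s)^2$; this again produces exactly a $|(\Delta X)^*_t|^p$-type correction, but it has to be written out. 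Alternatively, a good-$\lambda$ argument starting from $\Ee\bracket X_\tau=\Ee X_\tau^2$ avoids the issue entirely.
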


Using a standard stopping argument, Theorem~\ref{mar-50} can be extended to stopping times $\tau$.

There are two further variants of the BDG inequalities for discontinuous martingales. We begin with Novikov's~\cite{novikov75} result, which applies to processes which can be written as a stochastic integral with respect to a Poisson random measure; this includes, for instance, L\'evy processes and solutions to stochastic differential equations driven by L\'evy processes, see Sections~\ref{lp} and \ref{fel}. In contrast to Theorem~\ref{mar-50}, the result by Novikov gives moment estimates for any $p>0$.

\begin{theorem}[Novikov] \label{mar-51} \index{inequality!BDG (jump case)}
	 Let $(X_t)_{t \geq 0}$ be a one-dimensional stochastic process of the form
	 \begin{equation*}
	 	X_t = \int_0^t \int_{y \neq 0} F(s,y) \, \widetilde{N}(dy,ds), \quad t \geq 0,
	 \end{equation*}
    for a predictable stochastic process $F$ and a compensated Poisson random measure $\widetilde{N} = N - \widehat{N}$ with compensator $\widehat{N}(dy,ds) = \nu(dy) \, ds$ for a measure $\nu$ satisfying $\int_{y \neq 0} \min\{1,|y|^2\} \, \nu(dy)<\infty$.
    \begin{enumerate}
	\item\label{mar-51-a}
        If $\Ee\left[\int_0^T \int_{y \neq 0} |F(s,y)| \, \nu(dy) \, ds\right] < \infty$ for some $T>0$, then
		\begin{gather}
			\Ee \left( \sup_{t \leq T} |X_t|^p \right)
			\leq C_{p,\alpha} \Ee \left[ \left( \smash{\int_0^T} \int_{y \neq 0} |F(s,y)|^{\alpha} \, \nu(dy) \, ds \right)^{p/\alpha} \right]
			\label{mar-e50}
		\end{gather}
		for any $\alpha \in [1,2]$ and $p \in [0,\alpha] \subseteq [0,2]$.
		\item\label{mar-51-b} If $\Ee(\int_0^T \int_{y \neq 0} |F(s,y)|^2 \, \nu(dy) \, ds) < \infty$ for some $T>0$, then
		\begin{align}
			\label{mar-e52} \begin{aligned}
			\Ee \left( \sup_{t \leq T} |X_t|^p \right)
			&\leq c_p \Ee \left[ \left( \smash{\int_0^T} \int_{y \neq 0} |F(s,y)|^2 \, \nu(dy) \, ds \right)^{p/2} \right] \\
			&\qquad \mbox{}+ c_p \Ee \left[ \int_0^T \int_{y \neq 0} |F(s,y)|^p \, \nu(dy) \, ds \right]
			\end{aligned}
		\end{align}
		for all $p \geq 2$.
	\end{enumerate}
\end{theorem}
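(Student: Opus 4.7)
\emph{Approach.} The plan is to reduce both parts to martingale tools already established in this chapter. For part (a) I would first establish the ``diagonal'' case $p=\alpha\in[1,2]$ and then promote the bound to $p\in(0,\alpha)$ by a Lenglart/good-$\lambda$ argument; for part (b) the shortest route is a direct application of the c\`adl\`ag BDG estimate of Theorem~\ref{mar-50}.

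\emph{Diagonal case of (a).} For $\alpha=2$, the Poisson isometry $\Ee[|X_T|^2]=\Ee\int_0^T\!\int_{y\neq 0}|F|^2\nu(dy)\,ds$ together with Doob's $L^2$-maximal inequality yields \eqref{mar-e50}. For $\alpha=1$ the pathwise estimate
\begin{gather*}
\sup_{t\le T}|X_t|\le\int_0^T\!\!\int_{y\neq 0}|F|\,dN+\int_0^T\!\!\int_{y\neq 0}|F|\,\nu(dy)\,ds
\end{gather*}
and Fubini give \eqref{mar-e50} with constant $2$. For $\alpha\in(1,2)$ my plan is to approximate $F$ by predictable step processes $F_n=\sum_i c_i\I_{(s_{i-1},s_i]\times B_i}$, so that $\int F_n\,d\widetilde N$ is a finite sum of independent mean-zero compensated compound-Poisson increments $\xi_i=c_i(N_i-\lambda_i)$, where $\lambda_i=\nu(B_i)(s_i-s_{i-1})$. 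Refining the partition so that all $\lambda_i$ are small, a direct computation gives $\Ee|\xi_i|^\alpha\le c_\alpha|c_i|^\alpha\lambda_i$, and the von Bahr--Esseen inequality $\Ee|\sum\xi_i|^\alpha\le 2\sum\Ee|\xi_i|^\alpha$ (valid for $1\le\alpha\le 2$) yields the bound on $\Ee|X^{(n)}_T|^\alpha$. Doob's maximal inequality, applicable since $\alpha>1$, promotes it to a maximal bound, and one passes to the limit in $n$.

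\emph{From $p=\alpha$ to $p\in(0,\alpha)$.} Applied to the stopped process $X^\tau$, the diagonal estimate delivers the Lenglart-type domination
\begin{gather*}
\Ee\bigl[|X_\tau|^\alpha\bigr]\le C_\alpha\,\Ee[A_\tau],\qquad A_t:=\int_0^t\!\!\int_{y\neq 0}|F|^\alpha\nu(dy)\,ds,
\end{gather*}
for every bounded stopping time $\tau\le T$, where $A$ is increasing and predictable. Mimicking the proof of Lemma~\ref{pre-18} with $\sigma_\lambda:=\inf\{t\ge 0\mid A_t\ge\lambda^\alpha\}$ and using that $|X_{\sigma_\lambda}|^\alpha$ is controlled by $A_{\sigma_\lambda}\le\lambda^\alpha$, one obtains the good-$\lambda$-type estimate
\begin{gather*}
\Pp\Bigl(\sup_{t\le T}|X_t|>\lambda,\;A_T<\lambda^\alpha\Bigr)\le\frac{C_\alpha}{\lambda^\alpha}\,\Ee\bigl[A_T\wedge\lambda^\alpha\bigr],
\end{gather*}
and the layer-cake computation of that lemma -- with $X=\sup_{t\le T}|X_t|$, $Y=A_T^{1/\alpha}$ and $r=\alpha$ -- then yields \eqref{mar-e50} for every $p\in(0,\alpha)$.

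\emph{Part (b) and main obstacle.} For (b), Theorem~\ref{mar-50} applied to the c\`adl\`ag martingale $X$ gives $\Ee[(X^*_T)^p]\le C_p\Ee[\bracket{X}_T^{p/2}+|(\Delta X)^*_T|^p]$; for a Poisson stochastic integral the predictable quadratic variation is $\bracket{X}_T=\int_0^T\!\int_{y\neq 0}|F|^2\nu(dy)\,ds$, which is the first term of \eqref{mar-e52}. Since $\Delta X_t=F(t,y)$ precisely when $N$ has an atom at $(t,y)$,
\begin{gather*}
|(\Delta X)^*_T|^p\le\sum_{t\le T}|\Delta X_t|^p=\int_0^T\!\!\int_{y\neq 0}|F(s,y)|^p\,N(dy,ds),
\end{gather*}
and the compensation formula $\Ee\int|F|^p\,dN=\Ee\int|F|^p\,\nu(dy)\,ds$ produces the second term of \eqref{mar-e52}. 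The delicate step of the whole theorem is the von Bahr--Esseen-type $L^\alpha$-bound for $\alpha\in(1,2)$ in (a): for non-deterministic $F$ the characteristic-function argument that would trivialize the deterministic case via the L\'evy--Khintchine formula is unavailable, so one must either carry out the approximation by simple integrands with sufficiently careful $L^\alpha$-control of the remainder, or symmetrize and invoke Khintchine-type estimates (Theorem~\ref{mar-33}) conditionally on the jump configuration of $N$.
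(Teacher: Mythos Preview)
The paper does not give a full proof of this theorem but only a one-paragraph sketch after the statement, and on the structural level your proposal matches that sketch closely. For part~(b) you invoke Theorem~\ref{mar-50} and then identify $\bracket{X}_T=\int_0^T\!\int|F|^2\,\nu(dy)\,ds$ and bound $(\Delta X)^*_T$ via the compensation formula; this is precisely what the paper suggests. For part~(a) the paper, like you, first says that ``it suffices to consider $p=\alpha$'' -- your Lenglart/good-$\lambda$ argument via Lemma~\ref{pre-18} is a clean way to make this reduction explicit.

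Where the two routes differ is in the treatment of the diagonal case $p=\alpha\in(1,2)$. The paper points to Novikov's original device: a change-of-variable formula for the $C^1$ (but not $C^2$) function $x\mapsto|x|^\alpha$, which plays the role of It\^o's formula and yields the $L^\alpha$-estimate directly in continuous time, without discretization. Your alternative -- step-process approximation plus a von~Bahr--Esseen bound -- is viable in principle, but the sentence ``a finite sum of independent mean-zero compensated compound-Poisson increments'' is not quite right: for predictable $F$ the coefficients $c_i$ are $\Fscr_{s_{i-1}}$-measurable random variables, so the $\xi_i$ are martingale differences rather than independent. What you actually need is the martingale-difference version $\Ee\bigl|\sum_i\xi_i\bigr|^\alpha\le C_\alpha\sum_i\Ee|\xi_i|^\alpha$ for $1\le\alpha\le 2$ (Burkholder), together with the estimate $\Ee\bigl[|N_i-\lambda_i|^\alpha\mid\Fscr_{s_{i-1}}\bigr]\le c\,\lambda_i$ for small $\lambda_i$. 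You flag this gap yourself in the final paragraph. Novikov's change-of-variable route bypasses the discretization and the accompanying approximation bookkeeping altogether, which is why the paper singles it out; your route has the compensating virtue of recycling inequalities already developed in this chapter.
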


The estimate \eqref{mar-e52} for $p \geq 2$ can be derived from Theorem~\ref{mar-50} or by applying It\^o's formula to $x \mapsto x^p$, see e.g.\ Kunita~\cite[Thm.~2.11]{kunita04}. For $p \in (0,2)$, one can show that it suffices to consider $p=\alpha$ and then establish the estimate for this particular case using a change of variable formula which is, so to speak, the substitute for It\^o's formula, cf.\ Novikov~\cite{novikov75}.

\medskip

The other possibility to get BDG inequalities for a jump process $X=(X_t)_{t\geq 0}$ is to replace the predictable compensator $\bracket X$ by the \emph{square bracket} process \index{square bracket}
\begin{gather}\label{mar-e70}
    \sbracket{X}_t \coloneqq  \lim_{n\to\infty}  \left(X_0^2 + \smash[t]{\sum_{i=1}^{k(n)}} (X_{t\wedge\tau_i^n} - X_{t\wedge\tau_{i-1}^n})^2\right)
\end{gather}
(the limit is taken in probability) where $0 = \tau_0^n \leq \tau_1^n\leq \dots\leq \tau_{k(n)}^n\leq t$ are stopping times such that $\max_{1\leq i\leq k(n)} |\tau_i^n - \tau_{i-1}^n|\to 0$ as $n\to\infty$. A full discussion is given in Protter~\cite[Ch.~II.6]{protter}; here we just mention that $\sbracket{X}_t$ is an adapted increasing process such that $\sbracket{X}_t-\bracket X_t$ is a martingale. Moreover, one has the following relation to stochastic integration with jumps:
\begin{gather}\label{mar-e72}
    \Delta\sbracket{X}_t = (\Delta X)_t^2
    \et
    \sbracket{X}_t - \sbracket{X}_0 = X_t^2 - X_0^2 - 2\int_0^t X_{s-}\,dX_s.
\end{gather}

Example~\ref{mar-72} below shows why we must restrict ourselves to $p\geq 1$ in the following \enquote{square-bracket} version of the BDG inequalities.

\begin{theorem}\label{mar-71} \index{inequality!BDG (jump case)}
    Let $(X_t)_{t\geq 0}$ be a real-valued martingale with c\`adl\`ag sample paths and $X_0=0$. For every $p\geq 1$ there are universal constants $0<c_p\leq C_p < \infty$ such that
    \begin{gather}\label{mar-e74}
        c_p \Ee\left[ \sbracket{X}_t^{p/2}\right]
        \leq
        \Ee\left[ \left(X^*_t\right)^{p}\right]
        \leq
        C_p \Ee\left[ \sbracket{X}_t^{p/2}\right],
        \quad t\geq 0.
    \end{gather}
\end{theorem}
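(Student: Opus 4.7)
The plan is to mimic the proof of Theorem~\ref{mar-43}, making two changes forced by the presence of jumps. First, the identity \enquote{$X^2 - \bracket{X}$ is a local martingale} is replaced by the semimartingale identity \eqref{mar-e72}, which makes $X^2 - \sbracket{X}$ a local martingale. Localizing along $\tau_n := \inf\{s : |X_s|^2 + \sbracket{X}_s > n\}$ and applying Doob's $L^2$-maximal inequality handles the case $p = 2$: the identity $\Ee[\sbracket{X}_t] = \Ee[X_t^2]$ (which passes to the limit by monotone/dominated convergence when either side is finite) yields the lower bound, and $\Ee[(X_t^*)^2] \leq 4\Ee[X_t^2] = 4\Ee[\sbracket{X}_t]$ gives the upper bound.

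For general $p \geq 1$, the strategy is a good-$\lambda$ inequality in the spirit of the time-change proof of Theorem~\ref{mar-43}. Fixing $\beta > 1$ and $\delta \in (0,1)$, let $\sigma := \inf\{s \geq 0 : X_s^* > \lambda\}$; on the event $\{X_t^* > \beta \lambda,\; \sqrt{\sbracket{X}_t} \leq \delta \lambda\}$ the post-$\sigma$ increment $Y_u := X_u - X_{u \wedge \sigma}$ must reach a level of order $\lambda$, while $\sbracket{Y}_t = \sbracket{X}_t - \sbracket{X}_\sigma \leq \delta^2\lambda^2$. The $L^2$-estimate just established, applied to $Y$ and combined with Chebyshev's inequality, then produces a bound of the form $\epsilon(\beta,\delta) \Pp(X_t^* > \lambda)$ with $\epsilon(\beta,\delta)$ arbitrarily small for small $\delta$. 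Lemma~\ref{pre-14} then delivers the upper bound in \eqref{mar-e74}, and a symmetric good-$\lambda$ inequality with the roles of $X^*$ and $\sqrt{\smash[b]{\sbracket{X}}}$ interchanged yields the lower bound.

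The main obstacle is the \enquote{overshoot} at $\sigma$: a single jump $\Delta X_\sigma$ can render $|X_\sigma|$ much larger than $\lambda$ even though $X_{\sigma-}^* \leq \lambda$, which is exactly the phenomenon that blocked direct localization in the continuous-time proof (cf.\ the footnote on p.~\pageref{hidden-BDG-1}). The remedy is Davis' decomposition $X = X' + X''$, in which $X''$ collects the \enquote{large} jumps of $X$ into a predictable process of finite variation dominated pathwise by $c (\Delta X)_t^*$, while $X'$ is a local martingale with bounded jumps to which the overshoot-free good-$\lambda$ argument applies directly. Since $((\Delta X)_t^*)^2 \leq \sbracket{X}_t$ pathwise, the contribution of $X''$ is absorbed into $\Ee \sbracket{X}_t^{p/2}$ after an application of the triangle inequality in $L^p$, and this last step is precisely where the restriction $p \geq 1$ enters; Example~\ref{mar-72} below addresses the obstruction for $p < 1$.
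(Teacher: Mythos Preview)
The paper does not give its own proof of Theorem~\ref{mar-71}; it only refers the reader to Cohen \& Elliott, Liptser \& Shiryayev, Dellacherie \& Meyer, Meyer's \emph{Cours}, and Lenglart--Lepingle--Pratelli. Your sketch---Davis' decomposition to tame the overshoot, followed by a good-$\lambda$ argument \`a la Lemma~\ref{pre-14}---is precisely the route taken in the last three of these references, so in that sense you are in line with what the paper has in mind.

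One imprecision worth correcting: in Davis' decomposition the piece $X''$ is \emph{not} a predictable finite-variation process. One writes $X'' = J - \widetilde J$, where $J$ collects the jumps $\Delta X_s$ with $|\Delta X_s| > 2(\Delta X)^*_{s-}$ and $\widetilde J$ is its dual predictable projection; thus $X''$ is itself a local martingale of finite variation, and it is the total variation of $X''$ (controlled pathwise by a multiple of $(\Delta X)^*_t$, via a geometric-series argument for $J$ and a Garsia--Neveu-type lemma, cf.\ Lemma~\ref{gen-33}, for $\widetilde J$) that one feeds into $\Ee\big[\sbracket{X}_t^{p/2}\big]$. The remaining piece $X' = X - X''$ then has jumps bounded by a predictable multiple of $(\Delta X)^*_{t-}$, which is what makes the overshoot in your good-$\lambda$ step harmless on the event $\{\sqrt{\sbracket{X}_t}\leq\delta\lambda\}$. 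The control of $\Ee\big[\operatorname{var}(\widetilde J)_t^{\,p}\big]$ by $\Ee\big[\operatorname{var}(J)_t^{\,p}\big]$ is in fact one of the places where $p\geq 1$ is genuinely used, so your closing remark about the restriction is correct even if the mechanism is a bit more than just the triangle inequality in $L^p$.
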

It goes without saying that, as in all cases discussed before, \eqref{mar-e74} is stable under stopping and under the limit as $t\to\infty$. Theorem~\ref{mar-71} goes back to the seminal paper \cite{bur-dav-gun72} by  Burkholder, Davis \& Gundy, see also Burkholder~\cite{burkholder73} for a simplified version in discrete time. The first continuous-time proof, including the hitherto unpublished \enquote{Davies} case $p=1$, seems to be due to Meyer~\cite[App.~II]{meyer72}. Full proofs of Theorem~\ref{mar-71} are fairly difficult to find in textbooks, notable exceptions are Cohen \& Elliott~\cite[Thm.~11.5.5]{coh-ell15}, Liptser \& Shiryaev~\cite[pp.~70--83]{lip-shi89} or Schilling~\cite[Ch.~10]{schilling-maps} in the discrete-time setting; usually Dellacherie \& Meyer~\cite[p.~304, \S VII.92]{dm-2},~\cite[pp.~328--329, XIII.14]{dm-5}, Meyer~\cite[App.~II, III]{meyer72} and~\cite[pp.~350--353]{meyer-cours} and  Lenglart \emph{et al.}~\cite[\S 2]{len-lep-pra80} serve as standard references.
 Marinelli \& R\"ockner~\cite{mar-roe16} present a proof of Theorem~\ref{mar-71} along the lines of the stochastic calculus proof of the BDG inequalities for jump processes which also works for Hilbert-space valued random variables. As one would expect, the key ingredient is the careful (conditional) domination of the jumps. If the underlying martingale is given by a stochastic integral, the BDG inequalities for $p=2$ are just It\^o's isometry; Dirksen~\cite{dirksen14} discusses generalizations of It\^o's isometry to $L^p$-spaces with $1<p<\infty$.

\medskip
Burkholder~\cite[\S 10--12]{burkholder73} points out that it is the impact of big jumps which causes the breakdown of the BDG inequalities if $p\in (0,1)$. This paper also contains an interesting discussion on how to tame the jumps. The following example is due to Marcinkiewicz \& Zygmund~\cite{mar-zyg38}, see also Burkholder \& Gundy~\cite{bur-gun70}; we follow the presentation in Schilling~\cite[Bsp.~10.5]{schilling-maps}.

\begin{example}[Marcinkiewicz \& Zygmund]\label{mar-72} \index{inequality!BDG, counterexample}
    In order to illustrate the breakdown of the BDG inequalities, it is enough to consider a discrete-time martingale $(X_n)_{n\in\nat_0}$; the analogue of the jumps are the increments $\xi_n \coloneqq  X_n - X_{n-1}$.

    Let $\eta$ be a random variable with law $\Pp(\eta = 1)=1-1/m$ and $\Pp(\eta = 1-m) = 1/m$ for some fixed $m\geq 2$. We construct sequences of independent random variables $(\xi_n)_{n\in\nat}$ such that the inequalities \eqref{mar-e74} fail for the \enquote{random walk} martingale $X_0\coloneqq 0$, $X_n \coloneqq  \xi_1+\dots+\xi_n$ if $0<p<1$.

    \subparagraph{Breakdown of the lower bound.}
    Let $\xi_1\sim \eta$, $\xi_2\sim-\eta$ and $\xi_n = 0$ for $n\geq 3$ be independent random variables. For $n\geq 2$
    \begin{align*}
        \Ee\left(|X_n|^p\right)
        = \Ee\left(|\xi_1+\xi_2|^p\right)
        &= \Ee\left(|\xi_1+\xi_2|^p\I_{\{\xi_1\neq -\xi_2\}}\right)\\
        &= 2\Ee\left(|\xi_1+\xi_2|^p \I_{\{\xi_1=1\}\cap\{\xi_2 = m-1\}}\right)\\
        &\leq 2 m^{p} \,\Pp(\xi_2=m-1) = 2m^{p-1}.
    \end{align*}
    However, we always have $\xi_1^2+\xi_2^2 \geq 2$, and so
    \begin{gather*}
        \Ee\left(\sbracket X_\infty^{p/2}\right)
        = \Ee\left(\left(\xi_1^2+\xi_2^2\right)^{p/2}\right)
        \geq 1.
    \end{gather*}
    If $m\gg 1$ is large enough, the lower bound in \eqref{mar-e74} is violated.

    \subparagraph{Breakdown of the upper bound.}
    For $k,n\in\nat$ let $\xi_1\sim \eta,\dots, \xi_n\sim\eta$ and $\xi_{n+k} = 0$ be independent random variables. If $m\geq 2n$, then
    \begin{gather*}
            |X_{n+k}|
            = \left|\sum_{i=1}^n \xi_i\right|
            \geq
            \begin{cases}
            n, &\forall i\in \{1,\dots, n\} : \xi_i = 1,\\[5pt]
            \overbracket[.6pt]{|\xi_{j}| - \sum\nolimits_{i\neq j}|\xi_i|}^{\geq (m-1)-(n-1)}\geq n,
            &\exists j\in \{1,\dots, n\} : \xi_{j} =1-m.
            \end{cases}
    \end{gather*}
    Moreover, $\sbracket X_{n+k} = \sum_{i=1}^n\xi_i^2$ satisfies
    \begin{gather*}
            \Pp\left(\sbracket X_{n+k} = n\right) = \left(1-\tfrac 1m\right)^n
            \et
            \Pp\left(n < \sbracket X_{n+k} \leq nm^2\right) = 1-\left(1-\tfrac 1m\right)^n.
    \end{gather*}
    On the one hand, we have $\Ee\left(|X_{n+k}|^p\right)\geq n^p$, and on the other hand
    \begin{gather*}
            \Ee\left(\sbracket X_{n+k}^{p/2}\right)
            \leq n^{p/2}\left(1-\tfrac 1m\right)^n +  n^{p/2} m^p\left(1-\left(1-\tfrac 1m\right)^n\right)
            \xrightarrow[]{m\to\infty} n^{p/2}.
    \end{gather*}
    Thus, the upper bound in \eqref{mar-e74} cannot hold if $m,n\gg 1$.
\end{example}

\begin{remark}\label{mar-73}
    Let us briefly indicate an extension of the BDG inequalities which is contained in Lenglart \emph{et al.}~\cite{len-lep-pra80} and Meyer~\cite{dm-5}. Rather than considering the moment $F(X_t^*)$ for the polynomially growing function $F(x) = |x|^p$, $p\geq 1$, one may consider convex functions $F\colon [0,\infty)\to[0,\infty)$ which are of \emph{moderate growth}, i.e.\ functions that satisfy \index{moderate growth}
    \begin{gather*}
        \exists c>0 \::\: \sup_{x>0} \frac{F(cx)}{F(x)} < \infty.
    \end{gather*}
    All concave functions are moderate. If $F$ is convex with right derivative $f = F'_+$, then the above condition is equivalent to
    \begin{gather*}
        \alpha = \sup_{x>0} \frac{xf(x)}{F(x)} < \infty
        \quad\text{which implies that}\quad
        \forall c>1\::\: \sup_{x>0} \frac{F(cx)}{F(x)} \leq c^\alpha.
    \end{gather*}
    The BDG inequalities for convex moderate functions read
    \begin{gather*}
        \Ee\left[F(X_t^*)\right] \asymp \Ee\left[F\left(\sqrt{\sbracket{X}_t}\right)\right],\quad t\geq 0.
    \end{gather*}
    The proof relies,
     essentially, on the good-$\lambda$-inequality and a clever use of the layer-cake formula for moderate functions.
\end{remark}

Using the optimization trick from Remark~\ref{mar-42} it is easy to deduce exponential tail estimates from the BDG inequalities. The restriction $\alpha\lambda^2 \geq d^2(t)$ comes from the fact that the BDG inequalities hold, in general, only for $p\geq 1$. The following result is due to Kallenberg \& Sztencel~\cite[Thm.~5.1, 5.3]{kal-szt91}.\footnote{Note that Kallenberg \& Sztencel~\cite{kal-szt91} treat high-dimensional martingales and aim to get uniform constants for all dimensions. Thus, the proofs are much more complicated, and the resulting estimates are \enquote{only} sub-Gaussian or require further symmetry assumptions to get Gaussian tails.}
\begin{corollary}\label{mar-77}
    Let $(X_t)_{t\geq 0}$ be a real-valued c\`adl\`ag martingale such that the square bracket process is bounded, i.e.\ $\sbracket X_t \leq d^2(t)$ for some increasing function $d(t)>0$. Then
    \begin{gather}\label{mar-e78}
        \Pp\left( X_t^* > \lambda \right) \leq \frac 2\beta e^{-\alpha \lambda^2/d^2(t)},\quad \lambda > \frac{d^2(t)}{\alpha},\: \alpha+\beta = 1,\: \alpha,\beta\in (0,1).
    \end{gather}
\end{corollary}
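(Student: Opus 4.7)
My plan is to mimic verbatim the passage from Khintchine's inequality to Azuma's inequality carried out in Remark~\ref{mar-42}, now with the square-bracket BDG estimate of Theorem~\ref{mar-71} playing the role of Khintchine's bound.

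\textbf{Step 1 (moment bound).} For every $p\geq 1$, Theorem~\ref{mar-71} combined with the standing hypothesis $\sbracket{X}_t\leq d^2(t)$ yields
\begin{equation*}
    \Ee\bigl[(X_t^*)^p\bigr] \leq C_p\,\Ee\bigl[\sbracket{X}_t^{p/2}\bigr] \leq C_p\,d(t)^p.
\end{equation*}
To produce a Gaussian tail I need to track the dependence of the BDG constant on $p$, in analogy with the explicit Khintchine constant $\Gamma_p = p\,2^{p/2}\Gamma(p/2)$ derived in Theorem~\ref{mar-33}. Invoking the elementary Gamma-function estimate $p\,\Gamma(p/2) \leq (2/\beta)\,p^{p/2}(2\alpha e)^{-p/2}$ from Remark~\ref{mar-42}, I would rewrite this as
\begin{equation*}
    C_p \leq \frac{2}{\beta}\left(\frac{p}{\alpha e}\right)^{p/2},\qquad \alpha,\beta\in(0,1),\;\alpha+\beta=1.
\end{equation*}

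\textbf{Step 2 (Markov and optimization).} Markov's inequality then gives, for every admissible~$p$,
\begin{equation*}
    \Pp\left(X_t^* > \lambda\right)
    \leq \frac{1}{\lambda^p}\,\Ee\bigl[(X_t^*)^p\bigr]
    \leq \frac{2}{\beta}\left(\frac{p\,d^2(t)}{\alpha e\,\lambda^2}\right)^{p/2}.
\end{equation*}
Differentiating $\tfrac{p}{2}\log\!\bigl(p\,d^2(t)/(\alpha e\lambda^2)\bigr)$ in $p$ shows the minimum is attained at $p_\star := \alpha\lambda^2/d^2(t)$, where the bracketed quantity equals $e^{-1}$. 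Substituting back produces the desired bound $(2/\beta)\exp\!\bigl[-\alpha\lambda^2/(2 d^2(t))\bigr]$. The requirement $p_\star \geq 1$ to stay in the BDG regime is precisely the admissibility condition $\alpha\lambda^2 \geq d^2(t)$ under which the corollary is stated.

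\textbf{Main obstacle.} Theorem~\ref{mar-71} as stated only asserts $C_p<\infty$ and gives no information about its growth in $p$, whereas a Gaussian tail requires $C_p^{1/p}$ to grow like $\sqrt{p}$ with a quantitative coefficient. The substantive part of the proof is therefore Step~1: one must either revisit the interpolation argument behind Theorem~\ref{mar-71} with the same bookkeeping used in Remark~\ref{mar-42}, or appeal to the sharp BDG constants of Kallenberg \& Sztencel \cite{kal-szt91}, in order to extract the full $(\alpha,\beta)$-parametrized family of moment bounds. Once that quantitative form of BDG is in hand, Step~2 is the one-line optimization already carried out in Remark~\ref{mar-42}.
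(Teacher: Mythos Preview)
Your approach is exactly what the paper sketches: apply the square-bracket BDG inequality (Theorem~\ref{mar-71}) and then repeat the Markov-plus-optimization argument of Remark~\ref{mar-42}; the paper gives no more detail than you do and, in particular, also glosses over the $p$-growth of $C_p$ that you correctly flag as the substantive point, simply pointing to Kallenberg \& Sztencel \cite{kal-szt91}. Note that your Step~2 produces the exponent $-\alpha\lambda^2/(2d^2(t))$ under the constraint $\alpha\lambda^2\geq d^2(t)$, in agreement with Remark~\ref{mar-42} and the paper's own preamble to the corollary; the exponent and the range for $\lambda$ printed in \eqref{mar-e78} appear to be misprints.
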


\section{L\'evy Processes}\label{lp}

Brownian motion is arguably one of the most important stochastic processes in continuous time. The class of L\'evy processes shares many features with Brownian motion, and it is reasonable to treat Brownian motion in the wider context of L\'evy processes. In many cases, \enquote{Brownian proofs} extend to the L\'evy setting almost literally, and in such cases we sometimes point out \enquote{Brownian} references, leaving the obvious adaptations to the reader. Below we often work in dimension $d=1$. Let us point out, however, that most results below remain valid for $d$-dimensional processes at the expense of larger, usually dimension-dependent, constants.

\subsection{Background Knowledge on L\'evy Processes}\label{lp-back}

\begin{definition}\label{lp-03} \index{Levy process@L\'evy process}
    A \emph{L\'evy process} is a stochastic process $(X_t)_{t\geq 0}$ taking values in $\rd$ and such that its increments are stationary \eqref{Lstat}, independent \eqref{Lindep} and the trajectories are continuous in probability \eqref{Lcont}:
\begin{align}
    \forall s\leq t                             &:&& X_t-X_s \sim X_{t-s} \tag{L1}\label{Lstat}\\
    \forall n\in\nat,\: t_0\leq \dots \leq t_n  &:&& (X_{t_k}-X_{t_{k-1}})_{k=1}^n\text{\ \ independent} \tag{L2}\label{Lindep}\\
    \forall \epsilon>0                          &:&& \lim_{h\downarrow 0} \Pp\left(|X_h| >\epsilon\right) = 0 \tag{L3}\label{Lcont}
\end{align}
\end{definition}
Our standard reference for L\'evy processes is Sato's monograph~\cite{sato99}, a short introduction is given in Schilling~\cite{crm}.

Note that \eqref{Lstat} implies that $X_0\sim \delta_0$, i.e.\ $\Pp(X_0 = 0)=1$; this is often separately stated as a further defining condition. Let us add a few further remarks.

\begin{remark}[role of Brownian motion]\label{lp-05}\index{Brownian motion}%
    Among all L\'evy processes, Brownian motion is special for
    (i) its law is Gaussian,
    (ii) its paths $t\mapsto B_t$ are (a.s.) continuous, and
    (iii) its infinitesimal generator is a second-order differential operator with constant coefficients.
    In fact, the properties (i)--(iii) are equivalent. The implication
    (i)$\Rightarrow$(ii) is the Kolmogorov--Chentsov theorem (e.g.\ Revuz \& Yor~\cite[Thm.~I.(2,1)]{rev-yor99}, Schilling~\cite[Thm.~10.1]{schilling-bm} or Theorem~\ref{gen-12}),
    (ii)$\Rightarrow$(i) is due to L\'evy (e.g.\ It\^o~\cite[Sec.~4.\S2]{ito-tata}, Schilling~\cite[Thm.~8.4]{crm}, it is essentially a uniform version of the central limit theorem),
    for (ii)$\Rightarrow$(iii) see e.g.\ Schilling~\cite[Thm.~7.38]{schilling-bm},
    and (iii)$\Rightarrow$(i) is a standard fact on the fundamental solution of 2nd order differential operators -- just use the Fourier transform.
\end{remark}

\begin{remark}[filtration and measurability issues]\label{lp-07}
    In the definition of a L\'evy process
    we have implicitly used the natural filtration $\Fscr_t^X \coloneqq  \sigma(X_s, s\leq t)$ generated by the process itself. It is not hard to see that \eqref{Lindep} is equivalent to
    \begin{gather}
    \tag{L2$^\prime$}\label{Lindep-bis}
    \forall s\leq t \::\: X_t-X_s \text{\ \ is independent of\ \ } \Fscr_s^X.
    \end{gather}
    If we want to regularize the sample paths, it is important to add null sets to the filtration. To do so, we complete the underlying probability space (if needed -- but we do not change notation) and denote the family of $\Pp$-null sets in the complete space by $\Nscr$. The filtration is then enlarged by setting $\Fscr_t \coloneqq  \sigma(\Fscr_t^X, \Nscr)$. Since we are adding only null sets and sets of full measure, it is clear that \eqref{Lindep-bis}, hence \eqref{Lindep}, remains valid; thus $(X_t,\Fscr_t)_{t\geq 0}$ is indeed a \emph{L\'evy process with filtration}. One can show that this enlargement is right continuous, i.e.\ $\Fscr_t = \Fscr_{t+}$, see e.g.\ Protter~\cite[Thm.~I.31]{protter}, Revuz \& Yor~\cite[Thm.~III.(2.10)]{rev-yor99} or Schilling~\cite[Ch.~6.7]{schilling-bm}; thus, $(X_t)_{t\geq 0}$ can be defined on a filtered probability space $(\Omega,\Ascr,\Fscr_t,\Pp)$ which fulfils the \emph{usual conditions}.

    In this setting, we can construct a modification of $(X_t,\Fscr_t)_{t\geq 0}$ such that for all $\omega$ one has $X_0(\omega)=0$ and \eqref{Lcont} is (up to a modification) equivalent to
    \begin{gather}
    \tag{L3$^\prime$}\label{Lcont-bis}
    \forall \omega\in\Omega\::\: t\mapsto X_t \text{\ \ is right continuous with finite left limits (c\`adl\`ag)}.
    \end{gather}
    We refer to Revuz \& Yor~\cite[Thm.~III.(2.7)]{rev-yor99} for the proof.
\end{remark}

\begin{remark}[strong Markov property]\label{lp-08} \index{Levy process@L\'evy process!strong Markov property}
    The independent and stationary increments property implies that the process $Y_t \coloneqq  X_{t+s}-X_s$ is, for each fixed $s\geq 0$, again a L\'evy process, which is independent of $\Fscr_s$, and that $(Y_t)_{t\geq 0}$ and $(X_t)_{t\geq 0}$ have the same finite dimensional distributions.

    The independence of $(Y_t)_{t\geq 0}$ from the past $\Fscr_s$ allows us to realize a L\'evy process as a Markov process. To do so, we define probability measures $\Pp^x$, $x\in\rd$, on $(\Omega,\Fscr_\infty)$ as projective limit of the finite dimensional distributions
    \begin{gather*}
        \Pp^x\left(X_{t_k}\in B_k, k=1,\dots, n\right)\coloneqq  \Pp\left(X_{t_k}+x\in B_k, k=1,\dots, n\right),\\
        0\leq t_1<\dots < t_n,\:n\in\nat,\:B_1,\dots,B_n\in\Bscr(\rd).
    \end{gather*}
    Then $\Pp^x$ and $\Pp^\mu(\dots) \coloneqq  \int \Pp^x(\dots)\,\mu(dx)$ are the laws of the process $(X_t)_{t\geq 0}$ with initial distribution $X_0\sim\delta_x$ and $X_0\sim\mu$, respectively; we frequently use $\Pp$ instead of $\Pp^0$.

    In the last two paragraphs we may replace the fixed time $s\geq 0$ by an $\Fscr_t$-stopping time $\sigma$:
    \begin{gather}\label{lp-e06}\begin{gathered}
        (X_{t+\sigma}-X_\sigma)_{t\geq 0} \text{\ \ is, conditional on $\{\sigma<\infty\}$, a L\'evy process},\\
        \text{which is independent of $\Fscr_\sigma$ and distributed like $(X_t)_{t\geq 0}$}.
    \end{gathered}\end{gather}
    The property \eqref{lp-e06} is often called the \emph{strong Markov property} of a L\'evy process and, using standard arguments, one can see that it implies the usual formulation of the strong Markov property
    \begin{gather*}
        \Ee^x\left[f(X_{t+\sigma}) \mid \Fscr_{\sigma} \right]
        = \Ee^{X_\sigma}\left[f(X_t)\right]
        \quad\text{$\Pp^x$-a.s.\ on $\{\sigma<\infty\}$ for $f\in B_b(\rd)$}
    \end{gather*}
    see e.g.\ Schilling~\cite[Ch.~6.2]{schilling-bm}.
\end{remark}

An elegant way to characterize L\'evy processes can be based on the conditional characteristic function, see Schilling~\cite[Thm.~3.1]{crm}.
\begin{theorem}\label{lp-09} \index{Levy process@L\'evy process!characterization}
    A stochastic process $(X_t,\Fscr_t)_{t\geq 0}$ with values in $\rd$ and c\`adl\`ag paths is a L\'evy process with filtration $(\Fscr_t)_{t\geq 0}$ if, and only if,
    \begin{gather}\label{lp-e08}
        \Ee\left[e^{i\xi\cdot (X_t-X_s)} \:\middle|\: \Fscr_s\right]
        = e^{-(t-s)\psi(\xi)},\quad \xi\in\rd,\; s\leq t,
    \end{gather} \index{characteristic exponent}\index{Levy--Khintchine formula@L\'evy--Khintchine formula}\index{Levy triplet@L\'evy triplet}
    with a \emph{characteristic exponent} $\psi\colon \rd\to\comp$ which is uniquely determined by the \emph{L\'evy--Khintchine formula}
    \begin{gather}\label{lp-e10}\begin{aligned}
        \psi(\xi)
        &= -ib\cdot\xi + \frac 12\xi\cdot Q\xi
        + \int_{|y|\geq 1} \left(1-e^{i\xi\cdot y}\right) \nu(dy)\\
        &\qquad\mbox{} + \int_{0<|y|<1} \left(1-e^{i\xi\cdot y} + i\xi\cdot y\right) \nu(dy).
    \end{aligned}\end{gather}
    The \emph{L\'evy triplet} $(b,Q,\nu)$ appearing in \eqref{lp-e10} is unique and it comprises a vector $b\in\rd$, a positive semidefinite matrix $Q\in\real^{d\times d}$ and a Radon measure $\nu$ on $\rd\setminus\{0\}$ such that $\int_{y\neq 0} |y|^2\wedge 1\,\nu(dy)<\infty$.
\end{theorem}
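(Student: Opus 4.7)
The plan is to establish both implications. The \emph{if} direction is quick: given c\`adl\`ag paths and \eqref{lp-e08}, the conditional characteristic function of $X_t - X_s$ given $\Fscr_s$ equals the deterministic function $\xi \mapsto e^{-(t-s)\psi(\xi)}$, so by a standard monotone class argument $X_t - X_s$ is independent of $\Fscr_s$ and has the same law as $X_{t-s}$ (taking $s=0$ and using that $X_0 = 0$ a.s., which follows from $\phi_0(\xi) \equiv 1$). This yields \eqref{Lstat} and \eqref{Lindep}. Stochastic continuity \eqref{Lcont} follows from the continuity of $\psi$ at the origin via L\'evy's continuity theorem, since $\Ee[e^{i\xi\cdot X_h}] = e^{-h\psi(\xi)} \to 1$ uniformly on compact sets as $h \downarrow 0$.

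For the converse direction, I would first show that $\phi_t(\xi) := \Ee[e^{i\xi \cdot X_t}]$ has the form $e^{-t\psi(\xi)}$. The independent and stationary increments property gives the semigroup identity
\begin{gather*}
    \phi_{s+t}(\xi) = \Ee\left[e^{i\xi \cdot X_s}\,e^{i\xi \cdot (X_{s+t}-X_s)}\right] = \phi_s(\xi)\,\phi_t(\xi),
\end{gather*}
and dominated convergence together with \eqref{Lcont} shows that $t \mapsto \phi_t(\xi)$ is continuous with $\phi_0(\xi) = 1$. Standard arguments then produce a continuous function $\psi : \rd \to \comp$ with $\psi(0) = 0$ such that $\phi_t(\xi) = e^{-t\psi(\xi)}$ for all $t \geq 0$. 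Once this is in hand, the independence of $X_t - X_s$ from $\Fscr_s$ promotes the unconditional identity $\phi_{t-s}(\xi) = e^{-(t-s)\psi(\xi)}$ to its conditional counterpart \eqref{lp-e08}.

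It remains to bring $\psi$ into the form \eqref{lp-e10}. For each $n \in \nat$, the decomposition $X_1 = \sum_{k=1}^n (X_{k/n} - X_{(k-1)/n})$ writes $X_1$ as an $n$-fold sum of iid copies of $X_{1/n}$, so $X_1$ is infinitely divisible. Consequently $\psi = \psi_{X_1}$ is a characteristic exponent in the sense of Section~\ref{pre-mom}, and the classical L\'evy--Khintchine representation \eqref{pre-e71} (invoked here as a known result; see Sato~\cite{sato99}) supplies the representation \eqref{lp-e10} together with the uniqueness of the triplet $(b, Q, \nu)$ and the stated integrability condition.

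The main obstacle in this plan is the passage from the multiplicative semigroup identity $\phi_{s+t} = \phi_s \phi_t$ to the exponential form $\phi_t = e^{-t\psi}$: the Cauchy equation can admit pathological solutions, and a continuous complex logarithm must be chosen consistently in $\xi$. The resolution relies on the fact that $\phi_t(\xi) \to 1$ uniformly on compacts in $\xi$ as $t \downarrow 0$, so $\phi_t(\xi)$ stays away from $0$ on $[0,t_0] \times K$ for every compact $K \subset \rd$ and a suitably small $t_0>0$; one then defines $\psi(\xi) := -t_0^{-1} \log \phi_{t_0}(\xi)$ with a single-valued continuous branch of the logarithm and extends the identity to all $t \geq 0$ by the semigroup property, dyadic rationals and continuity.
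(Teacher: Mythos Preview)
The paper does not give its own proof of this theorem; it merely cites \cite[Thm.~3.1]{crm}. Your sketch follows the standard route and is essentially correct.

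One small point deserves attention. You write that ``$X_0=0$ a.s., which follows from $\phi_0(\xi)\equiv 1$'', but \eqref{lp-e08} concerns only the \emph{increments} $X_t-X_s$ and says nothing about $X_0$ itself: if $(L_t)_{t\ge 0}$ is a L\'evy process and $c\in\rd$ is a constant, then $X_t:=L_t+c$ still satisfies \eqref{lp-e08} with the same $\psi$, yet $X_0=c$. So in the ``if'' direction one must either assume $X_0=0$ as part of the hypothesis (this is how the cited reference proceeds) or read the conclusion as ``$(X_t-X_0)_{t\ge 0}$ is a L\'evy process''. This is a wrinkle in the theorem statement rather than in your method.

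A second minor remark: for the logarithm step you do not need uniform convergence of $\phi_t(\xi)\to 1$ on compacts; pointwise continuity of $t\mapsto\phi_t(\xi)$ together with the semigroup identity already forces $\phi_t(\xi)\neq 0$ for all $t,\xi$ (since $\phi_{t_0}(\xi)=\phi_{t_0/n}(\xi)^n$ and $\phi_{t_0/n}(\xi)\to 1$), after which the continuous logarithm and the Cauchy argument go through for each fixed $\xi$.
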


If we take expectations on both sides of the formula \eqref{lp-e08}, we see immediately that the random variables $X_t$ are infinitely divisible, see also the discussion at the beginning of Section~\ref{pre-mom}. In fact, and this can also be deduced from \eqref{lp-e08} and the structure of infinitely divisible laws, each infinitely divisible random variable $Z$ can be \enquote{embedded} into a L\'evy process $(X_t)_{t\geq 0}$ such that $X_1\sim Z$. For further details we refer to Sato~\cite[Ch.~2.\S7--8]{sato99} and Schilling~\cite[Ch.~2, Ch.~7]{crm}.

Since a L\'evy process is a translation invariant Markov process, the transition semigroup is given by $P_t f(x) = \Ee^x f(X_t) = \Ee f(X_t+x)$. The next theorem explains the structure of the infinitesimal generator $A$ of the semigroup $P_t = e^{tA}$ and the structure of the sample paths. Proofs can be found in Sato~\cite[\S19--20 and Thm.~31.5]{sato99} and Schilling~\cite[Ch.~6--7]{crm}.

We will use $\widehat f(\xi) \coloneqq  (2\pi)^{-d}\int_{\rd} e^{-i\xi\cdot y} f(y)\,dy$ to denote the Fourier transform of the function $f$.
\begin{theorem}\label{lp-11}
    Let $(X_t,\Fscr_t)_{t\geq 0}$ be a L\'evy process with values in $\rd$ and characteristic exponent $\psi$ given by \eqref{lp-e10}. The infinitesimal generator of the process is a pseudo differential operator
    \begin{align}\label{lp-e12}
        Af(x) &= -\psi(D)f(x) = -\int_{\rd} \psi(\xi) \widehat f(\xi)\,d\xi,\quad f\in C_c^\infty(\rd),
    \intertext{which can also be written as an integro-differential operator}\label{lp-e14}
        Af(x) &= b\cdot\nabla f(x) + \frac{1}{2} \tr\left( Q\cdot \nabla^2 f(x) \right)
	           + \int_{|y|\geq 1} \left(f(x+y) -f(x)\right) \nu(dy)\\
                &\quad\notag
	           \mbox{}+ \int_{0<|y|<1} \left(f(x+y) -f(x)- y\cdot \nabla f(x)\right) \nu(dy),\quad f\in C_c^2(\rd).
    \end{align}
    The sample paths of a L\'evy process enjoy the following \emph{L\'evy-It\^o decomposition}
    \begin{align}\label{lp-e16}
        X_t
        = bt + \sqrt Q W_t
        + \sum_{\substack{0<s\leq t\\\mathclap{|\Delta X_s|\geq 1}}} \Delta X_s
        + \lim_{\epsilon\downarrow 0}\Bigg[\quad\sum_{\substack{0<s\leq t\\\mathclap{\epsilon\leq |\Delta X_s|<1}}} \Delta X_s - t\!\!\!\int\limits_{\epsilon\leq|y|<1}\!\!\! y\,\nu(dy)\Bigg]
    \end{align}
    where the four terms are L\'evy processes, which are independent and whose L\'evy exponents are given by the four terms in the representation \eqref{lp-e10} of $\psi$. The limit appearing in \eqref{lp-e16} is a limit in probability.
\end{theorem}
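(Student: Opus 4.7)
The proof naturally splits into three largely independent tasks: identifying the generator as a Fourier multiplier, rewriting that multiplier in integro-differential form, and constructing the pathwise L\'evy--It\^o decomposition. The first two are analytic and essentially computational; the third is the main obstacle.

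For the generator formula \eqref{lp-e12}, the plan is to exploit \eqref{lp-e08}. For $f\in C_c^\infty(\rd)$, write $f$ as an inverse Fourier integral and apply Fubini to the semigroup $P_tf(x)=\Ee f(X_t+x)$; using $\Ee e^{i\xi\cdot X_t}=e^{-t\psi(\xi)}$ from \eqref{lp-e08} (with $s=0$), one obtains
\[
    P_tf(x) = \int_{\rd} e^{ix\cdot\xi}\,e^{-t\psi(\xi)}\,\widehat f(\xi)\,d\xi.
\]
Dividing by $t$ and letting $t\downarrow 0$, one may pull the limit inside the integral by dominated convergence: because $f\in\Sscr(\rd)$ we have $\widehat f$ of rapid decay, while $|t^{-1}(e^{-t\psi(\xi)}-1)|\leq |\psi(\xi)|$ (using $|1-e^{-z}|\leq|z|$ for $\RE z\geq 0$), and Lemma~\ref{pre-74} bounds $|\psi(\xi)|$ by a polynomial. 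This yields \eqref{lp-e12}. Note that the formula as printed omits the factor $e^{ix\cdot\xi}$; the proof delivers the corrected integrand.

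For the integro-differential representation \eqref{lp-e14}, I would substitute the L\'evy--Khintchine formula \eqref{lp-e10} into $-\psi(D)f$ and interpret each term. The linear and quadratic parts produce $b\cdot\nabla f$ and $\tfrac12\tr(Q\nabla^2 f)$ by the standard symbol-to-operator correspondence $i\xi_j\leftrightarrow\partial_j$. For each jump term one uses the translation identity $\int e^{ix\cdot\xi}e^{iy\cdot\xi}\widehat f(\xi)\,d\xi=f(x+y)$; the delicate point is to interchange the $\xi$-integral with the $\nu(dy)$-integral. For the small-jump part, the integrand is controlled by $\frac12|y|^2\|\nabla^2 f\|_\infty$ via Taylor's formula, while for the large-jump part it is bounded by $2\|f\|_\infty$, so the integrability $\int(|y|^2\wedge 1)\,\nu(dy)<\infty$ underwrites Fubini's theorem. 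The algebraic match of the resulting terms with \eqref{lp-e14} is then immediate.

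The real work is the L\'evy--It\^o decomposition \eqref{lp-e16}. The plan is the classical jump-counting argument. First, define the random measure $N((0,t]\times B):=\#\{s\in(0,t]\colon \Delta X_s\in B\}$ for Borel $B\subseteq\rd\setminus\{0\}$ bounded away from $0$. Using \eqref{Lstat}, \eqref{Lindep} and the strong Markov property at successive jump times of prescribed magnitude, one shows $N$ is a Poisson random measure with intensity $ds\otimes\nu(dy)$; this also forces $\nu(\{|y|\geq\epsilon\})<\infty$ for every $\epsilon>0$. Consequently, the sum of jumps of size $\geq 1$ is a genuine compound Poisson process, and for fixed $\epsilon\in(0,1)$, the compensated sum $J^\epsilon_t:=\sum_{\epsilon\leq|\Delta X_s|<1,\,s\leq t}\Delta X_s-t\int_{\epsilon\leq|y|<1}y\,\nu(dy)$ is a c\`adl\`ag $L^2$-martingale with $\Ee|J^\epsilon_t|^2=t\int_{\epsilon\leq|y|<1}|y|^2\nu(dy)$. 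Using Doob's maximal inequality (Theorem~\ref{mar-09}) applied to differences $J^{\epsilon_1}-J^{\epsilon_2}$ together with $\int_{|y|<1}|y|^2\nu(dy)<\infty$, one obtains uniform convergence on compact time intervals in probability (indeed in $L^2$) as $\epsilon\downarrow 0$ to a process $J_t$. By construction, all jumps of $X$ are exhausted by the large-jump compound Poisson process and the limiting small-jump martingale, and each of these four candidate processes is adapted and has independent, stationary increments (inherited from $N$ and from $X$). Their mutual independence follows from the fact that a Poisson random measure assigns independent variables to disjoint sets. Subtracting them from $X_t$ therefore leaves a \emph{continuous} L\'evy process $C_t$; by L\'evy's characterization of Brownian motion (as cited in Remark~\ref{lp-05}), $C_t-\Ee C_t$ is a Brownian motion with some covariance $Q'$ and $\Ee C_t$ is linear in $t$. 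Matching the characteristic exponent of the decomposition with $\psi$ via \eqref{lp-e10} identifies the drift as $b$ and the covariance as $Q$, completing the proof. The hardest technical step is the $L^2$-convergence of the compensated small jumps together with verifying that no further \enquote{hidden} discontinuities survive in $C_t$; this is where the integrability condition $\int(|y|^2\wedge 1)\nu(dy)<\infty$ is crucial and where full rigour requires the Poisson random measure framework.
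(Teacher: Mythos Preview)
The paper does not actually prove Theorem~\ref{lp-11}; it is stated as background and the reader is referred to Sato~\cite[\S19--20 and Thm.~31.5]{sato99} and \cite[Ch.~6--7]{crm} for the proofs. Your plan follows exactly the standard route taken in those references: Fourier representation of the semigroup combined with dominated convergence for \eqref{lp-e12}, term-by-term interpretation of the L\'evy--Khintchine formula for \eqref{lp-e14}, and the Poisson random measure construction with $L^2$-martingale convergence of the compensated small jumps for \eqref{lp-e16}. Your observation that the displayed formula \eqref{lp-e12} is missing the factor $e^{ix\cdot\xi}$ is correct.
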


\subsection{Reflection-type Maximal Inequalities}\label{lp-ref}

The reflection principle is one of the first results one comes across when studying Brownian motion or random walks. It is, of course, a maximal (in-)equality and its proof is closely related to maximal inequalities for L\'evy processes.

Let $(X_t)_{t\geq 0}$ be a real-valued stochastic process. As before, $X_t^* \coloneqq  \sup_{s\leq t}|X_s|$ denotes the absolute maximum, and we  write $M_t \coloneqq  \sup_{s\leq t} X_t$ for the running maximum. By $\tau_a \coloneqq  \inf\left\{s>0 \mid X_s \geq a\right\}$ we denote the first passage time at the level $a$ (also: the first hitting time of the closed set $[a,\infty)$).

\begin{scholium}[classical reflection principle]\label{lp-31} \index{reflection principle}\index{Brownian motion}%
Let us briefly review the proof of the \emph{reflection principle} for a one-dimensional Brownian motion $(B_t)_{t\geq 0}$. For any $a>B_0=0$ and $t>0$ we have $\{\tau_a \leq t\} = \{M_t \geq a\}$ and, using the strong Markov property,
\begin{align*}
    \Pp^0\left(M_t \geq a\right)
    &= \Pp^0\left(M_t \geq a,\: B_t< a\right) + \Pp^0\left(M_t \geq a,\: B_t \geq a\right)\\
    &= \Pp^0\left(M_t \geq a,\: B_t< a\right) + \Pp^0\left(B_t \geq a\right)\\
    &= \Pp^0\left(\tau_a \leq t,\: B_t< a\right) + \Pp^0\left(B_t \geq a\right)\\
    &= \int_{\tau_a \leq t} \Pp^{B_{\tau_a}(\omega)}\left(B_{t-\tau_a(\omega)} < a\right) \Pp^0(d\omega) + \Pp^0\left(B_t \geq a\right).
\end{align*}
Since Brownian motion is invariant under translations, and $B_{\tau_a}=a$, we get
\begin{gather}\label{lp-e30}\begin{aligned}
    \Pp^{B_{\tau_a}(\omega)}\left(B_{t-\tau_a(\omega)} < a\right)
    &= \Pp^{0}\left(B_{t-\tau_a(\omega)} < a-B_{\tau_a}(\omega)\right)\\
    &= \Pp^{0}\left(B_{t-\tau_a(\omega)} < 0\right)
    = \frac 12
\end{aligned}\end{gather}
and, using again $\{\tau_a \leq t\} = \{M_t \geq a\}$, we see that
\begin{gather}\label{lp-e32}
    \Pp^0\left(M_t \geq a\right)
    = 2\Pp^0\left(B_t \geq a\right)
    = \Pp^0\left(|B_t| \geq a\right),\quad a>0.
\end{gather}
In the second equality we use the symmetry of Brownian motion: $B_t\sim -B_t$.
\end{scholium}

Recall that a stochastic process $(X_t)_{t\geq 0}$ is called \emph{self-similar} if for any $a>0$ there is some $b>0$ such that the processes $(X_{at})_{t\geq 0}$ and $(bX_t)_{t\geq 0}$ have the same finite dimensional distributions. In particular, $\Pp(X_t \geq 0) = \Pp(X_1 \geq 0)=p_0$ does not depend on $t>0$. A L\'evy process $(X_t)_{t\geq 0}$ is said to be \emph{strictly stable}, if each random variable $X_t$ is stable, i.e.\ if $X_t \sim b(t)X_1$ for a suitable $b(t)>0$. For L\'evy processes the notions of \emph{self-similarity} and \emph{strict stability} coincide and, in this case, $b(t) = t^{1/\alpha}$ for some \enquote{stability index} $\alpha\in (0,2]$, see Schilling~\cite[Ch.~3]{crm} and Sato~\cite[Ch.~3.\S13]{sato99}. The following observation is from Meerschaert \emph{et al.}~\cite[Thm.~6.1]{mss15}.
\begin{lemma}\label{lp-33} \index{reflection principle}
    Let $(X_t)_{t\geq 0}$ be a strictly stable \textup{(}hence, self-similar\textup{)} L\'evy process which is spectrally negative, i.e.\ it does not have positive jumps. Set $p_0 \coloneqq  \Pp(X_1 \geq 0)$. Then
    \begin{gather}\label{lp-e34}
        \Pp^0\left(M_t \geq a\right) = \frac{1}{1-p_0}\Pp^0\left(X_t \geq a\right),\quad a>0.
    \end{gather}
\end{lemma}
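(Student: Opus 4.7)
The plan is to adapt the Brownian reflection argument of Scholium~\ref{lp-31}, using spectral negativity as a substitute for path continuity and self-similarity as a substitute for the Brownian symmetry $B_t\sim -B_t$. First I would write $\tau_a:=\inf\{s>0\mid X_s\geq a\}$ so that $\{M_t\geq a\}=\{\tau_a\leq t\}$, and decompose
\begin{gather*}
\Pp^0(M_t\geq a)=\Pp^0(X_t\geq a)+\Pp^0(\tau_a\leq t,\,X_t<a),
\end{gather*}
using the trivial inclusion $\{X_t\geq a\}\subseteq\{M_t\geq a\}$. The task is then to evaluate the undershoot term.

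The crucial input is that spectral negativity, combined with the c\`adl\`ag property of $t\mapsto X_t$, forces $X_{\tau_a}=a$ on $\{\tau_a<\infty\}$: a path that approaches the level $a$ from below can only cross it via a positive jump, which is prohibited. Applying the strong Markov property \eqref{lp-e06} at $\tau_a$, I would rewrite
\begin{gather*}
\Pp^0(\tau_a\leq t,\,X_t<a)
=\Ee^0\!\left[\I_{\{\tau_a\leq t\}}\Pp^{X_{\tau_a}}(X_{t-\tau_a}<a)\right]
=\Ee^0\!\left[\I_{\{\tau_a\leq t\}}\Pp^{0}(X_{t-\tau_a}<0)\right],
\end{gather*}
where the second equality uses $X_{\tau_a}=a$ and the translation invariance of the transition kernel, exactly as in \eqref{lp-e30}.

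Next I would eliminate the random time $t-\tau_a$ using strict stability. Since $(X_t)_{t\geq 0}$ is self-similar, $X_s\sim s^{1/\alpha}X_1$ for all $s>0$, so $\Pp^0(X_s<0)=\Pp^0(X_1<0)$; as any non-degenerate stable law is absolutely continuous, $\Pp^0(X_1=0)=0$ and hence $\Pp^0(X_1<0)=1-p_0$. Substituting this constant into the expectation above yields
\begin{gather*}
\Pp^0(\tau_a\leq t,\,X_t<a)=(1-p_0)\Pp^0(\tau_a\leq t)=(1-p_0)\Pp^0(M_t\geq a),
\end{gather*}
and plugging this into the initial decomposition gives an affine equation for $\Pp^0(M_t\geq a)$ whose solution is the desired reflection-type identity \eqref{lp-e34}.

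The only genuinely delicate point will be justifying rigorously that $X_{\tau_a}=a$ on $\{\tau_a<\infty\}$; this rests on the absence of positive jumps together with the standing assumption (Remark~\ref{lp-07}) that the filtration satisfies the usual conditions, so that $\tau_a$ is a stopping time, $X_{\tau_a}$ is $\Fscr_{\tau_a}$-measurable, and a sequential argument $\tau_a=\lim_n\inf\{s>0\mid X_s>a-1/n\}$ combined with the c\`adl\`ag property rules out any overshoot. Once this is in place, the argument is a direct, almost mechanical translation of the classical Brownian reflection computation.
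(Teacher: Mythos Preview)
Your proposal is correct and follows essentially the same approach as the paper: decompose according to $\{X_t \geq a\}$ versus $\{X_t < a\}$, use spectral negativity to get $\{M_t \geq a\}=\{\tau_a\leq t\}$ and $X_{\tau_a}=a$, apply the strong Markov property, and then invoke self-similarity so that $\Pp^0(X_{t-\tau_a}<0)=\Pp^0(X_1<0)=1-p_0$ replaces the Brownian $\tfrac12$ in \eqref{lp-e30}. Your remark that absolute continuity of the stable law gives $\Pp^0(X_1=0)=0$ is a small clarification the paper leaves implicit.
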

\begin{proof}
    We can follow the proof of the reflection principle for Brownian motion: First $\{M_t \geq a\}=\{\tau_a \leq t\}$  and $X_{\tau_a}=a$ follows from the fact that $(X_t)_{t\geq 0}$ does not have positive jumps,  i.e.\ it can cross any level $a$, coming from below, only by \enquote{creeping}. Thus we only have to pay attention to \eqref{lp-e30}. Due to self-similarity, we have now $\Pp(X_{t-\tau_a(\omega)}  <  0) = \Pp(X_1 < 0) = 1-p_0$ instead of $\frac 12$.\footnote{Let us point out that, without self-similarity, \eqref{lp-e34} remains true, if we use $\Pp^0(X_t \geq 0)$ instead of $1-p_0 = \Pp^0(X_1 \geq 0)$.}
\end{proof}

The proof of the reflection principle (Scholium~\ref{lp-31}) can also be tweaked to give L\'evy's inequalities. These inequalities were originally shown by L\'evy~\cite[Ch.~VI.\S44]{levy37}, see also Lo\`eve~\cite[Vol.~1, Sec.~18.C]{loeve77}, when studying the convergence of sums of independent random variables. Here we present a variant for symmetric L\'evy processes, i.e.\ a L\'evy process with symmetric probability distributions: $X_t\sim -X_t$ for all $t>0$.
\begin{lemma}\label{lp-35} \index{reflection principle}
    Let $(X_t)_{t\geq 0}$ be a symmetric real-valued L\'evy process. Then
    \begin{gather}\label{lp-e36}
        \Pp^0\left(M_t > a\right) \leq 2\Pp^0\left(X_t \geq a\right)
    \et
        \Pp^0\left(X_t^* > a\right) \leq 2\Pp^0\left(|X_t| \geq a\right).
    \end{gather}
\end{lemma}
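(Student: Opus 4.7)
The plan is to adapt the classical reflection argument of Scholium~\ref{lp-31}, replacing the Brownian identity $\Pp^0(B_{t-\tau_a}<0)=\tfrac12$ by the weaker one-sided inequality $\Pp^0(X_r \geq 0)\geq \tfrac12$ that is available for any symmetric law. The main ingredients are the strong Markov property of L\'evy processes \eqref{lp-e06} and the symmetry $X_r\sim -X_r$, which ensures that $\Pp^0(X_r\geq 0)+\Pp^0(X_r\leq 0)\geq 1$ with both terms equal.

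\textbf{First inequality.} Fix $a>0$, $t>0$ and consider the $\Fscr_s$-stopping time $\tau:=\inf\{s\geq 0 \mid X_s>a\}$. Using the right continuity of $s\mapsto X_s$, one verifies that $\{M_t>a\}=\{\tau\leq t\}$, and on this event there exist $s_n\downarrow\tau$ with $X_{s_n}>a$, whence $X_\tau=\lim X_{s_n}\geq a$. Consequently, on $\{\tau\leq t\}$ the bound $X_t-X_\tau\geq 0$ already forces $X_t\geq a$, so
\begin{gather*}
    \Pp^0(X_t\geq a)
    \geq \Pp^0\left(\tau\leq t,\; X_t-X_\tau\geq 0\right)
    = \Ee^0\left[\I_{\{\tau\leq t\}}\,\Pp^0\left(X_t-X_\tau\geq 0\mid \Fscr_\tau\right)\right].
\end{gather*}
By the strong Markov property \eqref{lp-e06}, on $\{\tau=s\}\subseteq\{\tau\leq t\}$ the increment $X_t-X_\tau$ has the same law as $X_{t-s}$, which is symmetric; hence $\Pp^0(X_t-X_\tau\geq 0\mid\Fscr_\tau)\geq \tfrac12$ almost surely on $\{\tau\leq t\}$. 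Substituting back yields $\Pp^0(X_t\geq a)\geq \tfrac12\Pp^0(\tau\leq t)=\tfrac12\Pp^0(M_t>a)$.

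\textbf{Second inequality.} Repeat the argument with the stopping time $\sigma:=\inf\{s\geq 0\mid |X_s|>a\}$. The same right-continuity reasoning gives $\{X_t^*>a\}=\{\sigma\leq t\}$ and $|X_\sigma|\geq a$ on this event. Let $\varepsilon:=\sgn(X_\sigma)$, which is $\Fscr_\sigma$-measurable and takes values in $\{-1,+1\}$ on $\{\sigma\leq t\}$. A case analysis (according to $X_\sigma\geq a$ or $X_\sigma\leq -a$) shows that $\varepsilon(X_t-X_\sigma)\geq 0$ implies $|X_t|\geq |X_\sigma|\geq a$. Conditioning on $\Fscr_\sigma$, applying the strong Markov property and exploiting the symmetry of $X_{t-s}$ just as before gives $\Pp^0(\varepsilon(X_t-X_\sigma)\geq 0\mid\Fscr_\sigma)\geq \tfrac12$ on $\{\sigma\leq t\}$, whence $\Pp^0(|X_t|\geq a)\geq \tfrac12\Pp^0(X_t^*>a)$.

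\textbf{Main obstacle.} The only delicate point is the verification that $X_\tau\geq a$ (resp.\ $|X_\sigma|\geq a$) on the relevant event despite the presence of jumps — this is where right continuity of paths and the strict inequality in the definition of $\tau$ and $\sigma$ are essential. Once this is secured, everything else is a routine combination of the strong Markov property and the symmetry $X_r\sim -X_r$.
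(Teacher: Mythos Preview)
Your proof is correct and follows the same reflection argument as the paper; for the second inequality the paper reduces to the first via $\{X_t^*>a\}\subseteq\{\sup_{s\leq t} X_s>a\}\cup\{\sup_{s\leq t}(-X_s)>a\}$ together with $2\Pp^0(X_t\geq a)=\Pp^0(|X_t|\geq a)$, while you argue directly with a single stopping time $\sigma$ and the sign $\varepsilon=\sgn(X_\sigma)$ --- a cosmetic variation that gives the same constant. One small quibble: the claimed equality $\{M_t>a\}=\{\tau\leq t\}$ can fail on the event $\{\tau=t,\,X_t=a\}$, but only the inclusion $\{M_t>a\}\subseteq\{\tau\leq t\}$ is actually needed (and does hold), so your conclusion $\Pp^0(X_t\geq a)\geq\tfrac12\Pp^0(\tau\leq t)\geq\tfrac12\Pp^0(M_t>a)$ is unaffected.
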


\begin{proof}
    Since we still have $\{M_t > a\} \subseteq \{\tau_a \leq t\}$, we can follow the proof in Scholium~\ref{lp-31} up to \eqref{lp-e30}. Since $X_{\tau_a}\geq a$, this identity now becomes an inequality:
    \begin{gather*}
        \Pp^{X_{\tau_a}(\omega)}\left(X_{t-\tau_a(\omega)} \leq a\right)
        = \Pp^{0}\left(X_{t-\tau_a(\omega)} \leq \smash[b]{\underbracket[.6pt]{a-X_{\tau_a}(\omega)}_{\leq 0}}\right)
        \leq \frac 12
    \end{gather*}
    and we get the first part of \eqref{lp-e36}. Using symmetry, we have
    \begin{align*}
        \Pp^0\left(X_t^* > a\right)
        \leq \Pp^0\left(\sup_{s\leq t} X_s > a\right) + \Pp^0\left(\sup_{s\leq t} (-X_s) > a\right),
    \end{align*}
    and $2\Pp^0\left(X_t \geq a\right) = \Pp^0\left(|X_t| \geq a\right)$; this reduces things to the first estimate.
\end{proof}

\begin{remark}\label{lp-37}
    Lemma~\ref{lp-35} holds, of course, for Brownian motion. In this case, the first inequality in \eqref{lp-e36} is actually an equality. To get an exact expression for $\Pp^0\left(B_t^* > a\right)$ one needs more care. Using L\'evy's \enquote{triple law}
    (see Schilling~\cite[Thm.~6.19 \& p.~207]{schilling-bm} -- this is also a \enquote{reflection} result) one can show that
    \begin{gather*}
        \Pp^0(B_t^* > a)
        = 1- \frac 4\pi\sum_{k=0}^\infty \frac{(-1)^k}{2k+1}\exp\left(-\frac{\pi^2(2k+1)^2}{8a^2}\right).
    \end{gather*}
\end{remark}

It is possible, as L\'evy did, to avoid the symmetry assumption in Lemma~\ref{lp-35} by a centring argument, e.g.\ at the mean value or the median of $X_t$. There is, however, a much more powerful (but little known) argument due to Etemadi~\cite{etemadi85} and, independently, Kwapie\'n \& Woyczy\'nski~\cite[Prop.~1.1.1, Prop.~8.2.1]{kwa-woy92}; we refer to see Billingsley~\cite[Thm.~22.5]{billingsley95} or Schilling~\cite[Thm.~33.1]{schilling-dc} for a textbook presentation of the discrete case. The following proof in the continuous-time setting combines Etemadi's original idea with the proof of Scholium~\ref{lp-31}.
\begin{theorem}[Etemadi]\label{lp-39} \index{inequality!Etemadi}
    Let $(X_t)_{t\geq 0}$ be a real-valued L\'evy process. Then, for all $a>0$, $t>0$
    \begin{gather}\label{lp-e40}
        \Pp^0\left(M_t > 3a\right)
        \leq \Pp^0\left(|X_t|\geq a\right) + \sup_{s\leq t}\Pp^0\left(X_s > a\right),
    \intertext{as well as}\label{lp-e42}
        \Pp^0\left(X_t^* > 3a\right)
        \leq 3\sup_{s\leq t}\Pp^0\left(|X_s| \geq a\right).
    \end{gather}
\end{theorem}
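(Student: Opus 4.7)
The plan is to adapt Etemadi's classical discrete-time argument to the L\'evy setting, using the strong Markov property \eqref{lp-e06} in place of the partition over $\{\tau=k\}$. Both inequalities share the same recipe: introduce a first passage time $\tau$ at the level $3a$, so that by the c\`adl\`ag property $\{M_t>3a\}$ (respectively $\{X_t^*>3a\}$) is contained in $\{\tau\leq t\}$ with $X_\tau\geq 3a$ (respectively $|X_\tau|\geq 3a$); split $\{\tau\leq t\}$ according to whether $|X_t|\geq a$ or $|X_t|<a$; and on the second piece use strong Markov to identify the conditional law of $X_t-X_\tau$ given $\Fscr_\tau$ with the law of $X_{t-\tau}$ on an independent copy of the process.

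For \eqref{lp-e40} we take $\tau:=\inf\{s>0:X_s>3a\}$. The first piece contributes at most $\Pp^0(|X_t|\geq a)$. On $\{\tau\leq t,\,X_t<a\}$ one has $X_\tau-X_t>2a$, and strong Markov identifies this event (after conditioning on $\Fscr_\tau$) with a tail of an independent copy of $X_{t-\tau}$ at level $2a$; bounding by the supremum over $s\leq t$, absorbing the factor $2\geq 1$ into the tail, and using the trivial estimate $\Pp^0(\tau\leq t)\leq 1$ then produces the second summand $\sup_{s\leq t}\Pp^0(X_s>a)$ of \eqref{lp-e40}.

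For \eqref{lp-e42} we take $\tau:=\inf\{s>0:|X_s|>3a\}$. On $\{\tau\leq t,\,|X_t|<a\}$ the triangle inequality gives $|X_\tau-X_t|>2a$, and strong Markov produces the self-referring bound
\begin{equation*}
    \Pp^0(\tau\leq t)
    \leq \Pp^0(|X_t|\geq a) + \sup_{s\leq t}\Pp^0(|X_s|\geq 2a)\,\Pp^0(\tau\leq t)
    \leq \alpha + \alpha\,\Pp^0(\tau\leq t),
\end{equation*}
where $\alpha:=\sup_{s\leq t}\Pp^0(|X_s|\geq a)$. Inserting $\Pp^0(\tau\leq t)\leq 1$ on the right gives $\Pp^0(\tau\leq t)\leq 2\alpha\leq 3\alpha$.

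The main obstacle is the strong Markov step, which must transfer the intrinsic object $X_t-X_\tau$ -- living in the post-$\tau$ filtration, and depending on both the overshoot of $\tau$ and the random elapsed time $t-\tau$ -- into a supremum of tails of $X_s$ over $s\leq t$. Correctly setting this up requires identifying $(X_{\tau+u}-X_\tau)_{u\geq 0}$, conditional on $\Fscr_\tau$ on $\{\tau<\infty\}$, as an independent L\'evy process with the same law as $X$, and then evaluating this independent copy at the $\Fscr_\tau$-measurable time $t-\tau$. Once this identification is carried through, the rest of the argument is essentially algebraic: for \eqref{lp-e42} it reduces to the one-line self-referring bound closed by $\Pp^0(\tau\leq t)\leq 1$, and for \eqref{lp-e40} to the analogous estimate yielding a sum of two terms.
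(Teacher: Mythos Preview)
Your argument for \eqref{lp-e42} is correct and in fact slightly sharper than the paper's: by keeping the factor $\Pp^0(\tau\leq t)$ in the Markov step and bounding $\sup_{s\leq t}\Pp^0(|X_s|\geq 2a)\leq\alpha$ directly, you obtain the constant $2$ rather than $3$. The paper instead uses the Etemadi splitting $|X_{t-s}|\sim|X_t-X_s|$ with $\{|X_t-X_s|>2a\}\subseteq\{|X_t|>a\}\cup\{|X_s|>a\}$, which yields three terms; your route is closer to the Ottaviani--Skorokhod form mentioned in Remark~\ref{lp-41}.

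Your argument for \eqref{lp-e40}, however, has a genuine gap. After strong Markov, the post-$\tau$ increment $X_t-X_\tau$ has (conditionally on $\Fscr_\tau$) the law of an independent $X_{t-\tau}$. The event you isolate is $X_\tau-X_t>2a$, i.e.\ $X_t-X_\tau<-2a$, so what you actually obtain is a \emph{left} tail:
\[
    \Pp^0\bigl(\tau\leq t,\;X_t<a\bigr)
    \leq \sup_{s\leq t}\Pp^0\bigl(X_s<-2a\bigr)
    \leq \sup_{s\leq t}\Pp^0\bigl(-X_s>a\bigr).
\]
For a non-symmetric L\'evy process this is \emph{not} bounded by $\sup_{s\leq t}\Pp^0(X_s>a)$; ``absorbing the factor $2\geq 1$'' does not flip the sign. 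The missing idea---and the whole point of Etemadi's argument over the symmetric L\'evy inequality of Lemma~\ref{lp-35}---is to use stationarity once more: write $X_{t-s}\sim X_t-X_s$ and apply the set inclusion $\{X_t-X_s<-2a\}\subseteq\{X_t<-a\}\cup\{X_s>a\}$. This converts the left tail into $\Pp^0(X_t<-a)+\sup_{s\leq t}\Pp^0(X_s>a)$; combining $\Pp^0(X_t<-a)$ with the first piece $\Pp^0(X_t\geq a)$ then yields the term $\Pp^0(|X_t|\geq a)$ in \eqref{lp-e40}.
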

\begin{proof}
    Set $\tau = \tau_{3a} = \inf\left\{s>0 \mid X_s > 3a\right\}$ and use $\{\tau_{3a}\leq t\} \supseteq \{M_t > 3a\}$ along with the strong Markov property to get
    \begin{align*}
        \Pp^0\left(M_t > 3a\right)
        &\leq \Pp^0\left(\tau\leq t,\: X_t < a\right) + \Pp^0\left(X_t\geq a\right)\\
        &= \int_{\tau\leq t} \Pp^{X_{\tau}(\omega)}\left(X_{t-\tau(\omega)} < a\right) \Pp^0(d\omega) + \Pp^0\left(X_t\geq a\right)
    \end{align*}
    Since $X_{\tau} \geq 3a$, we can use translation invariance to get
    \begin{align*}
        \Pp^{X_{\tau}(\omega)}\left(X_{t-\tau(\omega)} < a\right)
        &= \Pp^{0}\left(X_{t-\tau(\omega)}< a-X_{\tau}(\omega) \right)\\
        &\leq \Pp^{0}\left(X_{t-\tau(\omega)} < -2a\right)\\
        &\leq \sup_{s\leq t}\Pp^{0}\left(X_{t-s} < -2a\right).
    \end{align*}
    Since $X_{t-s}\sim X_t - X_s$ and $\{X_t-X_s < -2a\}\subseteq \{X_t < -a\}\cup\{-X_s < -a\}$ we see
    \begin{align*}
        \Pp^{X_{\tau}(\omega)}\left(X_{t-\tau(\omega)} < a\right)
        \leq \Pp^0\left(X_t < -a\right) + \sup_{s\leq t}\Pp^{0}\left(X_s > a\right).
    \end{align*}
    Inserting this estimate into the integral formula which we got from the strong Markov property yields \eqref{lp-e40}.

    The inequality \eqref{lp-e42} follows along the same lines, we just have to use the stopping time $\sigma_{3a}=\inf\left\{s>0 \mid |X_s|>3a\right\}$ instead of $\tau_{3a}$. The slightly worse factor $3$ comes from the triangle inequality when dealing with $|X_{t-s}|\sim |X_t - X_s|$ in the last step of the argument.
\end{proof}

\begin{remark}\label{lp-41} \index{inequality!Ottaviani--Skorokhod}
    A close inspection of the proof of Theorem~\ref{lp-39} shows that it actually yields estimates of the type
    \begin{gather*}
        \Pp^0\left(X_t^* > 3a\right)
        \leq \frac{\Pp^0\left(|X_t|\geq a\right)}{1-\sup\limits_{s\leq t}\Pp^0\left(|X_{t-s}| > 2a\right)}
        \leq \frac{\Pp^0\left(|X_t|\geq a\right)}{\Pp^0\left(|X_t|\leq  a\right)-\sup\limits_{s\leq t}\Pp^0\left(|X_s| > a\right)}.
    \end{gather*}
    Such inequalities -- often stated with $a+2b$ instead of $3a$, causing some obvious changes -- are also called \emph{Ottaviani--Skorohkod inequalities} (Billingsley~\cite[p.~296]{billingsley95}) or \emph{generalized Kolmogorov inequalities} (Skorokhod~\cite[I.\S1.1]{skorokhod91}). See also the notes in Billingsley where the potency of the Etemadi and the Ottaviani--Skorokhod inequalities is compared: as one might expect from our proofs, they do have similar power.
\end{remark}

In a series of papers starting in the 1980s Etemadi demonstrated how the maximal estimate \eqref{lp-e42} can be used to replace Kolmogorov's maximal estimate for centred partial sums, resulting in shorter and often more general proofs of classical results. We will use now Etemadi's estimate and some of our considerations on censored moments and concentration functions from Sections~\ref{pre-mom}, \ref{pre-con} to give a short proof of Pruitt's maximal estimates for L\'evy processes, Pruitt~\cite{pruitt81}. Pruitt used the truncated moments \index{censored moment} $K,G$ and $M$ from \eqref{pre-e75} and \eqref{pre-e85}, the connection with the characteristic exponent $\psi$ was pointed out in Schilling~\cite{rs-growth}, see also Theorem~\ref{pre-78}.
\begin{theorem}\label{lp-43}\index{inequality!maximal tail}\index{maximum function $\psi^*, \vert\psi\vert^*$}
    Let $(X_t)_{t\geq 0}$ be a real-valued L\'evy process with characteristic exponent $\psi$ given by \eqref{lp-e10} and denote by $|\psi|^*(\ell) = \sup_{|\xi|\leq\ell}|\psi(\xi)|$ the associated maximum function as in \eqref{pre-e73}. Then, there is an absolute constant $c>0$ such that for all $a>0$, $t>0$
    \begin{align}
    \label{lp-e46}
        \Pp^0\left(X_t^* > a\right) \leq c t|\psi|^*(a^{-1})
        \et
        \Pp^0\left(X_t^* \leq a\right) \leq \frac{c}{t |\psi|^*(a^{-1})}.
    \end{align}
\end{theorem}
\begin{proof}
    Fix $a>0$ and $t>0$. For the first estimate we begin with Etemadi's inequality:
    \begin{align*}
        \Pp\left(X_t^* > a\right)
        \leq 3 \sup_{s\leq t} \Pp\left(|X_s| > \tfrac a3\right)
        = 3 \sup_{s\leq t} \Pp\left(|X_s|\wedge a > \tfrac a3\right).
    \end{align*}
    By the Markov inequality
    \begin{align*}
        \Pp\left(|X_s|\wedge a > \tfrac a3\right)
        \leq \frac 9{a^2} \Ee\left(|X_s|^2\wedge a^2\right)
        \leq  18 \, \Ee \left[\frac{a^{-2}X_s^2}{1+a^{-2}X_s^2}\right].
    \end{align*}
    Using Lemma~\ref{pre-72} and Tonelli's theorem we see that
    \begin{align*}
        \Pp\left(|X_s| \wedge a > \tfrac a3\right)
        &\leq  18 \int \Ee \left[1-\cos \left(\tfrac \xi a X_s\right)\right] g(\xi)\,d\xi\\
        &= 18 \int \RE\left(1-e^{-s\psi\left(a^{-1}\xi\right)}\right) g(\xi)\,d\xi\\
        &\leq 18 \,s\int \left|\psi\left(a^{-1}\xi\right)\right| g(\xi)\,d\xi.
    \end{align*}
    The first inequality in \eqref{lp-e46} follows now easily from Lemma~\ref{pre-74}, see also the argument used in the proof of Theorem~\ref{pre-76}.

    We will now turn to the second inequality. If we content ourselves with $\psi^*(a^{-1})$ on the right hand side, then the argument is rather straightforward:
    \begin{align*}
        \Pp\left(X_t^* \leq a\right)
        &\leq \Pp\left(|X_{t/2}|\leq a,\: |X_t| \leq a\right)\\
        &\leq \Pp\left(|X_{t/2}|\leq a,\: |X_t - X_{t/2}| \leq 2a\right)\\
        &\leq \Pp\left(|X_{t/2}|\leq 2a\right)^2;
    \end{align*}
    in the last step we use the independent and stationary increments property of a L\'evy process. Now we can apply Esseen's estimate for L\'evy's concentration function, Theorem~\ref{pre-50} and Remark~\ref{pre-54}, to get \index{concentration function!L\'evy}
    \begin{align*}
        \Pp\left(X_t^* \leq a\right)
        \leq \Pp\left(|X_{t/2}|\leq 2a\right)^2
        \leq Q_{X_{t/2}}(4a)^2
        \leq \frac{C}{t D(\kolmo,4a)}
        \leq \frac{C'}{t \psi^*((4a)^{-1})};
    \end{align*}
    if we use Lemma~\ref{pre-74}, we can compare $\psi^*((4a)^{-1})$ and $\psi^*(a^{-1})$ to get the upper bound with $\psi^*(a^{-1})$. For many practical purposes this is enough and, quite often one has also the sector condition (Lemma~\ref{pre-92}) which allows one to exchange $\psi^*$ for $|\psi|^*$.

\bigskip
    Let us give the additional argument (due to Pruitt~\cite{pruitt81}) to get $|\psi|^*(a^{-1})$. This relies on estimates in terms of the truncated moments $K(\kolmo;a)$, $G(\nu;a)$ and, crucially, $M(b,\nu;a)$. First we note that we have already seen that
    \begin{gather*}
        \Pp\left(X_t^* \leq a\right)
        \leq \frac{C}{t D(\kolmo,4a)}
        \leq \frac{C'}{t} \left[\frac 1{K(\kolmo,4a)} \wedge \frac 1{G(\kolmo,4a)}\right].
    \end{gather*}
    To proceed, we decompose the L\'evy process into two independent processes $X_t = Y_t + J_t$, where $J_t = J_t(4a) = \sum_{s\leq t} \Delta X_s \I_{(4a,\infty)}(|\Delta X_s|)$ is the compound Poisson process consisting of all jumps larger than $4a$, and $Y_t = X_t-J_t$. The processes $(Y_t)_{t\geq 0}$ and $(J_t)_{t\geq 0}$ are independent L\'evy processes, the characteristic exponents of $Y_1$ and $J_1$ are those given in Scholium~\ref{pre-70} with $\ell=4a$. On the set $\{X_t^*\leq a\}$, there cannot happen any jumps of size $>2a$, in particular $J_t^*=0$, and we see
    \begin{align*}
        \Pp\left(X_t^*\leq a\right)
        = \Pp\left(X_t^*\leq a,\: J_t^*=0\right)
        = \Pp\left(Y_t^*\leq a,\: J_t^*=0\right)
        \leq \Pp\left(Y_t^*\leq a\right).
    \end{align*}
    Assuming $t M(b,\nu;4a)\geq 1$ or, equivalently, $|\Ee Y_t|\geq 4a$, see Scholium~\ref{pre-70}, we have
    \begin{align*}
        \Pp\left(X_t^*\leq a\right)
        \leq \Pp\left(Y_t^*\leq a\right)
        \leq \Pp\left( |Y_t| \leq 2a\right)
        \leq \Pp\left( |Y_t- \Ee Y_t| \geq \tfrac 12 |\Ee Y_t|\right).
    \end{align*}
    The last estimate uses $|Y_t - \Ee Y_t|\geq |\Ee Y_t|-|Y_t|\geq |\Ee Y_t|-2a\geq\frac 12|\Ee Y_t|$ on the set $\{|Y_t|\leq 2a\}$ and $t M(b,\nu;4a)\geq 1$. The Markov inequality and the moment formulae stated in Scholium~\ref{pre-70} yield
    \begin{gather*}
        \Pp\left(X_t^*\leq a\right)
        \leq \frac{4\Vv(Y_t)}{|\Ee Y_t|^2}
        = \frac{4\,K(\kolmo;4a)}{t M^2(b,\nu;4a)}.
    \end{gather*}
    Finally, if $tM(b,\nu;4a)\leq 1$, we trivially have
    \begin{gather*}
        \Pp(X_t^*\leq a) \leq 1 \leq \frac{1}{tM(b,\nu;4a)}.
    \end{gather*}
    Summing up we get, up to a constant $Ct^{-1}$, upper bounds of the form
    \begin{gather*}
        \frac{1}{K(\kolmo;4a)}\wedge\frac{1}{G(\nu;4a)}\wedge\frac{K(\kolmo;4a)}{M(b,\nu;4a)^2}
        \quad\text{or}\quad
        \frac{1}{K(\kolmo;4a)}\wedge\frac{1}{G(\nu;4a)}\wedge\frac{1}{M(b,\nu;4a)}.
    \end{gather*}
    Since $K(\kolmo;4a)\vee G(\nu;4a) \vee M(b,\nu;4a) \asymp |\psi|^*((4a)^{-1})\asymp |\psi|^*((a^{-1})$, see Theorem~\ref{pre-78} and Lemma~\ref{pre-74}, the estimate follows in the second case.

    The first case needs a further distinction of cases: If $K(\kolmo;4a)\leq c M(b,\nu;4a)$, we have ${K(\kolmo;4a)}/{M(b,\nu;4a)^2} \leq c/M(b,\nu;4a)$, and we are back in Case 2. If $K(\kolmo;4a) > c M(b,\nu;4a)$, we have $\Pp(X_t^*\leq a) \leq K(\kolmo;4a)^{-1} \leq c M(b,\nu;4a)^{-1}$, which puts us again in Case 2. This finishes the argument.
\end{proof}

    We can apply the technique used in the proof of Theorem~\ref{lp-43} to get maximal tail estimates for stochastic integrals driven by a L\'evy process. The following result recovers a result by Joulin~\cite{joulin07} and Gin\'e \& Marcus~\cite{gin-mar83} on stochastic integrals driven by stable L\'evy processes.

\begin{corollary}\label{lp-44} \index{inequality!maximal tail}\index{Levy-driven SDE@L\'evy-driven SDE}
    Let $(X_t)_{t\geq 0}$ be a L\'evy process with filtration $(\Fscr_t)_{t\geq 0}$ and $(H_t)_{t\geq 0}$ be an $\Fscr_t$-adapted c\`adl\`ag process such that $\int_0^t \Ee\left(H_s^2\right) ds<\infty$ for all $t\geq 0$. There exists a constant $c\in (0,\infty)$ such that for every $a,\ell>0$ and $t\geq 0$ the following tail estimate for the stochastic integral $H\bullet X_t = \int_0^t H_{s-}\,dX_s$ holds:
    \begin{gather}\label{lp-e47}
        \Pp\left( (H\bullet X)^*_t > \ell\right)
        \leq c \left(t + \frac{a^2}{\ell^2}t + \frac{a^2}{\ell^2}\int_0^t \Ee \left(H_s^2\right) ds\right)  |\psi|^*(a^{-1}).
    \end{gather}
\end{corollary}

\begin{proof}
    Fix $a>0$ and write $X_t = \left(Y_t - \Ee Y_t\right) + \left(J_t + \Ee Y_t\right)$ where $J_t = J_t(a) = \sum_{s\leq t} \Delta X_s \I_{(2a,\infty)}(|\Delta X_s|)$ is the L\'evy process containing all jumps of $X_t$ whose absolute value is larger than $2a$. The process $\bar Y_t \coloneqq  Y_t - \Ee Y_t$ is both a L\'evy process and a square-integrable martingale.

    In view of our assumptions, the stochastic integral $H\bullet X_t = \int_0^t H_{s-}\,dX_s$ is well-defined. We will now estimate the tail of $(H\bullet X)^*_t$:
    \begin{align*}
        \Pp\left( (H\bullet X)^*_t > \ell\right)
        &\leq \Pp\left( (H\bullet X)^*_t > \ell,\: X_t^* \leq a\right) +  \Pp\left( X_t^* > a\right)\\
        &\leq \Pp\left( (H\bullet Y)^*_t > \ell,\: X_t^* \leq a\right) +  c t |\psi|^*(a^{-1}).
    \end{align*}
    The last estimate uses that $X_s=Y_s$ for all $s\leq t$ on the set $\{X_t^*\leq a\}$, since the latter assumption excludes all jumps larger than $2a$; we have also used Theorem~\ref{lp-43} to estimate the tail of $X_t^*$. Denote the first expression on the right-hand side by $P$.
    Now we use the Markov inequality and the elementary estimate $(A+B)^2\leq 2A^2+2B^2$ to get for any $\ell > 0$
    \begin{align*}
        P &\leq \Pp\left( (H\bullet Y)^*_t > \ell\right)\\
        &\leq \frac 2{\ell^2} \Ee\left( \left[(H\bullet \bar Y)^*_t\right]^2 \right) + \frac{2}{\ell^2} \left(\Ee Y_1\right)^2 \Ee \left( \smash[t]{\left[\int_0^t H_s\,ds\right]^2}\right).
    \end{align*}
    Applying Doob's maximal inequality (Theorem~\ref{mar-09} with $p=2$), It\^o's isometry -- recall that $\sbracket{\bar Y}_t = \Ee\left[\bar Y_t^2\right] = t \Ee\left[\bar Y_1^2\right]$ -- and Jensen's inequality, yields
    \begin{align*}
        P&\leq \frac{8}{\ell^2} \Ee\left( \int_0^t H^2_s \,d\sbracket{\bar Y}_s \right) + \frac{2t}{\ell^2} \left(\Ee Y_1\right)^2 \Ee \left(\int_0^t H_s^2\,ds\right)\\
        &= \frac{8}{\ell^2} \Ee\left( \int_0^t H^2_s\,ds\right) \Ee\left(\bar Y_1^2\right)  + \frac{2t}{\ell^2} \left(\Ee Y_1\right)^2 \Ee \left(\int_0^t H_s^2\, ds\right)\\
        &= \frac{8}{\ell^2} \left( \Ee\left(\bar Y_1^2\right)  +  t \left(\Ee Y_1\right)^2\right) \int_0^t \Ee \left(H_s^2\right) ds.
    \end{align*}

    We can now apply Theorem~\ref{pre-78} to bound the expressions $\ell^{-2}\Ee\left(\bar Y_1^2\right)$ and $\ell^{-2} \left(\Ee Y_1\right)^2$ by
    $\ell^{-2} a^{2}|\psi|^*((2a)^{-1})$. Finally, an application of Lemma~\ref{pre-74} completes the proof, yielding \eqref{lp-e47}.
\end{proof}

\begin{remark}\label{lp-44-a}
 There are quite a few variations possible in Corollary~\ref{lp-44}.

\smallskip a)
    If $(X_t)_{t\geq 0}$ is symmetric, $\Ee Y_1=0$, and the term $a^2 \ell^{-2} t$ inside the parentheses on the right-hand side of \eqref{lp-e47} can be omitted.

\smallskip b)
    If $(X_t)_{t\geq 0}$ is a symmetric $\alpha$-stable L\'evy process, then $|\psi|^*(a^{-1}) = a^{-\alpha}$; in particular, \eqref{lp-e47} becomes estimate of Joulin~\cite[Eq.~(2.11)]{joulin07} and Gin\'e \& Marcus~\cite{gin-mar83}, if we use $a \coloneqq  \ell \big/ \int_0^t \Ee(H_s)\,ds$.

\smallskip c)
    We can obtain $L^p$-estimates, if we use the Markov inequality with an $L^p$-norm (rather than an $L^2$-norm) when estimating the probability $P$, and combine this with the Burkholder--Davis--Gundy inequalities (Theorem~\ref{mar-71}). Further variants can be found in Joulin~\cite{joulin07}.
\end{remark}

A typical application of the maximal estimates of Theorem~\ref{lp-43} are Pruitt's estimates \cite{pruitt81} for the speed of a L\'evy process.  Recall that we have for the stopping time $\sigma_r \coloneqq  \inf\left\{t>0 \mid |X_t|>r\right\}$, $r>0$, and $t>0$ the inclusions
\begin{gather*}
     \left\{X_t^* < r\right\}\subseteq \left\{\sigma_r > t\right\} \subseteq \left\{X_t^* \leq r\right\} \subseteq \left\{\sigma_r \geq t\right\}.
\end{gather*}

\begin{corollary}\label{lp-45} \index{inequality!mean exit time}
    Let $(X_t)_{t\geq 0}$ be a real-valued L\'evy process and denote by $\sigma_r$ the first exit time from the open ball \textup{(}interval\textup{)} $\ball{r}{0}$. Then, there are absolute constants $c,c'>0$ such that
    \begin{align}\label{lp-e48}
        \frac{c}{|\psi|^*(r^{-1})}
        \leq \Ee^0\left[\sigma_r\right]
        \leq \frac{c'}{|\psi|^*(r^{-1})},
        \quad r>0.
    \end{align}
\end{corollary}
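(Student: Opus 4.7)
The plan is to deduce both inequalities from the two maximal estimates of Theorem~\ref{lp-43} combined with the inclusions $\{X_t^* < r\} \subseteq \{\sigma_r > t\} \subseteq \{X_t^* \leq r\}$ recorded right before the statement; along the way I use Lemma~\ref{pre-74} each time I need to replace $|\psi|^*$ evaluated at $(\alpha r)^{-1}$ by $|\psi|^*(r^{-1})$ up to an absolute constant.

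For the lower bound I pick some $T>0$ to be tuned. Since $\{X_T^* \leq r/2\} \subseteq \{X_T^* < r\} \subseteq \{\sigma_r > T\}$, the first estimate in \eqref{lp-e46} applied with $a=r/2$ yields
\begin{gather*}
\Pp^0(\sigma_r > T) \geq 1 - \Pp^0(X_T^* > r/2) \geq 1 - cT |\psi|^*(2 r^{-1}) \geq 1 - 10cT |\psi|^*(r^{-1}).
\end{gather*}
Choosing $T = (20c|\psi|^*(r^{-1}))^{-1}$ forces $\Pp^0(\sigma_r>T)\geq 1/2$, hence $\Ee^0[\sigma_r]\geq T/2$, which is the lower bound.

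For the upper bound the key idea is to iterate the second estimate in \eqref{lp-e46} over successive intervals of length $T$. The observation that makes the iteration run cleanly is the elementary inclusion
\begin{gather*}
\{X_{(k+1)T}^* \leq r\} \subseteq \{X_{kT}^* \leq r\} \cap \bigl\{\sup_{kT \leq s \leq (k+1)T} |X_s - X_{kT}| \leq 2r\bigr\},
\end{gather*}
which follows from the triangle inequality: if $|X_s|\leq r$ and $|X_{kT}|\leq r$, then $|X_s - X_{kT}|\leq 2r$. The second event depends only on the increments after time $kT$, so by stationary independent increments and Theorem~\ref{lp-43},
\begin{gather*}
\Pp^0(X_{(k+1)T}^* \leq r) \leq \Pp^0(X_{kT}^* \leq r)\, \Pp^0(X_T^* \leq 2r) \leq \Pp^0(X_{kT}^* \leq r)\,\frac{10c}{T|\psi|^*(r^{-1})},
\end{gather*}
where the last inequality also uses Lemma~\ref{pre-74}. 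Induction gives $\Pp^0(X_{nT}^* \leq r) \leq (10c/(T|\psi|^*(r^{-1})))^n$. Picking $T = 20c/|\psi|^*(r^{-1})$ makes this at most $2^{-n}$, and combining with $\{\sigma_r > nT\} \subseteq \{X_{nT}^* \leq r\}$ and the layer-cake formula yields
\begin{gather*}
\Ee^0[\sigma_r] = \int_0^\infty \Pp^0(\sigma_r > t)\,dt \leq T \sum_{n=0}^\infty \Pp^0(\sigma_r > nT) \leq T \sum_{n=0}^\infty 2^{-n} = 2T,
\end{gather*}
which is the upper bound with the same universal constant (up to a factor).

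The only step that requires any care is the iterative upper bound. A naive attempt to compare $\{\sup_{[kT,(k+1)T]}|X_s|\leq r\}$ to an event for the \emph{centred} shifted process via translation invariance fails, because conditionally on $\Fscr_{kT}$ the shifted process $(X_{kT+s}-X_{kT})_{s\geq 0}$ starts at the origin but the exit event is with respect to a ball centred at $-X_{kT}$. The passage from a ball of radius $r$ around $X_{kT}$ to a ball of radius $2r$ around the origin, via the triangle inequality, is what makes the independent-increments product bound go through.
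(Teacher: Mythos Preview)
Your proof is correct. Both the lower and the upper bound go through as written; the use of Lemma~\ref{pre-74} to absorb the factor $2$ in the radius and the independence-of-increments iteration are handled cleanly.

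The paper's route differs mildly in the upper bound: rather than iterating the splitting all the way to geometric decay $\Pp^0(X_{nT}^*\leq r)\leq 2^{-n}$, it applies the splitting trick \emph{once} to upgrade the bound $\Pp^0(X_t^*\leq r)\leq c/(t|\psi|^*(r^{-1}))$ to $\Pp^0(X_t^*\leq r)\leq c/(t|\psi|^*(r^{-1}))^2$, then integrates $\int_x^\infty t^{-2}\,dt$ and optimizes over the free parameter $x$. The lower bound in the paper is likewise done by integrating $\Pp^0(X_t^*\leq r)\geq 1-ct|\psi|^*(r^{-1})$ over $[0,x]$ and optimizing, which is the continuous analogue of your single-$T$ Markov bound. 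Your full iteration is in fact the same device the paper uses later for L\'evy-type processes (Corollary~\ref{fel-15}, Corollary~\ref{fel-23}); it has the pleasant side effect of giving the exponential tail $\Pp^0(\sigma_r>nT)\leq 2^{-n}$ for free, whereas the paper's $t^{-2}$ approach gives a slightly shorter calculation but no sharper tail. Either way, the key input is the same independence-based product bound $\Pp^0(X_{(k+1)T}^*\leq r)\leq \Pp^0(X_{kT}^*\leq r)\,\Pp^0(X_T^*\leq 2r)$ that you single out in your final paragraph.
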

\begin{proof}
    We begin with a slight improvement of the second maximal estimate in \eqref{lp-e46}. The idea for this is already present in the proof of Theorem~\ref{lp-43}. We use the fact that $(X_s)_{s\leq t/2}$ and $(X_{u+t/2}-X_{t/2})_{u\geq 0}$ are iid L\'evy processes to get
    \begin{align*}
        \Pp^0\left(X_t^* \leq a\right)
        &= \Pp^0\left(X_{t/2}^* \leq a,\; \sup_{t/2\leq s\leq t} |X_s| \leq a\right)\\
        &\leq \Pp^0\left(X_{t/2}^* \leq a,\; \sup_{u\leq t/2} |X_{u+t/2} - X_{t/2}| \leq 2a\right)\\
        &\leq \Pp^0\left(X_{t/2}^*\leq 2a\right)^2\\
        &\leq \frac{c}{t^2 \left(|\psi|^*((2a)^{-1})\right)^2}.
    \end{align*}
    The argument $(2a)^{-1}$ can be replaced by $a^{-1}$ using Lemma~\ref{pre-74}. By the layer cake formula we have
    \begin{gather*}
        \Ee^0\left[\sigma_r\right]
        = \int_0^\infty \Pp^0\left(\sigma_r > t\right) dt
        \leq \int_0^\infty \Pp^0\left( X_t^* \leq r\right) dt.
    \end{gather*}
    For the upper estimate we use the (just improved) second inequality of \eqref{lp-e46}
    \begin{align*}
        \Ee^0\left[\sigma_r\right]
        &\leq x + \int_x^\infty \Pp^0\left( X_t^* \leq r\right) dt\\
        &\leq x + c\int_x^\infty \frac{dt}{t^2 \left(|\psi|^*(r^{-1})\right)^2}\\
        &= x + \frac 1x \frac{c}{\left(|\psi|^*(r^{-1})\right)^2}.
    \end{align*}
    Minimizing in $x$ gives the upper bound in \eqref{lp-e48}. The lower bound follows in a similar fashion, now with the (original version of the) first maximal inequality in \eqref{lp-e46}:
    \begin{align*}
        \Ee^0\left[\sigma_r\right]
        = \int_0^\infty \Pp^0\left(\sigma_r \geq t\right) dt
        &\geq \int_0^x \Pp^0\left( X_t^* \leq r\right) dt\\
        &= x - \int_0^x \Pp^0\left( X_t^* > r\right) dt\\
        &\geq x - c\int_0^x t \, |\psi|^*(r^{-1})\,dt\\
        &= x - \frac c2 x^2 \, |\psi|^*(r^{-1}).
    \end{align*}
    Again we can optimize in $x$ to get the lower bound.
\end{proof}

    Here is a typical application of the estimates in Corollary~\ref{lp-45}, which is needed to derive Harnack inequalities for L\'evy processes, see Song \& Vondra\v{c}ek~\cite{son-von04}.

\begin{corollary}\label{lp-47}\index{inequality!mean exit time}
    Let $(X_t^z)_{t\geq 0}$ be a L\'evy process with starting point $X_0 = z$, and denote by $\Pp^z$ the corresponding probability measure and by
    $\sigma^z_{\ball{r}{x}}$ the first exit time from the open ball $\ball{r}{x}$ with radius $r>0$ and centre $x$. There is a constant $C\in (0,\infty)$ such that
    \begin{gather}\label{lp-e49}
        \sup_{z\in\ball{r}{x}} \Ee^z \left[\sigma^z_{\ball{r}{x}}\right]
        \leq C \inf_{y\in\ball{r/2}{x}} \Ee^y\left[\sigma^y_{\ball{r}{x}}\right],
        \quad x\in\real.
    \end{gather}
\end{corollary}

\begin{proof}
    Write $X_t \coloneqq X_t^0$ for the L\'evy process with starting point $X_0=0$. For a L\'evy process, $(X_t^z)_{t\geq 0}$ is obtained by a simple shift: $X_t^z =  X_t + z$. Since $\ball{r}{x}\subseteq\ball{2r}{z}$ for all $z\in\ball{r}{x}$ and $\ball{r/2}{y}\subseteq\ball{r}{x}$ for all  $y\in\ball{r/2}{x}$, we see that for any $x\in\real$
    \begin{gather*}
        \forall y\in\ball{r/2}{x},\;\forall z\in\ball{r}{x}\::\quad
        \sigma^y_{\ball{r/2}{y}} \leq \sigma^y_{\ball{r}{x}} \et \sigma^z_{\ball{r}{x}} \leq \sigma^z_{\ball{2r}{z}}.
    \end{gather*}
    Because of $\Ee^y\left[\smash[b]{\sigma^y_{\ball{s}{y}}}\right] = \Ee^0\left[\smash[b]{\sigma^0_{\ball{s}{0}}}\right]$, we can apply Corollary~\ref{lp-45} to get
    \begin{gather*}
        \sup_{z\in\ball{r}{x}} \Ee^z \left[\sigma^z_{\ball{r}{x}}\right]
        \leq \sup_{z\in\ball{r}{x}} \Ee^z \left[\sigma^z_{\ball{2r}{z}}\right]
        \leq \frac{c}{|\psi|^*\left(\tfrac{1}{2r}\right)}
    \intertext{as well as}
        \frac{c'}{|\psi|^*\left(\tfrac{2}{r}\right)}
        \leq \inf_{y\in\ball{r/2}{x}} \Ee^y\left[\sigma^y_{\ball{r/2}{y}}\right]
        \leq \inf_{y\in\ball{r/2}{x}} \Ee^y\left[\sigma^y_{\ball{r}{x}}\right].
    \end{gather*}
    Using Lemma~\ref{pre-74}, we can compare $|\psi|^*\left(\tfrac 2r\right) \asymp |\psi|^*\left(\tfrac 1{2r}\right)$, leading to \eqref{lp-e49}.
\end{proof}

\subsection{Generalized Moments of L\'evy Processes}\label{lp-gen}

Traditionally expressions of the form $\Ee\left[g(X_t)\right]$ are called \emph{generalized moments}. The minimum requirement on $g\colon \rd\to [0,\infty)$ is measurability, but to get satisfactory existence (finiteness) results, we need that $g$ is \emph{locally bounded} and \emph{submultiplicative}, i.e.
\begin{gather}\label{lp-e52}
    \forall r>0\::\:\sup_{|x|\leq r} g(x) < \infty
    \et
    \forall x,y\in\rd\::\: g(x+y)\leq g(x)g(y).
\end{gather}

\begin{scholium}[on submultiplicative functions]\label{lp-51}\index{submultiplicative function}
    Sometimes the submultiplicativity property is stated as $g(x+y)\leq cg(x)g(y)$ for some constant $c\geq 1$; changing $g\rightsquigarrow c\cdot g$ reduces things to the present definition.

    \subparagraph{Regularity.} By a mollifier argument, see Berger \emph{et al.}~\cite{bks22}, we can always modify $g$ in such a way, that it is smooth and its derivatives $|\partial^\alpha g(x)|$ are dominated by $c_\alpha g(x)$ for a suitable constant $c_\alpha\in (0,\infty)$. Fix $g$. Let $\phi=\phi_g\in C^\infty(\rd)$ such that $\phi\geq 0$, $\supp\phi=\cball{1}{0}$ and $\int \phi(z)\,dz = \sup_{|x|\leq 1}g(x)=c_g$. From
    \begin{gather*}
        g*\phi(x+y)
        = \int g(x+y-z)\phi(z)\,dz
        \leq \int g(y-z)\phi(z)\,dz \cdot g(x)
    \intertext{in combination with}
        c_g g(x)
        = \int g(x-z+z)\phi(z)\,dz
        \leq \sup_{|z|\leq 1} g(z) \int g(x-z)\phi(z)\,dz
        = c_g g*\phi(x)
    \end{gather*}
    we get that $g*\phi$ is submultiplicative and, taking $y=0$, that $g*\phi\asymp g$. It is now easy to see that $\partial^\alpha(g*\phi) = g*(\partial^\alpha\phi)$ can be bounded by $\|g\cdot \partial^\alpha\phi\|_{L^1} \cdot g(x)$.

    Taking a convolution does not only regularize the function $g$, but it gives additional structure, which allows one to get such kind of bounds for the derivatives. For example, in dimension one $g(x) = e^x (2+ \sin (x^2))$ is smooth and does \emph{not} satisfy the above bounds, see Martin~\cite{stack-2015}.

    \subparagraph{Boundedness.}
    Since $\gamma(r)\coloneqq \sup_{|x|\leq r}g(x)$ satisfies
    \begin{gather*}
        \gamma(r+s) = \sup_{|z|\leq r+s}g(z) = \sup_{|x|\leq r, |y|\leq s}g(x+y)\leq \gamma(r)\gamma(s),
    \end{gather*}
    it is submultiplicative and increasing. Without loss of generality, $\gamma$ is smooth, so integrating up the estimate $\gamma'(r)\leq c\gamma(r)$ yields $\log\gamma(r)\leq c_0r+c_1$. Thus, $g(x)$ grows at most exponentially. For an alternative proof see Sato~\cite[Lem.~25.5]{sato99}.

    \subparagraph{Examples.}
    If $g$ is submultiplicative, so are -- up to a multiplicative constant -- $x\mapsto g(x)+1$ and $x\mapsto\max\{g(x),1\}$ and $g^\alpha(x)$ for any $\alpha>0$. Typical examples of submultiplicative functions are $x\mapsto \max\{|x|,1\}$, $x\mapsto \exp\left[c|x|^\beta\right]$, $\beta\in (0,1]$, $\log(|x|+e)$, $\log(\log(x+e^e))$ and all combinations with coordinate projections, see Sato~\cite[Prop.~25.4]{sato99}.
\end{scholium}
Since we are only interested in estimating $\Ee\left[g(X_t)\right]$, we may always switch between $g$, $\phi*g$ and $\phi*g + 1$, i.e.\ assume that $g$ is as good as we need it to be.

A classical result in the theory of L\'evy processes, due to Kruglov~\cite{kruglov70} (see also Sato~\cite[Thm.~25.3]{sato99}), shows that the generalized moment of a L\'evy process with triplet $(b,Q,\nu)$ satisfies
\begin{gather*}
    \Ee\left[g(X_t)\right] < \infty
    \iff
    \int_{|y|\geq 1} g(y)\,\nu(dy) < \infty.
\end{gather*}
The right hand side does not depend on $t$, and so we may read the left hand side to be true for \emph{some} or for \emph{all} $t>0$. In the following theorem we will see that this equivalence remains true for $\sup_{s\leq t}g(X_s)$, i.e.\ we have a maximal inequality. The following statement is taken from Berger \emph{et al.}~\cite[Thm.~3]{bks22}, but most of the equivalences were known earlier, see Remark~\ref{lp-55} for a discussion.

\begin{theorem}\label{lp-53}\index{generalized moment}\index{submultiplicative function}
    Let $(X_t)_{t\geq 0}$ be a $d$-dimensional L\'evy process with characteristic exponent $\psi$ and triplet $(b,Q,\nu)$. Moreover, $g\colon \rd\to[0,\infty)$ is a locally bounded submultiplicative function, and $\Tcal$ is the family of all stopping times for the process $(X_t)_{t\geq 0}$. Then, the maximal moment is finite
    \begin{enumerate}[\upshape(M1)]\itemsep6pt
    \item\label{lp-53-a}
        $\Ee\left[\sup_{s\leq t}g(X_s)\right]<\infty$ for some \textup{(}hence, all\textup{)} $t>0$,
    \end{enumerate}
    if, and only if, one of the following \textup{(}a fortiori equivalent\textup{)} conditions \textup{(M\ref{lp-53-b})--(S\ref{lp-53-i})} is satisfied.

    \noindent
    Siebert's moment conditions:
    \begin{enumerate}[\upshape(M1)]\addtocounter{enumi}{1}\itemsep6pt
    \item\label{lp-53-b}
        $\Ee\left[g(X_t)\right]<\infty$ for some \textup{(}hence, all\textup{)} $t>0$;
    \item\label{lp-53-c}
        $\left\{g(X_{\sigma\wedge t})\right\}_{\sigma\in\Tcal}$ is uniformly integrable for some \textup{(}hence, all\textup{)} $t>0$, i.e.
        \begin{equation*}
	        \lim_{R\to\infty} \sup_{\sigma\in\Tcal}\int_{g(X_\sigma)>R}g(X_\sigma)\, d\Pp=0;
	    \end{equation*}
    \item\label{lp-53-d}
        $\sup_{\sigma\in\Tcal}\Ee \left[g(X_{\sigma\wedge t})\right]<\infty$ for some \textup{(}hence, all\textup{)} $t>0$;
    \item\label{lp-53-e}
        \textup{[}this requires, in addition, that $g(x)=g(|x|)$ and $r\mapsto g(r)$ is increasing\textup{]}\\[5pt]
        $\Ee\left[g\left(\sup_{s\leq t}|X_t|\right)\right]<\infty$ for some \textup{(}hence, all\textup{)} $t>0$.
    \end{enumerate}
    Kruglov's condition in terms of the triplet:
    \begin{enumerate}[\upshape(T1)]\addtocounter{enumi}{5}\itemsep6pt
    \item\label{lp-53-f}
        $\int_{|y|\geq 1} g(y)\,\nu(dy) < \infty$.
    \end{enumerate}
    Hulanicki's conditions in terms of the semigroup and the generator:
    \begin{enumerate}[\upshape(S1)]\addtocounter{enumi}{6}\itemsep6pt
    \item\label{lp-53-g}
        The adjoint semigroup $P^*_tf(x)\coloneqq \Ee \left[f(x-X_t)\right]$ is a strongly continuous semigroup on the weighted Lebesgue space $L^1(\rd;g(x)\,dx)$;
    \item\label{lp-53-h}
    	The adjoint infinitesimal generator $(A^* \phi)(x) \coloneqq  (A \phi)(-x)$ satisfies $A^* \phi \in L^1(\real^d,g(x) \, dx)$ for all $\phi \in C_c^{\infty}(\real^d)$;
	\item\label{lp-53-i}
		There is a non-negative $0\not\equiv\phi \in C_c^{\infty}(\rd)$ such that $A^* \phi \in L^1(\rd;g(x)\,dx)$.
    \end{enumerate}
    If one \textup{(}hence, all\textup{)} of these conditions is satisfied, then there is some constant $c>0$ such that
    \begin{gather*}
    	\Ee^x\left[\sup_{s\leq t} g(X_t)\right] \leq g(x) e^{c t}, \quad t > 0.
    \end{gather*}
\end{theorem}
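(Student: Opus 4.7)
The natural plan is to prove a chain of implications that closes the loop among the three families (M), (T) and (S). The cheap direction (M1)$\Rightarrow$(M5) (when $g$ is radial increasing), (M1)$\Rightarrow$(M2), (M1)$\Rightarrow$(M3)$\Rightarrow$(M4)$\Rightarrow$(M2) is immediate: (M1) dominates everything in sight, uniform integrability implies $L^1$-boundedness, and the deterministic stopping time $\sigma\equiv t$ turns (M4) into (M2). The classical Kruglov theorem, Sato~\cite[Thm.~25.3]{sato99}, gives (M2)$\iff$(T1), so the substantive step is to close the loop by proving (T1)$\Rightarrow$(M1) \emph{together with the explicit bound} $\Ee^x[\sup_{s\leq t}g(X_s)]\leq g(x)e^{ct}$. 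The equivalences (S1)--(S3) will be grafted on afterwards.

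\textbf{The key step (T1)$\Rightarrow$(M1).} First, by submultiplicativity $g(X_s)\leq g(x)g(X_s-x)$ under $\Pp^x$, so upon replacing $X$ by $X-x$ it suffices to start at $0$ and show $\Ee^0[\sup_{s\leq t}g(X_s)]\leq e^{ct}$. The plan is to invoke the L\'evy--It\^o decomposition \eqref{lp-e16} and split $X_t=Y_t+J_t$, where $J_t=\sum_{s\leq t}\Delta X_s\I_{\{|\Delta X_s|>1\}}$ is the compound Poisson process of large jumps and $Y_t$ collects drift, Gaussian part and small-jump martingale; $Y$ and $J$ are independent L\'evy processes. Submultiplicativity yields
\begin{gather*}
    \sup_{s\leq t}g(X_s)\leq \sup_{s\leq t}g(Y_s)\cdot\sup_{s\leq t}g(J_s),
\end{gather*}
and independence factorizes the expectation. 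For the compound Poisson part write $J_t=\sum_{i=1}^{N_t}\xi_i$ with $N_t$ Poisson$(\lambda t)$, $\lambda=\nu(\{|y|>1\})$ and $\xi_i$ iid with law $\lambda^{-1}\nu|_{\{|y|>1\}}$. Since $J$ is pure jump and $g\geq 1$ (replace $g$ by $g+1$ if necessary, see Scholium~\ref{lp-51}), $\sup_{s\leq t}g(J_s)\leq \prod_{i=1}^{N_t}g(\xi_i)$, and conditioning on $N_t$ gives the Poisson-mixture identity
\begin{gather*}
    \Ee\Big[\sup_{s\leq t}g(J_s)\Big]\leq \sum_{n=0}^\infty e^{-\lambda t}\frac{(\lambda t)^n}{n!}\Ee[g(\xi_1)]^n=\exp\!\Big(t\int_{|y|>1}(g(y)-1)\,\nu(dy)\Big),
\end{gather*}
which is finite by (T1). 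For $Y$, bounded jumps ($|\Delta Y|\leq 1$) make $Y$ admit exponential moments of all orders (e.g.\ via Lemma~\ref{gen-31} applied to $Y^\tau$, or directly from the L\'evy--Khintchine exponent of $Y$ which is entire), and since Scholium~\ref{lp-51} tells us $g$ grows at most exponentially, we get $\Ee[\sup_{s\leq t}g(Y_s)]\leq e^{c't}$ by the same Poisson/exponential martingale argument applied to the small-jump compensated sum. Multiplying, (T1)$\Rightarrow$(M1) follows with an explicit constant $c$ depending only on $b$, $Q$ and $\int_{|y|\geq 1}g(y)\,\nu(dy)$.

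\textbf{Semigroup equivalences (S1)--(S3).} The implications (S1)$\Rightarrow$(S2)$\Rightarrow$(S3) are routine: strong continuity of $P_t^*$ on $L^1(g(x)\,dx)$ yields, via $t^{-1}(P_t^*\phi-\phi)\to A^*\phi$ in $L^1(g(x)\,dx)$ on the core $C_c^\infty(\rd)$, the membership $A^*\phi\in L^1(g)$; (S2)$\Rightarrow$(S3) is trivial. For (S3)$\Rightarrow$(T1), I would insert the representation \eqref{lp-e14} for $A^*\phi$ at $|x|$ large so that $\phi(x)=0$ and $(\nabla\phi)(x)=0$: only the jump part
\begin{gather*}
    A^*\phi(x)=\int_{\rd}\phi(x-y)\,\nu(-dy)+\text{localized terms}
\end{gather*}
survives, and integrating against $g(x)\,dx$ over $\{|x|\geq R\}$ with $R$ large and a convolution/Tonelli swap forces $\int_{|y|\geq 1}g(y)\,\nu(dy)<\infty$. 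Finally, (T1)$\Rightarrow$(S1): the shift estimate $\Ee^x[g(X_t)]\leq g(x)e^{ct}$ gives $\|P_t^*\|_{L^1(g)\to L^1(g)}\leq e^{ct}$, and strong continuity is obtained by first checking it on the dense subspace $C_c(\rd)\subseteq L^1(g)$ using stochastic continuity of $(X_t)$ and dominated convergence with dominating function $g(x)e^{ct}$.

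\textbf{Expected obstacles.} The implications within (M) and between (M), (T) are essentially bookkeeping once one owns Kruglov's theorem and the decomposition argument. The genuine technical step is (T1)$\Rightarrow$(M1) with the \emph{clean exponential bound}: the straightforward supermartingale route via $e^{-ct}g(X_t)$ only delivers a weak-type estimate, since positive supermartingales do not satisfy a Doob-type $L^1$ maximal inequality. The Poisson-mixture computation on the large-jump part $J$ is what converts a pointwise product bound into an $L^1$ estimate on the supremum without losing the explicit constant, and in my view this -- together with the care needed to mollify $g$ (Scholium~\ref{lp-51}) so that the small-jump piece $Y$ is treatable -- is the main obstacle. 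On the operator side, the most delicate point is (S3)$\Rightarrow$(T1), where one has to extract the tail integrability of $\nu$ against $g$ from the action of $A^*$ on a single test function, which requires a careful choice of $\phi$ and control of the boundary terms in \eqref{lp-e14}.
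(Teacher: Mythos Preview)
Your proposal is correct, but your route for the key implication (T\ref{lp-53-f})$\Rightarrow$(M\ref{lp-53-a}) is genuinely different from the paper's. You go directly from the tail condition on $\nu$ to the maximal moment via the L\'evy--It\^o split $X=Y+J$, the product bound $\sup_{s\leq t}g(X_s)\leq\sup_{s\leq t}g(Y_s)\cdot\sup_{s\leq t}g(J_s)$, the Poisson-mixture computation $\Ee\bigl[\prod_{i\leq N_t}g(\xi_i)\bigr]=\exp\bigl(t\!\int_{|y|>1}(g(y)-1)\,\nu(dy)\bigr)$, and an exponential-martingale/Doob argument for the bounded-jump piece $Y$ (using only the growth bound $g(y)\leq Ce^{c|y|}$ from Scholium~\ref{lp-51}). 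The paper instead factors the implication as (T\ref{lp-53-f})$\Rightarrow$(M\ref{lp-53-b})$\Rightarrow$(M\ref{lp-53-a}): first it mollifies $g$ so that $g\in C^2$, inserts it into the generator to get $Ag\leq c\,g$, and applies It\^o plus Gronwall to obtain $\Ee^x[g(X_t)]\leq g(x)e^{ct}$; then, in a separate step, it upgrades (M\ref{lp-53-b}) to (M\ref{lp-53-a}) by a strong-Markov/stopping-time tail estimate of the form $\Pp\bigl(\sup_{s\leq T}g(X_s)>e^{a+\gamma}\bigr)\leq 2\,\Pp\bigl(g(X_T)>e^a\bigr)$ followed by the layer-cake formula.

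Your approach is more elementary and self-contained (no mollification, no It\^o formula for $g$), and it delivers the exponential-in-$t$ bound for the \emph{supremum} in one stroke. The paper's approach has the advantage that the It\^o/Gronwall step (T\ref{lp-53-f})$\Rightarrow$(M\ref{lp-53-b}) transfers verbatim to L\'evy-type and additive processes (cf.\ Remark~\ref{lp-57} and \cite{ltp-moments}), where no global L\'evy--It\^o factorisation into \emph{independent} pieces is available; moreover, the paper's purely probabilistic step (M\ref{lp-53-b})$\Rightarrow$(M\ref{lp-53-a}) is of independent interest. One small point: your exponential-martingale argument for $\Ee[\sup_{s\leq t}g(Y_s)]$ via Doob's $L^p$-inequality produces a bound $C\,e^{c't}$ with a multiplicative constant $C>1$, so the final estimate is $C\,g(x)e^{ct}$ rather than $g(x)e^{ct}$; this is harmless for the equivalence but worth noting against the stated form of the bound. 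For the Hulanicki conditions (S\ref{lp-53-g})--(S\ref{lp-53-i}) the paper itself defers to \cite{bks22}, so your sketch there is already at least as detailed as what the paper provides.
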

\begin{proof}
Without loss of generality we can assume that $g$ is of the form $g*\phi$ and $g\geq 1$, see Scholium~\ref{lp-51}.

\primolist
    We begin by showing that $S(t)\coloneqq \Ee\left[\sup_{s\leq t}g(X_s)\right]<\infty$ for \emph{some} $t$ implies $S(t)<\infty$ for \emph{all} $t$. Since $t\mapsto S(t)$ is increasing, the claim follows if we can show that $S(2t)<\infty$ if $S(t)<\infty$. We have used a very similar argument in the proof of Corollary~\ref{lp-45}. Since $(X_s)_{s\leq t}$ and $(X_{u+t}-X_t)_{u\geq 0}$ are iid L\'evy processes, we have
    \begin{align*}
        S(2t)
        &\leq S(t) + \Ee\left[\sup_{u\leq t}g(X_{u+t}-X_t + X_t)\right]\\
        &\leq S(t) + \Ee\left[\sup_{u\leq t}g(X_{u+t}-X_t) g(X_t)\right]\\
        &\leq S(t) + \Ee\bigl[g(X_t)\bigr] \cdot\Ee\left[\sup_{u\leq t}g(X_{u+t}-X_t)\right]\\
        &\leq S(t) + S(t)^2.
    \end{align*}

\secundolist
    If $\Ee\left[g(X_t)\right]$ is finite for \emph{some} $t>0$, then it is finite for \emph{all} $s<t$, too. Indeed, using submultiplicativity in the form $g(y)\geq g(x)/g(x-y)$ and the independent increments property of a L\'evy process, we get
    \begin{gather*}
        \infty > \Ee\left[g(X_t)\right]
        \geq \Ee\left[ \frac{g(X_s)}{g(X_s-X_t)}\right]
        = \Ee\left[g(X_s)\right] \cdot \underbracket[.6pt]{\Ee\left[\frac 1{g(X_s-X_t)}\right]}_{\in (0, 1]}.
    \end{gather*}
    Since $1\leq g(x) <\infty$, we have $0 < 1/g(x) \leq 1$, and the assertion follows.
\tertiolist
    Assume that (M\ref{lp-53-b}) holds for \emph{some} $t$, hence for all sufficiently small $t$. We show that (M\ref{lp-53-a}) holds for \emph{some} $T>0$. For $a,b> 0$ we define a stopping time
    \begin{gather*}
        \sigma \coloneqq  \sigma_{a,b} \coloneqq  \inf\left\{s\mid g(X_s) > g(0) e^{a+b}\right\}
    \end{gather*}
    and observe that the submultiplicativity of $g$ gives
    \begin{align*}
        \Pp\left( g(X_t) > e^a\right)
        &\geq \Pp\left( g(X_t) > e^a,\; g(X_\sigma)\geq  g(0)e^{a+b},\; \sigma \leq t\right)\\
        &\geq \Pp\left( g(X_\sigma - X_t) < g(0)e^b,\; g(X_\sigma)\geq  g(0)e^{a+b},\; \sigma \leq t\right).
    \end{align*}
    The strong Markov property yields (for all $t\leq T$, $T$ will be determined in the following step)
    \begin{align*}
        \Pp\left( g(X_t) > e^a\right)
        &\geq \int_{\sigma \leq t} \Pp\left( g(-X_{t-\sigma(\omega)}) < g(0)e^b\right) \Pp(d\omega)\\
        &\geq \inf_{r\leq t} \Pp\left( g(-X_{r}) < g(0)e^b\right)\cdot \Pp(\sigma \leq t)\\
        &\geq \frac 12 \Pp\left(\sup\nolimits_{r \leq t} g(X_r) > cg(0) e^{a+b}\right).
    \end{align*}
    In the last estimate we use $\left\{\sup\nolimits_{s\leq t} g(X_s)>  g(0) e^{a+b}\right\}\subseteq\{\sigma \leq t\} $. The factor $\frac 12$ comes from the fact that $g$ is locally bounded and $\lim_{t\to 0}\Pp(|X_t|>\epsilon)=0$ (continuity in probability), which shows that there is some $T>0$ such that
    \begin{gather*}
        \Pp\left(g(-X_t)< g(0)e^b\right) \geq \frac 12
        \quad\text{for all $t\leq T$}.
    \end{gather*}
    Setting $e^\gamma \coloneqq  g(0)e^{b}$ we have shown
    \begin{gather*}
        \Pp\left(\sup\nolimits_{s\leq T} g(X_s) > e^{a+\gamma}\right) \leq 2\Pp(g(X_T)>e^a).
    \end{gather*}
    Now a simple variant of the layer-cake argument yields
    \begin{align*}
        \Ee\left[\sup_{s\leq T}g(X_s)\right]
        \leq e^\gamma + 2e^\gamma \Ee\left[g(X_T)\right].
    \end{align*}
    Using Step~\secundo, we see that (M\ref{lp-53-a}) holds for this particular $T>0$, then by Step~\primo\ for all $T>0$. The latter trivially implies (M\ref{lp-53-b}) for all $t>0$. Thus, we have established that (M\ref{lp-53-a})$\Leftrightarrow$(M\ref{lp-53-b}).

\quartolist
    The uniform integrability of $\left\{g(X_{\sigma\wedge t})\right\}_{\sigma\in\Tcal}$ (for some or all $t>0$) follows immediately from the uniform bound $g(X_{\sigma\wedge t}) \leq \sup_{s\leq t}g(X_{s}) \in L^1(\Pp)$ (for some or all $t>0$).

    Since uniformly integrable families are $L^1$-bounded, it is clear that (M\ref{lp-53-c}) implies (M\ref{lp-53-d}) -- both for some $t>0$ or for all $t>0$. Finally, (M\ref{lp-53-d}) trivially gives (M\ref{lp-53-b}).

    Summing up, we have shown (M\ref{lp-53-a})$\Rightarrow$(M\ref{lp-53-c})$\Rightarrow$(M\ref{lp-53-d})$\Rightarrow$(M\ref{lp-53-b})$\Rightarrow$(M\ref{lp-53-a}).

\quintolist
    Assume, in addition, that $g(x)=g(|x|)$ and that $g(r)$ is increasing. By monotonicity, $\sup_{s\leq t}g\left(|X_s|\right)\leq g\left(\sup_{s\leq t}|X_s|\right)$. On the other hand, by right continuity, there is a random time $\tau\in [0,t]$ such that $\left|\sup_{s\leq t} \left|X_s(\omega)\right| - \left|X_{\tau(\omega)}(\omega)\right| \right|\leq 1$.  Consequently,
    \begin{gather*}
        g\left(\sup_{s\leq t}|X_s|\right)
        \leq  g\left(\sup_{s\leq t}|X_s|-|X_\tau|\right) g\left(|X_\tau|\right)
        \leq  \sup_{|y|\leq 1}g\left(|y|\right) \sup_{s\leq t}g\left(|X_s|\right).
    \end{gather*}
    The equivalence (M\ref{lp-53-a})$\Leftrightarrow$(M\ref{lp-53-e}) now follows from the estimates $\sup_{s\leq t}g\left(|X_s|\right)\asymp g\left(\sup_{s\leq t}|X_s|\right)$.

\sextolist
    Now we show (T\ref{lp-53-f})$\Rightarrow$(M\ref{lp-53-b}). For this, we use that $g$ is smooth (and has a convolution structure) so that we can insert $g$ into the formula \eqref{lp-e14} of the generator $A$ of $(X_t)_{t\geq 0}$. The differential operator part can be estimated using $\left|\partial^\alpha g(x)\right|\leq c_\alpha g(x)$, see Scholium~\ref{lp-51}. For the second integral expression we first apply Taylor's formula, $|g(x+y)-g(x) - y\cdot\nabla g(x)| \leq c|y|^2 |\nabla^2 g(x+\theta y)|$ with $\theta\in (0,1)$ and for all $|y| < 1$, and use again $|\partial^\alpha g(x+\theta y)|\leq c_\alpha g(x+\theta y) \leq c_\alpha g(x)\sup_{|z|\leq 1}g(z)$. For the first integral term of \eqref{lp-e14} we have $g(x+y)-g(x)\leq g(x)(g(y)-1)$. Altogether,
    \begin{gather*}
        Ag(x) \leq c\bigl(b,Q,\nu\big|_{\ball{1}{0}}\bigr)\cdot g(x) + \int_{|y|\geq 1} \left(g(y)-1)\right)\nu(dy) \cdot g(x).
    \end{gather*}
    The last integral is finite due to (T\ref{lp-53-f}). By It\^o's formula, $M_t = g(X_t)-g(x) - \int_0^t Ag(X_s)\,ds$ is a local martingale. If $(\sigma_n)_{n\in\nat}$ is a sequence of stopping times such that $M_{t\wedge\sigma_n}$ is a martingale, we get
    \begin{align*}
        \Ee^x \left[g(X_{t\wedge\sigma_n})\right] - g(x)
        = \int_0^t \Ee^x\left[Ag(X_{s\wedge\sigma_n})\right] ds
        \leq c\int_0^t \Ee^x\left[g(X_{s\wedge\sigma_n})\right] ds.
    \end{align*}
    Now we can apply Gronwall's lemma to conclude
    \begin{gather*}
        \Ee^{x} \left[g(X_{t\wedge\sigma_n})\right] \leq g(x) e^{ct}.
    \end{gather*}
    Letting $n\to\infty$, Fatou's lemma proves (M\ref{lp-53-b}) as well as the exponential bound for the generalized moment.

\septimolist
    In order to see (M\ref{lp-53-a})$\Rightarrow$(T\ref{lp-53-f}) we write $X_t = Y_t + J_{t}$ where $J_t$ is the compound Poisson process comprising all jumps whose absolute height exceeds $1$; we have used this (in dimension $1$) in the proof of Theorem~\ref{lp-43}, see also Scholium~\ref{pre-70}. In particular, $J_t = H_1+\dots+H_{N_t}$, where the jump heights $H_k$ are iid random variables with law $\nu(dy \,\cap\, \coball{1}{0})/\nu(\coball{1}{0})$ and $(N_t)_{t\geq 0}$ is an independent Poisson process with intensity $\nu(\coball{1}{0})$.

    Now we can argue as in Step~\secundo, using the subadditivity of $g$ and the independence of $Y_t$ and $J_t$:
    \begin{gather*}
        \infty > \Ee\left[g(X_t)\right]
        =\Ee\left[g(Y_t+J_t)\right]
        \geq \Ee\left[ \frac{g(J_t)}{g(-Y_t)}\right]
        = \Ee\left[g(J_t)\right] \cdot \smash[b]{\underbracket[.6pt]{\Ee\left[\frac{1}{g(-Y_t)}\right]}_{\mathclap{\in(0,1]\text{\ as\ } g(y)\geq 1}}}.
    \end{gather*}
    Moreover, because of the structure of $J_t$, we see
    \begin{align*}
        \Ee\left[g(J_t)\right]
        &= \Ee\left[\sum_{n=0}^\infty g(H_1+\dots+H_n)\Pp(N_t=n)\right]\\
        &\geq \Ee\left[g(H_1)\right] \Pp\left(N_t=1\right)\\
        &= e^{-t\nu(\coball{1}{0})} \smash[b]{\int_{|y|\geq 1} g(y)\frac{\nu(dy)}{\nu(\coball{1}{0})}},
    \end{align*}
    and (T\ref{lp-53-f}) follows.

\medskip
This completes the proof of the equivalence of  (M\ref{lp-53-a})--(T\ref{lp-53-f}). For Hulanicki's conditions, we need to show (M\ref{lp-53-c})$\Rightarrow$(S\ref{lp-53-g})$\Rightarrow$(M\ref{lp-53-b}) and (T\ref{lp-53-f})$\Rightarrow$(S\ref{lp-53-h})$\Rightarrow$(S\ref{lp-53-i})$\Rightarrow$(M\ref{lp-53-b}). Since this requires semigroup methods, we omit the proofs, and refer to Berger \emph{et al.}~\cite{bks22}.
\end{proof}

\begin{remark}\label{lp-55}
    Let us be a bit more precise in naming the above conditions. Siebert~\cite{siebert82} showed the equivalence (M\ref{lp-53-a})$\Leftrightarrow$(M\ref{lp-53-b}). Since it is not a long way from there to the conditions (M\ref{lp-53-c}), (M\ref{lp-53-d}) in this group (as observed by Berger \emph{et al.}~\cite{bks22}), we feel that the name-tag is justified. The last condition (M\ref{lp-53-e}) in this group is inspired by Kwapie\'n \& Woyczy\'nski~\cite[Prop.~8.4.1]{kwa-woy92} and Sato~\cite[Thm.~25.18]{sato99}, but our almost trivial new proof reveals that it properly belongs here. We have already mentioned Kruglov~\cite{kruglov70} for (M\ref{lp-53-b})$\Leftrightarrow$(T\ref{lp-53-f}), while the equivalence of (M\ref{lp-53-b}), (S\ref{lp-53-h}) and (S\ref{lp-53-i}) is due to Hulanicki \cite{hulanicki}; morally, (S\ref{lp-53-g}) also belongs here, it is from Berger \emph{et al.}~\cite{bks22}.
\end{remark}

The estimates of Theorem~\ref{lp-53} (partially) extend to larger classes of processes, notably to additive processes (see the following remark) and L\'evy-type processes, see K\"{u}hn~\cite{ltp-moments}.

\begin{remark}\label{lp-57}
    A \enquote{L\'evy process without stationary increments}, i.e.\ a stochastic process with independent increments \eqref{Lindep} or \eqref{Lindep-bis} and stochastic continuity \eqref{Lcont} is called an \emph{additive process}\footnote{In fact, L\'evy himself considered mostly this class of processes and It\^o calls them -- correctly -- L\'evy processes, contrary to today's usage.}. Essentially, the L\'evy--Khintchine formula \eqref{lp-e10} and Theorem~\ref{lp-11} remain valid with $t$-dependent L\'evy triplets. This means, in particular, that the equivalence of (M\ref{lp-53-a})--(T\ref{lp-53-f}) in Theorem~\ref{lp-53} remain valid -- even with almost the same proof, see Berger \emph{et al.}~\cite[Thm.~11]{bks22}; the \enquote{if we have an estimate for some $t$, we have it for all $t$} assertion, however, breaks down for additive processes.
\end{remark}

\section{L\'evy-type Processes}\label{fel}

In this section, we study maximal inequalities for so-called \emph{L\'evy-type processes}. Intuitively,
\begin{quote}\normalsize
a L\'evy-type process behaves locally like a L\'evy process, which means that the L\'evy triplet may change, depending on the current position of the process. Once started at a point $x_0 \in \real^d$, the process behaves for an \enquote{infinitesimally small} time $\Delta t$ like a L\'evy process with L\'evy triplet $(b(x_0),Q(x_0),\nu(x_0,dy))$ depending on the starting point $x_0$, then for an \enquote{infinitesimally small} time like a L\'evy process with L\'evy triplet $(b(X_{\Delta t}),Q(X_{\Delta t}),\nu(X_{\Delta t},dy))$, and so on.
\end{quote}
The corresponding family $(b(x),Q(x),\nu(x,dy))$, $x \in \real^d$, of L\'evy triplets characterizes many distributional properties and sample path properties of the L\'evy-type process. Throughout, we assume that $x\mapsto (b(x),Q(x),\nu(x,B))$ is measurable for every Borel set $B\subset\rd\setminus\{0\}$.

\subsection{Getting Started: Basics on L\'evy-type Processes}

Let $(X_t)_{t \geq 0}$ be a strong Markov process with respect to a family of probability measures $(\Pp^x)_{x \in \rd}$, and assume that $(X_t)_{t \geq 0}$ has c\`adl\`ag sample paths. Recall that the underlying filtered probability space $(\Omega,\Ascr,\Pp,\Fscr_t)$ satisfies the usual conditions, i.e.\ $(\Omega,\Ascr,\Pp)$ is complete, $\Fscr_0$ contains all $\Pp$-null sets and $(\Fscr_t)_{t\geq 0}$ is right-continuous. We call $(X_t)_{t \geq 0}$ a \emph{L\'evy-type process} \index{Levy-type process@L\'evy-type process}\index{Levy triplet@L\'evy triplet}\index{infinitesimal characteristics} if for every $x \in \rd$ and $f \in C_c^{\infty}(\rd)$, the process
\begin{equation*}
	M_t^f \coloneqq  f(X_t)-f(X_0) - \int_0^t Af(X_s) \, ds
\end{equation*}
is a martingale with respect to $\Pp^x$ for some \emph{L\'evy-type operator} $A$, i.e.\ an operator of the form
\begin{align}
	\label{fel-eq2}
	\begin{aligned}
	Af(x) &= b(x) \cdot \nabla f(x) + \frac{1}{2} \tr\left( Q(x) \cdot \nabla^2 f(x) \right) \\
	&\quad + \int_{\rd \setminus \{0\}} \left(f(x+y) -f(x)- \nabla f(x) \cdot y \I_{(0,1)}(|y|) \right) \, \nu(x,dy);
	\end{aligned}
\end{align}
here, $(b(x),Q(x),\nu(x,dy))$ is a family of L\'evy triplets, the so-called \emph{\textup{(}infinitesimal\textup{)} characteristics}. As before, we assume that $x\mapsto (b(x), Q(x), \nu(x,B))$ is measurable for every Borel set $B\subseteq \rd\setminus\{0\}$. In other words, the L\'evy-type process $(X_t)_{t \geq 0}$ is a solution to the $(A,C_c^{\infty}(\rd))$-martingale problem for the L\'evy-type operator $A$; see Ethier \& Kurtz~\cite{ethier-kurtz} for further information on the martingale problem and its connection to Markov processes. Note that, by Taylor's formula, the pointwise expression \eqref{fel-eq2} makes sense for any $f \in C_b^2(\rd)$. Using standard rules for the Fourier transform, it is not difficult to see that $A|_{C_c^{\infty}(\rd)}$ can be written as a pseudo-differential operator
\begin{equation*}
	Af(x) = - \int_{\rd} q(x,\xi) e^{ix \cdot \xi} \widehat{f}(\xi) \, d\xi,
	\quad f \in C_c^{\infty}(\rd),\; x \in \rd,
\end{equation*}
where $\widehat{f}(\xi) = (2\pi)^{-d} \int_{\rd} f(x) e^{-ix \cdot \xi} \, dx$ is the Fourier transform of $f$ and
\begin{equation}\label{fel-eq4}
    q(x,\xi)
    = - ib(x) \cdot \xi + \frac{1}{2} \xi \cdot Q(x) \xi + \int_{y\neq 0} \left(1-e^{iy \cdot \xi} + iy \cdot \xi \I_{(0,1)}(|y|) \right) \, \nu(x,dy)
\end{equation}
is the \emph{symbol}. \index{symbol of L\'evy-type process}\index{Levy--Khintchine formula@L\'evy--Khintchine formula} For fixed $x \in \rd$, the mapping $\xi \mapsto q(x,\xi)$ is the characteristic exponent of a L\'evy process. \index{characteristic exponent} In particular, $\xi \mapsto q(x,\xi)$ is continuous and negative definite (in the sense of Schoenberg), cf.\ Sections~\ref{pre-mom} and \ref{lp-back}. This implies that $\xi \mapsto \sqrt{|q(x,\xi)|}$ is subadditive, i.e.\
\begin{gather}\label{fel-eq5}
	\sqrt{|q(x,\xi+\eta)|}
	\leq \sqrt{|q(x,\xi)|} + \sqrt{|q(x,\eta)|}, \quad x,\xi,\eta \in \rd,
\end{gather}
see e.g.\ Schilling~\cite[Thm.~6.2]{crm} or the proof of Lemma~\ref{pre-74}. In the following, we will always assume that the symbol $(x,\xi)\mapsto q(x,\xi)$ is locally bounded. In terms of the characteristics this means that
\begin{gather}\label{fel-eq6}
    \sup_{x \in K} \left(|b(x)|+|Q(x)| + \smash[b]{\int_{y\neq 0}} \min\left\{1,|y|^2\right\}  \nu(x,dy)\right)<\infty
\end{gather}
for any compact set $K \subseteq \rd$. In particular, the local boundedness of $q$ entails that $\|Af\|_{\infty}<\infty$ for any $f \in C_c^{\infty}(\rd)$. Let us give some important examples of L\'evy-type processes.

\begin{example} \label{fel-3}
\begin{enumerate}\itemsep6pt
    \item\label{fel-3-1} \index{Levy process@L\'evy process}
        \emph{L\'evy processes:} If $(X_t)_{t \geq 0}$ is a L\'evy process with characteristic exponent $\psi$ and L\'evy triplet $(b,Q,\nu)$, then $(X_t)_{t \geq 0}$ is a L\'evy-type process with symbol $q(x,\xi)=\psi(\xi)$ and characteristics $(b,Q,\nu)$. In fact, there is a one-to-one correspondence between \emph{L\'evy triplets}, \emph{characteristic exponents} (\emph{symbols}) and \emph{L\'evy processes}.

	\item\label{fel-3-2} \index{Feller process}
    \emph{Feller processes:} Let $(X_t)_{t \geq 0}$ be a Feller process. If the domain of its (strong) infinitesimal generator contains the test functions $C_c^{\infty}(\rd)$, then a theorem by Cour\-r\`ege and van Waldenfels shows that $(X_t)_{t \geq 0}$ is a L\'evy-type process, cf.\ B\"{o}ttcher \emph{et al.}~\cite{matters3}. For a thorough study of Feller processes and their connection to L\'evy-type operators we refer to the monographs B\"{o}ttcher \emph{et al.}~\cite{matters3}, Hoh~\cite{hoh98} and Jacob~\cite{jacob1-3}.

    Although a Feller process always gives an $x$-dependent symbol $q(x,\xi)$ and an $x$-dependent L\'evy triplet $(b(x),Q(x),\nu(x,dy))$, the converse does not hold: there are triplets and symbols which are not connected with a \emph{Feller} process, cf.\ B\"{o}ttcher \emph{et al.}~\cite[Ex.~2.26]{matters3} and van Casteren~\cite[p.~156]{vancas11}. All of these examples still lead to (deterministic) processes; we are not aware of examples of symbols of the form \eqref{fel-eq4} which are not connected with a (stochastic or deterministic) process at all.

	\item\label{fel-3-3} \index{Levy-driven SDE@L\'evy-driven SDE}
        \emph{Solutions of L\'evy-driven SDEs:} Let $(L_t)_{t \geq 0}$ be a L\'evy process with characteristic exponent $\psi$ and $\sigma$ a continuous bounded function. If the stochastic differential equation (SDE)
	\begin{equation*}
		dX_t = \sigma(X_{t-}) \, dL_t, \quad X_0 = x,
	\end{equation*}
    has a unique weak solution, then $(X_t)_{t \geq 0}$ is a L\'evy-type process with symbol $q(x,\xi) = \psi(\sigma(x)^{\top} \xi)$, where $\psi$ is the characteristic exponent of $(L_t)_{t \geq 0}$, cf.\ Schilling \& Schnurr~\cite{sch-schnurr10}. The assumption on the boundedness of $\sigma$ and the uniqueness of the solution can be relaxed, cf.\ K\"{u}hn~\cite{sde,markovian-selection}.

    \item\label{fel-3-4} \index{pseudo Poisson process}
    \emph{\textup{(}Compound\textup{)} pseudo Poisson processes:} William Feller~\cite[Ch.~X.1]{feller-2} calls L\'evy-type processes with a triplet of the form $(0,0,\nu(x,dy))$ such that $\lambda_0 \coloneqq  \sup_{x\in\rd} \nu\left(x,\rd\setminus\{0\}\right) < \infty$ (compound) pseudo Poisson processes. The condition on the L\'evy kernel means that we have only finite jump activity such that the sample paths of the process are piecewise constant with exponential holding times between any two jumps. It is not difficult to construct such processes in a pathwise manner, see Feller \emph{loc.\ cit.} or Ethier \& Kurtz~\cite[Ch.~4.\S2]{ethier-kurtz}. Another way to understand (and, indeed to construct) such processes is to start with an arbitrary time-homogeneous Markov chain $(Y_n)_{n\in\nat}$ with transition kernel $p(x,dy) \coloneqq  \lambda_0^{-1}\nu(x,dy)$, and to time-change it using an independent Poisson process $(N_t)_{t\geq 0}$ with intensity $\lambda_0$, i.e.\ $X_t \coloneqq  Y_{N_t}$.

    \item\label{fel-3-5} \index{stable-like process}
    \emph{Stable-like processes:} A Feller process whose generator is of the form $(-\Delta)^{\alpha(x)}$ for some continuous function $\alpha \colon \rd\to[0,2]$ is a stable-like process. Intuitively this is a stable process which changes its stability behaviour \enquote{on the go}. This class of processes was proposed by Bass~\cite{bass88} who also proved the existence in dimension $d=1$. Rigorous results in higher dimensions are due to Hoh~\cite{hoh00}, Jacob \& Leopold~\cite{jac-leo93} and using a parametrix construction e.g.\ in Knopova \emph{et al.}~\cite{kno-kul-schi21}, Kolokoltsov~\cite{kol11} or K\"{u}hn~\cite{matters6}, see also the references therein.

    \item\label{fel-3-6} \index{Dirichlet form}
    \emph{Processes connected with Dirichlet forms:} In Schilling \& Uemura~\cite{schilling-uemura} criteria are given in terms of Dirichlet forms such that a negative definite symbol $q(x,\xi)$ gives rise to a Dirichlet form and, thus, a stochastic process. This process need not be a Feller process. In Schilling \& Wang~\cite{schilling-wang} this discussion is extended to not necessarily symmetric semi-Dirichlet forms.
\end{enumerate}
\end{example}
The symbol plays a key role for the probability estimates which we are going to derive. It is therefore natural to ask how the symbol can be calculated given the stochastic process. If $(X_t)_{t \geq 0}$ is a L\'evy process, then the L\'evy--Khintchine formula yields \index{symbol of L\'evy-type process}
\begin{equation*}
	- \psi(\xi)
	= \lim_{t \to 0} \frac{e^{-t \psi(\xi)}-1}{t}
	= \lim_{t \to 0} \frac{\Ee^x \left[e^{i \xi (X_t-x)}\right] - 1}{t}.
\end{equation*}
Although the L\'evy--Khintchine formula breaks down for L\'evy-type processes, i.e.\ $\Ee^x e^{i \xi (X_t-x)}\neq e^{-t q(x,\xi)}$ in general, the identity
\begin{gather}\label{fel-eq7}
	- q(x,\xi)
	= \lim_{t \to 0} \frac{\Ee^x e^{i \xi (X_t-x)}-1}{t}
\end{gather}
remains valid for a wide class of L\'evy-type processes.

\begin{proposition} \label{fel-4}
    Let $(X_t)_{t \geq 0}$ be a L\'evy-type process with symbol $q(x,\xi)$ and characteristics $(b(x),Q(x),\nu(x,dy))$. Assume that $q$ has bounded coefficients, i.e.\
	\begin{equation*}
		|q(x,\xi)| \leq C (1+|\xi|^2), \quad x,\xi \in \rd,
	\end{equation*}
	for some constant $C>0$.\footnote{Equivalently, $\sup_{x \in \rd} (|b(x)|+|Q(x)|+\int \min \{1,|y|^2\} \, \nu(x,dy))<\infty$.}
    If $x \mapsto q(x,\xi)$ is continuous, then \eqref{fel-eq7} holds.
\end{proposition}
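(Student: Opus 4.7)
The plan is to apply the martingale problem to a $C_c^\infty$-truncation of the exponential $y\mapsto e^{i\xi\cdot(y-x)}$, divide by $t$, let $t\downarrow 0$ using the c\`adl\`ag property and continuity of the symbol, and then remove the truncation via a Pruitt-type maximal estimate. Fix a cutoff $\chi\in C_c^\infty(\rd)$ with $\chi\equiv 1$ on $\ball{1/2}{0}$ and $\supp\chi\subseteq\ball{1}{0}$, and set $g_R(y):=\chi((y-x)/R)\,e^{i\xi\cdot(y-x)}\in C_c^\infty(\rd)$, so that $g_R(x)=1$. The martingale property then yields
\begin{gather*}
    \Ee^x[g_R(X_t)]-1=\int_0^t \Ee^x[Ag_R(X_s)]\,ds,\qquad t>0.
\end{gather*}

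A direct calculation from \eqref{fel-eq2} shows that, on $\ball{R/4}{x}$ and for $R\geq 4$, the cutoff is identically $1$ on a neighbourhood absorbing all jumps with $|z|<R/4$, so
\begin{gather*}
    Ag_R(y)=-q(y,\xi)\,e^{i\xi\cdot(y-x)}+E_R(y),\qquad |E_R(y)|\leq 2\,\nu(y,\{|z|>R/4\}),
\end{gather*}
and in particular $|E_R(x)|\to 0$ as $R\to\infty$ because $\nu(x,\cdot)$ is a L\'evy measure. Using the pseudo-differential representation $Ag_R(y)=-\int q(y,\eta)\,e^{iy\cdot\eta}\widehat{g_R}(\eta)\,d\eta$ together with the Schwartz decay of $\widehat{g_R}$, the bound $|q(y,\eta)|\leq C(1+|\eta|^2)$, the continuity of $y\mapsto q(y,\eta)$, and dominated convergence, one obtains $Ag_R\in C_b(\rd)$.

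Since $(X_t)$ is c\`adl\`ag with $X_0=x$ under $\Pp^x$, we have $X_s\to x$ $\Pp^x$-a.s.\ as $s\downarrow 0$; the boundedness and continuity of $Ag_R$ combined with dominated convergence imply $\Ee^x[Ag_R(X_s)]\to Ag_R(x)$, so
\begin{gather*}
    \lim_{t\downarrow 0}\frac{\Ee^x[g_R(X_t)]-1}{t}=Ag_R(x)=-q(x,\xi)+E_R(x).
\end{gather*}
On the other hand, since $g_R$ and $e^{i\xi\cdot(\cdot-x)}$ coincide on $\ball{R/2}{x}$ and are bounded by $1$,
\begin{gather*}
    \Bigl|\Ee^x[e^{i\xi\cdot(X_t-x)}]-\Ee^x[g_R(X_t)]\Bigr|\leq 2\,\Pp^x\Bigl(\sup_{s\leq t}|X_s-x|>R/2\Bigr).
\end{gather*}

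The main obstacle is to control, after division by $t$, this last probability uniformly in small $t$ with a bound that vanishes as $R\to\infty$. This amounts to a Pruitt-type maximal inequality for L\'evy-type processes with bounded symbol (the analogue of Theorem~\ref{lp-43}); it can be established by a stopping-time argument combined with the Dynkin formula applied to a radial bump $\phi((y-x)/R)$ with $\phi\in C_c^\infty(\rd)$, yielding an estimate of the form
\begin{gather*}
    \frac{1}{t}\Pp^x\Bigl(\sup_{s\leq t}|X_s-x|>R/2\Bigr)\leq c\sup_{|y-x|\leq R}\sup_{|\eta|\leq 2/R}|q(y,\eta)|,
\end{gather*}
whose right-hand side tends to $0$ as $R\to\infty$ by continuity of $\eta\mapsto q(y,\eta)$ at $\eta=0$ with $q(y,0)=0$, together with the standing bound $|q(y,\eta)|\leq C(1+|\eta|^2)$. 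Letting first $t\downarrow 0$ and then $R\to\infty$ in the decomposition above (and noting $|E_R(x)|\to 0$) proves \eqref{fel-eq7}.
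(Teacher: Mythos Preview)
Your overall strategy---truncate, apply the martingale problem, take $t\downarrow 0$, then remove the truncation---is sound, but the final step contains a genuine gap. You claim that
\[
\sup_{|y-x|\leq R}\;\sup_{|\eta|\leq 2/R}|q(y,\eta)|\xrightarrow[R\to\infty]{}0,
\]
justifying this by ``continuity of $\eta\mapsto q(y,\eta)$ at $\eta=0$'' and the bound $|q(y,\eta)|\leq C(1+|\eta|^2)$. Neither argument works: the latter bound is $\geq C$ and gives nothing, and pointwise continuity at $\eta=0$ is not uniform in $y$ over a set whose diameter grows with $R$. A concrete obstruction in $d=1$: take $b\equiv0$, $Q\equiv0$ and $\nu(y,dz)=\delta_{1+|y|}(dz)$, so that $q(y,\eta)=1-e^{i(1+|y|)\eta}$ is continuous in $y$ with bounded coefficients; then for $|y|=R$ and $\eta=2/R$ one has $|q(y,\eta)|\to |1-e^{2i}|>0$. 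Thus the right-hand side of your Pruitt-type bound need \emph{not} vanish.

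The repair is simple: what you actually need is the \emph{localized} maximal estimate
\[
\limsup_{t\to 0}\frac{1}{t}\,\Pp^x\!\left(\sup_{s\leq t}|X_s-x|>R/2\right)\leq C\sup_{|\eta|\leq 2/R}|q(x,\eta)|,
\]
which is Theorem~\ref{fel-9} in the paper (its proof is independent of Proposition~\ref{fel-4}). This bound involves only the single point $x$, and now $q(x,0)=0$ together with continuity of $q(x,\cdot)$ does give the desired decay. With this change your argument goes through.

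For comparison, the paper's own route is cleaner and avoids any maximal inequality. It interchanges the two limits: first one removes the cutoff in the integrated Dynkin identity, using that the bounded-coefficients assumption makes $A(e_\xi\chi_k)$ bounded uniformly in $k$ and convergent pointwise to $Ae_\xi$; dominated convergence then yields
\[
\Ee^x e_\xi(X_t)-e_\xi(x)=\Ee^x\!\int_0^t(Ae_\xi)(X_s)\,ds
\]
directly for the untruncated exponential. Only then does one divide by $t$ and let $t\to 0$, using that $(Ae_\xi)(y)=-e_\xi(y)q(y,\xi)$ is bounded and continuous. Your order of limits forces you to control the truncation error \emph{after} dividing by $t$, which is why the maximal estimate enters at all.
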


The key step in the proof is the identity
\begin{gather}\label{fel-eq8}
	\Ee^x \left(e_{\xi}(X_t)\right) - e_{\xi}(x)
    = \Ee^x \left( \int_0^t (A e_{\xi})(X_s) \, ds \right)
\end{gather}
for the function $e_{\xi}(x) \coloneqq  e^{i\xi \cdot x}$. The definition of a L\'evy-type process entails that
\begin{equation*}
	\Ee^x(e_{\xi} \chi_k)(X_t) - (e_{\xi} \chi_k)(x) = \Ee^x \left( \int_0^t (A(e_{\xi}\chi_k))(X_s) \, ds \right)
\end{equation*}
for $\chi_k(x) \coloneqq  \chi(x/k)$ and $\chi \in C_c^{\infty}(\rd)$ with $\I_{\ball{1}{0}} \leq \chi \leq \I_{\ball{2}{0}}$. In order to get \eqref{fel-eq8}, a limiting argument is then needed to pass to the limit $k \to \infty$, see B\"{o}ttcher \emph{et al.}~\cite[Proof Thm.~2.36]{matters3} for details. Once we have \eqref{fel-eq8} at our disposal, we note that $(Ae_{\xi})(x) = - e_{\xi}(x) q(x,\xi)$. Using the boundedness of the coefficients and the continuity of $x \mapsto q(x,\xi)$, the dominated convergence theorem yields the assertion.

\begin{remark}\label{fel-4b}
    There is a variant of Proposition~\ref{fel-4} for L\'evy-type processes with unbounded coefficients. If the symbol $q$ is locally bounded and continuous with respect to the $x$-variable, then
	\begin{gather}
		-q(x,\xi) = \lim_{t \to 0} \frac{\Ee^x \left[e^{i \xi (X_{t \wedge \tau_r^x}-x)}\right]-1}{t},
		\label{fel-eq9}
	\end{gather}
    where $\tau_r^x  = \inf\left\{t \geq 0 \mid |X_t - x| > r\right\}$ is the first exit time from the closed ball $\cball{r}{x}$. The proof is similar to the one above, combined with a standard stopping argument, see also B\"{o}ttcher \emph{et al.}~\cite[Cor.~2.39]{matters3}. This technique was developed by Schnurr \cite[Ch.~4.3]{schnurr-diss} and Schilling \& Schnurr \cite{sch-schnurr10}.
\end{remark}

\subsection{Maximal Tail Estimates for L\'evy-type Processes}

Let $(X_t)_{t \geq 0}$ be a L\'evy-type process. We will now study probability estimates for the running supremum $\sup_{s \leq t} |X_s-X_0|$, that is, we are interested in lower and upper bounds for the probability $\Pp^x \left( \sup_{s \leq t} |X_s-x| \geq r \right)$. If we denote by
\begin{gather}\label{fel-eq10}
    \tau_r^x = \inf\left\{t \geq 0 \mid X_t \notin \cball{r}{x}\right\}
\end{gather}
the first exit time from the closed ball $\cball{r}{x}$, then
\begin{gather}\label{fel-eq11}
	\{\tau_r^x<t\}
	\subseteq \left\{ \sup_{s \leq t} |X_s-x|>r \right\}
	\subseteq \{\tau_r^x \leq t\}
	\subseteq \left\{ \sup_{s \leq t} |X_s -x| \geq r \right\},
\end{gather}
and so the results can be read equivalently as estimates for $\Pp^x(\tau_r^x \leq t)$. Let us point out that \eqref{fel-eq11} remains valid if we replace $t$ by a random time.

\begin{theorem} \label{fel-5} \index{inequality!maximal tail}
	Let $(X_t)_{t \geq 0}$ be a L\'evy-type process with symbol $q(x,\xi)$. There is  a constant $C>0$ such that
	\begin{gather}\label{fel-eq13}
		\Pp^x \left( \sup_{s \leq \sigma} |X_s-x| \geq r \right)
		\leq C \,\Ee^x\left[\sigma\right] \sup_{|y-x| \leq r} \sup_{|\xi| \leq r^{-1}} |q(y,\xi)|
	\end{gather}
	for all $x \in \rd$, $r>0$ and any a.s.\ finite stopping time $\sigma$, i.e.\ $\Pp^x(\sigma<\infty)=1$. In particular,
	\begin{gather}\label{fel-eq15}
		\Pp^x \left( \sup_{s \leq t} |X_s-x| \geq r \right)
		\leq C\,t \sup_{|y-x| \leq r} \sup_{|\xi| \leq r^{-1}} |q(y,\xi)|.
	\end{gather}
\end{theorem}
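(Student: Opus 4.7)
The plan is to apply a stopped Dynkin formula to the family of bounded test functions
\[
f_\xi(y) := 1 - \cos\bigl(\xi\cdot(y-x)\bigr),\qquad \xi\in\rd,
\]
and then extract the pointwise tail bound by integrating in $\xi$. Set $\tau := \tau_r^x \wedge \sigma$. A cutoff-and-limit argument of the type underlying Proposition~\ref{fel-4} and Remark~\ref{fel-4b}---insert $f_\xi\chi_k\in C_c^\infty(\rd)$ with $\chi_k(y)=\chi(y/k)$, apply optional stopping, and let $k\to\infty$---yields
\[
\Ee^x\bigl[f_\xi(X_\tau)\bigr] = \Ee^x\!\int_0^\tau Af_\xi(X_s)\,ds,
\]
since $f_\xi(x)=0$. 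The pseudo-differential representation of $A$ gives $Af_\xi(y) = \RE\bigl(q(y,\xi)\,e^{i\xi\cdot(y-x)}\bigr)$, hence the clean bound $|Af_\xi(y)|\leq |q(y,\xi)|$.

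Next I would integrate both sides against $d\xi$ on $\{|\xi|\leq r^{-1}\}$. Writing $M := \sup_{|y-x|\leq r}\sup_{|\xi|\leq r^{-1}}|q(y,\xi)|$ and using Tonelli together with the fact that $X_{s}\in\cball{r}{x}$ for every $s<\tau$ (by c\`adl\`ag paths and the definition of $\tau_r^x$), the right-hand side is dominated by $|\ball{r^{-1}}{0}|\cdot\Ee^x[\sigma]\cdot M$. For the left-hand side I rely on the dimensional lemma
\[
\int_{|\xi|\leq r^{-1}} \bigl(1-\cos(\xi\cdot z)\bigr)\,d\xi \geq c_d\,|\ball{r^{-1}}{0}| \quad\text{whenever } |z|\geq r,
\]
which, after the scaling $\eta=r\xi$, reduces to proving $\inf_{|w|\geq 1}\int_{|\eta|\leq 1}(1-\cos(\eta\cdot w))\,d\eta>0$. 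By rotational invariance this is essentially a one-dimensional calculation using $\sup_{|u|\geq 1}\sin(u)/u=\sin 1<1$, with Riemann--Lebesgue decay and continuity giving uniformity for large $|w|$.

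Applying the lemma pointwise on $\{|X_\tau-x|\geq r\}$, together with $f_\xi\geq 0$ elsewhere, one arrives at
\[
c_d\,|\ball{r^{-1}}{0}|\,\Pp^x\bigl(|X_\tau-x|\geq r\bigr)\leq |\ball{r^{-1}}{0}|\,\Ee^x[\sigma]\,M.
\]
The volume factor cancels. Since $\cball{r}{x}$ is closed and paths are c\`adl\`ag, $\{\tau_r^x\leq\sigma\}\subseteq\{|X_\tau-x|\geq r\}$, and combined with the inclusion \eqref{fel-eq11} this yields \eqref{fel-eq13} with $C=c_d^{-1}$; the assertion \eqref{fel-eq15} is then the special case $\sigma\equiv t$. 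The principal obstacle is the rigorous justification of the stopped Dynkin identity for the non-compactly-supported $f_\xi$: bounded convergence handles the left-hand side after cutoff, but on the right one must dominate the contributions of $A(f_\xi\chi_k)(X_s)$ coming from $|y|>k$ uniformly in $k$. This is possible because $f_\xi$ is uniformly bounded and the local boundedness \eqref{fel-eq6} of the symbol guarantees uniform integrability of $\nu(X_s,\cdot)$ on $\{|y|\geq 1\}$ as $X_s$ ranges over $\cball{r}{x}$.
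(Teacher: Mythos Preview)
Your argument is correct, but it follows a genuinely different route from the paper. The paper picks a \emph{single} compactly supported bump $u\in C_c^\infty(\rd)$ with $u(0)=1$, $\supp u\subseteq\ball{1}{0}$, sets $u_r^x(y)=u((y-x)/r)$, and applies optional sampling directly to the martingale $u_r^x(X_t)-\int_0^t Au_r^x(X_s)\,ds$. Since $u_r^x$ already lies in $C_c^\infty$, no cutoff is needed; the generator is then expressed via the Fourier inversion formula $-Au_r^x(y)=\int e^{iy\cdot\xi}q(y,\xi)\widehat{u_r^x}(\xi)\,d\xi$, and the growth estimate $|q(y,r^{-1}\xi)|\leq 2\sup_{|\eta|\leq r^{-1}}|q(y,\eta)|(1+|\xi|^2)$ (a consequence of the subadditivity \eqref{fel-eq5}) produces the bound with constant $C=2\int(1+|\xi|^2)|\widehat u(\xi)|\,d\xi$. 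By contrast, you work with the family $f_\xi(y)=1-\cos(\xi\cdot(y-x))$, obtain the very clean pointwise bound $|Af_\xi|\leq|q(\cdot,\xi)|$, and integrate over $|\xi|\leq r^{-1}$, using a geometric lower bound on $\int_{|\xi|\leq r^{-1}}(1-\cos(\xi\cdot z))\,d\xi$. This is essentially the L\'evy truncation philosophy the paper mentions (just after the statement of Theorem~\ref{fel-5}) as the basis of a shorter alternative proof in \cite[Thm.~1.29]{matters6}.

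What each approach buys: your route avoids Fourier inversion and the subadditivity-based growth bound, at the price of the cutoff argument needed to justify Dynkin's formula for the non-compactly-supported $f_\xi$ (which you correctly identify as the main technical burden and sketch adequately, given that $X_s$ is confined to $\cball{r}{x}$ for $s<\tau$). The paper's route is self-contained within the $C_c^\infty$ martingale-problem framework and, as the authors note, transfers cleanly to the localized small-time version (Theorem~\ref{fel-9}), where one lets $t\to 0$ under the integral and exploits continuity of $x\mapsto q(x,\xi)$. Two small points you glossed over but the paper makes explicit: reduce first to bounded $\sigma$ by monotone convergence, and upgrade from ``$>r$'' to ``$\geq r$'' by replacing $r$ with $r-1/n$ and using $|q(y,2\xi)|\leq 4|q(y,\xi)|$ to absorb the perturbation into the constant.
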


We give a direct proof using martingale methods which goes back to Schilling \cite{rs-growth}. This approach will allow us to establish a localized version of Theorem~\ref{fel-5}. A much shorter proof based on L\'evy's truncation inequality (see e.g.\ the upper bound in Lemma~\ref{pre-48}, or Schilling~\cite[Thm.~30.1]{schilling-dc} for a textbook version) can be found in K\"{u}hn~\cite[Thm.~1.29]{matters6}. Using similar methods as \cite{rs-growth}, Schnurr~\cite[Prop.~3.10]{schnurr13} has established an analogous estimate for homogeneous diffusions with jumps (in the sense of Jacod \& Shiryaev \cite[Def.~III.2.18]{jac-shir}). The proof of Schnurr is based on the the triplet $b$, $Q$ and $\nu$ of \eqref{fel-eq4} and it completely avoids the \emph{generator}; this means that the Markovianity of the process is not needed.

\begin{proof}[Proof of Theorem~\ref{fel-5}]
	The second assertion \eqref{fel-eq15} follows  from \eqref{fel-eq13}.

    \primolist
    We first establish \eqref{fel-eq13} with \enquote{$>r$} instead of \enquote{$\geq r$} on the left hand side. In view of \eqref{fel-eq11}, it suffices to show that
	\begin{equation*}
		\Pp^x\left(\tau_r^x \leq \sigma\right)
        \leq C\, \Ee^x\left[\sigma\right] \sup_{|y-x| \leq r} \sup_{|\xi| \leq r^{-1}} |q(y,\xi)|.
	\end{equation*}
    Because of monotone convergence, we need to consider only bounded stopping times, i.e.\ $\Pp^x(\sigma \leq M)=1$ for some $M>0$. Fix $x \in \rd$, $r>0$, and pick $u \in C_c^{\infty}(\rd)$ such that $u(0)=1$, $0 \leq u \leq 1$ and $\supp u \subseteq \ball{1}{0}$. If we set $u_r^x(y)\coloneqq  u((y-x)/r)$, then $M_t = u_r^x(X_t)-\int_0^t Au_r^x(X_s) \, ds$ is a martingale, and so the optional sampling theorem yields $\Ee^x M_0 = \Ee^x M_{\sigma \wedge \tau_r^x}$, i.e.\
	\begin{equation*}
		\Ee^x \left[ u_r^x(X_{\sigma \wedge \tau_r^x})\right] -1
		= \Ee^x \left( \int_{[0,\sigma \wedge \tau_r^x)} Au_r^x(X_s) \, ds \right).
	\end{equation*}
	Thus,
	\begin{align*}
		\Pp^x\left(\tau_r^x \leq \sigma\right)
		\leq 1 - \Ee^x \left[u_r^x(X_{\sigma})\right]
		&= -  \Ee^x \left( \int_{[0,\sigma \wedge \tau_r^x)} Au_r^x(X_s) \, ds \right) \\
		&= - \Ee^x \left( \int_{[0,\sigma \wedge \tau_r^x)} \I_{\{|X_s-x|<r\}} Au_r^x(X_s) \, ds \right).
	\end{align*}
	Since we have
	\begin{align*}
		-Au_r^x(y)
		 = \int_{\rd} e^{iy \cdot \xi} q(y,\xi) \widehat{u_r^x}(\xi) \, d\xi
		 &= e^{-ix \cdot \xi}\cdot r^d \int_{\rd} e^{iy \cdot \xi} q(y,\xi) \widehat{u}(r\xi) \, d\xi \\
		 &= e^{-ix \cdot \xi} \int_{\rd} e^{iy \cdot \xi} q(y,r^{-1} \xi) \widehat{u}(\xi) \, d\xi
	\end{align*}
	for all $y \in \rd$, we find that
	\begin{align}\label{fel-eq17}
		\Pp^x(\tau_r^x \leq \sigma)
        \leq \Ee^x \left[ \int_{[0,\sigma \wedge \tau_r^x)} \int_{\rd} |q(X_s ,r^{-1}\xi)| \, |\widehat{u}(\xi)| \, d\xi \, \I_{\{|X_s-x|<r\}} \, ds \right].
	\end{align}
    Using the growth estimate $\sup_{x \in K} |q(x,\xi)| \leq c_{K} (1+|\xi|^2)$ for $q$ with the constant $c_K \coloneqq  2 \sup_{x \in K} \sup_{|\eta| \leq 1} |q(x,\eta)|$ for any compact set $K \subseteq \rd$, cf.\ B\"{o}ttcher \emph{et al.}~\cite[Thm.~2.31]{matters3}, we get
	\begin{align*}
		 \int\limits_{\rd} \sup_{|y-x| \leq r} |q(y,r^{-1}\xi)| \, |\widehat{u}(\xi)| \, d\xi
		 &\leq 2\sup_{|y-x|\leq r} \sup_{|\eta| \leq 1/r} |q(y,\eta)| \int\limits_{\rd} \left(1+|\xi|^2 \right) |\widehat{u}(\xi)| \, d\xi \\
		 &\leq C \sup_{|y-x|\leq r} \sup_{|\eta| \leq 1/r} |q(y,\eta)|
	\end{align*}
	for the absolute constant $C \coloneqq  2\int_{\rd} \left(1+|\xi|^2 \right) |\widehat{u}(\xi)| \, d\xi < \infty$. In summary,
	\begin{gather}
		\Pp^x\left(\tau_r^x \leq \sigma\right)
		\leq C \Ee^x\left[\sigma \wedge \tau_r^x\right] \sup_{|y-x|\leq r} \sup_{|\eta| \leq 1/r} |q(y,\eta)|.
		\label{fel-eq19}
	\end{gather}

    \secundolist
    From
	\begin{equation*}
		\Pp^x \left( \sup_{s \leq t} |X_s-x| \geq r \right)
		= \lim_{n \to \infty} \Pp^x \left( \sup_{s \leq t} |X_s-x|> r - \frac{1}{n} \right)
	\end{equation*}
	and Step~\primo\ we get
	\begin{align*}
		\Pp^x \left( \sup_{s \leq t} |X_s-x| \geq r \right)
		&\leq C \Ee^x\left[\sigma\right] \lim_{n \to \infty} \sup_{|y-x| \leq r-1/n} \sup_{|\xi| \leq 1/(r-1/n)} |q(y,\xi)| \\
		&\leq C \Ee^x\left[\sigma\right] \sup_{|y-x| \leq r} \sup_{|\xi| \leq 2/r} |q(y,\xi)|.
	\end{align*}
	Since $\xi \mapsto \sqrt{|q(y,\xi|}$ is subadditive, cf.\ \eqref{fel-eq5}, we have $|q(y,2\xi)| \leq 4 |q(y,\xi)|$, and so
	\begin{equation*}
		\Pp^x \left( \sup_{s \leq t} |X_s-x| \geq r \right)
		\leq 4C \Ee^x\left[\sigma\right] \sup_{|y-x| \leq r} \sup_{|\xi| \leq 1/r} |q(y,\xi)|.
    \qedhere
	\end{equation*}
\end{proof}

As an immediate consequence, we obtain the following lower bound for the expectation of stopping times.  This is also an alternative way to derive the lower bound in \eqref{lp-e48} for L\'evy processes.

\begin{corollary} \label{fel-7}  \index{inequality!mean exit time}
	Let $(X_t)_{t \geq 0}$ be a L\'evy-type process with symbol $q(x,\xi)$. There is an absolute constant $C>0$ such that
	\begin{equation*}
		\Ee^x\left[\sigma\right]
		\geq C \sup_{r>0} \frac{\Pp^x\left(\tau_r^x\leq \sigma < \infty\right)}{\sup_{|y-x| \leq r} \sup_{|\xi| \leq 1/r} |q(y,\xi)|}
	\end{equation*}
	for all $x \in \rd$ and any stopping time $\sigma$. In particular,
	\begin{equation*}
		\Ee^x\left[\tau_r^x \right]
		\geq C \frac{\Pp^x\left(\tau_r^x<\infty\right)}{\sup_{|y-x| \leq r} \sup_{|\xi| \leq 1/r} |q(y,\xi)|}
	\end{equation*}
\end{corollary}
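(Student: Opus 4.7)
The corollary is essentially a straightforward rearrangement of Theorem~\ref{fel-5}, and the plan is to dress it up accordingly. First, if $\Ee^x[\sigma] = \infty$, the asserted inequality is trivially true (the right hand side being a supremum of quantities $\leq C/\!\sup_{y,\xi}|q(y,\xi)|<\infty$ under the local boundedness assumption~\eqref{fel-eq6} on $q$). So I may assume $\Ee^x[\sigma]<\infty$, which in particular forces $\Pp^x(\sigma<\infty)=1$, i.e.\ $\sigma$ is an almost surely finite stopping time, so that Theorem~\ref{fel-5} is applicable.

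Next I would invoke the pathwise inclusion from \eqref{fel-eq11}, which remains valid with the deterministic time $t$ replaced by the random time $\sigma$ (the argument is pointwise in $\omega$): since $\sigma<\infty$ a.s.,
\begin{gather*}
    \left\{\tau_r^x \leq \sigma\right\}
    = \left\{\tau_r^x \leq \sigma < \infty\right\}
    \subseteq \left\{\sup_{s\leq\sigma}|X_s-x| \geq r\right\} \quad\Pp^x\text{-a.s.}
\end{gather*}
Combining this with the maximal tail bound~\eqref{fel-eq13} of Theorem~\ref{fel-5} applied to the stopping time $\sigma$ yields
\begin{gather*}
    \Pp^x\left(\tau_r^x \leq \sigma < \infty\right)
    \leq C\,\Ee^x[\sigma] \sup_{|y-x|\leq r} \sup_{|\xi|\leq 1/r} |q(y,\xi)|.
\end{gather*}
Dividing by the double supremum (trivially skipping any $r>0$ for which the denominator vanishes, since then the numerator must also vanish) and taking the supremum over $r>0$ delivers the first claim.

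For the second claim I simply specialize to $\sigma = \tau_r^x$: the event $\{\tau_r^x \leq \sigma < \infty\}$ reduces to $\{\tau_r^x < \infty\}$, so the general inequality collapses to the stated bound on $\Ee^x[\tau_r^x]$. There is no real obstacle here; the only points worth checking carefully are (i) the reduction to the case $\Ee^x[\sigma]<\infty$ so that Theorem~\ref{fel-5} genuinely applies, and (ii) that the inclusion \eqref{fel-eq11} carries over verbatim from deterministic times to stopping times, which is immediate since it holds trajectorially.
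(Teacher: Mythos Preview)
Your proof is correct and is essentially the same rearrangement argument as the paper's. The only difference is cosmetic: the paper avoids the case distinction on $\Ee^x[\sigma]$ by truncating $\sigma \rightsquigarrow \sigma \wedge t$, applying the intermediate bound \eqref{fel-eq19} (valid for bounded stopping times) to get
\[
    \Pp^x(\tau_r^x \leq \sigma \wedge t) \leq C\,\Ee^x[t \wedge \sigma \wedge \tau_r^x]\sup_{|y-x|\leq r}\sup_{|\xi|\leq 1/r}|q(y,\xi)|,
\]
and then letting $t \to \infty$; this handles all stopping times $\sigma$ uniformly without ever invoking the hypothesis $\Pp^x(\sigma<\infty)=1$ of Theorem~\ref{fel-5}. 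Your route has the mild advantage of citing only the \emph{statement} \eqref{fel-eq13} rather than the intermediate step \eqref{fel-eq19} from its proof. One small point you assert without justification: if the denominator vanishes then so does the numerator. This does follow, e.g.\ from \eqref{fel-eq15} applied for every $t>0$, which forces $\Pp^x(\tau_r^x<\infty)=0$.
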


\begin{proof}
	From \eqref{fel-eq19} with $\sigma \rightsquigarrow \sigma \wedge t$, we see that
	\begin{align*}
		\Pp^x\left(\tau_r^x \leq \sigma \wedge t\right)
		\leq C \Ee^x\left[t \wedge \sigma \wedge \tau_r^x\right] \sup_{|y-x|\leq r} \sup_{|\eta| \leq 1/r} |q(y,\eta)|.
	\end{align*}
	Letting $t \to \infty$ yields
	\begin{equation*}
		\Pp^x\left(\tau_r^x \leq \sigma< \infty\right)
		\leq C \Ee^x\left[\sigma \right] \sup_{|y-x|\leq r} \sup_{|\eta| \leq 1/r} |q(y,\eta)|
	\end{equation*}
    for any $r>0$. Rearranging the terms gives the first assertion, and then the second one is immediate if we take $\sigma\coloneqq \tau_r^x$.
\end{proof}

Next we present a localized version of the maximal inequality, which deals with the small-time asymptotics of $\Pp^x \left( \sup_{s \leq t} |X_s-x|>r \right)$. It has been established in K\"{u}hn \& Schilling~\cite{ihke} for Feller processes, but the proof remains valid for the wider class of L\'evy-type processes.

\begin{theorem} \label{fel-9}
	Let $(X_t)_{t \geq 0}$ be a L\'evy-type process with symbol $q(x,\xi)$. Assume that $x \mapsto q(x,\xi)$ is continuous for any $\xi \in \rd$. There is an absolute constant $C>0$ such that
	\begin{gather}
		\limsup_{t \to 0} \frac{1}{t} \Pp^x \left( \sup_{s \leq t} |X_s-x| \geq r \right)
		\leq C \sup_{|\xi| \leq 1/r} |q(x,\xi)|
		\label{fel-eq23}
	\end{gather}
	for any $x \in \rd$ and $r>0$.
\end{theorem}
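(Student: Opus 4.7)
The plan is to reuse the martingale identity from the proof of Theorem~\ref{fel-5} but extract the rate $1/t$ by localizing near $t=0$ with the exit time $\tau_r^x$ itself, avoiding the uniform bound $\sup_{|y-x|\leq r}\sup_{|\xi|\leq 1/r}|q(y,\xi)|$ in favour of the pointwise quantity $\sup_{|\xi|\leq 1/r}|q(x,\xi)|$. First I would redo Step~\primo\ of Theorem~\ref{fel-5} with $\sigma = t$ to arrive at the pre-bound \eqref{fel-eq17}, which I restate as
\begin{gather*}
\Pp^x(\tau_r^x\leq t)\leq \Ee^x\!\left[\int_0^{t\wedge\tau_r^x}F(X_s)\,ds\right],
\quad\text{with}\quad
F(y):= \I_{\{|y-x|<r\}}\int_{\rd}|q(y,r^{-1}\xi)|\,|\widehat{u}(\xi)|\,d\xi,
\end{gather*}
where $u\in C_c^\infty(\rd)$ is the fixed cut-off with $u(0)=1$, $0\leq u\leq 1$, $\supp u\subseteq \ball{1}{0}$. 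The growth bound $|q(y,\eta)|\leq 2\sup_{|\zeta|\leq 1}|q(y,\zeta)|(1+|\eta|^2)$ from Lemma~\ref{pre-74}, combined with the local boundedness of $q$ on $\cball{r}{x}$ and the rapid decay of $\widehat{u}$, yields a constant $c_{x,r}<\infty$ such that $F(y)\leq c_{x,r}$ for all $y\in\cball{r}{x}$.

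Next I would divide by $t$ and let $t\to 0$. Since $X_0 = x$ lies in the interior of $\cball{r}{x}$ and paths are c\`adl\`ag, $\tau_r^x>0$ almost surely, so $t\wedge\tau_r^x = t$ for all $t$ sufficiently small (depending on $\omega$). By the right continuity of $s\mapsto X_s$ and the assumed continuity $x\mapsto q(x,\xi)$, dominated convergence inside the $\xi$-integral defining $F$ yields $F(X_s)\to F(x)$ almost surely as $s\downarrow 0$, whence the elementary Lebesgue-point identity
\begin{gather*}
\frac{1}{t}\int_0^{t\wedge\tau_r^x}F(X_s)\,ds\xrightarrow[t\to 0]{}F(x)\quad\text{a.s.}
\end{gather*}
Since the prelimit is uniformly bounded by $c_{x,r}$, a second application of dominated convergence under $\Ee^x$ gives
\begin{gather*}
\limsup_{t\to 0}\frac{1}{t}\Pp^x(\tau_r^x\leq t)\leq F(x)=\int_{\rd}|q(x,r^{-1}\xi)|\,|\widehat{u}(\xi)|\,d\xi.
\end{gather*}

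To translate this into the statement \eqref{fel-eq23}, I would use the elementary inclusion $\{\sup_{s\leq t}|X_s-x|\geq r\}\subseteq\{\tau_{r'}^x\leq t\}$, valid for any $0<r'<r$ (indeed, $|X_s-x|\geq r>r'$ forces $X_s\notin\cball{r'}{x}$), apply the bound just obtained with $r$ replaced by $r'$, and then let $r'\uparrow r$; continuity of $\xi\mapsto q(x,\xi)$ together with dominated convergence show that $\int|q(x,(r')^{-1}\xi)||\widehat{u}(\xi)|\,d\xi$ converges to $\int|q(x,r^{-1}\xi)||\widehat{u}(\xi)|\,d\xi$. Arguing exactly as at the end of the proof of Theorem~\ref{fel-5}, Lemma~\ref{pre-74} applied to $\xi\mapsto q(x,\xi)$ gives $|q(x,r^{-1}\xi)|\leq 2\sup_{|\eta|\leq 1/r}|q(x,\eta)|(1+|\xi|^2)$, and thus
\begin{gather*}
\int_{\rd}|q(x,r^{-1}\xi)|\,|\widehat{u}(\xi)|\,d\xi\leq C\sup_{|\xi|\leq 1/r}|q(x,\xi)|
\end{gather*}
with the absolute constant $C:=2\int_{\rd}(1+|\xi|^2)|\widehat{u}(\xi)|\,d\xi$, completing \eqref{fel-eq23}.

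The main obstacle I anticipate is the simultaneous justification of the pathwise limit $\frac{1}{t}\int_0^{t\wedge\tau_r^x}F(X_s)\,ds\to F(x)$ and its interchange with the outer expectation. Both steps hinge on the localization by $\tau_r^x$: one needs $\tau_r^x>0$ a.s.\ (guaranteed by right continuity of paths and $X_0=x$) \emph{and} a \emph{uniform} domination of the integrand on $\{s<\tau_r^x\}$, which is precisely what the exit-time truncation supplies. Dropping the localization one only recovers the non-local estimate of Theorem~\ref{fel-5} rather than the pointwise refinement \eqref{fel-eq23}.
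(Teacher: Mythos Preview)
Your proof is correct and follows essentially the same route as the paper: start from \eqref{fel-eq17} with $\sigma=t$, divide by $t$, use right continuity of paths together with the continuity $x\mapsto q(x,\xi)$ and dominated convergence (justified by the localization $s<\tau_r^x$) to obtain the limit $\int|q(x,r^{-1}\xi)||\widehat u(\xi)|\,d\xi$, and finish with the growth estimate from Lemma~\ref{pre-74}. The only cosmetic difference is that the paper first bounds $\int_0^{t\wedge\tau_r^x}(\cdots)\,ds\leq t\cdot\sup_{s<t\wedge\tau_r^x}(\cdots)$ and then passes to the limit, whereas you keep the time-average form; both lead to the same conclusion.
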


\begin{remark}
    The symbol $q$ is continuous with respect to the $x$-variable if, and only if, the pseudo-differential operator $A$ with symbol $q$ satisfies $Af \in C(\rd)$ for any $f \in C_c^{\infty}(\rd)$. There is also a characterization in terms of the triplets
    $(b(x),Q(x),\nu(x,dy))$, see K\"{u}hn \& Kunze~\cite[Thm.~B.1]{kunze21}. If $(X_t)_{t \geq 0}$ is a Feller process and the domain of the (strong) infinitesimal generator contains $C_c^{\infty}(\rd)$, then $x \mapsto q(x,\xi)$ is continuous if, and only if, $x\mapsto q(x,0)$ is continuous; if we have a conservative Feller process, then this is always satisfied since $q(x,0)=0$, cf.\ Schilling~\cite[Thm.~4.4]{rs-cons} or B\"{o}ttcher \emph{et al.}~\cite[Thm.~2.30]{matters3}.
\end{remark}

\begin{proof}[Proof of Theorem~\ref{fel-9}]
	Let $u$ be the function from the proof of Theorem~\ref{fel-5}. From \eqref{fel-eq17} with $\sigma \equiv t$, we see that
	\begin{align*}
		\Pp^x(\tau_r^x \leq t)
		\leq t \,\Ee^x \left( \int_{\rd} \sup_{s<t \wedge \tau_r^x} |q(X_s,r^{-1}\xi)| \, |\widehat{u}(\xi)| \, d\xi \, ds \right).
	\end{align*}	
	Since $X_s \in \cball{r}{x}$ for all $s<t \wedge \tau_r^x$, there is a constant $C=C(r,x)$ such that
	\begin{equation*}
		\sup_{s<t \wedge \tau_r^x} |q(X_s,r^{-1}\xi)|
		\leq C (1+|\xi|^2) |\widehat{u}(\xi)| \in L^1(d\xi)
	\end{equation*}
	for all $t \geq 0$. Moreover, the continuity of $x \mapsto q(x,\xi)$ entails that
	\begin{equation*}
		\lim_{t \to 0} \sup_{s<t \wedge \tau_r^x} |q(X_s,r^{-1}\xi)| \, |\widehat{u}(\xi)|
		= |q(x,r^{-1}\xi)| \, |\widehat{u}(\xi)|.
	\end{equation*}
    Applying dominated convergence yields
	\begin{equation*}
		\limsup_{t \to 0} \frac{1}{t} \Pp^x(\tau_r^x \leq t)
		\leq \int_{\rd} |q(x,r^{-1}\xi)| \, |\widehat{u}(\xi)| \, d\xi.
	\end{equation*}
	Taking into account \eqref{fel-eq11} and the estimate
	\begin{equation*}
		|q(x,r^{-1}\xi)| \leq 2 \sup_{|\eta| \leq 1/r} |q(x,\eta)| (1+|\xi|^2)
	\end{equation*}
	gives
	\begin{equation*}
		\limsup_{t \to 0} \frac{1}{t} \Pp^x \left( \sup_{s \leq t} |X_s-x| > r \right)
		\leq C \sup_{|\xi| \leq 1/r} |q(x,\xi)|.
	\end{equation*}

    In order to get \eqref{fel-eq23} with \enquote{$\geq r$}, we can almost literally follow the strategy used in Step~\secundo\ of the proof of Theorem~\ref{fel-5}.
\end{proof}

\begin{example} \label{fel-10} \index{stable-like process}
    Let $(X_t)_{t \geq 0}$ be a process of variable order, i.e.\ a L\'evy-type process with symbol $q(x,\xi) = |\xi|^{\alpha(x)}$ for a continuous function $\alpha\colon \rd \to (0,2]$, cf.\ Example~\ref{fel-3} No.~\ref{fel-3-5}. Then Theorem~\ref{fel-5} and Theorem~\ref{fel-9} show that
	\begin{gather*}
		\Pp^x \left( \sup_{s \leq t} |X_s-x| \geq r \right) \leq C\,t\, r^{-\alpha_*(x,r)}
	\intertext{and}
		\limsup_{t \to 0} \frac{1}{t} \Pp^x \left( \sup_{s \leq t} |X_s-x| \geq r \right)
		\leq C  r^{-\alpha(x)}
	\end{gather*}
	for $r > 0$ and some absolute constant $C>0$, where $\alpha_*(x,r) = \min_{|z-x| \leq r} \alpha(z)$.
\end{example}

\begin{example} \label{fel-11} \index{Levy-driven SDE@L\'evy-driven SDE}
	Let $(X_t)_{t \geq 0}$ be the unique weak solution to an SDE
	\begin{equation*}
		dX_t = \sigma(X_{t-}) \, dL_t,
	\end{equation*}
	for a one-dimensional L\'evy process $(L_t)_{t \geq 0}$ and $\sigma \in C_b(\real)$. The symbol of $(X_t)_{t \geq 0}$ is $q(x,\xi) = \psi(\sigma(x)\xi)$ where $\psi$ is the characteristic exponent of $(L_t)_{t \geq 0}$, cf.\ Example~\ref{fel-3}, and so Theorems~\ref{fel-5} and \ref{fel-9} yield
	\begin{gather*}
		\Pp^x \left( \sup_{s \leq t} |X_s-x| \geq r \right)
		\leq C t \sup_{|y-x| \leq r} \sup_{|\xi| \leq r^{-1}} |\psi(\sigma(y)\xi)|
	\intertext{and}
		\limsup_{t \to 0} \frac{1}{t} \Pp^x \left( \sup_{s \leq t} |X_s-x| \geq r \right)
		\leq C  |\psi(\sigma(x)\xi)|.
	\end{gather*}	
	If $\sigma$ satisfies a growth estimate $|\sigma(x)| \leq M (1+|x|^{\gamma})$ for some constants $\gamma \geq 0$, $M>0$, then we get
	\begin{equation*}
		\Pp^x \left( \sup_{s \leq t} |X_s-x| \geq r \right)
		\leq C't \sup_{|\eta| \leq r^{\gamma-1}} |\psi(\eta)|
        = |\psi|^*(r^{\gamma-1})
	\end{equation*}
	for $r \gg 1$.
\end{example}

So far, we have established upper bounds for the tails of the maximum. Next we turn to lower bounds. Lower bounds for $\Pp^x(\sup_{s \leq t}|X_s-x| > r)$ are of course equivalent to upper bounds for the probability $\Pp^x(\sup_{s \leq t} |X_s-x|\leq r)$ of the complementary event. Since we know from \eqref{fel-eq11} that
\begin{equation*}%\label{fel-eq25}
	\Pp^x \left( \tau_r^x \geq t \right)
	\geq \Pp^x \left( \sup_{s \leq t} |X_s-x| \leq r \right)
	\geq \Pp^x(\tau_r^x > t)
	\geq \Pp^x \left( \sup_{s \leq t} |X_s-x| < r\right),
\end{equation*}
this means that the estimates can also be stated as tail estimates for the first exit time $\tau_r^x$.   \par
We start with a bound in terms of the characteristics $(b(x),Q(x),\nu(x,dy))$. The basic idea is quite simple: the process $(X_t)_{t \geq 0}$ leaves the ball $\ball{r}{x}$ centred at the starting point $x$ as soon as a jump of modulus larger than $2r$ occurs. Thus we can estimate the exit probability by considering only the compound pseudo Poisson process (see Example~\ref{fel-3} No.~\ref{fel-3-4}) accounting for the jumps of $(X_t)_{t\geq 0}$ exceeding absolute size $2r$. The likelihood that a jump of size $>2r$ happens has a lower bound which is proportional to $\inf_z \nu\left(z,\left\{y \in \rd \mid |y|>2r\right\}\right)$.

\begin{proposition} \label{fel-13} \index{inequality!maximal tail}
	Let $(X_t)_{t \geq 0}$ be a L\'evy-type process with infinitesimal characteristics $(b(x),Q(x),\nu(x,dy))$, $x \in \rd$. If $\sigma$ is a finite stopping-time, then
	\begin{gather}\label{fel-eq26}
		\Pp^x \left( \sup_{s \leq \sigma} |X_s-x| > r \right) \geq \Ee^x\left[\tau_r^x \wedge \sigma\right] \cdot G(x,2r)
	\intertext{for all $x \in \rd$ and $r>0$, where\footnotemark}\label{fel-eq27}
		G(x,r) \coloneqq  \inf_{|z-x| \leq r} \nu\left(z, \left\{y \in \rd \mid |y| >r\right\}\right).
	\intertext{In particular,}\label{fel-eq28}
		\Pp^x \left( \sup_{s \leq t} |X_s-x|>r \right) \geq \frac{t\,G(x,2r)}{1+t\, G(x,2r)}.
	\end{gather}
\footnotetext{Our notation is chosen to be in-line with the truncated and censored second moments which were introduced on page~\ref{pre-e75}, Eq.~\eqref{pre-e75}.}
\end{proposition}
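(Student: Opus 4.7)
The plan is to count large jumps of $(X_t)_{t \ge 0}$ that force an exit from $\cball{r}{x}$. The key observation, coming from the triangle inequality, is that if $|X_{s-}-x|\le r$ and $|\Delta X_s|>2r$, then $|X_s-x|\ge |\Delta X_s|-|X_{s-}-x|>r$, so any single jump of size exceeding $2r$ starting inside $\cball{r}{x}$ forces the process out of $\cball{r}{x}$. Counting such jumps up to $\tau:=\tau_r^x\wedge\sigma$ must therefore yield a $\{0,1\}$-valued random variable, whose expectation can be bounded below through the compensator of the jump measure of $(X_t)_{t \ge 0}$.

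Concretely, denote by $N(ds,dy)$ the integer-valued random measure counting the jumps of $(X_t)_{t \ge 0}$. A consequence of the martingale-problem characterisation of the process by the L\'evy-type operator \eqref{fel-eq2} is that $N$ admits the predictable compensator $\widehat N(ds,dy) = \nu(X_{s-},dy)\,ds$; this is the standard identification of the third semimartingale characteristic (cf.\ Jacod \& Shiryaev, or \cite[Thm.~2.41]{matters3}). Set $Z := \int_{(0,\tau]}\int_{|y|>2r} N(ds,dy)$. By the observation above, a jump of size $>2r$ occurring at a time $s \in (0,\tau_r^x]$ must happen precisely at $s=\tau_r^x$, and since integration is cut off at $\tau$, we conclude $Z \in \{0,1\}$ a.s., with $Z=1$ exactly on $\{\tau_r^x \le \sigma,\ |\Delta X_{\tau_r^x}|>2r\}$. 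Applying the compensator formula to the predictable integrand $\I_{(0,\tau]}\I_{\{|y|>2r\}}$ and using $|X_{s-}-x|\le r \le 2r$ for $s \le \tau$---so that $\nu(X_{s-},\{|y|>2r\}) \ge G(x,2r)$ by \eqref{fel-eq27}---we obtain
\begin{equation*}
    \Ee^x[Z] = \Ee^x \left[\int_0^{\tau} \nu(X_{s-},\{|y|>2r\})\,ds\right] \ge G(x,2r)\,\Ee^x[\tau_r^x \wedge \sigma].
\end{equation*}
Since $Z\in\{0,1\}$ gives $\Pp^x(Z=1)=\Ee^x[Z]$, and since $\{Z=1\}\subseteq\{\sup_{s \le \sigma}|X_s-x|>r\}$, estimate \eqref{fel-eq26} follows. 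For \eqref{fel-eq28} we specialise to $\sigma = t$, write $p := \Pp^x(\sup_{s \le t}|X_s-x|>r)$, and note $\Ee^x[\tau_r^x\wedge t] \ge t\,\Pp^x(\tau_r^x \ge t) \ge t(1-p)$ because $\{\tau_r^x<t\}\subseteq\{\sup_{s<t}|X_s-x|>r\}$. Substituting into \eqref{fel-eq26} yields $p \ge t\,G(x,2r)(1-p)$, which rearranges to \eqref{fel-eq28}.

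The delicate ingredient is invoking the identification of the compensator of $N$ with $\nu(X_{s-},dy)\,ds$, which draws on semimartingale theory rather than a direct manipulation of the martingale problem on $C_c^\infty(\rd)$. A more elementary Dynkin-formula route via smooth test functions $u_\epsilon \in C_c^\infty(\rd)$ with $u_\epsilon \equiv 1$ on $\cball{r}{x}$ and $\supp u_\epsilon \subseteq \cball{r+\epsilon}{x}$ produces the analogous lower bound but with $G(x,2r+\epsilon)$ in place of $G(x,2r)$; recovering the sharp constant $G(x,2r)$ by sending $\epsilon\downarrow 0$ then relies on right-continuity of $r\mapsto G(x,r)$, a subtlety that the compensator approach bypasses entirely.
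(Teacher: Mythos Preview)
Your proof is correct and takes a genuinely different route from the paper's. The paper works directly with the martingale problem on $C_c^\infty(\rd)$: it picks a smooth cutoff $u \in C_c^\infty(\rd)$ with $\I_{\ball{r+\epsilon}{x}} \leq u \leq \I_{\ball{r+2\epsilon}{x}}$, applies Dynkin's formula at $\sigma \wedge \tau_r^x$, and observes that for $z \in \cball{r}{x}$ the local part of $Au(z)$ vanishes while the nonlocal part is bounded above by $-\nu(z,\{|y| \geq 2r+3\epsilon\})$; sending $\epsilon \downarrow 0$ via monotone convergence of the sets $\{|y| \geq 2r+3\epsilon\} \uparrow \{|y| > 2r\}$ then yields the sharp constant $G(x,2r)$. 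Your approach instead invokes the semimartingale characterization---identifying the compensator of the jump measure as $\nu(X_{s-},dy)\,ds$---and counts large jumps up to $\tau_r^x \wedge \sigma$; this is cleaner and avoids the $\epsilon$-approximation altogether, at the cost of importing semimartingale machinery that goes beyond the bare martingale problem on test functions. Your final paragraph is correct in spirit, but slightly mischaracterizes the paper's handling of the limit: the $\epsilon \downarrow 0$ step is performed pathwise on $\nu(X_s,\{|y| \geq 2r+3\epsilon\})$ by monotone convergence of the sets \emph{before} passing to the infimum over $z$, so no right-continuity of $r \mapsto G(x,r)$ is needed.
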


\noindent
Note that \eqref{fel-eq28} is equivalent to
\begin{gather}\label{fel-eq29}
	\Pp^x \left( \sup_{s \leq t} |X_s-x| \leq r \right) \leq \frac{1}{1+t\, G(x,2r)};
\end{gather}
this bound will be improved in Corollary~\ref{fel-15} below.

\begin{proof}[Proof of Proposition~\ref{fel-13}]
    Fix $x \in \rd$, $\epsilon>0$ and $r>0$. By monotone convergence, it suffices to prove the assertion for bounded stopping times $\sigma$. Pick a function $u \in C_c^{\infty}(\rd)$ such that $\I_{\ball{r+\epsilon}{x}} \leq u \leq \I_{\ball{r+2\epsilon}{x}}$. Denote by $\tau_r^x$ the first exit time from the ball $\cball{r}{x}$. As $u(X_t)=1$ for $t<\tau_r^x$, we find from Dynkin's formula that
	\begin{align*}
		\Pp^x\left(\tau_r^x>\sigma\right)
		&= \Ee^x \left[u(X_\sigma) \I_{\{\tau_r^x>t\}}\right]\\
		&\leq \Ee^x \left[u(X_{\sigma \wedge \tau_r^x})\right]
		= 1+ \Ee^x \left[ \int_{(0,\sigma \wedge \tau_r^x)} Au(X_s) \, ds \right].
	\end{align*}
	 For $z \in \ball{r+\epsilon}{x}$ we have $u(z)=1$, $\nabla u(z)=0$ and $\nabla^2 u(z)=0$, so that
	\begin{equation*}
		Au(z) = \int_{y\neq 0} \left(u(z+y)-1\right)  \nu(z,dy), \quad z \in \ball{r+\epsilon}{x}.
	\end{equation*}
	Since the integrand is negative and $u = 0$ outside the ball $\ball{r+2\epsilon}{x}$, it follows that
	\begin{equation*}
		Au(z)
		\leq \int_{|(z+y)-x| \geq r+2\epsilon} (u(z+y)-1) \, \nu(z,dy)
		\leq - \int_{|y| \geq 2r+3\epsilon} \nu(z,dy)
	\end{equation*}
	for all $z \in \ball{r+\epsilon}{x}$. By definition, $X_s \in \cball{r}{x}$ for $s<\tau_r^x$, and so we may use the bound for $z=X_s$, $s<\tau_r^x$, to find that
	\begin{align*}
		\Pp^x\left(\tau_r^x>\sigma\right)
		&\leq 1- \Ee^x \left[ \int_{(0,\sigma \wedge \tau_r^x)} \nu\left(X_s,\left\{y \in \rd \mid  |y| \geq 2r+3\epsilon\right\}\right)  ds \right] \\
		&\xrightarrow[]{\;\epsilon \downarrow 0\;}
            1- \Ee^x \left[ \int_{(0,\sigma \wedge \tau_r^x)} \nu\left(X_s,\left\{y \in \rd \mid  |y| \geq 2r\right\}\right)  ds \right].
	\end{align*}
	Hence,
	\begin{align}\label{fel-eq31}
		\Pp^x\left(\tau_r^x>\sigma\right)
		&\leq 1- \Ee^x\left[\tau_r^x \wedge \sigma\right]  \inf_{|z-x| \leq r} \nu\left(z, \left\{y \in \rd \mid  |y| >2r\right\}\right).
    \\\notag
		&\leq 1-  \Ee^x\left[\tau_r^x \wedge \sigma\right] G(x,2r).
	\end{align}
    Since $\{\tau_r^x \geq \sigma\} = \bigcap_n \{\tau_r^x>\sigma-1/n\}$, a standard continuity-of-measure argument shows that the same estimate holds for $\Pp^x(\tau_r^x \geq \sigma)$. In particular, by \eqref{fel-eq11},
	\begin{equation*}
		\Pp^x \left( \sup_{s \leq \sigma} |X_s-x| > r \right)
        \geq \Ee^x\left[\tau_r^x \wedge \sigma\right]  G(x,2r).
	\end{equation*}
    For $\sigma \equiv t$ we use in \eqref{fel-eq31} the elementary estimate $\Ee^x \left[t \wedge \tau_r^x\right] \geq  t \Pp^x(\tau_r^x \geq t)$ to see
	\begin{align*}
		\Pp^x\left(\tau_r^x \geq t\right)
		&\leq 1- t \Pp^x\left(\tau_r^x \geq t\right)   G(x,2r),
	\end{align*}
	i.e.
	\begin{equation*}
		\Pp^x \left( \sup_{s \leq t} |X_s-x| \leq r \right)
		\leq \Pp^x\left(\tau_r^x \geq t\right)
		\leq \frac{1}{1+t\, G(x,2r)}.
	\end{equation*}
	In other words,
	\begin{equation*}
		\Pp^x \left( \sup_{s \leq t} |X_s-x|>r \right) \geq \frac{t\,G(x,2r)}{1+t \, G(x,2r)}.
    \qedhere
	\end{equation*}
\end{proof}

Let $N_t \coloneqq  \# \left\{s \leq t \mid |\Delta X_s|>2r\right\}$ be the process counting the jumps having modulus larger than $2r$. Clearly, $\left\{\sup_{s \leq t} |X_s-x| \leq r\right\} \subseteq \left\{N_t=0\right\}$. For the particular case that $(X_t)_{t \geq 0}$ is a L\'evy process, it is known that $(N_t)_{t \geq 0}$ is a Poisson process with intensity $\lambda = \nu\left(\left\{y \in \rd \mid  |y|>2r\right\}\right)$, where $\nu$ denotes the L\'evy measure, see e.g.\ Schilling~\cite[Ch.~9]{crm}. Consequently,
\begin{align*}
    \Pp^x \left( \sup_{s \leq t} |X_s-x| \leq r\right)
	&\leq \Pp^x(N_t=0)\\
	&= e^{-\lambda t}
	= \exp \left[- t \nu\left(\left\{y \in \rd \mid  |y|>2r\right\} \right)\right]
\end{align*}
for a L\'evy process $(X_t)_{t \geq 0}$. If $(X_t)_{t \geq 0}$ is a general L\'evy-type process, $(N_t)_{t \geq 0}$ is no longer a Poisson process (but a pseudo Poisson process), but the following result shows that we can still expect a similar estimate in terms of the jump intensity.

\begin{corollary} \label{fel-15} \index{inequality!maximal tail}
	Let $(X_t)_{t \geq 0}$ be a L\'evy-type process. Then
	\begin{gather}\label{fel-eq35}
		\Pp^x \left( \sup_{s \leq t} |X_s-x| \leq r \right) \leq \exp(-t \,G(x,5r)), \quad x \in \rd,\,r>0,\,t>0
	\intertext{and}\label{fel-eq36}
		\Ee^x \left[\tau_r^x\right] \leq \frac{1}{G(x,2r)}, \quad x \in \rd,\,r>0,
	\end{gather}
	for the function $G$ defined in \eqref{fel-eq27}.
\end{corollary}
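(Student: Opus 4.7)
The second inequality \eqref{fel-eq36} will be essentially a repackaging of Proposition~\ref{fel-13}. I would take $\sigma \equiv t$ in \eqref{fel-eq26}, which gives $\Ee^x[\tau_r^x \wedge t]\, G(x,2r) \leq \Pp^x(\sup_{s \leq t}|X_s-x|>r) \leq 1$, so $\Ee^x[\tau_r^x \wedge t] \leq 1/G(x,2r)$, and then monotone convergence as $t \uparrow \infty$ yields the claim.

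For the exponential bound \eqref{fel-eq35}, the plan is a standard Markov-property iteration on a uniform grid, together with a careful re-centring that forces the radius in $G$ to grow from $r$ to $5r$. For $n \in \nat$ set $B_k := \{\sup_{s \leq kt/n}|X_s-x| \leq r\}$ and $F_k := \{\sup_{s \in [kt/n,(k+1)t/n]}|X_s - X_{kt/n}| \leq 2r\}$, so $B_{k+1} \subseteq B_k \cap F_k$ because $|X_s - X_{kt/n}| \leq 2r$ automatically on $\{\sup_{s \leq t}|X_s-x| \leq r\}$. Since $B_k \in \Fscr_{kt/n}$, the Markov property gives
\begin{equation*}
  \Pp^x(B_k \cap F_k) = \Ee^x\left[\I_{B_k}\, h(X_{kt/n})\right],
  \quad h(y) := \Pp^y\!\left(\sup_{u \leq t/n}|X_u - y| \leq 2r\right).
\end{equation*}

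The key estimate is that $h(y) \leq (1 + (t/n)G(x,5r))^{-1}$ for every $y \in \cball{r}{x}$. I would get this by applying \eqref{fel-eq29} at base point $y$ with radius $2r$, which gives $h(y) \leq (1 + (t/n)G(y,4r))^{-1}$, and then controlling $G(y,4r) \geq G(x,5r)$: whenever $|z-y| \leq 4r$ and $|y-x| \leq r$ the triangle inequality gives $|z-x| \leq 5r$, and clearly $\nu(z,\{|w|>4r\}) \geq \nu(z,\{|w|>5r\})$, so taking infima yields the bound. This is the step where the cost \enquote{$r \rightsquigarrow 5r$} enters, and it is essentially the only subtle point in the argument; the factor could be reduced by working with radius $r+\epsilon$ instead of $2r$ and optimising, but the elementary choice $2r$ keeps things transparent.

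Combining these gives $\Pp^x(B_{k+1}) \leq \Pp^x(B_k)/(1+(t/n)G(x,5r))$. Iterating $n$ times from $B_0 = \Omega$ yields
\begin{equation*}
  \Pp^x\!\left(\sup_{s \leq t}|X_s-x| \leq r\right) = \Pp^x(B_n) \leq \left(1 + \frac{t}{n}G(x,5r)\right)^{-n},
\end{equation*}
and letting $n \to \infty$ gives the stated exponential estimate $e^{-tG(x,5r)}$. The main technical obstacle is the measurability/Markov step: one must ensure that the supremum over $[kt/n,(k+1)t/n]$ may legitimately be treated via the shift $\theta_{kt/n}$, which follows from the c\`adl\`ag paths and the (strong) Markov property posited for $(X_t)_{t\geq 0}$.
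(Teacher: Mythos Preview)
Your proof is correct and follows essentially the same approach as the paper: for \eqref{fel-eq35} the paper uses the identical Markov-property iteration on the grid $t_i = it/n$, the same re-centring step $G(y,4r) \geq G(x,5r)$ for $y \in \cball{r}{x}$, and the same limit $n \to \infty$; for \eqref{fel-eq36} the paper likewise takes $\sigma \equiv t$ in Proposition~\ref{fel-13} and lets $t \to \infty$ (citing Fatou rather than monotone convergence, which is immaterial).
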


\begin{proof}
	For fixed $x \in \rd$, $r>0$, $n \in \nat$ and $t>0$, set $t_i \coloneqq  \frac{i}{n} t$, then
	\begin{align*}
		&\Pp^x \left( \sup_{s \leq t} |X_s-x| \leq r \right)\\
		&\quad\leq \Pp^x \left( \bigcap_{i=0}^{n-1} \big\{ |X_{t_i}-x| \leq r, \sup_{s \leq t_{i+1}-t_i} |X_{t_i+s}-X_{t_i}| \leq 2r \big\} \right).
	\end{align*}
	Applying the Markov property and using that, by Proposition~\ref{fel-13},
	\begin{equation*}
		\Pp^y \left( \sup_{s \leq \rho} |X_s-y| \leq 2r \right)
		\leq \frac{1}{1+\rho\, G(y,4r)}
		\leq \frac{1}{1+\rho\, G(x,5r)}
	\end{equation*}
	for any $y \in \cball{r}{x}$ and $\rho>0$, we get
	\begin{equation*}
		\Pp^x \left( \sup_{s \leq t} |X_s-x| \leq r \right)
		\leq \left( \frac{1}{1+\frac{t}{n}\, G(x,5r)} \right)^n
		\xrightarrow[]{\;n \to \infty\;} \exp \left[-t\,G(x,5r) \right],
	\end{equation*}
	which proves \eqref{fel-eq35}. For \eqref{fel-eq36}, we note that by \eqref{fel-eq31} with $\sigma \equiv t$
	\begin{equation*}
		\Ee^x\left[\tau_r^x \wedge t\right] \leq \frac{1-\Pp^x(\tau_r^x > t)}{G(x,2r)} \leq \frac{1}{G(x,2r)}.
	\end{equation*}
	Letting $t \to \infty$ using Fatou's lemma gives \eqref{fel-eq36}.
\end{proof}

The rate in \eqref{fel-eq35} is optimal for a wide class of jump process. However, the estimate uses only the tails of the L\'evy measure, and therefore not the full information is taken into account, as the following example shows. Recall that we have already seen a similar phenomenon in connection with L\'evy processes, see Theorem~\ref{lp-43} and its proof.

\begin{example} \label{fel-17} \index{stable L\'evy process}
    Let $(X_t)_{t \geq 0}$ be an $\alpha$-stable L\'evy process for $\alpha \in (0,1)$. Taylor~\cite{taylor67} has shown that the bounds for $\Pp\left(\sup_{s\leq t}|X_s| \leq r\right)$ depend substantially on the support of the L\'evy measure $\nu$: if the spectral measure (i.e.\ the spherical part of $\nu$) is concentrated in a hemisphere $\left\{y \in \mathds{S}^{d-1} \mid  y_i \geq 0\right\}$ for some $i \in \{1,\ldots,d\}$, then
	\begin{gather}\label{fel-eq38}
		\Pp\left(\sup_{s\leq t}|X_s| \leq r\right) \asymp \exp\left(-ct^{1/(1-\alpha)} r^{-\alpha/(1-\alpha)}\right)
	\end{gather}
	for small $r>0$, where the constants $c$ in the lower and upper bound may differ. For all other stable processes, it holds that
	\begin{gather}\label{fel-eq39}
			\Pp\left(\sup_{s\leq t}|X_s| \leq r\right) \asymp \exp\left(-ct r^{-\alpha}\right)
	\end{gather}
    for small $r>0$, again with possibly different constants in the lower and upper bound.  Corollary~\ref{fel-15} yields the estimate $\Pp\left(\sup_{s\leq t}|X_s| \leq r\right) \leq \exp\left[-ctr^{-\alpha}\right]$ for any stable process, i.e.\ it does not capture the information on the support of the L\'evy measure.
\end{example}

\begin{theorem} \label{fel-19} \index{inequality!maximal tail}
	Let $(X_t)_{t \geq 0}$ be a L\'evy-type process with symbol $q(x,\xi)$. Then
	\begin{gather}
		\Pp^x \left( \sup_{s \leq t} |X_s-x| \leq r \right)
		\leq
        \dfrac{c}{1 + t \sup\limits_{|\xi| \leq 1/(rk(x,r))} \inf\limits_{|y-x| \leq r} \RE q(y,\xi)},
		\label{fel-eq40}
	\intertext{where $c\coloneqq 4/\cos \sqrt{\tfrac{2}{3}}$ and\footnotemark}\notag
        k(x,r)
        \coloneqq  \inf\left\{k \geq \Big(\arccos \sqrt{\tfrac{2}{3}}\Big)^{-1} \mid \inf_{|\xi| \leq 1/(kr)} \inf_{|y-x| \leq r} \frac{\RE q(y,\xi)}{|\xi| \, |\IM q(y,\xi)|} \geq 2r \right\}.
	\end{gather}
\footnotetext{The definition of $k(x,r)$ in B\"{o}ttcher \emph{et al.}~\cite[Theorem~5.5]{matters3} has a misprint which we corrected here: $\sup\limits_{(\cdots)}\sup\limits_{(\cdots)}$ should read $\inf\limits_{(\cdots)}\inf\limits_{(\cdots)}$.}
\end{theorem}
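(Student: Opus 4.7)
The plan is to imitate the second estimate of Theorem~\ref{lp-43} --- Pruitt's upper bound for $\Pp(X_t^*\leq a)$ in the L\'evy case --- replacing the manipulation of characteristic functions of iid increments by a Dynkin-type identity for the complex exponential $f_\xi(y):=e^{i\xi\cdot(y-x)}$, which satisfies $Af_\xi = -q(\cdot,\xi)f_\xi$ pointwise. The two constraints packed into the definition of $k(x,r)$ are tailored so that, on the \enquote{window} $\{|y-x|\leq r,\;|\xi|\leq 1/(rk(x,r))\}$, one can simultaneously (i) linearise $f_\xi$, keeping $\cos(\xi\cdot(y-x))$ bounded below by $\sqrt{2/3}$, and (ii) absorb $|\IM q|$ into a fraction of $\RE q$ --- a localised sector condition in the spirit of Lemma~\ref{pre-92}.

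Fix $x\in\rd$, $r>0$, $t>0$, set $\tau:=t\wedge\tau_r^x$, and note that the inclusion $\{\sup_{s\leq t}|X_s-x|\leq r\}\subseteq\{\tau_r^x\geq t\}$ gives the \emph{a priori} bound
\begin{gather*}
    \Ee^x[\tau]\geq t\,\Pp^x\Big(\sup\nolimits_{s\leq t}|X_s-x|\leq r\Big).
\end{gather*}
The key identity is the Dynkin-type formula
\begin{gather*}
    \Ee^x\big[e^{i\xi\cdot(X_\tau-x)}\big]-1
    = -\Ee^x\bigg[\int_0^\tau q(X_s,\xi)\,e^{i\xi\cdot(X_s-x)}\,ds\bigg],\quad\xi\in\rd,
\end{gather*}
which I obtain by applying the martingale problem to smooth truncations $f_\xi\chi_R\in C_c^\infty(\rd)$ (with $\chi_R\equiv 1$ on $\ball{R}{x}$) and sending $R\to\infty$, exactly as in the proof of Proposition~\ref{fel-4} and Remark~\ref{fel-4b}; the local boundedness \eqref{fel-eq6} of $(b,Q,\nu)$ is used to control $A(f_\xi\chi_R)-Af_\xi$ uniformly on $\cball{r}{x}$ via the tail $\int_{|z|\geq R-r}\nu(\cdot,dz)\to 0$. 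Taking real parts and writing $\theta_s:=\xi\cdot(X_s-x)$ yields
\begin{gather*}
    1-\Ee^x[\cos\theta_\tau] = \Ee^x\bigg[\int_0^\tau\!\!\big(\RE q(X_s,\xi)\cos\theta_s - \IM q(X_s,\xi)\sin\theta_s\big)ds\bigg].
\end{gather*}

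Now I restrict to $\xi$ with $|\xi|\leq 1/(rk(x,r))$. For $s<\tau_r^x$ one has $X_s\in\cball{r}{x}$, hence $|\theta_s|\leq r|\xi|\leq 1/k(x,r)\leq\arccos\sqrt{2/3}$, so $\cos\theta_s\geq\sqrt{2/3}$ and $|\sin\theta_s|\leq r|\xi|$; the second defining inequality of $k(x,r)$ gives $r|\xi|\,|\IM q(X_s,\xi)|\leq\tfrac12\RE q(X_s,\xi)$, while $\RE q(X_s,\cdot)\geq 0$ by negative definiteness. Consequently the integrand is pointwise bounded below by $(\sqrt{2/3}-\tfrac12)\RE q(X_s,\xi)\geq c_0\inf_{|y-x|\leq r}\RE q(y,\xi)$ with $c_0:=\sqrt{2/3}-\tfrac12>0$. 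Combining this with the trivial bound $|1-\Ee^x[\cos\theta_\tau]|\leq 2$ on the left-hand side and with the lower bound on $\Ee^x[\tau]$ above yields, for every admissible $\xi$,
\begin{gather*}
    c_0\,t\,\Pp^x\Big(\sup\nolimits_{s\leq t}|X_s-x|\leq r\Big)\inf_{|y-x|\leq r}\RE q(y,\xi)\leq 2.
\end{gather*}
Taking the supremum over $|\xi|\leq 1/(rk(x,r))$ and applying the elementary bound $\min(1,a/u)\leq (1+a)/(1+u)$ together with $\Pp^x\leq 1$ converts this into an estimate of the required form \eqref{fel-eq40}.

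The main obstacle is the rigorous justification of the Dynkin identity for $f_\xi\notin C_c^\infty(\rd)$: the generator $A$ is non-local, and the smooth cut-off $\chi_R$ does \emph{not} commute with its jump integral. One must estimate $A(f_\xi\chi_R)-Af_\xi$ uniformly on $\cball{r}{x}$ by the tail integral $\int_{|z|\geq R-r}\nu(\cdot,dz)$ and use the integrability of $\nu$ to pass $R\to\infty$ under the expectation; this is the same mechanism as in Remark~\ref{fel-4b}. A secondary, more technical issue is the recovery of the sharp constant $c=4/\cos\sqrt{2/3}$: the crude $c_0$ above must be improved by a tighter joint optimisation of $\cos\theta\,\RE q-\sin\theta\,\IM q$ in the angle $\theta=\xi\cdot(X_s-x)\in[-1/k,1/k]$ and in the ratio $|\IM q|/\RE q$, together with a correspondingly sharp conversion from $\min(1,\cdot)$ to the form $c/(1+\cdot)$.
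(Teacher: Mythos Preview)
The paper does not present a proof here but defers to \cite[Lem.~6.3]{rs-growth} and \cite[Thm.~5.5]{matters3}; your argument --- Dynkin's formula for the stopped exponential $e^{i\xi\cdot(X_{\cdot}-x)}$, justified by cutoff approximation as in Remark~\ref{fel-4b}, followed by the lower bound $\RE q\,\cos\theta_s-\IM q\,\sin\theta_s\geq(\sqrt{2/3}-\tfrac12)\inf_{|y-x|\leq r}\RE q(y,\xi)$ obtained from the two defining inequalities of $k(x,r)$ --- is exactly the method of those references. The gap between your constant $2/(\sqrt{2/3}-\tfrac12)$ and the stated $4/\cos\sqrt{2/3}$ is, as you note, a matter of sharper bookkeeping in the final step and does not affect the validity or shape of the estimate.
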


The estimate \eqref{fel-eq40} was first shown in Schilling~\cite[Lem.~6.3]{rs-growth} for Feller processes whose symbol satisfy a sector condition, see also B\"{o}ttcher \emph{et al.}~\cite[Thm.~5.5]{matters3}.\footnote{The additive constant \enquote{$1+$} in the denominator in \eqref{fel-eq40} comes from the trivial observation that a probability is less than $1$, i.e.\ taking the minimum with $1$ in $\Pp^x(\dots)\leq \frac cp$, we get $\frac cp \wedge 1 \leq c' (1+p)^{-1}$.} It turns out that the proof goes through for the wider class of L\'evy-type processes without any major modifications, and so wo do not present the proof in this exposition. \par

The definition of $k(x,r)$ is tailor-made in such a way that
\begin{equation*}
	\RE q(y,\xi) \geq 2r |\xi| \, |\IM q(y,\xi)|
\end{equation*}
for all $|\xi| \leq 1/(rk(x,r))$ and $|y-x| \leq r$ -- this is exactly what is needed for some technical estimates in the proof of Theorem~\ref{fel-19}. If the symbol $q$ satisfies the \emph{sector condition}, i.e.\
\begin{gather}
	|\IM q(x,\xi)| \leq C \RE q(x,\xi), \quad x,\xi \in \rd, \label{fel-eq42}
\end{gather}
for some constant $C>0$, then $k(x,r) = k_0 \coloneqq  \max\left\{2C,\big(\arccos \sqrt{{2}/{3}}\big)^{-1}\right\}$, and so we arrive at the following statement.

\begin{corollary} \label{fel-21}
    Let $(X_t)_{t \geq 0}$ be a L\'evy-type process with symbol $q(x,\xi)$ satisfying the sector condition. Then there is a constant $c>0$ depending only on the sector constant such that
	\begin{gather}\label{fel-eq44}
		\Pp^x \left( \sup_{s \leq t} |X_s-x| \leq r \right)
		\leq \dfrac{c}{1+ t \sup\limits_{|\xi| \leq 1/(rk_0)} \inf\limits_{|y-x| \leq r} \RE q(y,\xi)}
	\end{gather}
	for all $x \in \rd$, $r>0$ and $t >0$.
\end{corollary}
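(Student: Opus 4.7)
The plan is to obtain Corollary~\ref{fel-21} as a direct specialization of Theorem~\ref{fel-19} by showing that the sector condition \eqref{fel-eq42} forces the (a priori $x$- and $r$-dependent) quantity $k(x,r)$ to be bounded uniformly by the explicit constant $k_0 = \max\{2C,(\arccos\sqrt{2/3})^{-1}\}$.

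First I would apply the sector condition $|\IM q(y,\xi)|\leq C\RE q(y,\xi)$ to the ratio appearing in the definition of $k(x,r)$: for every $y,\xi\in\rd$ with $\IM q(y,\xi)\neq 0$,
\begin{equation*}
    \frac{\RE q(y,\xi)}{|\xi|\,|\IM q(y,\xi)|}
    \geq \frac{1}{C\,|\xi|}.
\end{equation*}
Restricting to $|\xi|\leq 1/(k_0 r)$, the right hand side is bounded below by $k_0 r/C$, which is $\geq 2r$ precisely because $k_0\geq 2C$. Since $k_0$ is also $\geq(\arccos\sqrt{2/3})^{-1}$, it qualifies as an admissible value in the infimum defining $k(x,r)$, so $k(x,r)\leq k_0$ for every $x\in\rd$ and $r>0$.

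Next I would feed this bound into \eqref{fel-eq40}. Because $k(x,r)\leq k_0$ implies the inclusion of balls $\{|\xi|\leq 1/(rk_0)\}\subseteq\{|\xi|\leq 1/(rk(x,r))\}$, the supremum that appears in Theorem~\ref{fel-19} is monotone:
\begin{equation*}
    \sup_{|\xi|\leq 1/(rk(x,r))}\inf_{|y-x|\leq r}\RE q(y,\xi)
    \;\geq\;
    \sup_{|\xi|\leq 1/(rk_0)}\inf_{|y-x|\leq r}\RE q(y,\xi).
\end{equation*}
Consequently the denominator in \eqref{fel-eq40} only shrinks when we pass to $k_0$, and so the corresponding fraction becomes larger; this yields \eqref{fel-eq44} with the same constant $c$ that Theorem~\ref{fel-19} produces. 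The constant depends only on the sector constant $C$ (through $k_0$) and the absolute number $\arccos\sqrt{2/3}$, as claimed.

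The only real work is the verification $k(x,r)\leq k_0$, which is a direct consequence of the sector condition; everything else is a monotonicity argument and a quotation of Theorem~\ref{fel-19}. There is no genuine obstacle, since the definition of $k(x,r)$ was tailor-made to reduce to an explicit constant exactly under a bound of the form \eqref{fel-eq42}.
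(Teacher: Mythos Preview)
Your argument is correct and is exactly the approach the paper takes: the text immediately preceding Corollary~\ref{fel-21} observes that under the sector condition one has $k(x,r)\leq k_0$ (the paper even writes $k(x,r)=k_0$), so the corollary drops out of Theorem~\ref{fel-19} by the same monotonicity of the supremum that you spell out. There is nothing to add.
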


Using an iteration argument based on the Markov property -- similar to the proof of Corollary~\ref{fel-15} --, we can improve the above estimates to obtain an exponential bound.

\begin{corollary} \label{fel-23}
	Let $(X_t)_{t \geq 0}$ be a L\'evy-type process with symbol $q(x,\xi)$. Then
	\begin{gather*}
		\Pp^x \left( \sup_{s \leq t} |X_s-x| \leq r \right)
		\leq \exp\left[ -c t \sup_{|\xi| \leq 1/(2rk^*(x,r))} \inf_{|y-x| \leq 3r} \RE q(y,\xi)\right]
	\intertext{for $c\coloneqq  \cos \sqrt{\tfrac{2}{3}}/(4e)$ and}
		k^*(x,r)
        \coloneqq  \inf\left\{k \geq \Big(\arccos \sqrt{\tfrac{2}{3}}\Big)^{-1} \mid \inf_{|\xi| \leq 1/(2kr)} \inf_{|y-x| \leq 3r} \frac{\RE q(y,\xi)}{|\xi| \, |\IM q(y,\xi)|} \geq 4r \right\}.
	\intertext{Moreover,}
		\Ee^x \left[\tau_r^x\right]
        \leq \dfrac{C}{\sup\limits_{|\xi| \leq 1/(2rk^*(x,r))} \inf\limits_{|y-x| \leq 3r} \RE q(y,\xi)}.
	\end{gather*}
    If the symbol $q$ satisfies the sector condition \eqref{fel-eq42}, then $k^*(x,r)$ may be replaced by $k_0 = \max\left\{2C,\, \big(\arccos \sqrt{{2}/{3}}\big)^{-1}\right\}$.
\end{corollary}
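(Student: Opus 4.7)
The plan is to follow exactly the template used in the proof of Corollary~\ref{fel-15}: split the time interval $[0,t]$ into $n$ pieces and apply the single-step estimate of Corollary~\ref{fel-21} (or Theorem~\ref{fel-19}) on each piece via the strong Markov property. The fundamental difference with that earlier argument is that Proposition~\ref{fel-13} carries prefactor $1$ — hence taking $n\to\infty$ gave a clean exponential — whereas Corollary~\ref{fel-21} comes with prefactor $c_{0}:=4/\cos\sqrt{2/3}>1$, so the naive limit $n\to\infty$ produces the blow-up $c_{0}^{n}\to\infty$. The main obstacle, therefore, will be an optimal \emph{finite} choice of $n$ that balances this growing prefactor against the polynomial decay. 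For brevity I shall write $P := \sup_{|\xi|\leq 1/(2rk^{*}(x,r))}\inf_{|y-x|\leq 3r}\RE q(y,\xi)$ for the target quantity.

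First I would pin down the single-step estimate at an auxiliary centre $y\in\cball{r}{x}$ used with radius $2r$. The inclusion $\cball{2r}{y}\subseteq\cball{3r}{x}$ forces $k(y,2r)\leq k^{*}(x,r)$, because every $k$ admissible in the definition of $k^{*}(x,r)$ is automatically admissible in the definition of $k(y,2r)$ (enlarging the spatial infimum from $\cball{3r}{x}$ to $\cball{2r}{y}$ can only increase the ratio $\RE q/(|\xi|\,|\IM q|)$). This simultaneously widens the admissible $\xi$-range and strengthens the spatial infimum in Theorem~\ref{fel-19}, yielding
\begin{equation*}
\sup_{|y-x|\leq r}\Pp^{y}\!\Big(\sup_{s\leq s_{0}}|X_{s}-y|\leq 2r\Big)\leq \frac{c_{0}}{1+s_{0}P},\qquad s_{0}>0.
\end{equation*}
Next I would use the triangle inequality, valid on the event $\{\sup_{s\leq t}|X_{s}-x|\leq r\}$, to bound $|X_{t_{i}+s}-X_{t_{i}}|\leq 2r$ for $t_{i}=it/n$; this exhibits the event as the intersection of the $n$ events $\{|X_{t_{i}}-x|\leq r\}\cap\{\sup_{s\leq t/n}|X_{t_{i}+s}-X_{t_{i}}|\leq 2r\}$. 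Iterating the strong Markov property backwards from $t_{n}=t$ to $t_{0}=0$ and inserting the one-step estimate at each stage will give
\begin{equation*}
\Pp^{x}\!\Big(\sup_{s\leq t}|X_{s}-x|\leq r\Big)\leq \Big(\frac{c_{0}}{1+(t/n)P}\Big)^{n}.
\end{equation*}

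The remaining — and main — step is to optimise in $n\in\nat$. For $tP\geq c_{0}e$ I would take $n=\lfloor tP/(c_{0}e)\rfloor\geq 1$; then $(t/n)P\geq c_{0}e$, so each factor is dominated by $c_{0}/(c_{0}e)=1/e$, and the product is at most $e^{-n}\leq e\cdot\exp(-tP/(c_{0}e))=e\cdot\exp(-ctP)$ with $c=\cos\sqrt{2/3}/(4e)$. In the complementary range $tP<c_{0}e$ the bound holds trivially via $\Pp\leq 1\leq e\cdot\exp(-ctP)$. The harmless multiplicative constant $e$ is then absorbed (either left as a prefactor, or swallowed by shrinking $c$ slightly under the implicit understanding that the estimate is non-trivial only for $tP$ bounded away from zero). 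The bound on $\Ee^{x}[\tau_{r}^{x}]$ will then follow at once from the layer-cake formula and the inclusion $\{\tau_{r}^{x}>t\}\subseteq\{\sup_{s\leq t}|X_{s}-x|\leq r\}$, giving $\Ee^{x}[\tau_{r}^{x}]\leq \int_{0}^{\infty}e\cdot e^{-ctP}\,dt = e/(cP)$. Under the sector condition \eqref{fel-eq42} one checks directly that $k^{*}(x,r)$ collapses to the absolute constant $k_{0}$, so no separate argument is needed for this special case.
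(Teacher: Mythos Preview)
Your proposal is correct and follows precisely the approach the paper intends: the paper does not write out the proof but simply says ``using an iteration argument based on the Markov property --- similar to the proof of Corollary~\ref{fel-15}'', and your reconstruction of that iteration (including the crucial inclusion $\cball{2r}{y}\subseteq\cball{3r}{x}$ that forces $k(y,2r)\leq k^{*}(x,r)$, and the finite-$n$ optimisation needed because the prefactor $c_{0}>1$ prevents the clean limit $n\to\infty$) is exactly right. The residual prefactor $e$ you obtain is a genuine artefact of this discretisation; your handling of it (trivial absorption when $tP$ is small, and noting that the stated constant $c=\cos\sqrt{2/3}/(4e)$ is precisely $1/(c_{0}e)$) is the standard way to deal with it.
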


\begin{remark}\label{fel-25}
Maximal inequalities and, in particular, the estimates on the mean exit time have interesting applications in the study of path properties and
distributional properties of L\'evy-type processes. For example, one can prove
\begin{itemize}
\item
    \emph{Harnack inequalities} for L\'evy-type generators following the strategy of Bass \& Levin~\cite{bas-lev02} -- see also Song \& Vondra\v{c}ek~\cite{son-von04} for a systematic exposition of this method and the comments in Example~\ref{lp-47};  this method is explained in Schilling \& Uemura~\cite[\S5]{schilling-uemura}.
\item
    \emph{tightness estimates} for L\'evy-type process $(X_t)_{t\geq 0}$ which can be applied to show the continuity of expressions of the form $x\mapsto \Ee^x \left[f(X_{\tau_U})\right]$ for exit times $\tau_U$, the \emph{\textup{(}strong\textup{)} Feller property} for certain L\'evy-type processes, see Schilling \& Wang~\cite{schilling-wang12}, and the existence of \emph{Markovian selections} for martingale problems, see K\"{u}hn~\cite{markovian-selection}.
\item
    \emph{moment estimates} for L\'evy-type processes using the layer-cake formula (Lemma~\ref{pre-10}), see Deng \& Schilling\cite{deng-rs}, K\"{u}hn~\cite{ltp-moments} and Theorem~\ref{lp-53} for an illustration of the method in the L\'evy case.
\item
    results on \emph{upper and lower functions} and \emph{small-time asymptotics} for L\'evy-type processes as in Knopova \& Schilling \cite{kno-schi14}, K\"{u}hn \cite{upper-fn}, K\"{u}hn \& Schilling \cite{ihke}.
\item
    estimates on the \emph{Hausdorff dimension} of the image of a Feller process Schilling~\cite{schilling98}, Knopova \emph{et al.} \cite{kno-schi-wang}.
\item
    results on the \emph{p-variation} of the sample paths, see Manstavi\v{c}ius~\cite{manstavicius04}, Manstavi\v{c}ius \& Schnurr~\cite{mansta-schnurr}.
\item
	results on \emph{random time changes} of L\'evy-type processes, cf.\ Kr\"{u}hner \& Schnurr~\cite{kruehner-schnurr}, K\"{u}hn~\cite{perpetual}.
\end{itemize}
\end{remark}

\section{Markov Processes}\label{mp}

Let us resume the discussion started in Chapter~\ref{gen} and turn to a situation where we want to make as few assumptions on the stochastic process as possible. Throughout, $(\Omega,\Ascr,\Pp,\Fscr_t)$ is a filtered probability space which satisfies the usual conditions, i.e.\ it is complete and the filtration is right continuous and $\Fscr_0$ contains all $\Pp$-null sets. Let $(X_t)_{t\geq 0}$ be a strong Markov process taking values in $\rd$. We will consider only temporally homogeneous processes; in this case, $p_t(x,dy) = \Pp^x(X_t\in dy)$ is the transition function, and we write $\Ee^x(\dots) = \int_\Omega \dots d\Pp^x$ for the corresponding expectation. We assume that $t\mapsto X_t(\omega)$ is, almost surely for all $\Pp^x$, right continuous with finite left limits (c\`adl\`ag). The standard argument how one can construct a c\`adl\`ag modification can be found in Revuz \& Yor~\cite[Thm.~III.(2.7)]{rev-yor99} (for Feller processes, i.e.\ Markov processes whose transition semigroup $P_t u(x) \coloneqq  \Ee^x u(X_t) = \int u(y)\,p_t(x,dy)$ preserves the family $C_\infty$ of continuous functions vanishing at infinity) or in Sharpe~\cite[Thm.~I.(2.7)]{sharpe88} (for processes satisfying Meyer's first \emph{hypoth\`ese droite} (HD1)).

\begin{definition}\label{mp-05}\index{uniform stochastic continuity}
    A (strong) Markov process $(X_t)_{t\geq 0}$ is said to be \emph{uniformly stochastically continuous} if
    \begin{equation}\label{mp-e06}
        \forall \epsilon>0,\; \forall t\geq 0\::\quad
        \lim_{s\to t} \sup_{x\in\rd} \Pp^{x} (|X_s-X_t|>\epsilon)=0.
    \end{equation}
\end{definition}
Because of the Markov property, it is enough to require \eqref{mp-e06} for $t=0$.

Since we do not assume further structural properties on a Markov process, we will get only a few results on the maximal process $X_t^*$. The first result is a version of Etemadi's inequality, see Theorem~\ref{lp-39}.

\begin{theorem}[Etemadi-type maximal inequality]\label{mp-07} \index{inequality!Etemadi}
    Assume that $(X_t)_{t\geq 0}$ is a strong Mar\-kov process. For all $a>0$ and $x\in\rd$ one has
    \begin{equation}\label{mp-e08}
        \Pp^x \left(\sup_{s\leq t} |X_s-x| > 3 a\right)
        \leq  2 \sup_{s\leq t} \sup_{y\in\coball{3a}{x}} \Pp^y (|X_s-y| > a).
    \end{equation}
\end{theorem}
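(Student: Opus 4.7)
The plan is to mimic the argument of Etemadi's inequality for L\'evy processes (Theorem~\ref{lp-39}), with the strong Markov property playing the role that translation invariance played in the L\'evy setting. The first step is to introduce the first-exit stopping time
\begin{equation*}
	\sigma := \inf\{s \geq 0 \mid |X_s - x| > 3a\}.
\end{equation*}
The c\`adl\`ag property of the sample paths together with the definition of $\sigma$ as an infimum guarantees, on the one hand, that $\{X_t^* > 3a\} \subseteq \{\sigma \leq t\}$ and, on the other hand, that on $\{\sigma < \infty\}$ one has $|X_\sigma - x| \geq 3a$, i.e.\ $X_\sigma \in \coball{3a}{x}$. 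Hence it suffices to bound $\Pp^x(\sigma \leq t)$.

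The key decomposition is based on the size of the displacement $|X_t - X_\sigma|$ after $\sigma$:
\begin{equation*}
	\Pp^x(\sigma \leq t)
	= \Pp^x(\sigma \leq t,\, |X_t - X_\sigma| > a) + \Pp^x(\sigma \leq t,\, |X_t - X_\sigma| \leq a).
\end{equation*}
The first summand is handled by the strong Markov property at $\sigma$: since $X_\sigma \in \coball{3a}{x}$ on the event $\{\sigma \leq t\}$,
\begin{equation*}
	\Pp^x(\sigma \leq t,\, |X_t - X_\sigma| > a)
	= \Ee^x\!\bigl[\I_{\{\sigma \leq t\}}\,\Pp^{X_\sigma}(|X_{t-\sigma} - X_\sigma| > a)\bigr]
	\leq p \cdot \Pp^x(\sigma \leq t),
\end{equation*}
where $p := \sup_{s \leq t}\sup_{y \in \coball{3a}{x}} \Pp^y(|X_s - y| > a)$. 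For the second summand, the reverse triangle inequality gives, on the event,
$|X_t - x| \geq |X_\sigma - x| - |X_t - X_\sigma| \geq 3a - a = 2a > a$,
so the event is contained in $\{|X_t - x| > a\}$, a set whose probability I would in turn estimate by $p$.

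Combining the two estimates yields an Ottaviani--Skorokhod type bound $\Pp^x(\sigma \leq t)(1 - p) \leq p$, cf.\ Remark~\ref{lp-41}. The final clean-up is a simple two-case argument: if $p \geq \tfrac{1}{2}$ then $2p \geq 1$ and \eqref{mp-e08} holds trivially; if $p < \tfrac{1}{2}$ then $1/(1-p) \leq 2$, and one obtains $\Pp^x(X_t^* > 3a) \leq p/(1-p) \leq 2p$, which is \eqref{mp-e08}.

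The main obstacle is of a technical nature: one must handle the boundary event $\{\sigma = t\text{ and }|X_\sigma - x| = 3a\}$ when passing from $\{X_t^* > 3a\}$ to $\{\sigma \leq t\}$ (this is done by approximating with strict inequality, e.g.\ replacing $3a$ by $3a - 1/n$ and letting $n \to \infty$), and one must justify the estimate $\Pp^x(|X_t - x| > a) \leq p$ used for the second summand, which is where it is essential that the supremum defining $p$ is taken over a set accommodating $x$ as a starting point.
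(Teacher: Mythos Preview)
Your argument is essentially the paper's, only with the two pieces of the decomposition swapped: the paper splits $\{\tau\leq t\}$ according to whether $|X_t-x|>a$ or $|X_t-x|\leq a$, bounds the first piece trivially by $\Pp^x(|X_t-x|>a)$, and on the second piece applies the triangle inequality $|X_t-X_\tau|\geq |X_\tau-x|-|X_t-x|\geq 2a$ \emph{before} invoking the strong Markov property. Your split on $|X_t-X_\sigma|$ is the mirror image and leads to the same additive bound
\[
    \Pp^x\left(\sup_{s\leq t}|X_s-x|>3a\right)\leq \Pp^x(|X_t-x|>a)+p,
    \qquad p:=\sup_{s\leq t}\sup_{y\in\coball{3a}{x}}\Pp^y(|X_s-y|>a).
\]
The Ottaviani--Skorokhod rearrangement and the case distinction on $p\lessgtr\tfrac12$ are an unnecessary detour: in your first summand you can simply estimate $\Pp^x(\sigma\leq t)\leq 1$ and arrive at the sum above directly, exactly as the paper does.

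Your concern about the last step is justified. Since $x\notin\coball{3a}{x}$, there is no general way to bound $\Pp^x(|X_t-x|>a)$ by $p$, and the paper's proof does not attempt it either: it ends with the two-term bound displayed above rather than with the form $2p$ written in \eqref{mp-e08}. So your argument is just as complete as the paper's; the residual discrepancy lies in the stated inequality, not in your reasoning.
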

\begin{proof}
    Denote by $\tau \coloneqq  \tau^x_{3a} \coloneqq  \inf\left\{ s>0 \mid |X_s-x|>3a\right\}$ the first exit time from the ball $\cball{3a}{x}$. Because of the inclusion $\left\{\sup_{s\leq t} |X_s-x|>3 a\right\} \subseteq \left\{\tau \leq t\right\}$ we have
    \begin{equation*}
    \begin{split}
        \Pp^x &\left(\sup_{s\leq t} |X_s-x| > 3a\right)\\
        &\leq\Pp^x \left(\tau \leq t,\; |X_t-x| > a\right) + \Pp^x \left(\tau \leq t,\; |X_t-x|\leq a\right)\\
        &\leq \Pp^x \left(|X_t-x| > a\right) + \Pp^x\left(\tau \leq t,\; |X_t-x|\leq a\right)\\
        &\leq \Pp^x \left(|X_t-x| > a\right) + \Pp^x\left(\tau \leq t,\; |X_t-X_\tau| \geq 2a\right).
    \end{split}
    \end{equation*}
    In the last line we use the triangle inequality $|X_t-X_\tau| \geq |X_\tau - x| - |X_t-x|$ and the fact that $|X_\tau - x| \geq 3a$. We will now estimate the second term appearing on the right hand side. Using the strong Markov property we get
    \begin{equation}\label{mp-e10}\begin{aligned}
        \Pp^x &\left(\tau \leq t, \; |X_t-X_\tau| \geq 2a\right)\\
        &= \int_{\tau \leq t} \Pp^{X_\tau(\omega)}\left(|X_{t-\tau(\omega)} - X_0| \geq 2a\right) \Pp^x(d\omega)\\
        &\leq \sup_{s\leq t} \sup_{y\in\coball{3a}{x}} \Pp^y\left(|X_s-y| \geq 2a\right) \Pp^x\left(\tau \leq t\right)\\
        &\leq \sup_{s\leq t} \sup_{y\in\coball{3a}{x}} \Pp^y\left(|X_s-y| \geq 2a\right).
    \end{aligned}\end{equation}
    In the penultimate estimate we use that $X_\tau\in\coball{3a}{x}$. Both estimates together prove
    \begin{equation*}\begin{aligned}[b]
        \Pp^x &\left(\sup_{s\leq t} |X_s-x| > 3a\right)\\
        &\leq \Pp^x (|X_t-x| > a) + \Pp^x(\tau \leq t,\; |X_t-X_\tau| \geq 2a)\\
        &\leq \Pp^x (|X_t-x| > a) + \sup_{s\leq t}\sup_{ y\in\coball{3a}{x} } \Pp^y\left(|X_s-y| \geq 2a\right)\\
        &\leq \Pp^x (|X_t-x| > a) + \sup_{s\leq t}\sup_{ y\in\coball{3a}{x} } \Pp^y(|X_s-y| > a).
    \end{aligned}\qedhere\end{equation*}
\end{proof}

If we assume uniform stochastic continuity, then we get the following Kol\-mo\-go\-rov- or Ottaviani--Skorokhod-type inequality. Our proof follows -- and simplifies -- It\^o's presentation~\cite[Ch.~2.9]{ito-aarhus}. Alternatively, one could use the argument sketched in Remark~\ref{lp-41} and the penultimate estimate in \eqref{mp-e10}, without crudely estimating $\Pp^x\left(\tau\leq 1\right)$ with $1$ to get an equivalent version of the next lemma, see also K\"{u}hn~\cite[Lemma 5.4]{upper-fn}. Further proofs of similar results are in Blumenthal~\cite[Lem.~2.1, 2.2]{blumenthal57} and Gikhman \& Skorokhod~\cite[p.~420, Lem.~2]{gih-sko-1}.

\begin{theorem}[Ottaviani--Skorokhod-type maximal inequality]\label{mp-21} \index{inequality!Ottaviani--Skorokhod}
    Assume that \linebreak $(X_t)_{t\geq 0}$ is a uniformly stochastically continuous strong Markov process. For any $\epsilon>0$ there is some $\delta=\delta(\epsilon)$ such that for all $t,s\geq 0$ satisfying $0 < t-s <\delta$
    \begin{equation}\label{mp-e22}
        \Pp^x \left(\sup_{s\leq u,v\leq t} |X_u-X_v|>4\epsilon\right) \leq 4 \Pp^x \left(|X_s-X_t|>\epsilon\right).
    \end{equation}
\end{theorem}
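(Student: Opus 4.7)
The plan is to run an Ottaviani--Skorokhod-type argument where uniform stochastic continuity plays the role normally filled by the (sub)martingale property: it lets us uniformly control $\Pp^y(|X_r-y|>\epsilon)$ by $\frac 12$ on a small time interval, no matter where the process starts.

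First I would use \eqref{mp-e06} to pick $\delta=\delta(\epsilon)>0$ with the property
\begin{gather*}
    \sup_{0\leq r\leq\delta}\sup_{y\in\rd}\Pp^y\left(|X_r-y|>\epsilon\right)\leq \tfrac 12.
\end{gather*}
Then I would reduce the oscillation event to a single-point deviation: by the triangle inequality $|X_u-X_v|\leq |X_u-X_s|+|X_v-X_s|$, one has
\begin{gather*}
    \left\{\sup_{s\leq u,v\leq t}|X_u-X_v|>4\epsilon\right\}
    \subseteq \left\{\sup_{s\leq u\leq t}|X_u-X_s|>2\epsilon\right\}.
\end{gather*}
Introduce the stopping time $\tau:=\inf\{u\geq s\mid |X_u-X_s|>2\epsilon\}$. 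Because the paths are c\`adl\`ag, we have $|X_\tau-X_s|\geq 2\epsilon$ on $\{\tau\leq t\}$, hence the event on the right-hand side above is contained in $\{\tau\leq t\}$.

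The core step is to split $\{\tau\leq t\}$ according to whether $|X_t-X_s|$ exceeds $\epsilon$:
\begin{gather*}
    \{\tau\leq t\}
    =\big(\{\tau\leq t\}\cap\{|X_t-X_s|>\epsilon\}\big)\cup\big(\{\tau\leq t\}\cap\{|X_t-X_s|\leq\epsilon\}\big).
\end{gather*}
On the second set, the triangle inequality forces $|X_t-X_\tau|\geq |X_\tau-X_s|-|X_t-X_s|\geq\epsilon$. Therefore, by the strong Markov property (exactly as in \eqref{mp-e10}), together with our choice of $\delta$,
\begin{gather*}
    \Pp^x\left(\{\tau\leq t\}\cap\{|X_t-X_s|\leq\epsilon\}\right)
    \leq \sup_{0\leq r\leq t-s}\sup_{y}\Pp^y\left(|X_r-y|>\epsilon\right)\cdot\Pp^x(\tau\leq t)
    \leq \tfrac 12\Pp^x(\tau\leq t),
\end{gather*}
provided $t-s<\delta$. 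Rearranging gives $\Pp^x(\tau\leq t)\leq 2\Pp^x(|X_t-X_s|>\epsilon)$, from which \eqref{mp-e22} follows (with the slightly suboptimal constant $4$ absorbing minor strict-versus-non-strict bookkeeping).

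The only delicate point is the treatment of strict versus non-strict inequalities when invoking the strong Markov bound at the random time $\tau$; the precise form of the event $\{|X_t-X_\tau|\geq\epsilon\}$ (rather than $>\epsilon$) is handled by absorbing the difference into the factor $\tfrac 12$ and hence into the final constant $4$. Everything else is a direct, essentially algebraic, chase through the triangle inequality and the strong Markov decomposition.
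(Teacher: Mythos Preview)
Your proposal is correct and follows essentially the same Ottaviani--Skorokhod strategy as the paper: choose $\delta$ from uniform stochastic continuity so that $\Pp^y(|X_r-y|\geq\epsilon)\leq\frac12$ for $r<\delta$, reduce the oscillation to $\sup_{s\leq u\leq t}|X_u-X_s|>2\epsilon$ via the triangle inequality, and then use the strong Markov property at the first exit time to absorb half of the probability. The only organizational difference is that the paper first applies the simple Markov property at time $s$ (so that the exit time is from a ball around a \emph{fixed} point $z=X_s$) and phrases the core step as ``multiply by $2\Pp^{X_\tau}(|X_{t-s-\tau}-X_0|<\epsilon)\geq 1$'', whereas you work directly with the stopping time $\tau=\inf\{u\geq s\mid |X_u-X_s|>2\epsilon\}$ and split according to $\{|X_t-X_s|\lessgtr\epsilon\}$; these are equivalent rearrangements of the same idea.
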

\begin{proof}
    Pick any $\epsilon>0$. Because of uniform stochastic continuity, there is some $\delta = \delta(\epsilon)>0$ such that
    \begin{gather*}
        \sup_{x\in\rd, h<\delta} \Pp^x \left(|X_h -x|\geq \epsilon\right)\leq \frac 12,
    \end{gather*}
    or, equivalently,
    \begin{equation}\label{mp-e24}
        \inf_{x\in\rd,\, h<\delta} \Pp^x \left(|X_h -x|< \epsilon\right)\geq \frac 12.
    \end{equation}
    Using the triangle inequality, \eqref{mp-e22} follows from
    \begin{equation}\label{mp-e25}
        \Pp^x \left( \sup_{s\leq u\leq t} |X_u-X_s|>2\epsilon\right) \leq 2 \Pp^x \left(|X_s-X_t|>\epsilon\right).
    \end{equation}
    Denote by  $\tau = \tau_{\ball{2\epsilon}{z}}$  the first exit time from the open ball $\ball{2\epsilon}{z}$ of radius $2\epsilon$ and centre $z \in \rd$. Combining \eqref{mp-e24} and the strong Markov property yields for $t-s<\delta$
    \begin{align*}
        \Pp^z &\left(\tau\leq t-s\right)\\
        &= \Pp^z \left(|X_\tau -z|\geq 2\epsilon, \; \tau \leq t-s\right)\\
        &\leq \int \underbracket[.6pt]{2\Pp^{X_\tau(\omega)} \left(| X_{t-s-\tau(\omega)} -X_0|<\epsilon\right)}_{\geq 1,\text{\ by\ \eqref{mp-e24}}}\,
        \I_{\{| X_\tau-z|\geq 2\epsilon\}}(\omega) \I_{\{\tau \leq t-s\}}(\omega)\,\Pp^z(d\omega)\\
        &= 2\Pp^z \left( |X_\tau -z|\geq 2 \epsilon, \; \tau \leq t-s,\; |X_{t-s}-X_\tau|< \epsilon\right)\\
        &\leq 2\Pp^z \left(|X_{t-s}-z|>\epsilon,\; \tau \leq t-s\right)\\
        &\leq 2\Pp^z \left(|X_{t-s}-z|>\epsilon\right).
    \end{align*}
    Again by the Markov property,
    \begin{align*}
        \Pp^x \left( \sup_{s\leq u\leq t} |X_u-X_s|>2\epsilon\right)
        &= \Ee^x \left[\Pp^{X_s} \left(\sup_{u\leq t-s} |X_u-X_0 | >2\epsilon\right)\right]\\
        &\leq 2 \Ee^x \left[\Pp^{X_s}  \left(|X_{t-s}-X_0|>\epsilon\right)\right]\\
        &=2 \Pp^x \left(|X_t-X_s|>\epsilon\right),
    \end{align*}
    and this proves \eqref{mp-e25}.
\end{proof}

Maximal estimates of this type can be used to prove sample path regularity of strong Markov processes. For example, we have \emph{quasi-left continuity}, i.e.\ if $\sigma_n$ and $\sigma$ are stopping times, then
\begin{gather*}
    \sigma_n\uparrow\sigma<\infty\text{\ \ a.s.}
    \implies
    \Pp^x\left(\lim_{n\to\infty} X_{\sigma_n} = X_\sigma\right)=1,
\end{gather*}
see It\^o~\cite[Ch.~2.9, Thm.~2]{ito-aarhus}, or we can prove the following

\begin{theorem}[Dynkin--Kinney criterium]\label{mp-25} \index{Dynkin--Kinney criterium}
    Assume that $(X_t)_{t\geq 0}$ is a strong Markov process satisfying the condition
    \begin{gather}\label{mp-e26}
        \forall\epsilon>0\::\quad \lim_{t\to 0} \frac{1}{t} \sup_{x\in\rd} \Pp^x\left(|X_t - x|>\epsilon\right) = 0,
    \end{gather}
    then $(X_t)_{t\geq 0}$ has a modification with continuous sample paths.
\end{theorem}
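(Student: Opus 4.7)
The plan is to reduce path continuity to the absence of jumps and then use the Etemadi-type inequality from Theorem~\ref{mp-07} together with a dyadic refinement to bound the largest jump on any bounded interval. Since the chapter assumes that $t\mapsto X_t$ is already c\`adl\`ag for every starting measure, it suffices to show that for each fixed $T>0$ and $\epsilon>0$,
\begin{equation*}
    \Pp^x\left(\sup_{0<t\leq T}\left|X_t-X_{t-}\right|>6\epsilon\right) = 0;
\end{equation*}
sending $\epsilon$ through a null sequence then gives a.s.\ continuity on $[0,T]$, and a countable intersection over $T\in\nat$ finishes the job.

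For $n\in\nat$ set $t^{(n)}_i := iT 2^{-n}$ and
\begin{equation*}
    O_n := \max_{1\leq i\leq 2^n}\;\sup_{t_{i-1}^{(n)}\leq s\leq t_i^{(n)}}\left|X_s - X_{t_{i-1}^{(n)}}\right|.
\end{equation*}
First I would apply the Markov property at each deterministic time $t_{i-1}^{(n)}$ and then invoke Theorem~\ref{mp-07} at the (random) starting point $X_{t_{i-1}^{(n)}}$. Since the bound in Theorem~\ref{mp-07} is uniform in the starting point, the union bound yields
\begin{equation*}
    \Pp^x(O_n>3\epsilon)
    \leq 2^n \sup_{y\in\rd}\Pp^y\left(\sup_{s\leq T/2^n}|X_s-y|>3\epsilon\right)
    \leq 2\cdot 2^n \sup_{s\leq T/2^n}\,\sup_{z\in\rd}\Pp^z(|X_s-z|>\epsilon).
\end{equation*}
Now I would activate the hypothesis \eqref{mp-e26}: for any $\delta>0$ there is $h_0=h_0(\delta,\epsilon)>0$ such that $\sup_z\Pp^z(|X_s-z|>\epsilon)\leq \delta s$ for $s\leq h_0$. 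Choosing $n$ so that $T/2^n\leq h_0$ yields $\Pp^x(O_n>3\epsilon)\leq 2\delta T$, and since $\delta$ is arbitrary, $\Pp^x(O_n>3\epsilon)\to 0$ as $n\to\infty$.

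It remains to translate this into the absence of jumps. If $|X_\tau-X_{\tau-}|>6\epsilon$ for some $\tau\in(t_{i-1}^{(n)},t_i^{(n)}]$, then the triangle inequality forces at least one of $|X_\tau - X_{t_{i-1}^{(n)}}|$ or $|X_{\tau-}-X_{t_{i-1}^{(n)}}|$ to exceed $3\epsilon$. In the second case, the c\`adl\`ag property lets me approximate $X_{\tau-}$ by $X_s$ with $s\in[t_{i-1}^{(n)},\tau)$, still giving $|X_s - X_{t_{i-1}^{(n)}}|>3\epsilon$. In either case $O_n>3\epsilon$; consequently $\{\sup_{0<t\leq T}|X_t-X_{t-}|>6\epsilon\}\subseteq\{O_n>3\epsilon\}$ for every $n$, and letting $n\to\infty$ shows that this event has probability zero.

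The hard part is identifying $O_n$ as the right quantity to control, rather than the naive $\max_i|X_{t_i^{(n)}}-X_{t_{i-1}^{(n)}}|$: only by controlling the full oscillation within each mesh subinterval can one capture jumps occurring strictly inside a mesh. This is precisely what Theorem~\ref{mp-07} delivers uniformly in the starting point, converting the pointwise Dynkin--Kinney hypothesis \eqref{mp-e26} into a bound on an interval-supremum without loss of a factor $2^n$ from the union bound. Everything else is bookkeeping with the triangle inequality and the c\`adl\`ag property.
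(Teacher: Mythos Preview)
Your argument is correct. The paper does not spell out a proof either; it simply observes that \eqref{mp-e26} implies uniform stochastic continuity and then points to Theorem~\ref{mp-21} (the Ottaviani--Skorokhod-type inequality) and It\^o's lecture notes. You instead invoke Theorem~\ref{mp-07} (the Etemadi-type inequality). Both tools do the same job here: they turn the pointwise hypothesis \eqref{mp-e26} into a maximal estimate over an interval of length $T/2^n$, uniformly in the starting point, so that the union bound over the $2^n$ dyadic subintervals produces a factor $2^n$ that is exactly cancelled by the $o(t)$ bound on $\sup_z\Pp^z(|X_t-z|>\epsilon)$. The only practical difference is that Theorem~\ref{mp-21} bounds the full oscillation $\sup_{u,v}|X_u-X_v|$ directly, whereas Theorem~\ref{mp-07} bounds the displacement from the left endpoint; this is why you need the extra triangle-inequality step at the end to pass from $O_n$ to the jump size. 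Otherwise the two routes are essentially interchangeable.
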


Obviously, the condition \eqref{mp-e26} implies uniform stochastic continuity. Hence, we can use Theorem~\ref{mp-21} to get a rather simple proof of the Dynkin--Kinney criterium, see It\^o~\cite[Ch.~2.9, Thm.~3]{ito-aarhus}. For a more standard proof, we refer to Wentzell~\cite[Ch.~9.1]{wentzell79} or Dynkin~\cite[Ch.~6,\S5]{dynkin61}.

If we assume a bit more about the process $(X_t)_{t\geq 0}$, e.g.\ that it is Feller and that the test functions $C_c^\infty(\rd)$ are in the domain of the infinitesimal generator, then \eqref{mp-e26} is essentially equivalent to the continuity of the sample paths or to the fact that the infinitesimal generator is a second-order differential operator, see the discussion in Schilling~\cite[Thm.~7.38, Thm.~7.39, Thm.~23.3]{schilling-bm}.

A further application of such maximal inequalities are estimates on the $p$-variation of stochastic processes, see Manstavi\v{c}ius~\cite{manstavicius04}; typically, the results of Chapter~\ref{fel} (e.g.\ Theorem~\ref{fel-5}) in combination with Theorem~\ref{mp-21} establish the conditions needed in Manstavi\v{c}ius~\cite{manstavicius04}, see B\"{o}ttcher \emph{et al.}~\cite[Ch.~5.4]{matters3}.

\begin{remark}[on sublinear processes]\label{mp-27} \index{sublinear process}\index{G-L\'evy process@$G$-L\'evy process}
    It is also possible (and interesting) to study maximal inequalities for sublinear expectations. A \emph{sublinear} Markov semi\-group $(T_t)_{t \geq 0}$ satisfies the usual semigroup property $T_{t+s} = T_t T_s$, but each operator $T_t$ is only subadditive and positively homogeneous, i.e.\
\begin{equation*}
	T_t(f+g) \leq T_t(f) + T_t(g)
    \quad\text{and}\quad
    T_t(\lambda f) = \lambda T_t(f),
\end{equation*}
for all $\lambda \geq 0$ and all bounded measurable functions $f,g$. Sublinear Markov semigroups appear in the study of stochastic processes on sublinear expectation spaces, which can be interpreted as stochastic processes under uncertainty, cf.\ Hollender~\cite{hollender}. Using techniques similar to those in Chapter~\ref{fel}, it is possible to derive a maximal inequality for this class of processes, cf.\ K\"{u}hn~\cite[Proposition 5.1]{sublinear1}. Applications include an analogue of the Courr\`ege-von Waldenfels theorem, cf.\ K\"{u}hn~\cite{sublinear-generator}, and the study of sublinear Hamilton--Jacobi--Bellman equations, see Denk \emph{et al.}~\cite{denk} and Hollender~\cite{hollender} for further details and references.
\end{remark}

\section{Gaussian Processes}\label{gau}

The methods used in the study of Gaussian processes are quite different from those introduced in the previous sections; quite often, they come from discrete mathematics or functional analysis or from a combination of both.  Moreover, the full strength of these methods unfolds, if one considers Gaussian processes which are indexed by elements of a (compact) metric space. As before, we restrict ourselves to processes indexed by $[0,\infty)$ and taking values in $\real$ -- this is enough to illustrate some of the methods used in the theory. Our standard references for Gaussian processes are the books by Lifshits~\cite{lifshits95} and~\cite{lifshits12}. Gaussian random variables and Gaussian processes with values in Banach spaces are discussed in Ledoux \& Talagrand \cite{led-tal91}.

\begin{definition}\label{gau-03} \index{Gaussian process}
    A random process $(X_t)_{t\geq 0}$ taking values in $\real$ is said to be Gaussian, if every vector $(X_{t_1},\dots,X_{t_n})$, $0\leq t_1<\dots <t_n$ is an $n$-dimensional Gaussian random variable.
\end{definition}

By definition, a Gaussian random process is uniquely determined through the mean $m(t) = \Ee \left[X_t\right]$ and the covariance $K(t,s)=\Ee\left[X_t X_s\right] - m(t)m(s)$ functions.

\begin{example}\label{gau-05}\index{Brownian motion}%
Typical examples of Gaussian processes (indexed by $[0,\infty)$) are:
\begin{enumerate}
\item
    \emph{Brownian motion:} $K(t,s) = t\wedge s$ and $m(t)\equiv 0$.
\item
    \emph{Fractional Brownian motion:} $K(t,s) = \frac 12\left(t^\alpha + s^\alpha -|s-s|^\alpha\right)$ with the Hurst index $H = \alpha/2 \in (0,1]$ and $m(t)\equiv 0$.
\item
    \emph{Bi-fractional Brownian motion:} $K(t,s) = \frac 12\left( \left(t^\alpha + s^\alpha\right)^k -|s-s|^{\alpha k}\right)$ with $\alpha, k \in (0,2]$, $\alpha k \leq 2$ and $m(t)\equiv 0$.
\item
    \emph{Gaussian Markov processes:} Let $f,g\colon [0,\infty)\to[0,\infty)$ be functions such that $g$ is strictly increasing. The process $X_t \coloneqq  f(t) W_{g(t)}$ where $(W_t)_{t\geq 0}$ is a standard Brownian motion, is a mean-zero Gaussian Markov process.

    Conversely, every mean-zero Gaussian Markov process is of this form, see Marcus \& Rosen~\cite[Chap.~5.1.1.]{mar-ros06}.

    Examples in this class are Brownian motion, the Brownian bridge ($K(s,t) = s\wedge t - st$, $X_t = (1+t)W_{t(1+t)^{-1}}$) or the Ornstein--Uhlenbeck process ($K(s,t) = \exp(-|s-t|)$, $X_t = e^{-t} W_{e^{2t}}$ -- this is the only stationary Gaussian Markov process).
\end{enumerate}
\end{example}

\subsection{Concentration of Measure Methods}

A key tool in the analysis of Gaussian processes are \emph{tail estimates} (sometimes called \emph{Borell estimates}). We will sketch a method how to obtain such estimates from concentration inequalities. As a preparation we need the following logarithmic Sobolev inequality in Gaussian spaces which was discovered by Gross~\cite{gross75}. We do not provide a proof, but refer to Ledoux~\cite[Thm.~5.1]{ledoux01} (for a semigroup proof) or Shigekawa~\cite[Thm.~2.12]{shige04} (for a probabilistic proof). Recall that the \emph{entropy of a random variable} $Z\geq 0$ is defined as $\Ent(Z) = \Ee\left(Z\log Z\right) - \left(\Ee Z\right) \log \left(\Ee Z\right)$.

\begin{theorem}[logarithmic Sobolev inequality]\label{gau-07} \index{inequality!log Sobolev}
    Let $G=(G_1,\dots,G_n)\in\rn$ be a vector whose entries are iid standard normal random variables and $f\colon \rn\to\real$ a $C^1$-function. Then the entropy of $f^2(G)$ satisfies
    \begin{gather}\label{gau-e08}
        \Ee\left[f^2(G)\log f^2(G)\right] - \Ee\left[f^2(G)\right] \log\Ee\left[f^2(G)\right]
        \leq 2\Ee\left[|\nabla f(G)|^2\right].
    \end{gather}
\end{theorem}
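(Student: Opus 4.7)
The plan is to use the Ornstein--Uhlenbeck semigroup as a smooth interpolation between $f^2$ and its mean, following the semigroup route of Ledoux \cite{ledoux01}. Let $\gamma_n$ denote the standard Gaussian measure on $\rn$ (so $G\sim\gamma_n$) and, for $h\in L^1(\gamma_n)$, define the OU semigroup via Mehler's formula
\begin{gather*}
P_t h(x) := \Ee\!\left[h\!\left(e^{-t}x + \sqrt{1-e^{-2t}}\, G\right)\right], \quad t \geq 0,\; x\in\rn.
\end{gather*}
The facts I would use without proof are: $(P_t)_{t\geq 0}$ has generator $Lh = \Delta h - x\cdot\nabla h$, it is symmetric on $L^2(\gamma_n)$, it leaves $\gamma_n$ invariant (so $\int Lh\, d\gamma_n = 0$ whenever the integral makes sense), and it is ergodic: $P_t h \to \int h\, d\gamma_n$ pointwise as $t\to\infty$.

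Setting $g:=f^2$ and
\begin{gather*}
\Phi(t) := \int (P_t g)\log(P_t g)\, d\gamma_n,
\end{gather*}
we have $\Phi(0) = \Ee[f^2(G)\log f^2(G)]$ and, by ergodicity, $\Phi(\infty) = \Ee[f^2(G)]\log\Ee[f^2(G)]$; therefore the left-hand side of \eqref{gau-e08} equals $-\int_0^\infty \Phi'(t)\,dt$. Differentiating under the integral and using symmetry of $L$ together with $\int L(P_t g)\,d\gamma_n = 0$, the standard entropy-dissipation computation yields
\begin{gather*}
\Phi'(t) = \int L(P_t g)\bigl(1+\log P_t g\bigr)\, d\gamma_n = -\int \frac{|\nabla P_t g|^2}{P_t g}\, d\gamma_n;
\end{gather*}
the last equality is the carr\'e du champ identity for $L$ applied with test function $\log(P_t g)$.

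The decisive input is the commutation rule $\nabla P_t h = e^{-t}P_t(\nabla h)$, which follows from Mehler's formula by differentiating under the expectation. Applied to $g=f^2$, this gives $\nabla P_t g = 2e^{-t}P_t(f\nabla f)$, so that the Cauchy--Schwarz inequality for the probability kernel $P_t$ produces the pointwise bound $|P_t(f\nabla f)|^2 \leq P_t(f^2)\,P_t(|\nabla f|^2)$. Substituting,
\begin{gather*}
-\Phi'(t) \leq 4e^{-2t}\int P_t(|\nabla f|^2)\, d\gamma_n = 4e^{-2t}\int |\nabla f|^2\, d\gamma_n,
\end{gather*}
where invariance of $\gamma_n$ is used once more. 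Integrating over $t\in(0,\infty)$ delivers \eqref{gau-e08} with the sharp constant $2$.

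The main obstacles are technical rather than conceptual. To make sense of $\log P_t g$ one needs $P_t g$ to be strictly positive; I would first prove the inequality for $f$ replaced by $\sqrt{f^2+\epsilon}$ (or, equivalently, for bounded $C^1$ functions bounded away from $0$), and then pass to the limit $\epsilon\downarrow 0$ via dominated convergence and the lower semicontinuity of entropy. The justification of $\Phi(\infty) = \Ee[g]\log\Ee[g]$ and of differentiating under the integral are routine for such regularised $f$ via Mehler's formula. The sharp constant $2$ is recovered precisely because of the factor $e^{-2t}$ in the commutation-plus-Cauchy--Schwarz step; absent this factor one would only get a decay that is not integrable, and the inequality would fail in this strong form.
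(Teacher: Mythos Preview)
The paper does not actually prove Theorem~\ref{gau-07}; it states the result and refers the reader to Ledoux~\cite[Thm.~5.1]{ledoux01} for a semigroup proof and to Shigekawa~\cite[Thm.~2.12]{shige04} for a probabilistic one. Your proposal is precisely the Ledoux semigroup argument: the Ornstein--Uhlenbeck interpolation, the entropy-dissipation identity $\Phi'(t)=-\int |\nabla P_t g|^2/P_t g\,d\gamma_n$, the commutation $\nabla P_t = e^{-t}P_t\nabla$ combined with Cauchy--Schwarz for the kernel, and the final integration of $4e^{-2t}$ yielding the sharp constant $2$. The steps are correct, the regularisation you sketch ($f\rightsquigarrow\sqrt{f^2+\epsilon}$ to ensure strict positivity, then $\epsilon\downarrow 0$) is the standard way to make the formal computation rigorous, and nothing is missing from the outline. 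In short, you have supplied exactly the proof the paper chose to outsource to Ledoux.
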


\begin{theorem}\label{gau-11}
    Let $G=(G_1,\dots,G_n)\in\rn$ be a vector whose entries are iid standard normal random variables and $f\colon \rn\to\real$ a Lipschitz function with global Lipschitz constant $L$, i.e.\ $|f(x)-f(y)|\leq L|x - y|$ holds for all $x,y\in\rn$. Then
    \begin{gather}\label{gau-e12}
        \Ee\left[e^{\lambda(f(G) - \Ee f(G))}\right] \leq e^{\frac 12 \lambda^2 L^2},\quad \lambda > 0.
    \end{gather}
\end{theorem}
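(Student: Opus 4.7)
The plan is to run Herbst's argument: apply the logarithmic Sobolev inequality of Theorem~\ref{gau-07} to the function $e^{\lambda f/2}$ and convert the resulting entropy bound into a differential inequality for the moment generating function $H(\lambda) := \Ee\bigl[e^{\lambda(f(G)-\Ee f(G))}\bigr]$.

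First I would reduce to the smooth case. A Lipschitz function on $\rn$ with constant $L$ is not in general $C^1$, so I would mollify: set $f_\epsilon := f * \phi_\epsilon$ for a standard mollifier $\phi_\epsilon$. Then $f_\epsilon \in C^\infty$, $|\nabla f_\epsilon|\leq L$ pointwise, $f_\epsilon\to f$ locally uniformly and $\Ee f_\epsilon(G)\to \Ee f(G)$ by dominated convergence (using that $|f(x)|\leq |f(0)|+L|x|$ has all Gaussian moments). If \eqref{gau-e12} holds for each $f_\epsilon$ with the same constant $L$, Fatou gives the inequality for $f$. We may therefore assume $f\in C^1(\rn)$ with $|\nabla f|\leq L$, and additionally centre $f$ so that $\Ee f(G)=0$.

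Next, fix $\lambda>0$ and apply Theorem~\ref{gau-07} to $h := e^{\lambda f/2}$. Since $|\nabla h|^2 = \tfrac14\lambda^2 e^{\lambda f}|\nabla f|^2 \leq \tfrac14\lambda^2 L^2 e^{\lambda f}$, inequality \eqref{gau-e08} yields, with $H(\lambda) := \Ee\bigl[e^{\lambda f(G)}\bigr]$,
\begin{gather*}
    \lambda\,\Ee\!\left[f(G)\,e^{\lambda f(G)}\right] - H(\lambda)\log H(\lambda) \leq \tfrac12\lambda^2 L^2\, H(\lambda).
\end{gather*}
Because $f$ has at most linear growth, dominated convergence shows that $H$ is differentiable on $(0,\infty)$ with $H'(\lambda) = \Ee\bigl[f(G)e^{\lambda f(G)}\bigr]$, so the displayed inequality reads
\begin{gather*}
    \lambda H'(\lambda) - H(\lambda)\log H(\lambda) \leq \tfrac12\lambda^2 L^2\, H(\lambda).
\end{gather*}
Dividing by $\lambda^2 H(\lambda)$ (which is strictly positive) gives
\begin{gather*}
    \frac{d}{d\lambda}\left(\frac{\log H(\lambda)}{\lambda}\right)
    = \frac{H'(\lambda)}{\lambda H(\lambda)} - \frac{\log H(\lambda)}{\lambda^2}
    \leq \frac{L^2}{2}.
\end{gather*}

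Finally, I would integrate this from $0$ to $\lambda$. Because $f$ is centred, a Taylor expansion gives $H(\lambda) = 1 + O(\lambda^2)$ as $\lambda\downarrow 0$, so $\log H(\lambda)/\lambda \to 0 = \Ee f(G)$. Integration then yields $\log H(\lambda)/\lambda \leq \lambda L^2/2$, i.e.\ $H(\lambda)\leq e^{\lambda^2 L^2/2}$, which is \eqref{gau-e12}.

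The main obstacle is not conceptual but technical: justifying that $H$ is differentiable with $H'(\lambda)=\Ee[fe^{\lambda f}]$ and that $\log H(\lambda)/\lambda \to \Ee f(G)$ as $\lambda\downarrow 0$. Both follow from the fact that, by the linear growth of $f$ and the Gaussian tail of $G$, the family $\{|f(G)|^k e^{\lambda f(G)}\}_{\lambda\in[0,\Lambda]}$ is uniformly integrable for every $\Lambda>0$ and every $k\in\nat_0$. Granted this, the rest of the argument is a clean application of the log-Sobolev inequality and a one-line ODE.
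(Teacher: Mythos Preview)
Your proof is correct and follows exactly the paper's route: reduce to smooth $f$ with $|\nabla f|\leq L$, centre, apply the log-Sobolev inequality \eqref{gau-e08} to $e^{\lambda f/2}$, rewrite the resulting entropy bound as the differential inequality $\frac{d}{d\lambda}\bigl(\lambda^{-1}\log\phi(\lambda)\bigr)\leq L^2/2$, and integrate. The only cosmetic difference is that you spell out the justification of differentiability and of the limit $\lambda^{-1}\log H(\lambda)\to 0$, which the paper leaves implicit.
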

\begin{proof}
    It is enough to show \eqref{gau-e12} for $C^1$-functions whose derivative is bounded by $L$. Indeed, if $f$ is Lipschitz with Lipschitz constant $L$, we can approximate $f$ pointwise by a sequence of $C^1$-functions $f_n$ such that $|\nabla f_n(x)|\leq L$, and \eqref{gau-e12} remains stable under the limit if we use Fatou's lemma. So, without loss of generality, $f\in C^1$ and $|\nabla f(x)|\leq L$. Moreover, we can always assume that $\Ee f(G)=0$, otherwise we use $f \rightsquigarrow f - \Ee f(G)$.

    We apply Theorem~\ref{gau-07} to the function $\lambda\mapsto e^{\lambda f(G)/2}$ to get
    \begin{align*}
        \Ee\left[\lambda f(G) e^{\lambda f(G)}\right] - \Ee\left[e^{\lambda f(G)}\right]\cdot \log \Ee\left[e^{\lambda f(G)}\right]
        &\leq 2\Ee\left[\left|\nabla e^{\frac 12\lambda f(G)}\right|^2\right]\\
        &= \frac{\lambda^2}{2} \Ee\left[e^{\lambda f(G)}\left|\nabla f(G)\right|^2\right]\\
        &\leq \frac{\lambda^2}{2} L^2 \Ee\left[e^{\lambda f(G)}\right].
    \end{align*}
    If we set $\phi(\lambda) \coloneqq  \Ee \left[e^{\lambda f(G)}\right]$, this inequality becomes a differential inequality
    \begin{gather*}
        \lambda \phi'(\lambda) - \phi(\lambda)\log\phi(\lambda) \leq \frac{\lambda^2}{2} L^2\phi(\lambda),
    \end{gather*}
    which we can rewrite as
    \begin{gather*}
        \frac{d}{d\lambda}\left(\frac{\log\phi(\lambda)}{\lambda}\right)
        = \frac{\phi'(\lambda)}{\lambda\phi(\lambda)} - \frac{\log\phi(\lambda)}{\lambda^2} \leq \frac 12 L^2.
    \end{gather*}
    From this we see that $\phi(\lambda) \leq e^{\lambda^2 L^2/2}$, and the claim follows.
\end{proof}

\begin{corollary}[Gaussian concentration]\label{gau-13} \index{inequality!Gaussian concentration}
    Let $G=(G_1,\dots,G_n)\in\rn$ be a vector whose entries are iid standard normal random variables and $f\colon \rn\to\real$ a Lipschitz function with global Lipschitz constant $L$. Then
    \begin{gather}\label{gau-e14}
        \Pp\left( \pm \big(f(G) - \Ee f(G)\big)> \lambda\right) \leq e^{-{\lambda^2}/{2L^2}},\quad \lambda > 0.
    \end{gather}
\end{corollary}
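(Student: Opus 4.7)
The plan is to apply the standard Chernoff (exponential Markov) argument to the moment generating function bound already established in Theorem~\ref{gau-11}. Set $Y := f(G) - \Ee f(G)$; for any $\xi > 0$, Markov's inequality gives
\begin{gather*}
    \Pp(Y > \lambda) = \Pp\left(e^{\xi Y} > e^{\xi\lambda}\right) \leq e^{-\xi\lambda}\,\Ee\left[e^{\xi Y}\right],
\end{gather*}
and combining this with the bound $\Ee\left[e^{\xi Y}\right] \leq e^{\xi^2 L^2/2}$ from \eqref{gau-e12} yields
\begin{gather*}
    \Pp(Y > \lambda) \leq \exp\!\left(-\xi\lambda + \tfrac{1}{2}\xi^2 L^2\right), \quad \xi > 0.
\end{gather*}
The right hand side is minimised at $\xi = \lambda/L^2$, which produces the target bound $e^{-\lambda^2/(2L^2)}$ — exactly the claimed upper tail estimate.

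For the lower tail, I will simply observe that $-f$ is also Lipschitz with the same constant $L$ (since $|{-f(x)}-(-f(y))| = |f(x)-f(y)| \leq L|x-y|$). Hence Theorem~\ref{gau-11} applies to $-f$ as well, and the very same Chernoff optimisation gives $\Pp(-(f(G) - \Ee f(G)) > \lambda) \leq e^{-\lambda^2/(2L^2)}$. Together the two inequalities yield \eqref{gau-e14}.

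There is essentially no obstacle here: all the heavy lifting has been done upstream in the logarithmic Sobolev inequality (Theorem~\ref{gau-07}) and in the differential-inequality argument that bootstrapped it to the subgaussian MGF bound \eqref{gau-e12}. The only minor point to note is that Theorem~\ref{gau-11} is stated with $\lambda > 0$, which is precisely the range we need to run the Chernoff argument (the case $\lambda = 0$ being trivial, as the right hand side of \eqref{gau-e14} equals $1$).
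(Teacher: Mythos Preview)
Your proof is correct and is essentially identical to the paper's: apply Markov's inequality to $e^{\xi Y}$, invoke the MGF bound \eqref{gau-e12} from Theorem~\ref{gau-11}, and optimise at $\xi=\lambda/L^2$. The paper handles both signs in one line by writing $\pm$ inside the exponential, while you make the observation that $-f$ is also Lipschitz explicit --- but this is exactly the same argument.
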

\begin{proof}
    For $x,\lambda>0$ we find using the Markov inequality and Theorem~\ref{gau-11}
    \begin{align*}
        \Pp\left( \pm\left(f(G) - \Ee f(G)\right) > \lambda\right)
        &= \Pp\left( \pm x\left(f(G) - \Ee f(G)\right) > x\lambda\right)\\
        &\leq e^{-x\lambda} \Ee\left(e^{\pm x\left(f(G) - \Ee f(G)\right)}\right)
        \leq e^{-x\lambda + x^2L^2/2},
    \end{align*}
    and the claim follows, since the right hand side is minimal for $x=\lambda/L^2$.
\end{proof}

An important feature in the estimates \eqref{gau-e12} and \eqref{gau-e14} is that the right hand side does not depend on the dimension $n$ of the Gaussian vector $(G_1,\dots,G_n)$. This allows us to derive
\begin{corollary}[Borell's inequality]\label{gau-15} \index{inequality!Borell}
    Let $(X_t)_{t\geq 0}$ be \textup{(}a separable modification\footnote{A necessary and sufficient condition for the existence of a separable modification is that $(s,t)\mapsto \|X_s-X_t\|_{L^2}$ is separable, e.g.\ if $K(s,t)$ is continuous; see Lifshits~\cite[p.~26]{lifshits95} and the discussion at the beginning of Chapter~\ref{gen}.} of\textup{)} a centred Gaussian process and $M_t \coloneqq \sup_{s\leq t} X_s$ its running maximum. If $\sigma_t^2 \coloneqq  \sup_{s\leq t}\Ee \left[|X_s|^2\right]<\infty$ for all $t\geq 0$, then $\Vv\left[M_t \right]<\infty$ and
    \begin{gather}\label{gau-e16}
        \Pp\left(\pm\left(M_t - \Ee M_t  \right) > \lambda\right) \leq e^{-{\lambda^2}/{2\sigma_t^2}},\quad\lambda>0.
    \end{gather}
\end{corollary}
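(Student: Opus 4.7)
The plan is to approximate $X_t^*$ by finite maxima, invoke the Gaussian concentration estimate of Corollary~\ref{gau-13}, and pass to the limit. By separability, there is a countable dense $D \subseteq [0,t]$ with $X_t^* = \sup_{s \in D}|X_s|$ almost surely. Exhaust $D = \bigcup_n D_n$ by finite sets $D_n \uparrow D$ and set $M_n := \max_{s \in D_n}|X_s|$, so that $M_n \uparrow X_t^*$ monotonically.

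For fixed $n$ with $D_n = \{s_1,\dots,s_k\}$, pick a matrix $A \in \real^{k \times k}$ such that $AA^{\top}$ is the covariance matrix of $(X_{s_1},\dots,X_{s_k})$. Then $(X_{s_1},\dots,X_{s_k}) \sim AG$ for a standard Gaussian $G \in \real^k$, and $M_n = f(G)$ with $f(y) := \max_i |(Ay)_i|$. A direct computation using the triangle inequality shows that $f$ is Lipschitz with constant $L_n = \max_i \|A^{(i)}\|$, where $A^{(i)}$ denotes the $i$-th row of $A$; moreover $\|A^{(i)}\|^2 = (AA^{\top})_{ii} = \Ee[X_{s_i}^2] \leq \sigma_t^2$. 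Applying Corollary~\ref{gau-13} to $\pm f$ therefore yields, with a constant \emph{independent of $n$},
\begin{equation*}
    \Pp\bigl(\pm(M_n - \Ee M_n) > \lambda\bigr) \leq e^{-\lambda^2/(2\sigma_t^2)}, \quad \lambda > 0.
\end{equation*}

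The main obstacle is to pass to the limit in this inequality, which requires controlling the centering $\Ee M_n$ uniformly in $n$. The two-sided concentration bound gives $|\Ee M_n - \mathrm{med}(M_n)| \leq c\,\sigma_t$ for a universal constant $c$, and $\mathrm{med}(M_n)$ is non-decreasing in $n$ with limit $\mathrm{med}(X_t^*)$. This limiting median is finite under the implicit standing hypothesis that the separable modification takes values in $\real$ — equivalently, by the Fernique $0$--$1$ law for Gaussian suprema, $X_t^* < \infty$ almost surely whenever $\sigma_t < \infty$ and $(X_s)_{s\leq t}$ is separable. Hence $\sup_n \Ee M_n < \infty$, and monotone convergence delivers $\Ee M_n \uparrow \Ee X_t^* < \infty$. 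Sending $n \to \infty$ in the finite-$n$ bound via Fatou's lemma applied to the events $\{\pm(M_n - \Ee M_n) > \lambda - \epsilon\}$ (for arbitrary $\epsilon>0$) yields \eqref{gau-e16}, and integrating the sub-Gaussian tail on the right-hand side gives $\Vv X_t^* < \infty$.
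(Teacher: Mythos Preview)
Your approach matches the paper's: write each finite maximum as a Lipschitz function of a standard Gaussian vector with Lipschitz constant at most $\sigma_t$, invoke Corollary~\ref{gau-13}, and pass to the limit along a separating set. The paper uses the positive square root $\sqrt{C}$ of the covariance matrix where you use a generic factor $A$, and the paper takes $\max_i X_{t_i}$ rather than $\max_i|X_{t_i}|$, but these differences are cosmetic. You are in fact more careful than the paper about the limit step---the paper simply writes $\Vv(X_t^*) = \sup_n \Vv f(G_1,\dots,G_n) \leq \sigma_t$ and passes to the limit without explaining why $\Ee X_t^*<\infty$; your median argument is the right idea for making this rigorous.

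However, the sentence you use to justify $\mathrm{med}(X_t^*)<\infty$ is wrong. The Fernique $0$--$1$ law only gives $\Pp(X_t^*<\infty)\in\{0,1\}$; it does \emph{not} say that $\sigma_t<\infty$ together with separability forces the value $1$. The archetypal counterexample is an iid sequence of standard normals: variances uniformly equal to $1$, trivially separable, yet almost surely unbounded supremum---and one can embed such behaviour into a separable process on $[0,t]$ whenever the Dudley entropy integral in \eqref{gau-e34} diverges. Also, ``the separable modification takes values in $\real$'' means each $X_s$ is a.s.\ finite, which is strictly weaker than $\sup_s|X_s|<\infty$ a.s., so the two conditions you declare equivalent are not. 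What is genuinely needed---and what Borell's inequality is normally stated under---is the hypothesis $\Pp(X_t^*<\infty)>0$ (equivalently $=1$, by the $0$--$1$ law); the paper's statement suppresses this too. Granting that hypothesis, your argument is complete.
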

\begin{proof}
    Fix $t>0$ and let $0\leq t_1<\dots < t_n\leq t$. Since $\Xi \coloneqq  (X_{t_1},\dots,X_{t_n})^\top$ is a Gaussian random vector, we can write it as $\Xi = \sqrt C\Gamma$ where $\sqrt C = (\gamma_{ij})_{i,j}$ is the (unique, positive semidefinite) square root of the covariance matrix $C = (K(t_i,t_j))_{i,j}$ and $\Gamma \coloneqq  (G_1,\dots,G_n)^\top$ is a Gaussian vector with iid standard normal entries. In particular,
    \begin{gather*}
        \max_{1\leq i\leq n} X_{t_i} \sim \max_{1\leq i\leq n} (\sqrt C \Gamma)_i  \eqqcolon  f(G_1,\dots,G_n).
    \end{gather*}
    For $x,y\in\rn$ we have for every $i=1,2,\dots,n$
    \begin{gather*}
        \left|\big(\sqrt C x\big)_i - \big(\sqrt C y\big)_i\right|
        = \left| \sum_{j=1}^n \gamma_{ij} (x_j-y_j) \right|
        \leq \left(\sum_{j=1}^n \gamma_{ij}^2\right)^{1/2} |x-y|.
    \end{gather*}
    Since $\sum_{j=1}^n \gamma_{ij}^2 = \Vv(X_{t_i})$, we see that
    \begin{gather*}
        \left|f(x) - f(y)\right| \leq \sigma_t |x-y|.
    \end{gather*}

    Pick the $t_i$ in such a way that $\{t_1,\dots,t_n\}\uparrow \rat\cap[0,t]$. Take $y=0$ and use the separability of the process to get $\Vv(M_t) = \sup_n \Vv f(G_1,\dots,G_n) \leq \sigma_t$. Finally, we can apply Corollary~\ref{gau-13} and see that
    \begin{equation*}\begin{aligned}[b]
        \Pp\left(\pm\left(M_t - \Ee M_t \right) > \lambda\right)
        &= \lim_{n\to\infty} \Pp\left(\pm\Big[\max_{1\leq i\leq n} X_{t_i} - \Ee \big[\max_{1\leq i\leq n} X_{t_i}\big]\Big] > \lambda\right)\\
        &\leq e^{-\lambda^2/2\sigma_t^2}.
    \end{aligned}\qedhere\end{equation*}
\end{proof}

\begin{corollary}[Fernique's theorem]\label{gau-17}\index{theorem!Fernique}
    Let $(X_t)_{t\geq 0}$ be a \textup{(}separable modification of a\textup{)} centred Gaussian process and $M_t \coloneqq \sup_{s\leq t} X_s$ its running maximum. If $\sigma_t^2 \coloneqq  \sup_{s\leq t}\Ee \left[X_s^2\right]<\infty$ for all $t\geq 0$, then
    \begin{gather}\label{gau-e18}
        \Ee\left[ e^{p M_t^2 }\right] < \infty,\quad 0\leq p< \frac 1{2\sigma_t}.
    \end{gather}
\end{corollary}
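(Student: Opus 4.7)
The plan is to combine Borell's inequality (Corollary~\ref{gau-15}) with the layer cake formula (Lemma~\ref{pre-10}). Writing $m_t := \Ee[X_t^*]$ for the mean of the maximum, Corollary~\ref{gau-15} provides the one-sided tail bound
\[
    \Pp(X_t^* > u) \leq e^{-(u-m_t)^2/(2\sigma_t^2)}, \quad u > m_t,
\]
and, in particular, implicitly guarantees $m_t < \infty$ (this can, if needed, be secured independently by a chaining argument such as Pisier's Lemma~\ref{gen-15} applied to a countable separating set).

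Next, I would apply the layer cake formula (Lemma~\ref{pre-10}) with the absolutely continuous test function $\phi(u) = e^{pu^2}$ evaluated at the non-negative random variable $X_t^*$. This yields
\[
    \Ee\left[e^{p(X_t^*)^2}\right] - 1
    = \int_0^\infty 2 p u \, e^{p u^2} \, \Pp(X_t^* > u) \, du,
\]
which I would then split at $u = m_t$. On $[0, m_t]$ the probability is trivially bounded by $1$, and the contribution is at most $e^{p m_t^2} - 1$. On $(m_t, \infty)$, inserting the Gaussian tail bound dominates the integrand by $2 p u \exp\bigl[p u^2 - (u - m_t)^2/(2\sigma_t^2)\bigr]$.

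The crux is the convergence of this last integral. Its exponent is a quadratic in $u$ whose leading coefficient is $p - 1/(2\sigma_t^2)$. Whenever this coefficient is strictly negative, i.e.\ on the range $2p\sigma_t^2 < 1$, the integrand decays like a centred Gaussian and the integral converges by the standard Gaussian computation. The main (mild) obstacle is the \emph{a priori} finiteness of $m_t$; once that is in hand, the layer cake manipulation is entirely routine and delivers the conclusion on the natural Fernique range $p < 1/(2\sigma_t^2)$.
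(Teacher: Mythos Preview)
Your proposal is correct and follows essentially the same route as the paper: apply Borell's inequality (Corollary~\ref{gau-15}) and then the layer cake formula, the only cosmetic difference being that the paper applies layer cake to the centred quantity $e^{p(X_t^*-\Ee X_t^*)^2}$ directly rather than splitting the integral for $e^{p(X_t^*)^2}$ at $m_t$. Your identification of the Fernique range as $p<1/(2\sigma_t^{2})$ also corrects what is evidently a typo in the stated bound $p<1/(2\sigma_t)$.
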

\begin{proof}
    From Borell's inequality we see that
    \begin{gather*}
        \Pp\left( |M_t - \Ee M_t| >\lambda\right)
        \leq \Pp\left( M_t - \Ee M_t >\lambda\right) + \Pp\left( \Ee M_t - M_t > \lambda\right)
        \leq 2 e^{-\lambda^2/2\sigma_t^2},
    \end{gather*}
    and with the layer-cake formula we get
    \begin{equation*}\begin{aligned}[b]
        \Ee\left[ e^{p (M_t - \Ee M_t )^2}\right]
        &= \int_0^\infty 2\lambda p e^{p\lambda^2} \Pp\left(|M_t - \Ee M_t| >\lambda\right) d\lambda\\
        &\leq \int_0^\infty 4 \lambda p e^{p\lambda^2} e^{-\lambda^2/2\sigma_t^2}\, d\lambda
        = \frac{4 p\sigma_t^2}{2p\sigma_t^2 - 1}.
    \end{aligned}\qedhere\end{equation*}
\end{proof}

The setting and proofs of Corollaries~\ref{gau-15} and \ref{gau-17} show that we are in a truly exceptional situation: From $\Pp\left( |M_t - \Ee M_t | > \lambda\right) \leq  2e^{-\lambda^2/2\sigma_t^2}$ we conclude
\begin{gather*}
    \Pp\left(M_t < \infty\right) = 1 \iff \Ee M_t < \infty.
\end{gather*}
Together with \eqref{gau-e18} we get, that for a centred Gaussian process with $\sigma_t^2 \coloneqq  \sup_{s\leq t}\Vv X_s < \infty$, we even have
\begin{gather*}
 M_t < \infty\text{\ a.s.}
    \iff \Ee M_t < \infty
    \iff \Ee e^{p M_t^2 } < \infty,\; p < (2\sigma_t)^{-1}.
\end{gather*}

\subsection{Metric Entropy Methods}

Let us briefly discuss a further technique in the study of maximal inequalities for Gaussian processes. It is connected with the way in which we can cover the index set optimally. These methods came up at the end of the 1960s, mainly due to Dudley~\cite{dudley67}, and led to a breakthrough in the study of Gaussian processes.

\begin{definition}\label{gau-31}\index{covering number}\index{metric entropy}
Let $(\Tt,d)$ be a compact metric space. One defines for every $\epsilon>0$ the
\begin{align*}
    \emph{covering number}& & N(\epsilon) &\coloneqq  \min\left\{n \mid \Tt \subseteq {\textstyle \bigcup_{k=1}^n} A_{k},\; \diam(A_i)\leq\epsilon \right\};\\
    \emph{metric entropy}& & H(\epsilon) &\coloneqq  \log N(\epsilon);\\
    \emph{packing number}& & M(\epsilon) &\coloneqq  \max\Big\{ n \mid t_1,\dots,t_n\in\Tt, \forall i\neq j\::\: d(t_i,t_j)>\epsilon\Big\};\\
    \emph{metric capacity}& & C(\epsilon) &\coloneqq  \log M(\epsilon).
\end{align*}
\end{definition}
Note that $N(2\epsilon)\leq M(\epsilon)\leq N(\epsilon)$, see Lifshits~\cite[Prop.~10.1]{lifshits12}. We write $N_\Tt, H_\Tt$ etc.\ if we need to specify the index set $\Tt$.

Entropy numbers play also an important role in the theory of function spaces and in the approximation of operators. Among the standard references in this direction are Carl \& Stephani \cite{car-ste90} or Edmunds \& Triebel~\cite{edm-tri96}. The notions introduced in Definition~\ref{gau-31} have all their roots in the concept of metric entropy of a set which Kolmogorov introduced in the 1930s and 1950s, see Kolmogorov~\cite{kolmogorov36,kolmogorov55,kolmogorov56} and Kolmogorov \& Tikhomirov~\cite{kolmogorov59}.

We mention, without proof, a key result based on the notions introduced in Definition~\ref{gau-31}.
\begin{theorem}[Dudley--Sudakov bounds]\label{gau-33}\index{inequality!Dudley--Sudakov}
    Assume that $(X_t)_{t\geq 0}$ is \textup{(}the separable modification of\textup{)} a centred Gaussian process and $\sigma_t^2 = \sup_{s\leq t}\Ee\left(X_{s}^2\right)<\infty$ for every $t\geq 0$. Then the following estimates hold
    \begin{gather}\label{gau-e34}
        \frac{\epsilon}{2\sqrt 2}  \sqrt{C_{[0,t]}(\epsilon)}
        \leq
        \Ee \left[X_t^*\right]
        \leq
        2\sqrt 2 \int_0^{\sigma_t} \sqrt{H_{[0,t]}(2\epsilon)}\,d\epsilon,\quad \epsilon > 0.
    \end{gather}
\end{theorem}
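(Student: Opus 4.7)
The two inequalities sit at opposite ends of the Gaussian concentration phenomenon: the upper bound is obtained by chaining, and the lower bound by a Slepian-type comparison. I would prove them separately, working throughout with the canonical pseudo-metric $d(s,r):=\|X_s-X_r\|_{L^2}$ on $[0,t]$, whose diameter is at most $2\sigma_t$.

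\emph{Upper bound (Dudley's integral).} The plan is to run the chaining argument behind Pisier's Lemma~\ref{gen-15}, but to replace the $L^p$-based bound on finite maxima by the sharper Gaussian one
\[
    \Ee\Bigl[\max_{1\leq i\leq M}|G_i|\Bigr] \leq \sigma\sqrt{2\log(2M)},
\]
valid for centred Gaussians with $\|G_i\|_{L^2}\leq\sigma$; this follows in one line from an exponential-Markov optimisation in $\theta$. Fix dyadic scales $\epsilon_n:=2^{1-n}\sigma_t$, pick minimal $\epsilon_n$-coverings $T_n\subseteq [0,t]$ with $|T_n|\leq N_{[0,t]}(\epsilon_n)$, and projections $\pi_n\colon[0,t]\to T_n$ satisfying $d(s,\pi_n(s))\leq\epsilon_n$. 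Separability and $L^2$-continuity deliver the telescoping representation
\[
    X_s = X_{\pi_0(s)} + \sum_{n\geq 1}\bigl(X_{\pi_n(s)} - X_{\pi_{n-1}(s)}\bigr).
\]
At level $n$ there are at most $N(\epsilon_n)^2$ distinct increments, each a centred Gaussian of standard deviation $\leq\epsilon_n+\epsilon_{n-1}\leq 3\epsilon_n$, whence the Gaussian maximum bound gives $\Ee\sup_s|X_{\pi_n(s)}-X_{\pi_{n-1}(s)}| \leq c\,\epsilon_n\sqrt{H(\epsilon_n)}$. Summing over $n\geq 1$, recognising $\sum\epsilon_n\sqrt{H(\epsilon_n)}$ as a Riemann sum for $\int_0^{2\sigma_t}\sqrt{H(\epsilon)}\,d\epsilon$, absorbing the $n=0$ contribution $\Ee|X_{\pi_0(s)}|\leq\sigma_t\sqrt{2/\pi}$, and making the substitution $\epsilon\rightsquigarrow 2\epsilon$ produces the stated integral with constant $2\sqrt 2$.

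\emph{Lower bound (Sudakov's minoration).} Fix $\epsilon>0$ and, by the definition of the packing number, choose $M:=M_{[0,t]}(\epsilon)$ points $t_1,\dots,t_M\in[0,t]$ with $d(t_i,t_j)>\epsilon$ for $i\neq j$. Introduce an iid $\Normal(0,1)$ vector $(G_i)_{i\leq M}$ and note that the auxiliary centred Gaussian vector $\bigl(\tfrac{\epsilon}{\sqrt 2}G_i\bigr)_{i\leq M}$ satisfies
\[
    \Ee\Bigl|\tfrac{\epsilon}{\sqrt 2}G_i - \tfrac{\epsilon}{\sqrt 2}G_j\Bigr|^2 = \epsilon^2 < \Ee|X_{t_i}-X_{t_j}|^2, \qquad i\neq j.
\]
Sudakov--Fernique comparison (Slepian's lemma for expectations of maxima) then yields $\Ee\max_{i\leq M}X_{t_i} \geq \tfrac{\epsilon}{\sqrt 2}\,\Ee\max_{i\leq M}G_i$. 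Combining with the elementary lower bound $\Ee\max_{i\leq M}G_i\geq\tfrac12\sqrt{\log M}$ -- obtained from $\Pp(\max G_i\geq\lambda)\geq 1-\exp\bigl(-M\Pp(G_1\geq\lambda)\bigr)$ and the standard Gaussian-tail estimate at $\lambda\asymp\sqrt{\log M}$ -- one gets
\[
    \Ee X_t^* \geq \Ee\max_{i\leq M}|X_{t_i}| \geq \Ee\max_{i\leq M}X_{t_i} \geq \frac{\epsilon}{2\sqrt 2}\sqrt{C_{[0,t]}(\epsilon)}.
\]

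\emph{Main obstacle.} The Dudley side is essentially a sharpened reprise of Pisier's argument and is routine once the Gaussian maximum bound is at hand. The genuinely non-trivial input lives on the Sudakov side: the Sudakov--Fernique (Slepian) comparison lemma is the only ingredient not already developed in earlier chapters, and a self-contained write-up would need to invoke it -- typically via a one-parameter Gaussian interpolation and integration by parts.
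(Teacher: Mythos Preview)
The paper does not actually prove this theorem: immediately after the statement it defers to Lifshits \cite[Thm.~1, p.~179; Thm.~5, p.~193]{lifshits95} and \cite[Thm.~10.1, 10.5]{lifshits12}, adding only a pointer to Lemma~\ref{gen-15} (Pisier) for the chaining side. So there is no in-paper argument to compare against.

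Your sketch is the standard proof one finds in those references, and it is correct in outline. The upper bound is exactly the sharpening of Pisier's chaining (Lemma~\ref{gen-15}) that replaces the crude $L^p$ union bound by the Gaussian maximal inequality $\Ee\max_{i\leq M}|G_i|\leq\sigma\sqrt{2\log(2M)}$; this is precisely why the paper points to Lemma~\ref{gen-15}. The lower bound via Sudakov--Fernique with the auxiliary iid vector $(\epsilon/\sqrt 2)G_i$ is the canonical Sudakov minoration, and you correctly identify the comparison lemma as the one non-trivial external ingredient. Two minor remarks: (i) the constant bookkeeping on the Dudley side is not fully displayed in your sketch, so getting exactly $2\sqrt 2$ rather than a slightly worse absolute constant would require a more careful accounting of the level-zero term and the factor $3\epsilon_n$; (ii) the elementary lower bound $\Ee\max_{i\leq M}G_i\geq\tfrac12\sqrt{\log M}$ holds for $M\geq 2$ but is vacuous at $M=1$, which is harmless since $C(\epsilon)=0$ there.
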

For a proof and further information we refer to the discussion in Lifshits~\cite[Thm.~1, p.~179, Thm.~5, p.~193]{lifshits95} and~\cite[Thm.~10.1, Thm.~10.5]{lifshits12}, see also Lemma~\ref{gen-15}. Note that the finiteness of the \emph{Dudley integral} appearing in the upper bound of \eqref{gau-e34}, is \emph{necessary and sufficient for the continuity and boundedness of the process $(X_t)_{t\in [0,T]}$ for any fixed $T>0$}, see Lifshits~\cite[p.~224]{lifshits95}. The idea behind the (upper part of the) Dudley--Sudakov bounds can be vastly generalized. We would like to mention here the pioneering work by Talagrand~\cite{talagrand96}, who uses systematically majorizing measures and the generic chaining method to derive bounds for $X^*$, pushing the theory well beyond Gaussian processes. For example, one can get upper and lower bounds for infinitely divisible processes. The (second edition of the) research monograph Talagrand~\cite{talagrand14} contains the latest developments in this direction.

\medskip
A further important topic in Gaussian processes are the so-called \emph{small ball probabilities} or \emph{small deviations}. The typical problem is to study the behaviour of
\begin{gather}
    \Pp\left(\sup_{s\leq t}|X_s| \leq \epsilon\right)  \text{\ \ as $\epsilon\to 0$},
\end{gather}
or of any other interesting functional $\Phi(X_s, s\leq t)$ replacing the supremum $\sup_{s\leq t}|X_s|$. With the Wiener process in mind, one would expect asymptotics like $\sim \kappa\epsilon^a \exp(-c\epsilon^{-b})$, where $b$ is the \emph{small deviation rate} and $c$ is the \emph{small deviation constant}. For a Wiener process we get $\kappa = 4/\pi$, $a=0$, $c = \pi^2/8$ and $b=2$; details can be found in Lifshits~\cite[Chap.~11]{lifshits12}, see also Schilling~\cite[Lem.~12.8]{schilling-bm}. Such estimates are interesting if one wants to determine the exact short-time behaviour of the sample paths of $t\mapsto X_t$.

The following example is a classic application of small-ball probabilities; at the same time, it nicely illustrates the scope and strength of small-ball methods. The result is due to L\'evy~\cite[p.~96, pp.~107--109]{levy53}, and it is the archetype of a so-called Stroock-and-Varadhan \emph{support theorem} for diffusions (see e.g.\ Stroock \& Varadhan~\cite{str-var72}, Ikeda \& Watanabe~\cite[Ch.~VI.8]{ike-wat89}) or for jump processes (e.g.\ Simon~\cite{simon00}, Kulik~\cite{kulik21}).

\begin{example}[L\'evy] \label{gau-35}\index{Brownian motion}%
	\textit{Let $(B_t)_{t \geq 0}$ be a one-dimensional Brownian motion. Then
	\begin{equation*}
		\Pp \left( \sup_{0 \leq t \leq 1} |B_t-f(t)|<\epsilon \right)>0, \quad \epsilon>0,
	\end{equation*}
	for any continuous function $f\colon [0,1] \to \real$ with $f(0)=0$.}

\begin{proof}
    Fix $\epsilon>0$. By the uniform continuity of $f$ on $[0,1]$, there is some $n \in \nat$ such that $|f(s)-f(t)| < \epsilon$ for all $|s-t| \leq 1/n$. Set $t_i \coloneqq i/n$, then
	\begin{equation*}
		\left\{ \sup_{0 \leq t \leq 1} |B_t-f(t)|<\epsilon \right\}
		\supseteq \bigcap_{i=0}^{n-1} \left\{ \sup_{t \in [t_i,t_{i+1}]} |B_t-f(t_i)|<\frac{\epsilon}{2(n-i)} \right\} \eqqcolon  \bigcap_{i=0}^{n-1} A_i,
	\end{equation*}
	and so, by the Markov property,
	\begin{align*}
		&\Pp \left( \sup_{0 \leq t \leq 1} |B_t-f(t)|<\epsilon \right)
		\geq \Pp \left( \bigcap_{i=0}^{n-1} A_i \right) \\
        &\quad\geq \Ee \left( \prod_{i=0}^{n-2} \I_{A_i} \cdot \Pp^{B_{t_{n-1}}} \left[ \sup_{s \leq 1/n} |B_s-f(t_{n-1})| < \frac{\epsilon}{2} \right] \right).
	\end{align*}
	Since $|B_{t_{n-1}}-f(t_{n-1})| < \epsilon/4$ on $A_{n-2}$, it follows that
	\begin{gather*}
		\Pp \left( \sup_{0 \leq t \leq 1} |B_t-f(t)|<\epsilon \right)
		\geq \Pp \left( \bigcap_{i=0}^{n-2} A_i \right) \Pp\left( \sup_{s \leq 1/n} |B_s| < \frac{\epsilon}{4} \right).
	\end{gather*}
	The reflection principle shows that for any $\epsilon>0$ and $n \in \nat$ the probability
	\begin{gather*}
		\Pp\left( \sup_{s \leq 1/n} |B_s| < \frac{\epsilon}{4} \right)
	\end{gather*}
	is strictly positive, and the claim follows by iteration.
\end{proof}
\end{example}

\newpage

\section*{Index of Notation}\label{index-notation}

\begin{small}
\setlength{\columnseprule}{.5pt}

This is intended to aid cross-referencing, so notation
that is specific to a single section is generally not listed. Some
symbols are used locally, without ambiguity, in senses other than
those given below.

Unless otherwise stated, binary operations between
functions such as $f\pm g$, $f\cdot g$, $f\wedge g$, $f\vee g$,
comparisons $f\leq g$, $f < g$ or limiting relations $f_n
\xrightarrow[n\to\infty]{} f$, $\lim_n f_n$, $\liminf_n f_n$,
$\limsup f_n$, $\sup_i f_i$ or $\inf_i f_i$  are always understood
pointwise.

\begin{raggedright}
\noindent
\begin{multicols}{2}
\raggedright
\parindent=0pt
\begin{nindex}\itemsep.5pt

\item[\bfseries Generalities]
\item[]

\item[positive]                 always  $\geq 0$
\item[negative]                 always  $\leq 0$
\item[increasing]               always  non-strict sense
\item[decreasing]               always  non-strict sense

\item[$\nat$]                   natural numbers: $1,2,3,\ldots$
\item[$\nat_0$]                 positive integers: $0,1,2,\ldots$
\item[$\integer,\rat,\real,\comp$] integer, rational, real, complex numbers
\item[$\inf\emptyset, \sup\emptyset$] $\inf\emptyset = +\infty$, $\sup\emptyset = -\infty$
\item[$a\vee b$]                maximum of $a$ and $b$
\item[$a\wedge b$]              minimum of $a$ and $b$
\item[$\entier{x}$]             $\max\{n\in\integer\mid n\leq x\}$

\item[]
\item[\bfseries Functions and sets]
\item[]

\item[$\I_A$]                       $\I_A(x)=\begin{cases} 1, & x\in A \\ 0, & x\not\in A\end{cases}$
\item[$f^+$, $X^+$]                 positive part
\item[$f^-$, $X^-$]                 negative part
\item[$f\asymp g$]                  %
                                    $\forall x\::\: cf(x)\leq g(x)\leq Cf(x)$
\item[$\log^+ x$]                   $\max\{\log(x),0\}$

\item[$\widehat f$, $\widecheck g$] (inverse) Fourier transform, p.~\pageref{hidden-ft}
\item[$f^*(x)$]                     Hardy-Littlewood maximal function, p.~\pageref{H-L-max}
\item[$f^\diamond(x)$]              square maximal function, p.~\pageref{square-max}

\item[$C(\epsilon)$]                metric capacity, p.~\pageref{gau-31}
\item[$H(\epsilon)$]                metric entropy, p.~\pageref{gau-31}
\item[$N(\epsilon)$]                covering number, p.~\pageref{gen-e15}, \pageref{gau-31}
\item[$M(\epsilon)$]                packing number, p.~\pageref{gau-31}

\item[$\psi$, $\psi_X$]             characteristic exponent, p.~\pageref{pre-e71}, \pageref{lp-e10}
\item[$\psi^*$, $\psi_X^*$]         maximum function, p.~\pageref{pre-e73}
\item[$|\psi|^*$, $|\psi_X|^*$]     maximum function, p.~\pageref{pre-e73}
\item[$(b,Q,\nu)$]                  L\'evy triplet, p.~\pageref{pre-e71}, \pageref{lp-e10}
\item[$(b(x),Q(x),\nu(x,dy))$]      infinitesimal characteristics, p.~\pageref{fel-eq2}

\item[$\|\cdot\|_{\mathrm{BMO}}$]   bounded mean oscillation, p.~\pageref{gen-e38} \eqref{gen-e36}, \eqref{gen-e38}, p.~\pageref{mar-17}

\item[$\ball{r}{x}$]                open ball with radius $r$ and centre $x$

\item[]
\item[\bfseries Random variables and processes]
\item[]

\item[$\Pp$, $\Ee$, $\Vv$]  probability, expectation, variance
\item[$X\sim Y$]            $\mathrm{law}(X) = \mathrm{law}(Y)$
\item[$X\sim \mu$]          $\mathrm{law}(X) = \mu$
\item[$M_t$]                $\sup_{s\leq t} X_s$
\item[$X_t^*$]              $\sup_{s\leq t}|X_s|$
\item[$X^*$]                $\sup_{s\geq 0}|X_s|$
\item[$\Delta X_t$]         $X_t - X_{t-}$, $(t>0)$

\item[$c\bullet X_n$]       martingale transform, p.~\pageref{mg-trafo}
\item[$H\bullet X_t$]       stochastic integral, p.~\pageref{lp-44}

\item[$\sbracket X_n$]      compensator, p.~\pageref{compen}
\item[$\bracket X_t$, $\sbracket X_t$]
                            (previsible) quadratic variation, p.~\pageref{mar-43}, \pageref{mar-e70}

\item[$\alpha_X(\ell)$]     Milman concentration function, p.~\pageref{pre-e61}

\item[$D(\mu,\ell), G(\mu,\ell), K(\mu,\ell), M(b,\nu,\ell)$]
                            truncated moments, p.~\pageref{pre-e75}
\item[$Q_X(\ell)$]          L\'evy concentration function, p.~\pageref{pre-e41}

\item[]
\item[\bfseries Abbreviations]
\item[]

\item[a.a., a.e.]     almost all/every(where)
\item[BDG]            Burkholder--Davis--Gundy
\item[BMO]            bounded mean oscillation
\item[c\`adl\`ag]     right-continuous, finite left-hand limits
\item[iid]            independent and identically distributed

\item[\eqref{Lstat},\eqref{Lindep},\eqref{Lcont}]
                      properties of L\'evy processes, pp.~\pageref{Lindep}--\pageref{Lcont-bis}

\item[]

\end{nindex}
\end{multicols}
\end{raggedright}

\newpage

\renewcommand{\indexname}{\normalsize Index}
\printindex

\end{small}

\begin{acks}[Acknowledgments]
    We thank the editor-in-chief of \emph{Probability Surveys} for the invitation to contribute to this journal and for the careful handling of the submission. The efforts and advice of two anonymous reviewers are much appreciated. We are deeply indebted to our colleagues and students, Dr.~David Berger, Dr.~Wojciech Cygan, Mr.~Mustafa Hamadi and Ms.~Cailing Li,  for proofreading and helpful comments.
    A substantial part of this paper was written while the first-named author was research associate at the Mathematics Department of TU Dresden. She would like to thank the department for the good working conditions.
    Financial support through the DFG-NCN Beethoven Classic 3 project SCHI419/11-1 \& NCN 2018/31/G/ST1/02252 is gratefully  acknowledged.
\end{acks}

\end{document}